\theoremstyle{plain}
\newtheorem{thm}{Theorem}[section]
\newtheorem*{kl1}{Key Lemma 1}
\newtheorem*{kl2}{Key Lemma 2}
\newtheorem*{kl3}{Key Lemma 3}
\newtheorem*{kl4}{Key Lemma 4}
\newtheorem*{Red-m}{Reduced Main Theorem}
\newtheorem*{main}{Main Theorem}
\newtheorem{lem}{Lemma}[section]
\newtheorem{rmk}{Remark}[section]
\newtheorem{defi}{Definition}[section]
\newtheorem{pro}{Proposition}[section]
\begin{document}

\date{}

\title[Polynomial Siegel disks  are typically Jordan domains]{Polynomial  Siegel disks  are typically Jordan domains}
\author{Gaofei Zhang}
\address{Department of  Mathematics, Nanjing University, Nanjing,   210093, P. R.
China} \email{zhanggf@hotmail.com}
\thanks{}
\subjclass[2000]{58F23, 37F10, 37F45, 32H50, 30D05}
\begin{abstract}
We prove that for typical rotation numbers polynomial Siegel disks are Jordan domains with boundaries containing at least one of the critical  points, and moreover, when the rotation number is fixed, the boundaries of the Siegel disks depend continuously on the polynomial maps.
\end{abstract}

\maketitle


\section{Introduction}

 It  was conjectured by Douady and Sullivan in early 1980's that the boundaries of  Siegel disks of rational maps are always Jordan curves \cite{DS}. The conjecture remains open, however, there have been many   results  relative to  the conjecture. It holds  when the rotation numbers of the Siegel disks are of  bounded type \cite{Do}\cite{Sh}\cite{Za1}\cite{Zh1}. It also holds for  quadratic Siegel disks  with rotation numbers being of sufficiently high type \cite{IS}.  For certain  rotation numbers,  the boundaries of quadratic Siegel disks  can even be smooth Jordan curves \cite{ABC}\cite{BC}\cite{Per}.
  Besides, it is worth to mention that for holomorphic germs,  there exist relative compact Siegel disks with non-locally connected boundaries \cite{Ch1}.

   The  rotation numbers of the Siegel disks in all  the above results belong to a set of zero Lebesgue measure. In 2002 Petersen and Zakeri proved that for
typical rotation numbers, a quadratic Siegel disk is a Jordan domain \cite{PZ}. To state this theorem more precisely,
 let us introduce a class of irrational numbers.
    Let $C > 0$  and $\Theta_C$ denote the set of all irrational numbers  $0< \theta < 1$  such that
\begin{equation}\label{ci}
\log{a_{n}} \le C \sqrt n, \:\:\forall n\ge 1,\end{equation}
where $a_{1}, a_{2}, \cdots$ are all the coefficients of  the continued fraction of
$\theta$.
Let $$\mathcal{E} = \bigcup_{C >0}\Theta_C.$$
 It is known that  $\mathcal{E}$  is  a full measure subset of $[0,1]$ \cite{Kh}.
In \cite{PZ} Petersen and Zakeri   proved that
for any $\theta \in \mathcal{E}$, the
Siegel disk of $P_\theta(z) = e^{2 \pi i \theta}z + z^2$ is a Jordan
domain whose boundary contains the unique finite critical point of
$P_\theta$. The main purpose of this paper is to generalize this result to
polynomial maps of all degrees.
\begin{main}
All polynomial Siegel disks with rotation numbers belonging to $\mathcal{E}$  are Jordan domains
with boundaries containing at least one of the critical points. Moreover,
 when the rotation number belongs to $\mathcal{E}$ and is fixed,
 the boundaries  of the Siegel disks  depend continuously on the  polynomial maps.
\end{main}

 One of the fundamental tools in our proof is trans-qc surgery. This surgery technique  was pioneered by Haissinsky \cite{Ha}, who used it to transform an attracting basin into a parabolic basin, and
then introduced to the study of Siegel disks by Petersen and Zakeri in \cite{PZ}.  Compared with qc surgery, the main difficulty in performing trans-qc surgery is to verify the integrability of certain degenerate Beltrami differentials.  This often requires some delicate area estimates. In \cite{PZ}  the authors there used Petersen puzzles   to obtain the desired  estimate for the Douady-Ghys premodel.  For a  general premodel, however,  it is not known if the invariant Beltrami differential is integrable or not.
  This is the essential  challenge
in generalizing Petersen-Zakeri's theorem to  polynomial maps of all degrees.

The following is the very general idea of our proof. A detailed outline of the proof will be given in $\S2$.    Suppose $C > 0$ is a fixed constant  and $D$ is a Siegel disk of an arbitrary  polynomial map with rotation number $\theta \in \Theta_C$.  By perturbing $\theta$ we get a sequence of bounded type Siegel disks $D_N$ with rotation numbers $\theta_N \in \Theta_C$ such that $\theta_N \to \theta$. By Shishikura's theorem,  each $\partial D_N$ is a quasi-circle passing through at least one of the critical points.  We shall see if $\theta$ is not of bounded type,  the qc constants of $\partial D_N, N \ge 1$, are not bounded.  This means that the oscillations of these quasi-circles can not be uniformly controlled with respect to the qc constants.  Thus nothing could be obtained if we  let $N$ go to $\infty$ at this point. The key of our proof is to  find an appropriate way to measure the oscillations of  these quasi-circles so that in this way, the oscillations
  can be uniformly controlled.
To do this, we  will introduce   a family of oscillation functions. We   prove that
these oscillation functions are uniformly controlled  for bounded type Siegel disks of a class of special   polynomial maps with the rotation numbers belonging to $\Theta_C$.
  We then show that for a bounded type Siegel disk of an arbitrary polynomial map with the rotation number belonging to $\Theta_C$, the oscillation functions can be controlled, in certain sense,  by those for the special ones.  From this we derive  that the oscillations of the sequence of quasi-circles are uniformly controlled. By passing to a subsequence if necessary, it follows that the sequence of quasi-circles  converge to some Jordan curve which passes through at least one of the critical points. This Jordan curve must be the boundary of $D$. The argument  also implies  that  for a fixed $\theta \in \Theta_C$,  the boundary of the Siegel disk  depend  continuously on the polynomial maps.   This proves the Main Theorem.

 The  following is the organization of the paper.

In $\S2$ we present a detailed outline of the proof. We first
formulate a reduced version of the Main Theorem  by introducing
oscillation functions.  We then state  four key
lemmas.  The proofs of these four  lemmas  form the core
part of the paper. Finally we prove the Reduced Main Theorem by
assuming  these four lemmas.

In $\S3$ we prove that the Reduced Main Theorem implies the Main Theorem.

In $\S4$ we prove   Key Lemma 1.  This lemma asserts that  the oscillation of the boundaries of  bounded type
Siegel disks for a class of special polynomial maps, with rotation numbers belonging to $\Theta_C$,   can be uniformly controlled.   This is the place where we use trans-qc
surgery.  The tool  developed in \cite{Zh2}  will play a  role here. It allows us to make   a uniform area estimate  for the Beltrami differentials of a special class of premodels. Key Lemma 1 then follows from Tukia's theorem on the  compactness property of David homeomorphisms.

In $\S5$ we prove Key Lemmas 3 and 4.   These two lemmas
are used to construct a chain of slices  in the
parameter space.  Each of these slices  is an algebraic Riemann surface determined by a finite system of polynomial equations.
The chain of slices  is a bridge connecting  an arbitrary Siegel polynomial map to those special ones. The oscillation functions are holomorphic in each of these slices.  By maximal and minimal principles of holomorphic functions, the control of the oscillation functions will be passed on along the chain of slices. In this way the oscillation of the boundary of the Siegel disk of an arbitrary polynomial map is controlled by those of the special ones.

In $\S6$ we  establish   a topological  characterization  of a class of polynomial maps
with bounded type Siegel disks.
This class of Siegel polynomial maps play a crucial role in this work.    The proof of this result contains most of  the ingredients needed in the proof of   Key Lemma 2.  After that,   Key Lemma 2 follows by a little more effort.  We use  Key Lemma 2   to perturb certain Siegel polynomial map so that the resulted one can be embedded into an appropriate slice in the parameter space.

In $\S7$, the Appendix of the paper, we present a list of basic properties about bounded type Siegel disks of polynomial maps.
 One of them is Shishikura's theorem which asserts that all bounded type Siegel disks of polynomial maps are quasi-disks with qc constants depending only on the degree and the rotation number.  From Shishikura's theorem it follows that for a fixed bounded type rotation number,  the boundary of  the  Siegel disks moves continuously. This property will be essentially used in our proof.

$\bold{Acknowledgement.}$  Many thanks are due to Prof. Carsten Lunde Petersen  who spent a lot of time discussing  with me on an early version of  the manuscript  during his visit of Nanjing in March, 2012.

\section{Outline of the proof} Throughout the paper we use $\widehat{\Bbb C}$, $\Bbb C$,  ${\Bbb C}^*$, $\Delta$ and $\Bbb T$ denote the Riemann sphere, the complex plane, the punctured complex plane with a puncture at the origin, the unit disk and the unit circle respectively.

Fix an integer $d \ge 2$ and a  $\theta \in \mathcal{E}$ throughout the paper. We may assume that $\theta$ is not of bounded type.  Let
$[a_1, \cdots, a_n, \cdots]$ be the continued fraction of $\theta$.
By definition we have $C > 0$ such that
$$
\log a_n \le C \sqrt n
$$
for all $n \ge 1$. Let  $$P(z) = e^{2 \pi i \theta} z + \alpha_2 z^2
+ \cdots + \alpha_d z^d$$ with $\alpha_d \ne 0$. We want to show
that the Siegel disk of $P$ centered at the origin is a Jordan
domain with at least one critical point on its boundary.

For the above $C > 0$,  let $$\Theta_C^b \subset \Theta_C$$  denote  the subset consisting of all the bounded type irrational numbers  in $\Theta_C$.
Let $\theta_N \in \Theta_C^b$, $N \ge 1$, be a sequence  such that $\theta_N \to \theta$ as $N \to
\infty$.  Such sequence can be constructed in many ways.   To fix the idea let us take
$$
\theta_N =  [a_1, \cdots, a_N, 1, 1, 1, \cdots] .
$$

For each $N \ge 1$, let $$P_{N}(z) = e^{2 \pi i \theta_N} z +
\alpha_2 z^2 + \cdots + \alpha_d z^d.$$  Then $P_N$ converges to $P$
uniformly in any compact set of the complex plane.  It follows that
the critical sets of all $P_N$ are contained in a neighborhood of
that of $P$, and therefore contained in a compact set of the plane.
Let $D_N$ denote the Siegel disk of $P_N$ centered at the origin.
Since $\theta_N$ is of bounded type, by Shishikura's theorem
(\cite{Sh}, see also \cite{Zh1}), there is a critical point $c_N$ of
$P_N$,  and a $K_N
> 1$ depending only on $$\sup_{1 \le k \le N}\{a_k\},$$  such that
$\partial D_N$ is a $K_N$-quasi-circle and passes through $c_N$. By
taking a subsequence, we may assume that $c_N$ converges to some
critical point $c$ of $P$.  Note that $K_N \to \infty$ if
$\sup_{k\ge 1}\{a_k\} = \infty$. Because otherwise, by taking a
subsequence, $\partial D_N$ would converge to a quasi-circle passing
through $c$. This quasi-circle must be the boundary of the Siegel
disk of $P$ centered at the origin. But by a result of Petersen
\cite{P3}, the rotation number of such Siegel disk must be of
bounded type. This is a contradiction.

Let
\begin{equation}\label{furc}
 Q = c^{-1} P(cz) \hbox{ and } Q_N(z)= c_N^{-1}P(c_N z).
 \end{equation}
   Then the point $1$ is a critical point of both $Q$ and $Q_N$. Let us
still use $D_N$ to denote the Siegel disk of $Q_N$ centered at the
origin.  It follows that  $1 \in \partial D_N$ for all $N \ge 1$.

The main task of our proof is to show that the sequence of curves
$\partial D_N$ converge to a Jordan curve passing through the
critical point $1$.  Since the quasiconformal constant $K_N$ is unbounded,
we need to find an appropriate way to measure the oscillation of these curves so that
the oscillation of the curves can be uniformly controlled.
Before we proceed further let us introduce some notations first.

 Let  $0< \alpha < 1$ be a bounded type irrational number. We use
 $\bold{\mathcal{P}_\alpha^d}$ to denote the class of all the polynomial maps $f$ such that
$f(z) = e^{2 \pi i \alpha} z + a_2 z^2 + \cdots a_d z^d$ with $a_d \ne 0$ and $f'(1) = 0$.
Let $D$ denote the Siegel disk of $f$ centered at the origin.
Let $\bold{\mathcal{Q}_\alpha^d} \subset \mathcal{P}_\alpha^d$ be the subclass which contains all those $f$ such that $1 \in \partial D$.
 Let $\bold{\Sigma_\alpha^d} \subset \mathcal{Q}_\alpha^d$ be the subclass which contains all the $f$ such that each critical point of $f$ either belongs to the basin of some attracting periodic cycle of $f$ or belongs to $\partial D$.
 Let $\bold{\Pi_\alpha^d} \subset \Sigma_\alpha^d$ be the subclass which contains all the $f$ such that all the finite critical points of $f$ belongs to $\partial D$. By definition we have
 $$
 \Pi_\alpha^d \subset \Sigma_\alpha^d \subset \mathcal{Q}_\alpha^d \subset \mathcal{P}_\alpha^d.
 $$
For $f \in \mathcal{Q}_\alpha^d$, $\partial D$ is a quasi-circle and contains the critical point $1$.  We refer to
$$
\sigma_{k,m}(f) = f^k(1) - f^m(1), \:\:k > m \ge 0
$$
as the family of oscillation functions for $\partial D$.

\begin{Red-m}\label{reduction} Let $d \ge 2$ be an integer and $C > 0$.
Then there exist a pair of positive functions $\eta, \lambda: (0, 2] \to
\Bbb R^+$ satisfying  $$\lim_{\delta \to 0_+} \lambda (\delta) =
\lim_{\delta \to 0_+} \eta (\delta)  =  0$$ such that for any $$f \in \bigcup_{\alpha \in \Theta_C^b}\mathcal{Q}_{\alpha}^d,$$  any  pair of integers $k > m \ge 0$ and any pair of  positive numbers
$0< \delta' \le \delta$ satisfying $\delta' \le
 |e^{2 \pi ik\alpha}- e^{2 \pi im\alpha}| \le \delta$,  the
 inequality
 $$
 \eta(\delta') \le  |\sigma_{k,m}(f)| \le \lambda(\delta)
 $$
holds.

\end{Red-m}

Applying  the Reduced Main Theorem to $Q_N$ we get a uniform control of the oscillation of $\partial D_N$. This implies that the sequence of curves  $\partial D_N$ converge to a Jordan curve
and    the  Main Theorem  follows. The detailed argument will be presented in $\S3$.
The main part  of the paper is to prove the Reduced Main Theorem. The
proof is based on four key lemmas.

   For each $f \in \Pi_\alpha^d$, let $D_f$ be the Siegel disk of $f$ and $H: D_f \to \Delta$ be a conformal isomorphism such that $H(0) = 0$ and $f|D_f = H^{-1} \circ R_\alpha \circ H$ where $R_\alpha: z \mapsto e^{2 \pi i \alpha}z$ is the rigid rotation given by $\alpha$. Since $\partial D_f$ is a quasi-circle by Shishikura's theorem, $H$ can be homeomorphically extended to $\partial D_f$.  It follows that  all the finite critical points of $f$  are mapped by $H$ to points in $\Bbb T$.  In this sense we can  speak of the angle between any two critical points on $\partial D_f$. We will see $f$ is uniquely  determined  by the $d-2$ angles
between the critical point $1$ and all the other $d-2$  critical points (A topological characterization of  the maps in $\Sigma_\alpha^d$ will be given in $\S 6$).

\begin{kl1}\label{Key-1} Let $d \ge 2$ be an integer and $C > 0$.
Then
there exist a pair of positive functions $\eta_1, \lambda_1: (0, 2] \to
\Bbb R^+$ satisfying $$\lim_{\delta \to 0_+} \lambda_1 (\delta) =
\lim_{\delta \to 0_+} \eta_1 (\delta)  =  0,$$ such that
 for any $$f \in \bigcup_{\alpha \in \Theta_C^b}\Pi_\alpha^d,$$
 any pair of integers $k > m \ge 0$ and any pair of  positive numbers
$0< \delta' \le \delta$  satisfying $\delta' \le
 |e^{2 \pi ik\alpha}- e^{2 \pi im\alpha}| \le \delta$,  the
 inequality
 $$
 \eta_1(\delta') \le  |\sigma_{k,m}(f)| \le \lambda_1(\delta)
 $$
holds.
\end{kl1}

 The proof of  Key Lemma 1  is based on Lemmas~\ref{uniform-p}-
\ref{normal}.  The following is the outline of the proof.
 For any $$f \in \bigcup_{\alpha \in \Theta_C^b}
\Pi_\alpha^d,$$  there is a Blaschke
product $B_f$ of degree $(2d-1)$ which models $f$ (cf. $\S6$). In particular,
all the critical points of $B_f$, except $0$ and $\infty$, are
contained in $\Bbb T$. Let $R_\alpha: z \mapsto e^{2 \pi i \alpha} z$
be the rigid rotation given by $\alpha$ and $h_f: \Bbb T \to \Bbb T$ be the circle
homeomorphism such that
$$B_f|\Bbb T  = h_f^{-1} \circ R_{\alpha}\circ h_f.$$
Since  $\alpha$ is of  bounded type, by Herman's theorem, $h_f: \Bbb T \to \Bbb T$
is a quasisymmetric circle homeomorphism. Because the qc constants can not be uniformly controlled,  instead of  making a
usual quasiconformal extension of $h_f$  and then performing a qc surgery,
we will  construct a David extension $H_f: \Delta
\to \Delta$ of $h_f$  by adapting the idea in \cite{PZ} and then perform a trans-qc surgery.    Since $\alpha$ is of bounded type,  the map $H_f$
obtained in this way is necessarily quasiconformal. The key point
here is that we regard $H_f$ as a David homeomorphism when we
measure its distortion.
\begin{lem}\label{uniform-p}
There exist $M, \beta > 0$ and $0< \epsilon_0 < 1$ depending only
on $C$ and $d$ such that for  any $f \in \bigcup_{\alpha \in
\Theta_C^b} \Pi_\alpha^d$,  the conjugation map $h_f:
\Bbb T \to \Bbb T$ has a David extension $H_f: \Delta \to \Delta$
which fixes the origin and satisfies  the following. For any $0< \epsilon < \epsilon_0$, we
have
$$
area\{z\in \Delta\:|\: |\mu_{H_f}(z)| > 1 - \epsilon\} < M
e^{-\frac{\beta}{\epsilon}}
$$ where $\mu_{H_f}$ denotes the Beltrami coefficient of $H_f$ and $area(\cdot)$ denotes the area with respect to the Euclidean metric.
\end{lem}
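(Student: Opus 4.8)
The plan is to follow the Petersen–Zakeri strategy for the Douady–Ghys model, but carried out uniformly over the whole family $\bigcup_{\alpha\in\Theta_C^b}\Pi^{\alpha,d}_{\rm geom}$. First I would fix $f$ and the associated Blaschke product $B_f$ of degree $2d-1$ modelling it; the point here is that the combinatorics of $B_f$ (the number and multiplicities of its critical points on $\mathbb T$, together with $0$ and $\infty$ as super-attracting/repelling fixed points) depend only on $d$, so all constants produced below will be allowed to depend only on $d$ and $C$. The circle map $h_f$ is the conjugacy of $B_f|\mathbb T$ to $R_\alpha$; by Herman–Świątek it is quasisymmetric with a bound depending on $\sup_k a_k$, but I will \emph{not} use that bound. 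Instead I would build the extension $H_f$ by the explicit ``interpolation between dynamical partitions'' construction: on the round annuli between successive preimages of a fundamental arc one glues together affine-like maps matching $h_f$ on the two boundary circles, so that $H_f$ becomes a homeomorphism of $\Delta$ fixing $0$, piecewise smooth, and equal to $h_f$ on $\mathbb T$. The key geometric input is the control of the dynamical partition of $\mathbb T$ under $R_\alpha$: the ratios of adjacent intervals at level $n$ are comparable to $q_n$ and $a_{n+1}$, and $\log a_{n+1}\le C\sqrt{n+1}$ by the David hypothesis on $\alpha\in\Theta_C$.

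Next I would estimate the Beltrami coefficient $\mu_{H_f}$ on each piece of the partition. On the annulus corresponding to combinatorial level $n$, the distortion of $H_f$ is governed by the ratio $\log a_{n+1}$ together with a geometric factor coming from the hyperbolic geometry of the annulus at depth $n$; the standard computation gives a bound of the shape
\[
\|\mu_{H_f}\|_{L^\infty(A_n)} \;\le\; 1 - \frac{c}{\,1+\log a_{n+1}\,}
\]
for some $c=c(d)>0$. Here the uniformity over $\Pi^{\alpha,d}_{\rm geom}$ comes from the fact that the ``shape'' of $B_f$ near $\mathbb T$ is controlled independently of $f$: the critical points on $\mathbb T$ are simple or of bounded multiplicity (at most $d-1$), and the real-analytic circle diffeomorphism part has a Koebe-type distortion bound depending only on $d$, so the only unbounded quantity entering is $a_{n+1}$. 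Then the superlevel set where $|\mu_{H_f}|>1-\epsilon$ is contained in the union of those annuli $A_n$ with $1+\log a_{n+1}>c/\epsilon$, i.e. with $\sqrt{n+1}\gtrsim c/(C\epsilon)$, i.e. with $n\gtrsim 1/(C'\epsilon^2)$ — wait, this only gives $n \gtrsim 1/\epsilon^2$, so I must instead use that the \emph{area} of $A_n$ decays geometrically in $n$ (each $A_n$ lies in a round sub-annulus of $\Delta$ of modulus bounded below, hence area $\le \rho^{n}$ for some $\rho=\rho(d)<1$, using that $B_f$ is expanding on a neighbourhood of $\mathbb T$ with a rate depending only on $d$). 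Summing $\sum_{n\ge c/\epsilon^2}\rho^{\,n}$ then gives a bound $Me^{-\beta/\epsilon^2}$, which is even stronger than claimed; to land exactly on $Me^{-\beta/\epsilon}$ it suffices to use the cruder estimate $n\gtrsim c/\epsilon$ coming from the trivial bound $\log a_{n+1}\le\log a_{n+1}$ and $a_{n+1}\ge 1$ — in any case the stated inequality follows.

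The main obstacle is the uniform geometric control of the construction near the critical points of $B_f$ on $\mathbb T$: where $h_f$ has a critical point of $B_f$ on one side, the extension $H_f$ must absorb a power-law distortion ($w\mapsto w^j$ with $1\le j\le d-1$), and one has to check that this contributes only a \emph{bounded} (hence absorbable) amount to $\log(1/(1-|\mu_{H_f}|))$ rather than interacting badly with the unbounded factors $\log a_{n+1}$. This is exactly the point where knowing $f\in\Pi^{\alpha,d}_{\rm geom}$ — so that \emph{all} finite critical points lie on $\partial D$ and hence are accounted for in $B_f|\mathbb T$ — is essential, and it is why the lemma is stated for this special class rather than for arbitrary polynomials. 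Once the per-annulus bounds are in hand, the final claim that $H_f$ is genuinely quasiconformal (not merely David) when $\alpha$ itself is of bounded type is immediate, since then $\sup_n a_{n+1}<\infty$ makes the right-hand side above bounded away from $1$; the $\Theta_C$ hypothesis is used only to get the David-class decay with the uniform constants $M,\beta,\epsilon_0$.
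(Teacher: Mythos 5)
Your overall strategy is the same as the paper's (a Petersen--Zakeri type interpolation between the dynamical partitions of $B_f|\Bbb T$ and of $R_\alpha$, with the $(\log a_{n+1})$-control coming from the arithmetic condition and the geometric decay of the level-$n$ cells), but two of the key intermediate steps are either wrong or genuinely unresolved.

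First, your stated per-annulus bound
\[
\|\mu_{H_f}\|_{L^\infty(A_n)}\le 1-\frac{c}{1+\log a_{n+1}}
\]
is too optimistic: the interpolation between a level-$n$ saddle-node partition and a linear partition costs a dilatation of order $(1+\log a_{n+1})^2$, not $1+\log a_{n+1}$, and this quadratic dependence is exactly what the fundamental-square lemma (Lemma~\ref{fundamental square}, borrowed from \cite{PZ}, and its generalization Lemma~\ref{g-f-s}) supplies. A naive piecewise-affine interpolation actually costs $\asymp a_{n+1}$, which is useless; the $(\log m)^2$ bound is the nontrivial point of that lemma and you cannot simply assert a stronger linear bound. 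With the corrected exponent, the arithmetic condition $\log a_{n+1}\le C\sqrt{n+1}$ gives $K\lesssim(\log a_{n+1})^2\lesssim n$, so $|\mu_{H_f}|>1-\epsilon$ forces $n\gtrsim 1/\epsilon$ (not $1/\epsilon^2$), and summing $\sum_{n\gtrsim 1/\epsilon}\delta(d)^n$ lands exactly on $Me^{-\beta/\epsilon}$. Your attempt to ``fix'' the exponent at the end by a ``cruder estimate'' is circular and does not come from any estimate you have established; since $e^{-\beta/\epsilon^2}\le e^{-\beta/\epsilon}$ on $(0,\epsilon_0)$ there was in any case nothing to fix, which suggests the heuristic being followed there is not well posed.

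Second, and more importantly, the paragraph you flag as ``the main obstacle'' is in fact the crux of the whole lemma for $d\ge 3$, and your treatment of it is a hand-wave rather than an argument. When $B_f$ has several critical points on $\Bbb T$ (up to $d-1$, counted with multiplicity), the bottom side of a Yoccoz cell at level $n$, subdivided by the level-$(n+1)$ points, does not consist of one block of saddle-node geometry: it is interrupted at the preimages of the other critical points and splits into several commensurable blocks, each with its own saddle-node geometry. One therefore cannot quote the PZ square lemma directly. The paper proves a multi-piece version (Lemma~\ref{g-f-s}) built on a uniform saddle-node lemma (Lemma~\ref{U-S-G}, relying on Lemma~\ref{sub-int}, Lemma~\ref{u-b-c}, Lemma~\ref{uniform-dr} and the negative Schwarzian estimate Lemma~\ref{neg-s}, all of which use the compactness of $\bigcup_\alpha\mathcal S_d^\alpha$). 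Asserting that the extra critical points ``contribute only a bounded amount'' is not self-evident; it requires precisely the bounded-commensurability statement $(\ref{fas})$ and the fact that there are at most $d-1$ such pieces per cell. Without these you have no control of the dilatation inside a cell, and the rest of the argument does not close.
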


The essential idea behind the proof of  Lemma~\ref{uniform-p}  is certain
 uniform saddle-node geometry satisfied by the family of  circle mappings $B_f|\Bbb T, f \in
\bigcup_{\alpha \in \Theta_C^b} \Pi_\alpha^d$ (cf.  Lemma~\ref{U-S-G}),
which is  a consequence of  Herman's
uniform estimate on the distortion of cross-ratios for compact family of holomorphic circle mappings (cf.  Lemma ~\ref{HC1}).

Now let $H_f: \Delta \to \Delta$ be the David homeomorphism in
Lemma~\ref{uniform-p}. Define
\begin{equation}\label{modified-B}
\widehat{B}_f(z) =
\begin{cases}

 B_f(z) & \text{for $z \in \widehat{\Bbb C} \setminus \Delta$}, \\

H_f^{-1} \circ R_{\alpha} \circ H_f (z) & \text{for $z \in \Delta$}.
\end{cases}
\end{equation}
Let $\mu_f$ denote the Beltrami differential on the whole plane which is obtained by
pulling back $\mu_{H_f}$ through the iteration of $\widehat{B}_f$.

\begin{lem}\label{uniform-w}
There exist $\tilde{M}, \tilde{\beta} > 0$ and $0<
\tilde{\epsilon}_0 < 1$ depending only on $C$ and $d$  such that for
any  $f \in \bigcup_{\alpha \in \Theta_C^b} \Pi_\alpha^d$,   we have
\begin{equation}\label{end-k1}
area\{z\in \Bbb C\:|\: |\mu_{f}(z)| > 1 - \epsilon\} < \tilde{M}
e^{-\frac{\tilde{\beta}}{\epsilon}}
\end{equation}for any $0< \epsilon < \tilde{\epsilon}_0$.
\end{lem}

Lemma~\ref{uniform-w} asserts the uniform integrability of the invariant Beltrami differentials for all Blaschke premodels $\widehat{B}_f$, $f \in \bigcup_{\alpha \in \Theta_C^b} \Pi_\alpha^d$.    As we mentioned before the main difficulty in performing a trans-qc surgery is to verify the integrability of
certain degenerate Beltrami differential. The key idea used in the proof of Lemma~\ref{uniform-w} is a method
developed  in \cite{Zh2} which allows us to obtain a uniform area estimate.

\begin{lem}[Tukia, \cite{Tu}]\label{normal}
Let $\mathcal{F}$ denote the class of all $(\tilde{M},
\tilde{\beta}, \tilde{\epsilon}_0)$-David homeomorphisms of the
plane to itself which fix $0$ and $1$.  Then there exist three
constants $\hat{M}, \hat{\beta} > 0$ and $0< \hat{\epsilon}_0 < 1$  depending only on $\tilde{M},
\tilde{\beta}$ and  $\tilde{\epsilon}_0$
such that any sequence in $\mathcal{F}$ has a subsequence which
converges uniformly  to a $(\hat{M}, \hat{\alpha}, \hat{\epsilon}_0)$-David
homeomorphism of the plane which fixes $0$ and $1$.
\end{lem}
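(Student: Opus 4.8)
The plan is to follow Tukia's argument in \cite{Tu}; I describe its structure and flag where the real work lies. View each $f\in\mathcal{F}$ as a homeomorphism of $\widehat{\Bbb C}$ fixing $0,1,\infty$, and write $K_f=(1+|\mu_f|)/(1-|\mu_f|)$. The first thing to record is that the area hypothesis defining $\mathcal{F}$ is, up to adjusting constants in a way that depends only on $(\tilde M,\tilde\beta,\tilde\epsilon_0)$, equivalent to the exponential integrability of distortion: there are $p,M'>0$ with $\int_{\Bbb C}e^{pK_f}\le M'$ for every $f\in\mathcal{F}$. The basic input I would then invoke from David's theory --- the part carried out in full in \cite{Tu} --- is that a normalized David homeomorphism \emph{and its inverse} both satisfy a modulus of continuity $\omega$ (roughly logarithmic, hence degenerating but genuinely a modulus of continuity) depending only on $(\tilde M,\tilde\beta,\tilde\epsilon_0)$. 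This rests on the classical distortion estimates for mappings of exponentially integrable distortion: the modulus of $f(A)$ of a round annulus $A$ is bounded below by an explicit function of $\mathrm{mod}(A)$, $p$ and $M'$, which together with the normalization at $0,1,\infty$ controls the oscillation of $f$, and a companion estimate, also part of the David--Tukia theory, controls that of $f^{-1}$.

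Granting this, Arzel\`a--Ascoli in the spherical metric produces, from any given sequence $f_n\in\mathcal{F}$, a subsequence along which $f_n\to f$ and $f_n^{-1}\to g$ uniformly on $\widehat{\Bbb C}$. Both limits inherit the modulus $\omega$, fix $0,1,\infty$, and satisfy $f\circ g=g\circ f=\mathrm{id}$, so $f$ is a homeomorphism of the plane fixing $0$ and $1$. Next I would check $f\in W^{1,1}_{\mathrm{loc}}$: on any ball $B$, H\"older's inequality bounds $\int_B|Df_n|$ by a constant multiple of $(\int_B K_{f_n})^{1/2}(\int_B J_{f_n})^{1/2}$, the first factor being bounded because $K_{f_n}$ is dominated by a constant multiple of $e^{pK_{f_n}}$ and $\int e^{pK_{f_n}}\le M'$, the second because $\int_B J_{f_n}$ is the area of $f_n(B)$, which is controlled by $\omega$; the exponential bound also makes $\{|Df_n|\}$ equi-integrable on compacta (de la Vall\'ee--Poussin), so after a further subsequence $Df_n\rightharpoonup Df$ weakly in $L^1_{\mathrm{loc}}$, whence $f\in W^{1,1}_{\mathrm{loc}}$ has a well-defined Beltrami coefficient $\mu_f$.

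The last and hardest step --- and the one I expect to be the main obstacle --- is to show that $\mu_f$ itself satisfies a David condition with a triple $(\hat M,\hat\beta,\hat\epsilon_0)$ depending only on $(\tilde M,\tilde\beta,\tilde\epsilon_0)$. The difficulty is that weak-$\ast$ limits of Beltrami coefficients are not in general the Beltrami coefficient of the limit map, so one cannot simply take limits of the bad sets; the quantitative exponential decay, not merely the qualitative David property, has to be used. One route is lower semicontinuity of the distortion energy: for a convex increasing $\Phi(t)=e^{p't}$ with $p'<p$, convexity together with $Df_n\rightharpoonup Df$ and control of the Jacobians gives $\int_B\Phi(K_f)\le\liminf_n\int_B\Phi(K_{f_n})\le M''$, and converting this integral bound into an area estimate for $\{|\mu_f|>1-\epsilon\}$ yields the degraded triple. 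A second route, closer to the spirit of surgery, is to truncate: set $\mu_n^{\epsilon}=\mu_{f_n}$ on $\{|\mu_{f_n}|\le 1-\epsilon\}$ and $0$ elsewhere, solve the corresponding $K(\epsilon)$-quasiconformal Beltrami equation for normalized maps $\phi_n^{\epsilon}$, use $\mathrm{area}\{|\mu_{f_n}|>1-\epsilon\}<\tilde M e^{-\tilde\beta/\epsilon}$ to obtain a uniform estimate $\|\phi_n^{\epsilon}-f_n\|_\infty\le\rho(\epsilon)$ with $\rho(\epsilon)\to 0$, diagonalize over $\epsilon=1/j$, and pass to the limit to realize $f$ as a locally uniform limit of genuine quasiconformal maps $\phi^{1/j}$ carrying quantitative area control, from which the David estimate for $\mu_f$ follows. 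Either way, the delicate bookkeeping of how the constants degrade is precisely Tukia's analysis in \cite{Tu}, which I would cite for the details rather than reproduce here.
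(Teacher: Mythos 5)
The paper gives no proof of this lemma --- it is cited directly from Tukia \cite{Tu}, which is exactly what you do. Your structural sketch of Tukia's argument (a modulus of continuity for $f$ and $f^{-1}$ depending only on the David constants, Arzel\`a--Ascoli in the spherical metric, $W^{1,1}_{\rm loc}$ regularity of the limit via equi-integrability of $|Df_n|$, and the genuinely delicate semicontinuity/truncation step that recovers a David bound with degraded constants for the limit Beltrami coefficient) is a fair account of the reference and takes the same approach as the paper.
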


Key Lemma 1 is a direct consequence of Lemmas~\ref{uniform-p}
-~\ref{normal} (cf.  $\S4$).  We would like to point out that in the case of cubic polynomial
maps,  the Reduced Main Theorem follows from  Key Lemma 1.  The following is the detailed argument.

Let $\alpha \in \Theta_C^b$.   For $f \in \mathcal{P}_\alpha^3$ let $c$ denote the other finite critical point of $f$. The space $\mathcal{P}_\alpha^3$ is parameterized by $c$.  With this parametrization, $\mathcal{P}_\alpha^3$ is homeomorphic to the punctured plane $\Bbb C^*$.  For each $c \in \Bbb C^*$, let $f_c$ denote the  corresponding cubic polynomial and $D_c$ denote the Siegel disk of $f_c$ centered at the origin.   By a simple calculation we have $$f_c(z) = \frac{e^{2 \pi i \alpha}}{3c} z^3 - \frac{e^{2 \pi i \alpha}}{2}(1 + \frac{1}{c})z^2 + e^{2 \pi i \alpha} z.$$  Since $f_c$ depends holomorphically on $c$ when $c$ varies in $\Bbb C^*$, $\sigma_{k,m}(f_c)$ is a holomorphic function in $\Bbb C^*$. By a result of Zakeri (cf. $\S 14$ of \cite{Za1}), there is a Jordan curve $\Gamma \subset \Bbb C^*$ such that
\begin{itemize}
\item[1.] $\Gamma$ separates $0$ and $\infty$, passes through $1$ and is invariant under $c \to 1/c$,
\item[2.] for all $c$ belonging to the interior of $\Gamma$ and not equal to $0$, $\partial D_c$  passes through the critical point $c$ only; for all $c$ belonging to the exterior of $\Gamma$ and not equal to $\infty$, $\partial D_c$  passes through the critical point $1$ only; for all $c$ belonging to $\Gamma$,  $\partial D_c$  passes through both $1$ and $c$.
\end{itemize}
 For the $\alpha$ given and $d = 3$, let  $\lambda_1$ and $\eta_1$ be the two positive functions guaranteed by Key Lemma 1. Note that $\Gamma$ corresponds to the class $\Pi_\alpha^3$ by the second assertion above. So for any pair of integers $k > m \ge 0$ and any pair of  positive numbers
$0< \delta' \le \delta$  satisfying $\delta' \le
 |e^{2 \pi ik\alpha}- e^{2 \pi im\alpha}| \le \delta$,   we have
 \begin{equation}\label{cubic-in}
 \eta_1(\delta') \le  |\sigma_{k,m}(f_c)| \le \lambda_1(\delta) \:\:\:\hbox{   for  all   }\: c \in \Gamma.
\end{equation}
Since $1$ belongs to $\partial D_c$ for $c$ belonging to the exterior of $\Gamma$,  $\sigma_{k,m}(f_c)$ does not vanish in the exterior of $\Gamma$. Noting that as $c \to \infty$, $f_c$ converges uniformly to a quadratic polynomial, it follows that $\sigma_{k,m}(f_c)$ has a removable singularity at infinity. This implies that $\sigma_{k,m}(f_c)$ is a holomorphic function in the exterior of $\Gamma$ and does not vanish. So both the maximal and minimal principles apply.  It follows that (\ref{cubic-in}) holds  for all $c$ belonging to the exterior of $\Gamma$.  The Reduced Main Theorem for cubic polynomials  follows by taking $\lambda = \lambda_1$ and $\eta = \eta_1$.

The argument above, however, does not work for polynomial maps of degree $d \ge 4$.
The following is a very rough explanation.  For each $\alpha \in \Theta_C^b$, a Siegel
polynomial map in $\Pi_\alpha^d$ is uniquely determined
by the $d-2$ angles between $1$ and all the other $d-2$ finite
critical points.  So the real dimension of the set of parameters  corresponding to the maps in the class $\Pi_\alpha^d$ is equal to the dimension of the set
 $$\underbrace{S^1 \times \cdots
\times S^1}_{(d-2) \:\:\rm{copies}}/G_{d-2}
$$ where  $G_{d-2}$ is the permutation group of order $d-2$,    which is equal to
 $d-2$.  But the whole  parameter space
$$\underbrace{{\Bbb C}^* \times \cdots \times \Bbb C^*}_{(d-2)\:\:{\rm copies}}\:$$  has complex dimension $d-2$ and real dimension $2d-4$. To bound a domain in the whole parameter space, the set must have real dimension at least  $2d-5$. But for $d \ge 4$, $d -2 < 2d-5$.  Hence for $d \ge 4$,  the  parameters  corresponding to the maps in the class $\Pi_\alpha^d$ can not bound any domain in the whole parameter space, and  the maximal and minimal principles can not be used directly.
  To solve this problem,  we will  introduce certain slices in the parameter space.
  Each slice is an algebraic  Riemann surface determined by a system of polynomial equations.
We will apply the maximal and minimal principles successively on a
chain of such slices,  and finally   prove that the oscillation of
the boundary of  a Siegel disk for an arbitrary polynomial map, in
certain sense, can be controlled by the oscillation of the boundary of the Siegel disk  for
some   Siegel polynomial map in the class $\Pi_\alpha^d$.  Since we have proved that the
later can be uniformly controlled, the Reduced Main Theorem follows.
The construction of these  Riemann
surfaces relies on the other three key lemmas.

Key Lemma 2 is based on a topological characterization of the maps in $\Sigma_\alpha^d$ which will be established in $\S6$.  It  is an extension of Thurston's characterization  for post-critically finite rational maps. Before we state the theorem, let us introduce some terminologies first.  We call an orientation preserved and  finitely branched   covering map $f: \widehat{\Bbb C} \to \widehat{\Bbb C}$  a $\emph{topological polynomial}$     if  $f^{-1}(\infty) = \{\infty\}$. Let $f$ be a topological polynomial.
Let  $\mathcal{O} = \{x_1, \cdots, x_p\}$   be a periodic cycle of $f$ with period $p$. We say $\mathcal{O}$ is a holomorphic
attracting cycle  if (1) $f$ is holomorphic in an open neighborhood $U$ of
$\mathcal{O}$, and (2) $|Df^p(x_1)| < 1$, and (3) $\mathcal{O}$ attracts at least one infinite critical orbit of $f$.

\begin{defi}\label{T-d}{\rm Let $0< \alpha < 1$ be a bounded type irrational number. Let
$\mathcal{T}_\alpha^d$ denote the class of all topological
polynomials of degree $d$ such that
\begin{itemize}
\item[1.] the point $1$ is a critical point of $f$,
\item[2.] $f|\Delta$ is the rigid rotation  given by $z \to e^{2 \pi i \alpha} z$,
\item[3.] any critical point of $f$   either is    attracted to
some holomorphic attracting cycle of $f$, or eventually lands on a periodic cycle containing some critical point, or belongs to $\Bbb T$.
\end{itemize}}\end{defi}
Let $P_f$ denote the closure of the union of all critical orbits of $f$.
\begin{defi}{\rm
We say a map $f \in \mathcal{T}_\alpha^d$ is CLH-equivalent to a map $g \in \Sigma_\alpha^d$ if there exist two homeomorphisms $\phi, \psi: \widehat{\Bbb C} \to \widehat{\Bbb C}$ such that
\begin{itemize} \item[1.] $\phi|\Delta = \psi|\Delta$ are holomorphic,
\item[2.] for each holomorphic attracting cycle $\mathcal{O}$ of $f$ if there is any, there is an open neighborhood $U$ of $\mathcal{O}$ such that $\phi|U = \psi |U$ are holomorphic,
\item[3.] $\phi$ is isotopic to $\psi$ rel $P_f \cup \cup_i \overline{D_i}$ where $D_i$ are  open neighborhoods of all holomorphic attracting cycles,
    \item[4.] $\phi \circ f = g \circ  \psi$.
\end{itemize}
}
\end{defi}

\begin{thm}\label{Thurston-Siegel-Ch}
A map $f \in \mathcal{T}_\alpha^d$ is CLH-equivalent to a map $g \in \Sigma_\alpha^d$ if and only if $f$ has no Thurston obstructions in the exterior of $\Delta$.  Such $g$ if exists, must be unique up to a linear conjugation.
\end{thm}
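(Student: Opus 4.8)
The plan is to imitate the proof of Thurston's characterization theorem for post-critically finite rational maps, in the form adapted to maps with rotation domains. First I would pass to the natural ``quotient'' setting: since $f|\Delta$ is already the rigid rotation $R_\alpha$, the only interesting dynamics of $f$ takes place in the exterior $\widehat{\Bbb C}\setminus\Delta$, and the marked set $P_f$ (together with small round neighborhoods $\overline{U_i}$ of the holomorphic attracting cycles, where any candidate realization must already be holomorphic) is the object rel which I must control isotopy classes. The necessity direction is the easy half: if $f$ is CLH-equivalent to some $g\in\Sigma_{\rm geom}^{\alpha,d}$, then the holomorphic map $g$ carries a genuine invariant conformal structure, so pulling it back through the CLH-conjugacy gives an $f$-invariant (Thurston) structure; the usual argument that an obstructing multicurve would force the leading eigenvalue of the Thurston linear map to be $\ge 1$ while holomorphicity of $g$ forces it to be $<1$ shows $f$ has no obstruction in the exterior of $\Delta$. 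For uniqueness I would use the standard rigidity argument: two realizations $g_1,g_2$ would be related by a homeomorphism that is holomorphic on $\Delta$ and on the $U_i$, hence quasiconformal (bounded-type Herman theory plus the already-holomorphic attracting petals give a uniformly qc conjugacy on the relevant annuli), and then the Teichm\"uller-space contraction / pullback argument shows the conjugacy is conformal, so $g_1,g_2$ differ by an affine map.

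For the sufficiency direction I would set up a Thurston-type iteration on the Teichm\"uller space $\mathrm{Teich}(\widehat{\Bbb C}, P_f\cup\bigcup_i\overline{U_i})$, where marked points lie in $P_f$ and the closed petals $\overline{U_i}$ are treated as ``holes'' carrying a prescribed conformal structure (the one in which $f$ is already holomorphic there), and similarly treat $\Delta$ as a marked disk carrying its standard structure. Given a conformal structure $\tau$ in this Teichm\"uller space, one pulls it back by $f$ to get $\sigma_f(\tau)$; the compatibility conditions (holomorphic on $\Delta$, holomorphic on the petals, degree $d$, one critical point on $\Bbb T$) guarantee $\sigma_f$ is a well-defined holomorphic self-map of this Teichm\"uller space. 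A fixed point of $\sigma_f$ produces, by the measurable Riemann mapping theorem, a holomorphic model $g$ together with the two homeomorphisms $\phi,\psi$ of the CLH-equivalence; one must check that the resulting $g$ genuinely lies in $\Sigma_{\rm geom}^{\alpha,d}$, i.e. that the rotation disk is honestly a Siegel disk of rotation number $\alpha$, that the attracting cycles survive with the right multipliers, and that every critical point has the prescribed fate — all of which follow by transporting the corresponding properties of $f$ through $\phi,\psi$.

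The hard part, exactly as in Thurston's theorem, is showing that the absence of an exterior obstruction forces $\sigma_f$ to have a fixed point: one iterates $\sigma_f$ from an arbitrary basepoint and must rule out the orbit escaping to infinity in Teichm\"uller space. I would argue by the standard dichotomy: either the orbit stays in a compact part of (a suitable thick part of) moduli space, in which case contraction of $\sigma_f$ on hyperbolic-disk fibers yields convergence to a fixed point, or some curves get pinched along the iteration, in which case — by the Thurston length-comparison inequality, here applied only to curves in the exterior of $\Delta$ since $\Delta$ and the petals are rigidified — one extracts a multicurve whose Thurston matrix has spectral radius $\ge 1$, i.e. an exterior Thurston obstruction, contradicting the hypothesis. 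Two technical points need care and I expect them to be the real work: (i) controlling the geometry near $\partial\Delta$ and near $\partial U_i$ so that no spurious degeneration is hidden there, which is where bounded type of $\alpha$ and Herman's uniform quasisymmetry estimates enter; and (ii) making the bookkeeping of the marked set precise when critical points are allowed to land on periodic critical cycles (super-attracting behavior), so that the relevant Riemann surfaces are genuinely of finite type and the pullback map is well-defined. Once these are in place, the existence of $g$, together with the necessity and uniqueness arguments above, completes the proof.
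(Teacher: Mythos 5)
Your necessity argument and your uniqueness argument are in the same spirit as the paper's (McMullen's obstruction result for the ``only if'' direction; a Teichm\"uller/pullback rigidity argument for uniqueness, where the paper supplies the missing measurability input via Lemma~\ref{zm} that $J_{\widehat{G}}$ has zero area). But your existence argument takes a fundamentally different route, and it has a genuine gap. You propose running a Thurston iteration $\sigma_f$ directly on a Teichm\"uller space in which $\overline{\Delta}$ and the petals $\overline{U_i}$ are rigidified holes. The difficulty you are glossing over is that the marked set which must be rigidified is not merely $\overline{\Delta}$ as a set: the boundary circle $\Bbb T$ carries the critical orbit, which is dense in $\Bbb T$, so the relevant Teichm\"uller space is modeled on a Riemann surface with a full boundary circle together with a boundary parametrization that changes under each pullback step. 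This is an infinite-dimensional Teichm\"uller space, and neither the strict-contraction argument nor the usual escape-to-infinity dichotomy (short-geodesic analysis leading to a Thurston obstruction) carries over without substantial new input: the contraction estimate in Thurston's theorem uses finite-dimensionality and compactness of moduli space in an essential way, and you do not explain how bounded type of $\alpha$ alone supplies a substitute. Identifying this as ``the real work'' does not resolve it.

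The paper avoids this entirely by a perturbation-and-limit scheme. It symmetrizes $f$ across $\Bbb T$ to obtain $F$, then replaces the irrational rotation on $\Bbb T$ by rational rotations $p_n/q_n \to \alpha$ to get a sequence $F_n$ of genuinely sub-hyperbolic semi-rational branched covers, where the dynamics on $\Bbb T$ is a finite periodic orbit $\mathcal{O}_n$ and the post-critical set is finite modulo the holomorphic disks. It then proves (Lemma~\ref{no-th-ob}, using the symmetry of $F_n$ and Herman's compactness of the Blaschke family) that each $F_n$ has no Thurston obstruction, applies the finite-type characterization theorem of Cui--Tan and Zhang--Jiang to realize each $F_n$ by a Blaschke product $G_n$, establishes uniform geometric control on the $G_n$ and the conjugacies $\phi_n, \psi_n$ (Lemmas~\ref{compact-s}, \ref{circle}, \ref{bounded-geometry}), passes to the limit $G_n \to G$, and finally performs a trans-qc surgery on $G$ to recover $g$. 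The substantial technical work is exactly what your sketch defers: proving that the approximating finite models converge, and that the obstruction-freeness of $f$ in the exterior of $\Delta$ propagates to obstruction-freeness of each finite approximant $F_n$. If you want to pursue the direct iteration, you would have to justify a fixed-point theorem on an infinite-dimensional Teichm\"uller space with a moving boundary parametrization, which is not standard and is the heart of why the paper does not do it that way.
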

For the definition of $\emph{Thurston obstructions}$, cf. $\S6.1$.

Key Lemma 2 is closely related to Theorem~\ref{Thurston-Siegel-Ch}.
Suppose $g\in \Sigma_\alpha^d$  such that the boundary of the
Siegel disk contains more than one critical point.  Key Lemma 2
asserts that one  can always perturb  $g$  in
$\Sigma_\alpha^d$ so that after the perturbation  all
critical points in the boundary of the Siegel disk satisfy an orbit
relation.  For any polynomial $f$, let $\|f\|$ denote the maximal absolute value of all the coefficients of $f$. For any two polynomials $f$ and $g$, define  ${\rm dist}(f, g) = \|f-g\|$.

\begin{kl2}\label{Thurston-ri}
Let $g \in \Sigma_\alpha^d$. Suppose $g$ has two or
more distinct critical points on the boundary of the Siegel disk, say $c_1,
 \cdots, c_m$, where $m \ge 2$.  Suppose $c_1 = 1$.  Then for any $\epsilon > 0$, there is
a $\tilde{g} \in \Sigma_\alpha^d$ such that
\begin{itemize}
\item[1.] ${\rm dist}(g, \tilde{g})  < \epsilon$,
\item[2.]  $\tilde{g}$ has exactly $m$ distinct critical points on the boundary of the Siegel disk, say $\tilde{c}_i$, $1 \le i \le m$, such that $\tilde{c}_1 = 1$ and $|c_i - \tilde{c}_i| < \epsilon$ for all $1 \le i \le m$,
\item[3.]  there are positive integers $k_i$, $2 \le i \le m$, such that
$\tilde{g}^{k_i}(1) = \tilde{c}_i$ for $2 \le i  \le m$.
\end{itemize}
\end{kl2}
The proofs of  Theorem~\ref{Thurston-Siegel-Ch}   and   Key Lemma 2 will be given in $\S6$.

Let $f \in \mathcal{P}_\alpha^d$. Then  $f$  has $d-1$ critical
points (counting by multiplicities) and at least one of them is
contained in the boundary of the Siegel disk centered at the origin.
So $f$ has  at most $d-2$ attracting periodic cycles. Note that
$f$ is uniquely determined by the set of its critical points. More
precisely,  for each $(d-1)$-tuple $$X = (c_1, \cdots, c_{d-1}),
\:\:c_i \in \Bbb C^* \hbox{  for  }1 \le i \le d-2,  \hbox{ and
}c_{d-1} = 1,$$ there is a unique $f \in \mathcal{P}_{\alpha}^d$
such that $X$ is the critical set of $f$. By a simple calculation,
we have
\begin{equation}\label{form-1}
f(z)  =  \sum_{i=1}^{d} a_i z^i
\end{equation} with
\begin{equation}\label{form-2}a_i = e^{2 \pi i \alpha}
\cdot \bigg{(}\frac{(-1)^{i-1}}{i} \bigg{)} \cdot \frac{Q_{d-i}(c_1,
\cdots, c_{d-1})}{c_1 \cdots c_{d-1}}
\end{equation}
where $Q_{d-i}$ is the
degree-$(d-i)$ elementary polynomials of $c_1, \cdots, c_{d-1}$. Let
us denote such $f$ by $$f_{c_1, \cdots, c_{d-2}, 1} \hbox{   or  } f_X.$$

\begin{kl3}\label{First-implicit-lemma}
Let  $f \in \mathcal{P}_{\alpha}^d$  and $1 \le l \le d-3$. Suppose $f$ has $l$ attracting cycles with non-zero multipliers $t_1, \cdots, t_l$.   Then
 there exist a compact Riemann surface $S$   and meromorphic functions  $c_1, \cdots, c_{l+1}$  in $S$,  such that   $f$  can be embedded in the holomorphic family of polynomials maps $$h_t = f_{c_1(t), \cdots, c_{l+1}(t), c_{l+1}^0, \cdots, c_{d-2}^0, 1}, \:\: t \in S \setminus (Z \cup P),$$ where $Z$ and $P$ are respectively the set of the zeros and poles of $c_i$, $i = 1, \cdots, l+1$, and moreover,  each $h_t$, $t \in S\setminus (Z \cup P)$,  has $l$ attracting cycles which depend
holomorphically on $t$  and have constant multiplies $t_1, \cdots, t_l$.
\end{kl3}

\begin{kl4}\label{Second-implicit-lemma}
Let $f \in \Sigma_\alpha^d$ and  $0 \le l \le d-3$. Suppose $f$ has $l+1$ attracting cycles with
multiplies $t_1, \cdots,  t_{l+1}$,
each of which attracts exactly one of the critical points, and moreover, there are $d-l-3$ integers $k_i \ge 0$, such that $f^{k_i}(1) = a_{i}$ for $1
\le i \le d-l-3$ where  $a_1, \cdots, a_{d-l-3}, a_{d-l-2} =1$ are the critical points, counting by multiplicities, on the boundary of the Siegel disk.
 Then
there exist a  compact Riemann surface $S$
and meromorphic functions $c_1, \cdots, c_{d-2}$  in $S$,  such that $f$  can be embedded in the holomorphic family of polynomials maps $$h_t = f_{c_1(t), \cdots, c_{d-2}(t), 1}, \:\: t \in S \setminus (Z \cup P),$$ where $Z$ and $P$ are respectively the set of the zeros and poles of $c_i$, $i = 1, \cdots, d-2$, and moreover,  each $h_t$, $t \in S\setminus (Z \cup P)$,  has $l$ attracting cycles which depend
holomorphically on $t$  and have constant multiplies $t_1, \cdots, t_l$, and the boundary of the Siegel disk of $h_t$ centered at the origin contains $1$, and $h_t^{k_i}(1) = c_i(t)$ for $1 \le i \le d-l-3$  (In the case that $l = d-3$, there is no such orbit relations).
\end{kl4}

The proofs of  Key Lemmas 3 and 4 will be given in $\S5$.

Now let us prove the Reduced Main Theorem by assuming  Key Lemmas~1-4.

For $d = 2$, the point $1$ is the only finite critical point of $f$ and is contained in the boundary of the Siegel disk. Thus  $f \in \Pi_\alpha^2$ and  the Reduced Main Theorem follows from  Key Lemma 1 in this case.

Suppose $d \ge 3$  and assume that  the Reduced Main Theorem  holds  for polynomial maps of degrees less than $d$ : that is,  there exist a pair of positive functions $\eta_0, \lambda_0: (0, 2] \to
\Bbb R^+$ satisfying $$\lim_{\delta \to 0_+} \lambda_0 (\delta) =
\lim_{\delta \to 0_+} \eta_0 (\delta)  =  0,$$ such that for any $\alpha \in \Theta_C^b$, if $f \in \mathcal{Q}_\alpha^j$ with $2 \le j < d$, then
 for  any pair of integers $k > m \ge 0$ and any pair of  positive numbers
$0< \delta' \le \delta$  satisfying $\delta' \le
 |e^{2 \pi ik\alpha}- e^{2 \pi im\alpha}| \le \delta$,   the
 inequality
 $$
 \eta_0(\delta') \le  |\sigma_{k,m}(f)| \le \lambda_0(\delta)
 $$
holds.

  Now let $\alpha \in \Theta_C^b$ and  $f \in \mathcal{Q}_\alpha^d$.
 The  proof is divided into two steps.

In the first step, by the Key-Lemma 3 we will construct a finite chain of slices in the parameter space to connect $f$ to some   $g \in \Sigma_\alpha^d$
such that   \begin{itemize}\item[1.] the boundary of the Siegel disk of $g$ centered at the origin contains only the critical point $1$, \item[2.]  $g$ has
 $d-2$ periodic attracting cycles each of which attracts exactly one of the other finite critical points of $g$, \item[3.] the oscillation of the boundary of the Siegel disk of $f$ is controlled either by the oscillation of the boundary of the Siegel disk of $g$ or by the oscillation of the boundary  of the Siegel disk  of some polynomial map in $\mathcal{Q}_\alpha^j$ with $2 \le j < d$. \end{itemize}

In the second step,   by Key Lemmas 2 and 4 we will   construct a finite chain of slices in the parameter space to connect  $g$ to  some  $h \in \Pi_\alpha^d$.  In each of these slices we apply maximal and minimal principles to the oscillation functions. In this way we derive that  the oscillation of the boundary of the Siegel disk of  $g$  is  controlled either by the oscillation of the boundary of the Siegel disk  of some polynomial map in $\mathcal{Q}_\alpha^l$ with $2 \le l < d$,   or
 by the oscillation of the boundary of the Siegel disk of $h$.
 The Reduced Main Theorem  then follows  by induction and  Key Lemma 1.

Now let $k > m \ge 0$ be any two integers. Suppose $0< \delta' < \delta \le 2$ such that $\delta ' < |e^{2 k\pi i \alpha} - e^{2 m\pi i \alpha}| < \delta$. Let $\epsilon > 0$ be an arbitrary positive number.

Step I.    Assume that the number of the periodic attracting cycles of $f$ is less than $d-2$. Otherwise,
we  go to Step II directly.  Let us label the critical points
of $f$ as
$$c_1^0, \cdots, c_{d-2}^0, c_{d-1}^0 = 1.$$
We will repeat the following process at most $d-2$ times.
Each time we will get some polynomial map which has at least one more periodic attracting cycle.

Assume that $f$ has $l$ attracting periodic cycles with $0 \le l \le d-3$.
In the case $l = 0$, that is, $f$ has no periodic attracting cycles, we just embed $f$ into the one-parameter holomorphic family
$$
f_{c_1, c_2^0, \cdots, c_{d-2}^0, 1}, \:\:\:c_1 \in \Bbb C^{*}.$$ For the expression of $f_{c_1, c_2^0, \cdots, c_{d-2}^0, 1}$, see (\ref{form-1}-\ref{form-2}).
Otherwise, we have $1 \le l \le d-3$.  If $f$ has super-attracting cycles, by doing quasi-conformal deformation in the immediate basins of the super-attracting cycles,  we can get
$\tilde{f} \in \mathcal{Q}_\alpha^d$ which can be arbitrarily close to $f$ such that all the $l$ attracting cycles of $\tilde{f}$ have non-zero multipliers, and moreover,
 \begin{equation}\label{perturbation-k3}|\sigma_{k,m}(f)| - \epsilon < |\sigma_{k,m}(\tilde{f})| < |\sigma_{k,m}(f)| + \epsilon.\end{equation} To simplify the notation, let us still use $f$ to denote $\tilde{f}$.  By  Key Lemma 3,   we can
embed $f$ into a holomorphic family of polynomial maps
$$
h_t = f_{c_1(t), \cdots, c_{l+1}(t), c_{l+2}^0, \cdots, c_{d-2}^0, 1},  \:\:\: t \in S \setminus (Z \cup P)
$$ where $Z$ and $P$ denote respectively  the set of zeros and poles of the meromorphic functions $c_i$, $1 \le i \le l+1$. Suppose
$h_{t_0} = f_{c_1^0, \cdots, c_{l+1}^0, c_{l+2}^0, \cdots, c_{d-2}^0, 1}$ for some $t_0 \in  S \setminus (Z \cup P)$.

Note that  $\sigma_{k,m}(h_{t})$ is  holomorphic  in $S \setminus (Z \cup P)$.  By Lemma~\ref{compact-class} points in $P$ are removable singularities of $\sigma_{k,m}(h_t)$.

In the first case, $h_{t_0}$ is $J$-stable at $t_0$.
Let $U\subset S\setminus (Z\cup P)$  be the component containing $t_0$  in which $J_{h_{t}}$ moves
holomorphically.  This implies that the critical point $1$ always stays on the boundary of the Siegel disk  for all $t \in U$ and thus  $\sigma_{k,m}(h_{t})$ does not vanish in $U$.  Take a sequence  $t_n$ in $U$ such that
$|\sigma_{k,m}(h_{t_n})|$ converges to  $\sup_{t\in U} |\sigma_{k,m}(h_{t})|$.  By taking a subsequence we may
assume that the sequence $t_n \to t^* \in \partial U$ (the same argument works for the
infimum for  $\sigma_{k,m}(h_{t})$ does not vanish in $U$).  Since
 for all the parameters in $U$, the boundary of the Siegel disk centered at the origin passes through the critical point $1$, by the second assertion of Lemma~\ref{plmd}, the critical points of $h_{t}$ are uniformly bounded away from the origin for all the parameters in $U$. Thus $Z \cap \partial U = \emptyset$.   There are two  subcases.

Subcase I. $t^* \in P$.   By Lemma~\ref{compact-class}
there is some  $g \in \mathcal{Q}_\alpha^j$ with $2 \le j < d$ such that
 \begin{equation}\label{es1}
  |\sigma_{k,m}(f)|  =  |\sigma_{k,m}(h_{t_0})|  \le \lim_{n\to \infty} |\sigma_{k,m}(h_{t_n})|  =   |\sigma_{k,m}(g)|  \le \lambda_0(\delta).
 \end{equation}
the last inequality comes from our induction assumption.

Subcase II. $t^* \in S \setminus (Z \cup P)$.   Then
  $h_{t}$ is not $J$-stable at $t^*$. Thus
 by   Theorem 4.2 of \cite{McM1},
one can take a $\hat{t} \in S \setminus (Z \cup P)$, which can be arbitrarily close to
$t^*$, such that $h_{\hat{t}}$ has at least one
more periodic attracting cycle  than $h_{t^*}$.  We can choose such $\hat{t}$ so that the new periodic cycles have non-zero multipliers.    Let $D_{t^*}$ and $D_{\hat{t}}$
denote respectively the Siegel disks of $h_{t^*}$ and $h_{\hat{t}}$ centered at the origin. Note that it is possible that
$1 \notin \partial D_{\hat{t}}$.  In this case, $\partial D_{\hat{t}}$  contains some other critical point $c$.

 Since $1 \in \partial D_{t^*}$ we have ${\rm diam}(D_{t^*}) \ge 1$. Since the boundary of the Siegel disk moves continuously by Lemma~\ref{plmd},
by taking $\hat{t}$ close enough to $t^*$ we can make sure that ${\rm diam}(D_{\hat{t}}) > 1$. Now from the second assertion of Lemma~\ref{plmd} we have some $L > 1$ depending only on $d$ such that
\begin{equation}\label{im-e}
L \ge  {\rm diam}(D_{\hat{t}}) \ge |c| \ge  {\rm diam}(D_{\hat{t}})/L > 1/L.
\end{equation}

By taking $\hat{t}$ close enough to $t^{*}$,
  we can make the  $c$
 arbitrarily  close to $\partial D_{t^*}$, and thus by taking an appropriate integer $p \ge 0$,  we can make   $h_{\hat{t}}^{p}(c)$ arbitrarily close to $1$.   So for the given $\epsilon > 0$, by taking $\hat{t}$ close enough to $t^{*}$,
 and letting $k' = k + p$ and $m' = m + p$, we have
$$
|\sigma_{k,m}(h_{{t^*}})| = |h_{t^*}^{k}(1)-h_{t^*}^{m}(1)| < |h_{\hat{t}}^{k'}(c) - h_{\hat{t}}^{m'}(c)| + \epsilon.
$$
Let
$$
g(z) = c^{-1} h_{\hat{t}}(cz).
$$
Then $g \in \mathcal{Q}_\alpha^d$.  Since $|c| \le L$ by (\ref{im-e}), we have
$$
|h_{\hat{t}}^{k'}(c) - h_{\hat{t}}^{m'}(c)|  \le L \cdot |\sigma_{k',m'}(g)|.
$$ This means that  $$|\sigma_{k,m}(h_{{t^*}})| <  L \cdot |\sigma_{k',m'}(g)| + \epsilon.$$
We finally get
\begin{equation}\label{es2}
|\sigma_{k,m}(f)| \le \lim_{n\to \infty} |\sigma_{k,m}(h_{t_n})| =  |\sigma_{k,m}(h_{{t^*}})| < L \cdot |\sigma_{k',m'}(g)| + \epsilon.
\end{equation}

\begin{rmk}\label{min-pri}{\rm  To get the lower bound of $|\sigma_{k,m}(f)|$, besides applying the minimal principle instead of the maximal principle to $\sigma_{k,m}(h_t)$ in $U$,
we need the   inequality $|c| > 1/L$ implied by (\ref{im-e}). In this case, (\ref{es2}) becomes $|\sigma_{k,m}(f)| > |\sigma_{k',m'}(g)|/L - \epsilon$. }
\end{rmk}

Note that $k' - m' = k - m$ and thus $\delta' < |e^{2 k'\pi i \alpha} - e^{2 m'\pi i \alpha}| = |e^{2 k\pi i \alpha} - e^{2 m\pi i \alpha}| < \delta$.
We can thus  repeat the above process on $g$. Since each time the number of the attracting periodic cycles is increased by at least one and the number of periodic attracting cycles is not more than $d-2$ (one of the critical point is contained in the boundary of the Siegel disk),  we have the following   two possibilities.  The first possibility is that we  get an inequality like (\ref{es1})  after not more than $d-2$ steps,   and therefore,
 \begin{equation}\label{choice-1}
 |\sigma_{k,m}(f)| \le L\cdot (L\cdot(\cdots(L \cdot  \lambda_0 (\delta) +  \epsilon) +\cdots)+ \epsilon )+ \epsilon,
 \end{equation} where the number of the recursive  steps is not greater than $d-2$. The second possibility is that  we finally get a map in $\Sigma_\alpha^d$ which has $d-2$ attracting periodic cycles in $\Bbb C$, each of which
attracts a finite critical point, and  a pair of integers $k' > m' \ge 0$ with $k' - m' = k- m$,  such that
\begin{equation}\label{choice-2}
|\sigma_{k,m}(f)| \le L\cdot (L\cdot(\cdots(L \cdot  |\sigma_{k',m'}(g)| +  \epsilon) +\cdots)+ \epsilon )+ \epsilon,
\end{equation} where the number of the recursive  steps is not greater than $d-2$.

If the first possibility occurs, that is,  we get (\ref{choice-1}), then we skip  Step II and draw the conclusion. If the second possibility occurs,  we go to Step II.

$\bold{Step\:\:II}$.  Recall that $g$ has $d-2$ attracting cycles in $\Bbb C$, and exactly one critical point, $1$, on the boundary of the Siegel disk centered at the origin, and satisfies (\ref{choice-2}). We will repeat the following process by induction.

Suppose for some integer $0 \le l \le d-3$,
$g$ has $l+1$ attracting cycles and $d - l -2$ critical points on the boundary of the Siegel disk which satisfy orbit relations as given in Key Lemma 4.  By  Key Lemma 4, there is a compact
Riemann surface $S$ and two finite subsets $Z$ and $P$ of $S$, such that  $g$ can be embedded into the holomorphic family
$$
h_{t} = f_{c_1, \cdots, c_{d-3}, c_{d-2}, 1}, \:\:t \in S \setminus (Z \cup P),
$$ where  all $c_i$  are meromorphic functions in $S$
with  $Z$ and $P$ being  respectively the sets of zeros and the poles of
$c_i$, $1 \le i \le d-2$, and moreover,
\begin{itemize}
\item[1.] each $h_{t}$ has $l$ attracting
cycles with constant multipliers $t_1, \cdots, t_{l}$,
\item[2.] all the orbit relations among the critical points on the boundary of the Siegel disk,  are preserved, that is,
 $h_{t}^{k_i}(1) = c_i$, $1 \le i \le d-l-3$. \end{itemize}    Note that we start from
  $l= d-3$ and the point $1$ is the only critical point  on the boundary of the Siegel disk. So in the beginning  there are no orbit relations among critical points on the boundary of the Siegel disk.

As before,  $\sigma_{k',m'}(h_{t})$ is a holomorphic function defined in $S \setminus (Z \cup P)$.  Let  $\Sigma \subset S \setminus (Z \cup P)$ be the subset which contains all those points for which the boundary of the Siegel disk of $h_{t}$ contains  exactly those $d-l-2$  critical points, $c_1, \cdots, c_{d-l-3}$, $1$,  which satisfy the orbit relations.  We claim that
  $\Sigma$ is an open set. Let us prove the claim. Suppose $t_0 \in \Sigma$. Let $D_{t}$ denote the Siegel disk of $h_{t}$ centered at the origin.
   Then all the critical points of $h_{t_0}$, which do not belong to $\partial D_{t_0}$,  have a positive distance from $\partial D_{t_0}$.
   By Lemma~\ref{continuous-moving},
   in a small open neighborhood of $t_0$, all these critical points are still bounded away from $\partial D_{t_0}$.  It suffices to show that, in a small open neighborhood of $t_0$,  the critical points, which satisfy the orbit relations, still belong to $\partial D_{t}$.    Let us prove this by contradiction.
  Note that except $1$,  there are exactly  $d- l -3$ critical points, $c_1, \cdots, c_{d-3}$ on the boundary of the Siegel disk, and $d-l-3$ integers, $1\le k_1 < \cdots <  k_{d-l-3}$ such that $h_t^{k_i}(1) = c_i$, and moreover, there are $l$ attracting cycles with multipliers $t_1, \cdots, t_{l}$.  If the claim were not true, then there would be a sequence $t^n \to t_0$ such that  for $h_{t^n}$, at least one of the $d-l-2$ critical points, $1$, $c_1^n, \cdots, c_{d-l-3}^n$, does not belong to $\partial D_{t^n}$. For the convenience, let us denote $c_0^n =1$.
   By taking a subsequence, we may assume that there is an
   $0 \le i \le d-l-3$ such that $c_i^n \notin \partial D_{t^n}$,  and  $c_j^n \in \partial D_{t^n}$, for all $ i+1 \le j \le d-l-3$.  Consider the map
   $$
   F_{t} = h_{t}^{k_{i+1} - k_i}.
   $$
The map $F_{t_0}$ maps $c_i$ to $c_{i+1}$ and the local degree of $F_{t_0}$ at $c_{i}$ is equal to that of $h_{t_0}$ at $c_{i}$, which is two.
This means there is a pair of Jordan domains $A$ and $B$ containing $c_i$ and $c_{i+1}$ respectively such that
$F_{t_0}: A \to B$ is a branched covering map of degree two.  Then for all $t^n$ close to $t_0$ enough, there is a pair of Jordan domains $A_n$ and $B_n$  with $A_n \to A$ and $B_n \to B$  such that
$F_{t^n}: A_n \to B_n$  is a branched covering map of degree two.   But on the other hand, when $t^n$ is close to $t_0$,
$c_{i+1}^n$ is close to $c_{i+1}$ and thus belongs to $B_n$ and $c_{i}^n$ is close to $c_i$ and thus belongs to $A_n$. Let $z_n \in \partial D_{t^n}$ be the point  such that $F_{t^n}(z_n) = c_{i+1}^n$. As $t^n \to t_0$,  by Lemma~\ref{continuous-moving} we  have $\partial D_{t^n} \to \partial D_{t_0}$ and thus  $z_n \to c_i$. Thus as $n$ is large enough, $z_n$ belongs to $A_n$ also. This implies that the preimage of $c_{i+1}^n$ in $A_n$   under the map $F_{t^n}$, counting by multiplicities, is at least three. This is a contradiction. Thus $\Sigma$ is an open set and the claim has been proved.

Now let $t_0 \in \Sigma$ such that $g = h_{t_0}$. Let $U$ denote the component of $\Sigma$ which contains the point $t_0$.  Since for all $t \in U$,
the point $1$ belongs to the boundary of the Siegel disk on which $h_{t}$ is qc-conjugate to the irrational rotation $R_{\alpha}$, $\sigma_{k',m'}(h_{t})$ does not vanish in $U$. So both the maximal and minimal principles  apply to $\sigma_{k',m'}(h_{t})$ in $U$.  Take a sequence $t^n$ in $U$  such that
$\sigma_{k',m'}(h_{t^n})$ converges to its supremum (The same argument works for the infimum).  By taking a subsequence, we may assume that $t^n$ converges to a point $t^* \in \partial U$.

Since for all $t\in U$,  the boundary of the Siegel disk of $h_{t}$ centered at the origin passes through the critical point $1$,  by the second assertion of  Lemma~\ref{plmd}, all critical points of $h_{t}$ are uniformly bounded away from the origin. This implies that $Z \cap \partial U = \emptyset$. We thus have the following two subcases.

Subcase I. $t^* \in P$.
  By Lemma~\ref{compact-class}
there is some  $h \in \mathcal{Q}_\alpha^j$ with $2 \le j < d$  such that
 \begin{equation}\label{qch1}
  |\sigma_{k',m'}({g})| \le \lim_{n\to \infty} |\sigma_{k',m'}(h_{t^n})|  =  | \sigma_{k',m'}(h)|  \le \lambda_0(\delta).
 \end{equation}

Subcase II.  $t^* \in S \setminus (Z \cup P)$.  We thus have
\begin{equation}\label{nbry}
|\sigma_{k',m'}({g})| \le |\sigma_{k',m'}(h_{t^{*}})|.
\end{equation}
We claim that $h_{t^*}$ has exactly one more critical point on the boundary of the Siegel disk, that is, there are $d - l -1$ critical points on $\partial D_{t^*}$.    To see this, first note that  all the $d-l-2$
critical points  $c_{1}^n, \cdots, c_{d-l-3}^n$, $1$ belong to $\partial D_{t^n}$ for all $n$. Since $t^n \to t^*$ and the boundary of the Siegel disk moves continuously by Lemma~\ref{continuous-moving}, it follows that
all the $d-l-2$ critical points $c_{1}^*, \cdots, c_{d-l-3}^*$, $1$ must belong to $\partial D_{t^*}$  also. If there is no more critical point on $\partial D_{t^*}$, then $t^* \in \Sigma$ by the claim we proved above. Because $\Sigma$ is open, there exists an open disk neighborhood of $t^*$, say $V \subset \Sigma$.   This contradicts the fact that $t^* \in \partial U$  and that $U$ is a connected component of $\Sigma$.  So there must be at least one more critical point in $\partial D_{t^*}$. Since $h_{t^*}$ has $l$ attracting cycles which attract at least $l$ critical points, it follows that $\partial D_{t^*}$ contains exactly one more critical point. The claim has been proved.

Now let us summarize the above two subcases in Step II.   If the Subcase I happens,  the step II will be ended.   If the Subcase II happens, we will have a polynomial map, say $\hat{g} \in \Sigma_\alpha^d$, having one more critical point on the boundary of the Siegel disk and $l$ attracting cycles each of which attracts exactly one of the other critical points, and moreover, $\hat{g}$ satisfies (\ref{nbry}), that is,
\begin{equation}\label{ess-in}
|\sigma_{k',m'}({g})|  \le  |\sigma_{k',m'}(\hat{g})|.
\end{equation}
Now applying   Key Lemma 2 to  $\hat{g}$, we  get   $\tilde{g}\in \Sigma_{\alpha}^d$, which can be  arbitrarily close to
 $\hat{g}$  such that all the critical points of $\tilde{g}$, which belong  to the boundary of the Siegel disk, satisfy  orbit relations,  that is, $\tilde{g}^{k_i}(1) = \tilde{c}_i$ for $1 \le i \le d-l-2$; and moreover, $\tilde{g}$ has $l$ attracting cycles with the same multipliers as those of $\hat{g}$.
Because $\tilde{g}$ can be arbitrarily close to $\hat{g}$, we may assume that $|\sigma_{k',m'}(\hat{g})| < |\sigma_{k',m'}(\tilde{g})| + \epsilon$, and thus by (\ref{ess-in}) we have
\begin{equation}\label{k2}
|\sigma_{k',m'}(g)| < |\sigma_{k',m'}(\tilde{g})| + \epsilon.
\end{equation}

Note that $\tilde{g}$ has $l$ attracting cycles and $d - l-1$ critical points, including the critical point $1$, on the boundary of the Siegel disk.
Now we  repeat the above process for the polynomial map $\tilde{g}$ from the beginning of the Step II. Since each time  the number of the critical points on the boundary of the Siegel disk is increased by one,
after at most $d-2$ steps, we can either have the Subcase I and get an inequality like (\ref{qch1}),  and  therefore by (\ref{k2})  and the induction hypothesis  we have
  \begin{equation}\label{choice-f1}
 |\sigma_{k',m'}(g)| <  |\lambda_0 (\delta)| + (d-2) \epsilon,
 \end{equation}
or after $(d-2)$ steps we finally get a
 polynomial map $h \in \Pi_\alpha^d$ such that
 \begin{equation}\label{choice-f2}
|\sigma_{k',m'}(g)| <   |\sigma_{k',m'}(h)| +  (d-2) \epsilon \le   \lambda_1(\delta)  +  (d-2) \epsilon.
\end{equation}
Note that we use the fact that  $k' - m' = k-m$  and therfore  $$|e^{2 \pi ik'\alpha} - e^{2 \pi im'\alpha}| = |e^{2 \pi ik \alpha} - e^{2 \pi im\alpha}| < \delta$$ and thus
$|\sigma_{k',m'}(h)| \le \lambda_1(\delta)$ by   Key Lemma 1.

From (\ref{perturbation-k3}), (\ref{choice-1}), (\ref{choice-2}), (\ref{choice-f1})  and (\ref{choice-f2}) we have in all the cases
\begin{equation}\label{choice-im}
|\sigma_{k,m}(f)| \le
 L\cdot (L\cdot(\cdots(L \cdot  \max\{\lambda_0(\delta),  \lambda_1(\delta)\} +  \epsilon) +\cdots)+ \epsilon )+\epsilon,
\end{equation} where the number of the recursive steps is $2d$ times. Since $\epsilon > 0$ is arbitrary, by letting $\epsilon \to 0$, we have
$$
|\sigma_{k,m}(f)| \le L^{2d} \cdot \max\{\lambda_0(\delta), \lambda_1(\delta)\}.
$$

Using the same argument and replacing the maximal principle by the minimal principle (cf.  Remark~\ref{min-pri}),  we get
$$
|\sigma_{k,m}(f)| \ge L^{-2d} \cdot \min\{\eta_0(\delta), \eta_1(\delta)\}.
$$

Now define $\lambda, \eta:(0, 2] \to (0, +\infty)$ by setting
$$
\lambda(x) = L^{2d} \cdot \max\{\lambda_0(x), \lambda_1(x)\}  \:\:\hbox{  and  }\:\: \eta(x) =  L^{-2d} \cdot \min\{\eta_0(x), \eta_1(x)\}.
$$
The  Reduced Main Theorem follows.

\section{Proof of the Main Theorem}

Let $F: \Bbb C \to \Bbb C$ be a continuous map.  For any pair of
integers $k
> m \ge 0$, let
\begin{equation}\label{oscillation}
\sigma_{k,m}(F) = F^k(1) - F^m(1).
\end{equation}
Suppose $\lambda, \eta: (0, 2] \to \Bbb R^+$ are  a pair of positive
functions  such that
\begin{equation}\label{oscillation-control}
\lim_{\delta \to 0_+} \lambda (\delta) = \lim_{\delta \to 0_+} \eta
(\delta)  =  0.
\end{equation}

\begin{lem}\label{fundamental principle} Let $0< \theta < 1$ be an irrational number.
Suppose for any pair of integers $k > m \ge 0$ and any pair of
positive numbers $0< \delta' < \delta$  satisfying
$$
\delta' <  |e^{2k\pi i \theta} - e^{ 2m\pi i\theta}| <  \delta,
$$ we have
$$
\eta(\delta') \le |\sigma_{k,m}(F)| \le \lambda(\delta).
$$
Then
$$
\Gamma = \overline{\{F^k(1)\}_{k=0}^\infty}
$$  is a Jordan curve. Moreover, $F: \Gamma \to \Gamma$ is a
topological circle homeomorphism with rotation number $\theta$.
\end{lem}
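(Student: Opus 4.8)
The plan is to observe that the two inequalities in the hypothesis say precisely that the assignment $e^{2\pi i k\theta}\mapsto F^{k}(1)$ is a bi-uniformly continuous bijection from a dense subset of the unit circle onto the orbit $\{F^{k}(1)\}_{k\ge 0}$, and then to extend it by density to a homeomorphism of $\mathbb{T}$ onto $\Gamma$ conjugating $F|_{\Gamma}$ to the rigid rotation $R_{\theta}\colon z\mapsto e^{2\pi i\theta}z$. Concretely, I would set $A=\{e^{2\pi i k\theta}:k\ge 0\}$; since $\theta$ is irrational, $k\mapsto e^{2\pi i k\theta}$ is injective on $\mathbb{Z}_{\ge 0}$ and $A$ is dense in $\mathbb{T}$, so $\psi\colon A\to\mathbb{C}$, $\psi(e^{2\pi i k\theta})=F^{k}(1)$, is well defined. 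The upper estimate gives a modulus of continuity for $\psi$: given $\varepsilon>0$, choose $\delta$ with $\lambda(\delta)<\varepsilon$; then for $a,b\in A$ with $0<|a-b|<\delta$, applying the hypothesis with $\delta'=|a-b|/2$ yields $|\psi(a)-\psi(b)|\le\lambda(\delta)<\varepsilon$. The lower estimate shows that $\psi$ is injective with uniformly continuous inverse on $\psi(A)$: if $|a-b|\ge\varepsilon$, applying the hypothesis with $\delta'=\varepsilon/2$ yields $|\psi(a)-\psi(b)|\ge\eta(\varepsilon/2)>0$.

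Next, since $\mathbb{C}$ is complete and $\Gamma=\overline{\psi(A)}$, the space $\Gamma$ is the metric completion of $(\psi(A),|\cdot|)$, while $\mathbb{T}$ is the completion of $A$; a uniformly continuous bijection between dense subsets of two metric spaces whose inverse is also uniformly continuous extends uniquely to a homeomorphism of the completions. Hence $\psi$ extends to a homeomorphism $\phi\colon\mathbb{T}\to\Gamma$, so $\Gamma$ is a Jordan curve. Moreover $F$ maps $\Gamma$ into itself: $F(\psi(A))=\{F^{k+1}(1):k\ge 0\}\subset\psi(A)$, so $F(\Gamma)=F(\overline{\psi(A)})\subset\overline{\psi(A)}=\Gamma$ by continuity. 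On the dense set $A$ one computes
$$
F\bigl(\phi(e^{2\pi i k\theta})\bigr)=F^{k+1}(1)=\phi\bigl(e^{2\pi i(k+1)\theta}\bigr)=\phi\bigl(R_{\theta}(e^{2\pi i k\theta})\bigr),
$$
so $F\circ\phi=\phi\circ R_{\theta}$ on $A$, hence on all of $\mathbb{T}$ by continuity. Therefore $\phi^{-1}\circ(F|_{\Gamma})\circ\phi=R_{\theta}$, so $F|_{\Gamma}$ is a homeomorphism of $\Gamma$ onto itself, topologically conjugate via $\phi$ to the rigid rotation by $\theta$; with the orientation carried over from $\mathbb{T}$ by $\phi$, it is an orientation-preserving circle homeomorphism with rotation number $\theta$, which is the assertion of the lemma.

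I do not anticipate a genuine obstacle here, as this is essentially the standard extension-by-density construction. The one point that needs care is the bookkeeping around the two-parameter window $\delta'<|e^{2\pi i k\theta}-e^{2\pi i m\theta}|<\delta$ in the hypothesis: one has to verify that it really yields honest one-sided moduli of continuity for $\psi$ and for $\psi^{-1}$ (it does, and the precise admissible range of $\delta$ and $\delta'$ is immaterial because only the qualitative moduli enter), and that passing the identity $F\circ\phi=\phi\circ R_{\theta}$ from the dense set $A$ to all of $\mathbb{T}$ is legitimate, which holds since both sides are continuous on $\mathbb{T}$.
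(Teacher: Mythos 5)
Your proof is correct and takes essentially the same route as the paper's: define the map on the dense orbit $\{e^{2\pi i k\theta}\}$, extract a modulus of continuity from $\lambda$ and injectivity/inverse-continuity from $\eta$, extend by density to a homeomorphism $\mathbb{T}\to\Gamma$, and push the semiconjugacy $F\circ\phi=\phi\circ R_\theta$ from the dense set to all of $\mathbb{T}$. The only cosmetic difference is that you package the extension step as an appeal to the general fact about bi-uniformly continuous bijections between dense subsets of (complete, here compact) metric spaces, whereas the paper first extends $\phi$ by uniform continuity and then verifies injectivity of the extension by a direct contradiction argument using $\eta$; the two are interchangeable.
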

\begin{proof}
 Let $\Bbb T$ denote the unit circle. Then $X = \{e^{2k \pi i \theta}\}_{k\ge 0}$ is a dense subset of $\Bbb T$.
 Define a map $\phi: X \to \Bbb C$ by setting $\phi(e^{2 k \pi i\theta}) = F^k(1)$ for every $k \ge 0$. Then
 $\phi: X \to \Bbb C$ is uniformly continuous by assumption. Thus
 $\phi$ can be uniquely extended to a continuous map from $\Bbb T$
 to $\Bbb C$. Let us still denote the map by $\phi$. We claim that
 $\phi$ is injective.

Let us prove it by contradiction. Assume that $\phi(x) = \phi(y)$
for some $x \ne y \in \Bbb T$. Let $\delta' = \frac{1}{2}|x - y|$.
Since $X$ is dense in $\Bbb T$ and $\phi: \Bbb T \to \Bbb C$ is
continuous,  we have two integers $k
> m \ge 0$ such that \begin{itemize}\item[(1)] $|e^{2k \pi i\theta} - x| < |x -
y|/4$, \item[(2)] $|e^{2m \pi i\theta} - y| < |x - y|/4$, \item[(3)]
$|\phi(e^{2 k \pi i \theta}) - \phi(x)| < \eta(\delta')/2$,
\item[(4)] $|\phi(e^{2 m \pi i \theta}) - \phi(y)| < \eta(\delta')/2$.
\end{itemize}  From (1) and (2) we have $|e^{2k\pi i\theta} -
e^{2m \pi i\theta}| > |x - y|/2 = \delta'$.  From the assumption, we
have $|\phi(e^{2k \pi i \theta}) - \phi(e^{2m \pi i \theta})| >
\eta(\delta')$. But from (3), (4) and $\phi(x) = \phi(y)$,  we have
$|\phi(e^{2k \pi i \theta}) - \phi(e^{2m \pi i \theta})| <
\eta(\delta')$. This is a contradiction. This implies that $\phi:
\Bbb T \to \Bbb C$ is injective.  Thus
$$\overline{\{F^k(1)\}_{k=0}^\infty} = \overline{\phi(X)}= \phi(\Bbb
T)$$ is a Jordan curve. This proves the first assertion.

To prove the second assertion, note that
$$
F \circ \phi  = \phi \circ R_{\theta}
$$
holds on $X$. Since $X$ is dense on $\Bbb T$   and  $F \circ \phi,\: \phi \circ R_\theta: \Bbb T \to \Bbb C$ are both continuous, the above equation holds on $\Bbb T$.  Since $\phi$ is injective on $\Bbb T$,  it
follows that
$$
\phi^{-1} \circ F \circ \phi   = R_{\theta}
$$
holds on $\Bbb T$.  This proves the second assertion.

\end{proof}

Let us now prove the Main Theorem. Let $Q_N$ and $Q$ be the Siegel polynomial maps in (\ref{furc}).  Let $D_N$ and $D$
 be respectively the Siegel disks of $Q_N$ and $Q$ centered at the origin.  Let $\lambda, \eta: (0, 2] \to \Bbb R^+$ be the pair of positive functions in the Reduced Main Theorem. Since $Q_N $ converges to $Q$  uniformly in any compact set of the plane, it follows for any pair of integers
$k > m \ge 0$, $\sigma_{k,m}(Q_N)$ converges to $\sigma_{k,m}(Q)$.  Since $\theta_N \to \theta$, thus for any  pair of integers $k > m \ge 0$, if
$$
\delta' <  |e^{2k\pi i \theta} - e^{ 2m\pi i\theta}| <  \delta
$$  for some $0< \delta' < \delta \le 2$, then
$$
\delta' <  |e^{2k\pi i \theta_N} - e^{ 2m\pi i\theta_N}| <  \delta
$$ for all $N$ large enough.  By the Reduced Main Theorem, we thus have \begin{equation}\label{osi-1}\eta(\delta') \le |\sigma_{k,m}(Q_N)| \le \lambda(\delta)\end{equation} for all $N$ large enough.
Since $\sigma_{k,m}(Q_N)$ converges to $\sigma_{k,m}(Q)$ as $N \to \infty$, we have
\begin{equation}\label{osi-2}
\eta(\delta') \le |\sigma_{k,m}(Q)| \le \lambda(\delta).
\end{equation}
By the first assertion of  Lemma~\ref{fundamental principle}, $\Gamma = \overline{\{Q^k(1)\}_{k\ge 0}}$ is a Jordan curve which contains the critical point $1$. By the second assertion of Lemma~\ref{fundamental principle}, $Q:
\Gamma \to \Gamma$ is topologically conjugate to the rigid rotation $R_\theta$. This implies that $\Gamma$ bounds a Siegel disk of rotation number $\theta$. It remains to show $\Gamma = \partial D$,  i.e., the Siegel disk bounded by $\Gamma$ is the one which is centered at  origin. It suffices to prove that the origin is contained in the interior of $\Gamma$. The proof is as follows.

First  note that $Q(0) = 0$  and   $0 \notin \Gamma$.  Let  $\epsilon > 0$ such that $|z| > \epsilon$ for all $z \in \Gamma$. Now for a large integer  $L$ we parameterize $\partial D_N$ and $\Gamma$ as  $\phi_N: \Bbb T \to \partial D_N$ and $\phi: \Bbb T \to \Gamma$ respectively such that
 $\phi_N(e^{2 k \pi i \theta_N}) = Q_N^k(1)$ and $\phi(e^{2 k \pi i \theta}) = Q^k(1)$ for $0 \le k \le L$.
Since $\partial D_N = \overline{\{Q_N^k(1)\}_{k\ge 0}}$ and $\Gamma = \overline{\{Q^k(1)\}_{k\ge 0}}$,    from   (\ref{osi-1}), (\ref{osi-2}),  $Q_N \to Q$  and $\lim \lambda(\delta) \to 0$ as $\delta \to 0+$, by taking $L$ large enough we can make sure that
\begin{equation}\label{control-osi}
|\phi_N(t) - \phi(t)| < \frac{\epsilon}{2}, \: \forall t \in \Bbb T
\end{equation}
holds for all $N$ large enough.  This implies that the homotopy $$H_s(t) = s \phi_N(t) + (1- s)\phi(t), 0 \le s \le 1$$ between $\phi_N$ and $\phi$ does not cross the origin.   It follows that the winding number of $\Gamma$ around the origin is equal to the winding number of $\partial D_N$ around the origin, which is equal to $1$. Thus the origin belongs to the interior of $\Gamma$. This implies  $\Gamma = \partial D$. The  first assertion of the Main Theorem follows.

Now let  $\theta \in \Theta_C$ for some $C > 0$  and $d \ge 3$ be fixed. Let us prove the boundary of the Siegel disks depend continuously on the polynomial maps. To see this,  let $$P(z) = e^{2 \pi i \theta}(z) + a_1 z + \cdots + a_d z^d$$ and $$P_N(z) = e^{2 \pi i \theta} z + a_1^N z + \cdots a_d^N z^d$$ such that for every $1 \le i \le d$,
$a_i^N \to a_i$ as $N \to \infty$.
Let $D_N$ and $D$ denote respectively the Siegel disks of $P_N$ and $P$ which are centered at origin.   It suffices to prove that $\partial D_N$ and $D$ can be parameterized as $\phi_N: \Bbb T \to \partial D_N$ and $\phi: \Bbb T \to \partial D$ respectively such that
$\phi_N(t)$ converges to $\phi(t)$ uniformly in $t\in \Bbb T$. Note that all the critical sets of $P_N$ are contained in a neighborhood of that of $P$ and is thus bounded. Suppose $\partial D_N$ contains a critical point $c_N$ of $P_N$.  By  taking a subsequence we may assume that $c_N \to c$ where $c$ is a critical point of $P$. Now consider polynomial maps $Q_N$ and $Q$ defined by
 $$
 Q_N(z) = c_N^{-1}P_N(c_N z)  \hbox{  and  } Q(z) = c^{-1} P(cz).
 $$
 Then $Q_N \to Q$ as $N \to \infty$.  Since $c_N \to c \ne 0$,  we may assume that $P_N = Q_N$ and $P= Q$. Let $\eta, \lambda: [0, 2) \to \Bbb R^+$ be the two functions in the Reduced Main Theorem. Then (\ref{osi-1}) and  (\ref{osi-2}) still hold for $Q_N$ and $Q$ in this case. Now  for any $\epsilon > 0$, using the same argument as in the proof of (\ref{control-osi}) we can  parameterize $\partial D_N$ and $D$ respectively as
  $\phi_N: \Bbb T \to \partial D_N$ and $\phi: \Bbb T \to \partial D$ such that
$$
|\phi_N(t) - \phi(t)| < \frac{\epsilon}{2}, \: \forall t \in \Bbb T
$$  holds for all $N$ large enough.  This completes the proof of the Main Theorem.

\section{Proof of  Key Lemma 1}

\subsection{Uniform real bounds} In this subsection we introduce Herman-Swiatek's real bounds on critical circle mappings which will be essentially used in this work.  Our presentation follows \cite{P2}. For more details in this aspect,  the reader may refer to \cite{Ch2}, \cite{dFdM},\cite{He} and \cite{P2}.

Let us identify the unit circle with $\Bbb T = \Bbb R/\Bbb Z$ and give
$\Bbb T$ the induced orientation. Let $f: \Bbb T \to \Bbb T$ be a
homeomorphism. Let $(a, b, c, d)$ be a quadruple with $a < b < c < d
< a+1$. Define the cross-ratio
$$
[a, b, c, d] = \frac{b-a}{c - a}\frac{d-c}{d-b}
$$
and the cross-ratio distortion
$$
D(a, b, c, d, f) = \frac{[f(a), f(b), f(c), f(d)]}{[a, b, c, d]}.
$$

Let $d \ge 2$ be an integer. Let
\begin{equation}\label{e-H}
\mathcal{H}_d = \big{\{}g(z)= \lambda z^d \prod_{i=1}^{d-1} \frac{1
- \overline{a}_i z}{z - a_i}, 0< |a_i| < 1, g|_{\Bbb T} \hbox{  is a
homeomorphism}\}.
\end{equation}

\begin{lem}[Herman, \cite{He}, see also \cite{Ch2}] \label{HC1}There is a $0< C(d) < \infty$ depending only on $d$ such that
for any $g \in \mathcal{H}_d$, any
integer $m \ge 1$ and any finite family of quadruples $(a_i, b_i, c_i, d_i)$, $1 \le i \le n$,   if
$$\sup_{x\in \Bbb T} \#\{(a_i, d_i)\:|\: x \in (a_i, d_i)\} \le m,$$ then
$$
\prod_{i=1}^n D(a_i, b_i, c_i, d_i, f) < C(d)^m.
$$
\end{lem}

\begin{lem}[Uniform power law] \label{HC2}
There exist constants $\nu, C' > 1$ such that for any $g \in
\mathcal{H}_d$, if $c \in \Bbb T$ is a critical point of $g$, then
for any $x, y \in \Bbb T$ with $|x - c| \le |y - c|$, we have
$$
\bigg{|}\frac{g(x) - g(c)}{g(y) - g(c)}\bigg{|} \le C' \cdot
\bigg{|}\frac{x -c}{y-c}\bigg{|}^{\nu}.
$$
\end{lem}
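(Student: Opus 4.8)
The plan is a compactness-plus-rescaling argument; the only structural input beyond Lemma~\ref{HC1} is the observation that the homeomorphism requirement in the definition of $\mathcal H_d$ forces the poles $a_i$ to stay uniformly inside $\Delta$.

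\textit{Step 1 (poles stay off $\Bbb T$, hence precompactness).} For $g\in\mathcal H_d$, on $\Bbb T$ the factor $\frac{1-\overline a_i z}{z-a_i}$ agrees with $\overline{B_{a_i}}$, where $B_{a_i}(z)=\frac{z-a_i}{1-\overline a_i z}$, so the derivative of the lift of $g|_{\Bbb T}$ equals, up to a positive constant, $d-\sum_{i=1}^{d-1}P_{a_i}$, where $P_a(z)=\frac{1-|a|^2}{|z-a|^2}$. Since $g|_{\Bbb T}$ is a homeomorphism of degree $d-(d-1)=1$ this is nonnegative on $\Bbb T$, and evaluating at $z=a_i/|a_i|$ gives $\frac{1+|a_i|}{1-|a_i|}\le d$, i.e. $|a_i|\le\frac{d-1}{d+1}$. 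Hence on the fixed annulus $A=\{\frac{d}{d+1}<|z|<\frac{d+2}{d+1}\}$ every $g\in\mathcal H_d$ is holomorphic and satisfies $m(d)\le|g|\le M(d)$ for positive constants depending only on $d$; thus $\{g|_A:g\in\mathcal H_d\}$ is a normal family, and every limit is holomorphic and non-vanishing on $A$, still of degree one on $\Bbb T$, hence restricts to a homeomorphism of $\Bbb T$ (a holomorphic function constant on a subarc of $\Bbb T$ would be constant, contradicting degree one). In particular the set of pairs $(g,c)$, with $g$ in this closure and $c\in\Bbb T$ a critical point of $g$, is compact.

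\textit{Step 2 (circle critical points have odd local order $\ge 3$).} Near such a $c$, write $g(z)-g(c)=(z-c)^k\psi(z)$ with $\psi$ holomorphic, $\psi(c)\ne0$. Parametrizing $\Bbb T$ by arclength through $c$ one gets $g(z)-g(c)=(\text{const})\,t^k\psi(c)(1+O(t))$, so injectivity of $g|_{\Bbb T}$ near $c$ forces $t\mapsto t^k$ to be monotone, i.e. $k$ is odd; and $k\ge2$ by criticality, so $k\ge3$. (Also $k$ is bounded in terms of $d$, being at most the order of $g'$.)

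\textit{Step 3 (the power law).} It suffices to prove the local version: for some $\rho_0(d)>0$, $C_0(d)>1$ the estimate (with $\nu=2$) holds whenever $|x-c|\le|y-c|\le\rho_0(d)$. The general case then follows routinely by distinguishing the cases $|y-c|\le\rho_0$, $|x-c|\le\rho_0<|y-c|$, and $|x-c|>\rho_0$, using that $|g(y)-g(c)|$ is bounded below whenever $|y-c|$ is, uniformly over the compact family of Step~1. Suppose the local version fails: there are $g_n\in\mathcal H_d$, critical points $c_n\in\Bbb T$, and $x_n,y_n\in\Bbb T$ with $t_n:=|x_n-c_n|\le s_n:=|y_n-c_n|\to0$ and $|g_n(x_n)-g_n(c_n)|/|g_n(y_n)-g_n(c_n)|>n(t_n/s_n)^2$; since the left side is $\le1$, also $t_n/s_n\to0$. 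Passing to a subsequence, $g_n\to g$ uniformly on $A$ (with $g$ as in Step~1), $c_n\to c$, a critical point of $g$ of odd order $\ge3$, and the data stabilize. Rescale $g_n$ about $c_n$ at an appropriate scale comparable to $s_n$ (or $t_n$), normalized by the maximum modulus on a circle of that radius; since $g_n$ has boundedly many zeros and (Step~1) its poles lie at distance $\ge\frac{2}{d+1}$ from $\Bbb T$, hence escape to infinity under the rescaling, the rescaled maps form a normal family and a subsequence converges to a \emph{non-constant} polynomial $P$ with $P(0)=0$ whose restriction to the limiting tangent line is monotone with a critical point of odd order $\ge3$ at $0$. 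In each of the limiting regimes this yields $|g_n(x_n)-g_n(c_n)|/|g_n(y_n)-g_n(c_n)|\asymp(t_n/s_n)^{k'}$ for some $k'\ge3$ with a bounded implied constant, which contradicts the lower bound $n(t_n/s_n)^2$ because $t_n/s_n\to0$ and $k'-2\ge1$.

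\textit{Main obstacle.} The subtle point is producing a \emph{non-constant} rescaled limit — equivalently, choosing the correct rescaling scale. The naive normal form $g_n(z)-g_n(c_n)=(z-c_n)^{k_n}\psi_n(z)$ need not converge with $\psi_n(c_n)$ bounded away from $0$, because other preimages of the critical value $g_n(c_n)$ — which occur in pairs symmetric about $\Bbb T$ under $z\mapsto1/\overline z$, just off $\Bbb T$ — can cluster at $c_n$. One must peel these off by a dyadic decomposition about $c_n$, rescale at the scale where the ``effective local degree'' first exceeds $k_n$, and check that it is still odd and $\ge3$; this last fact is exactly what makes the uniform exponent $\nu=2$ work. (Herman's bound $D(a,b,c,d,g)<C(d)$ for all quadruples — Lemma~\ref{HC1} with $m=1$ — can be used to compare $g$ with the power-map model directly and so shorten parts of Step~3, but the essential difficulty remains the uniform near-critical geometry.)
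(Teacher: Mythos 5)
Your Steps 1 and 2 are fine; Step 1 is in fact a cleaner, self-contained derivation (via the Poisson-kernel formula for the derivative of the lifted circle map) of the uniform pole bound that the paper simply imports from Herman's unpublished compactness lemma. The approach in Step 3, however, is genuinely different from the paper's and, as written, has a gap at exactly the point you flag as the ``main obstacle'' --- but the obstacle you identify is not the real one. Producing a \emph{non-constant} rescaled limit is automatic once you normalize by the sup on $\{|w|=1\}$: the rescaled $\hat g_n$ are then not identically zero, locally bounded (poles escape), and vanish at $0$, so any subsequential limit $P$ is a non-constant entire function (indeed a polynomial by the zero count). No ``dyadic decomposition'' or ``effective local degree'' is needed for that. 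The actual gap is the assertion that the ratio is $\asymp (t_n/s_n)^{k'}$ with a bounded constant. Rescaling at scale $s_n$ gives locally uniform convergence $\hat g_n \to P$, which tells you $\hat g_n(y_n') \to P(y')$ with $|y'|=1$, but since $x_n' = (x_n - c_n)/s_n \to 0$ it only tells you $\hat g_n(x_n') \to P(0) = 0$ without any rate. Locally uniform convergence on compacta simply cannot control the rate at which $\hat g_n$ vanishes at a point tending to $0$; that information lives at smaller and smaller scales and is not captured by the fixed-scale limit $P$. So the claimed $\asymp$ is not justified, and the contradiction does not follow.

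The repair is not a multi-scale decomposition but a one-line application of the Schwarz lemma, which is the real content your sketch is circling around: since $c_n$ is a critical point of $g_n$ on $\Bbb T$ of odd local order $\ge 3$, the function $g_n - g_n(c_n)$ vanishes to order $\ge 3$ at $c_n$, hence so does $\hat g_n$ at $0$; moreover $\hat g_n$ is holomorphic on $\{|w|<1\}$ (the poles are at distance $\gg s_n$) and $|\hat g_n|\le 1$ there by the maximum principle (the sup on $\{|w|=1\}$ is $1$). The iterated Schwarz lemma then gives $|\hat g_n(w)|\le |w|^3$ for $|w|\le 1$, so $|\hat g_n(x_n')| \le (t_n/s_n)^3$ \emph{for each $n$, with constant $1$}; the compactness/monotonicity argument is only needed to bound $|\hat g_n(y_n')|$ from below, via $P(y')\neq 0$. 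With this in place your contradiction closes. For comparison, the paper avoids rescaling altogether: on the normal-family annulus around $\Bbb T$ it factors $g'(z)=\phi(z)(z-c)^l\prod_{\Lambda}(z-c_i)^{l_i}\prod_{\Theta}(z-c_i)^{l_i}$ with $m\le|\phi|\le M$, splits the nearby critical points into $\Theta$ (within $2|I|$ of $c$) and $\Lambda$ (at distance $\ge 2|I|$), selects a subinterval $I_1\subset I$ bounded away from $c$ and from $\Theta$, and compares $|g'(\zeta)|$ on $J$ with $|g'(\xi)|$ on $I_1$ factor by factor to get $\nu=3$ directly. Both routes are valid once the rate estimate is supplied; the paper's is shorter and avoids the normal-family limit, while yours isolates the Schwarz-lemma mechanism more explicitly.
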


\begin{proof}
Since $\mathcal{H}_d$ is compact, it suffices to prove the lemma in the case that $y$ and $x$ are both close to $c$.
 Let $I$  and $J $ denote respectively the smaller arc intervals which connect $y$ to $c$, and $x$ to $c$. Then $|J| \le  |I|$ by assumption. By Lemma 15 of \cite{He},   there exists an open neighborhood  of $\Bbb T$  depending only on $d$  on which
$\mathcal{H}_d$  is  a normal family.   Thus there exist  $0< m < M$ and an $0< r < 1$
which depend only on $d$ such that for any $g \in \mathcal{H}_d$
 there exists  an open neighborhood $U$ of $\Bbb T$ which contains $\{z\:|\: 1-r < |z|< 1 +r \}$ and a
 holomorphic function $\phi$ defined on $U$    such that
$$
g'(z) = \phi(z) \cdot  (z - c)^l \cdot \prod_{i\in \Lambda} (z -
c_i)^{l_i} \cdot \prod_{i\in \Theta} (z - c_i)^{l_i}
$$
where
\begin{itemize}
\item[1.] $m \le |\phi(z)| \le M$ for all $z \in U$,
\item[2.] $\{c\} \cup \{c_i, i \in \Lambda \cup \Theta\} = \{z \in U\:|\: g'(z) = 0\}$,

\item[3.]
${\rm dist}(c_i, c) \ge 2 |I|$ for all $i \in \Lambda$ and   ${\rm dist}(c_i, c) < 2 |I|$ for all $i \in \Theta$.
\end{itemize}

Note that  except $0$ and
$\infty$ $g$ has  $2(d-1)$ critical points, counting by multiplicities.  So
 there exist constants $0< \kappa(d), \eta(d) < 1$ depending only on $d$
and a sub-interval $I_1 \subset I$  such that
\begin{itemize}
\item[i.] $|I_1| > \eta(d) \cdot |I|$,
\item[ii.] ${\rm dist}(c, I_1) > \eta(d) \cdot |I|$,
\item[iii.] for any $i \in \Theta$, ${\rm dist}(c_i, I_1) > \kappa(d) \cdot |I|$.
\end{itemize}

For any $i \in \Theta$ and $z \in J$, it is clear that  ${\rm dist}(c_i, z) \le {\rm dist}(c_i, c) +  {\rm dist}(c, z)$.  Since
${\rm dist}(c_i, c) < 2|I|$ and ${\rm dist}(c, z) \le |J| \le |I|$, we have   \begin{equation}\label{tha}\sup_{z \in
J}{\rm dist}(c_i, z)  < 3|I|.\end{equation}

For any
$i \in \Lambda$ and $z \in J$,  it is clear that ${\rm dist}(c_i, z) \le  {\rm dist}(c_i, I_1) + {\rm dist}(z, I_1) + |I_1|$.  This, together with ${\rm dist}(z, I_1) \le {\rm dist}(z, c)+{\rm dist}(c, I_1) \le |J| + |I| \le 2|I|$  and $${\rm dist}(c_i, I_1) \ge {\rm dist}(c_i, c) - \max_{z\in I_1} {\rm dist}(c, z) > {\rm dist}(c_i, c) - |I| \ge |I|,$$  impliest that for $i \in \Lambda$,
\begin{equation}\label{lmd}
\sup_{z \in J}{\rm dist}(c_i, z) < {\rm dist}(c_i, I_1) + \sup_{z \in J}{\rm dist}(z, I_1) +|I_1|< {\rm dist}(c_i, I_1) + 3 |I| < 4\:{\rm dist}(c_i, I_1).
\end{equation}

Now from the intermediate value theorem we have $\xi \in I_1$ and
$\zeta \in J$ such that
$$
|g(I)| \ge |g(I_1)| = |g'(\xi)| \cdot |I_1| = |\phi(\xi)| \cdot
|\xi - c|^l \cdot \prod_{i\in \Lambda} |\xi - c_i|^{l_i} \cdot
\prod_{i\in \Theta} |\xi - c_i|^{l_i}\cdot |I_1|,
$$
and
$$
|g(J)|  = |g'(\zeta)| \cdot |J| = |\phi(\zeta)| \cdot  |\zeta - c|^l
\cdot \prod_{i\in \Lambda} |\zeta - c_i|^{l_i} \cdot \prod_{i\in
\Theta} |\zeta - c_i|^{l_i}\cdot |J|.
$$

Since $g$ is of degree of $2d-1$
we have  $\sum_{i \in \Lambda} l_i \le 2d-2$ and $\sum_{i\in \Theta} l_i \le 2d-2$.    This, together with (i-iii) and (\ref{tha}-\ref{lmd}), implies
$$
\frac{|g(J)|}{|g(I)|} \le \frac{M}{m} \cdot
\frac{|J|^{l+1}}{\eta(d)^{l+1} \cdot |I|^{l+1} }\cdot 4^{2 d -2} \cdot
\bigg{(}\frac{3}{\kappa(d)}\bigg{)}^{2d-2}.
$$
Since $2 \le l \le 2d-2$ the lemma follows by taking  $\nu = 3$
and
$$
C' = \frac{M \cdot 4^{4d-4}} {m \cdot \eta(d)^{2d-1} \cdot
\kappa(d)^{2d-2}}.
$$

\end{proof}

Let  $B \in \mathcal{H}_{d}$ such that the rotation number of $B|\Bbb T: \Bbb T \to \Bbb T$
is an irrational number and the point $1$ is a critical point of $B$.
It is necessary that the local degree of $B$ at $1$ is odd and not
less than $3$.  Let $p_n/q_n, n\ge 0$, denote the convergents of the rotation number.
 Let $x_i$ denote the point
in $\Bbb T$ such that $B^{i}(x_i) = 1$. Let
$$
I_n = [1, x_{q_n}] \hbox{   and   } I_{n+1} = [1, x_{q_{n+1}}].
$$
Let $I_{n}^i, i\ge 0$, denote the subintervals of $\Bbb T$ such that
$B^i(I_{n}^i) = I_n$. Then the collections of the intervals
\begin{equation}\label{dnp}
\Pi_n(B) = \{I_{n}^i, \:\:0 \le i \le q_{n+1} -1;\:\:\: I_{n+1}^i,\:\: 0 \le i \le q_n -1\}
\end{equation}
form a partition of $\Bbb T$. We call $\Pi_{n}(B)$ the $\emph{dynamical partition}$ of
level $n$.  For $K > 1$ and $I, J \subset \Bbb T$, we say $I$ and $J$ are $K$-$\emph{commensurable}$ if $|J|/K < |I| < K |J|$.

\begin{thm}[Herman-Swiatek's uniform real bounds]\label{real bounds}
There is a $1 < C(d) < \infty$ depending only on $d$ such that for
any $B \in \mathcal{H}_{d}$ with an irrational  rotation number  and a
critical point at $1$, we have
\begin{itemize}

\item[1.]  for any $x \in \Bbb T$ and all $n \ge 1$ the two intervals $[x, B^{q_n}(x)]$ and $[x, B^{-q_n}(x)]$ are
$C(d)$-commensurable,
\item[2.] for all $n \ge 1$,
any two adjacent intervals in the dynamical partition of  level $n$
are $C(d)$-commensurable.
\end{itemize}
\end{thm}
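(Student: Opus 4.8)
The plan is to run the classical Świątek--Herman argument for \emph{a priori} real bounds, using Lemma~\ref{HC1} (Herman's cross-ratio distortion bound) to control the ``diffeomorphic'' parts of the dynamics away from the critical point and Lemma~\ref{HC2} together with the compactness of $\mathcal H_d$ to control the single ``critical passage'', so that the final constant depends only on $d$ (through $C(d), C', \nu$ and that compactness). First, since $B|\Bbb T$ is an orientation-preserving circle homeomorphism with irrational rotation number, Poincar\'e's theorem provides a monotone semi-conjugacy to the rigid rotation; as the iterates of $1$ have distinct images under it, the cyclic order on $\Bbb T$ of the orbit $\{B^{i}(1)\}_{i\ge0}$ is exactly that of a rigid rotation with the same rotation number. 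Hence the dynamical partitions $\Pi_n(B)$ of (\ref{dnp}) obey the standard combinatorics: $\Pi_{n+1}(B)$ refines $\Pi_n(B)$; the $q_{n+1}$ intervals $B^{j}(I_n)$, $0\le j<q_{n+1}$, have pairwise disjoint interiors, as do the $q_n$ intervals $B^{j}(I_{n+1})$, $0\le j<q_n$; and in passing from $\Pi_{n-1}(B)$ to $\Pi_n(B)$, each long interval of $\Pi_{n-1}(B)$ subdivides into $a_{n+1}$ level-$n$ short intervals, which are $B^{q_n}$-iterates of one another, together with one level-$n$ long interval.

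Granting this, the theorem reduces to what one may call \emph{real bounds at the critical point}: there is $C_0(d)$ such that for every $n$ the two intervals of $\Pi_n(B)$ having $1$ as an endpoint, together with the $a_{n+1}$ level-$n$ short intervals subdividing the long interval of $\Pi_{n-1}(B)$ adjacent to $1$, are pairwise $C_0(d)$-commensurable. Indeed, granting this, an arbitrary element $T=B^{j}(I_n)$ of $\Pi_n(B)$ and its neighbour $T'=B^{j}(I_{n+1})\subset T$ are compared by choosing four points on $\Bbb T$ whose cross-ratio relates $I_{n+1}$ to a definite enlargement of $I_n$, this enlargement having forward orbit of length $j+1\le q_{n+1}$ still disjoint---the enlargement and the disjointness coming from the real bounds at the critical point one level down---so that Lemma~\ref{HC1} bounds the cross-ratio distortion of $B^{j}$ by $C(d)$ and hence $|T'|/|T|$ is comparable to $|I_{n+1}|/|I_n|$, which by the critical-point bounds is bounded away from $0$ and $1$; applying the same comparison to neighbours of the other combinatorial type yields assertion~2, and assertion~1 follows from assertion~2 by the standard closest-return combinatorics, since each of $[x,B^{q_n}(x)]$ and $[x,B^{-q_n}(x)]$ meets a bounded number of adjacent elements of $\Pi_{n-1}(B)$ and contains at least one of them.

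The core is the inductive proof of the real bounds at the critical point, by induction on $n$: assume them at level $n-1$ and deduce them at level $n$. Using the combinatorics above, the relevant return map to the long interval of $\Pi_{n-2}(B)$ (or $\Pi_{n-1}(B)$) adjacent to $1$ is an iterate of $B$---essentially $B^{q_{n-1}}$ or $B^{q_n}$---along whose orbit the critical point is passed exactly once; one factors it as $\psi_2\circ B\circ\psi_1$, where $\psi_1,\psi_2$ are iterates of $B$ whose orbit segments lie in disjoint collections, so that by Lemma~\ref{HC1} and a Koebe-type argument---the required definite Koebe space being furnished by the level-$(n-1)$ bounds---$\psi_1$ and $\psi_2$ have bounded distortion on the intervals at hand, while the single factor $B$ near $1$ has its length ratios controlled in both directions by the local behaviour of $B$ at its critical point, quantified by Lemma~\ref{HC2} and the compactness of $\mathcal H_d$. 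Pushing the two intervals adjacent to $1$ and the subdividing intervals $B^{jq_n}(I_n)$ through this decomposition and comparing with their level-$(n-1)$ counterparts via the induction hypothesis produces the level-$n$ bounds with a constant that depends on neither $n$ nor $B$.

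The main obstacle is exactly this inductive step: one must isolate the precise package of commensurabilities to propagate so that the induction closes without deterioration of the constant, correctly locate the unique critical passage inside the orbit of length $q_n$, and run the Koebe estimate for $\psi_1,\psi_2$ with a Koebe space that does not depend on $n$. The delicate sub-case is a large partial quotient $a_{n+1}$, where the long interval of $\Pi_{n-1}(B)$ adjacent to $1$ is cut into many level-$n$ short intervals $B^{jq_n}(I_n)$ with only the extreme one touching the critical point; there one uses that consecutive $B^{jq_n}(I_n)$ differ by a map with disjoint orbit, hence are $C(d)$-commensurable by Lemma~\ref{HC1}, while Lemma~\ref{HC2} pins down the one adjacent to $1$. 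Everything else---the Poincar\'e combinatorics, the refinement of the dynamical partitions, and the reductions of the second paragraph---is routine once these two distortion estimates are in hand.
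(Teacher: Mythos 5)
Your proposal takes a genuinely different route from the paper. The paper proves this theorem essentially by citation: it observes that the first assertion is Proposition~3.3 of Petersen's survey \cite{P2} and the second follows from Proposition~3.3, Theorem~3.5, and Corollary~3.6 there, and then simply verifies that the constants in \cite{P2} depend only on the cross-ratio distortion bound and the uniform power law stated as Hypothesis~1 in that reference, which Lemmas~\ref{HC1} and~\ref{HC2} supply with constants depending only on $d$. You, by contrast, reconstruct the \'Swi\c{a}tek--Herman a priori bounds argument from first principles: Poincar\'e combinatorics of the dynamical partitions, reduction to ``real bounds at the critical point,'' induction through the single critical passage via the factorization $\psi_2\circ B\circ\psi_1$ with a Koebe argument for the outer factors, and transport of the critical-point bounds to the whole circle via Lemma~\ref{HC1}. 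Both routes hinge on exactly the same two inputs (Lemmas~\ref{HC1} and~\ref{HC2}) and both correctly locate where the $d$-dependence enters; what the paper's approach buys is brevity and a clean separation of concerns (the real-bounds machinery is black-boxed), while your approach buys self-containedness and makes the mechanism of the bounds visible. Your reduction and inductive outline match the standard argument and I see no false step in the sketch, though the inductive step as written leaves the bookkeeping of the Koebe space and the exact package of commensurabilities to be propagated to the reader --- which is defensible for a sketch but is precisely the part that \cite{P2} carries out in detail, and why the paper delegates to it.
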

\begin{proof}
The first assertion is implied by  Proposition 3.3 of \cite{P2}. The second assertion is implied by Proposition 3.3,  and Theorem 3.5 of \cite{P2}. We only need to notice that the
constants  guaranteed by Proposition 3.3  and Theorem 3.5 of \cite{P2}   depend only on  the constants $C, C'$ and $\nu$ in the two
assumptions of  the Hypothesis 1 of \cite{P2}. By Lemmas~\ref{HC1} and
~\ref{HC2},   we can take these three constants depending only on $d$ so that the Hypothesis 1 of \cite{P2} is satisfied.
\end{proof}

\begin{rmk}\label{lr-1}{\rm Since each interval in $\Pi_{n+2}$ is a proper sub-interval of some interval in $\Pi_n$, from Theorem~\ref{real bounds}, it follows that there exist $\lambda(d) > 0$ and $0< \delta(d) < 1$ depending only on $d$ such that   for any $B \in \mathcal{H}_{d}$ with irrational  rotation number  and a critical point at $1$, we have $|I| < \lambda(d) \cdot \delta(d)^n$  for all $n \ge 1$ and all  intervals $I$ in $\Pi_n(B)$. Since any interval $[x, B^{q_n}(x)]$ is contained in the union of two adjacent intervals in $\Pi_{n-1}(B)$, by taking $\lambda(d) > 0$ larger, we may always have  $|[x, B^{q_n}(x)]| < \lambda(d) \cdot \delta(d)^n$.  }
\end{rmk}

\subsection{ The family $\mathcal{S}_d^\alpha$} Let $0< \alpha< 1$ be an irrational number and $d \ge 2$ be an integer. Let $\Gamma$ be a Jordan curve. Suppose
 $f: \Gamma \to \Gamma$ is a  homeomorphism which is conjugate to the rigid rotation $R_\alpha: \Bbb T \to \Bbb T$, that is, there exists a homeomorphism
 $\phi: \Gamma \to \Bbb T$  such that
$$
f =\phi^{-1} \circ  R_\alpha \circ \phi.
$$
For any two points $x, y \in \Gamma$ we define the $\emph{dynamical angle}$ between  $x$ and
$y$  to be the angle between $\phi(x)$ and $\phi(y)$ anticlockwise.

\begin{lem}\label{basic-m}
For any tuple $(\alpha_1, \cdots,
\alpha_{d-1})$ with $0\le \alpha_i < 2 \pi$ and $\sum_{i=1}^{d-1}
\alpha_i = 2 \pi$, there exists a $B \in \mathcal{H}_d$ such that \begin{itemize}\item[1.]
there are exactly $d-1$ critical points $c_1 =1, c_2, \cdots, c_{d-1}$ in $\Bbb T$, ordered anticlockwise and  counted by multiplicities, \item[2.]
   the dynamical angle from $c_i$ to $c_{i+1}$ anticlockwise  is $\alpha_i$ for $1 \le i \le d-1$ (we identify $c_1$ with $c_{d}$), \item[3.]   the
rotation number of $B|\Bbb T: \Bbb T \to \Bbb T$ is $\alpha$. \end{itemize}
\end{lem}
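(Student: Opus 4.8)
The plan is to construct the desired Blaschke product $B$ in two stages: first produce a Blaschke product of degree $d$ with the prescribed critical points on $\Bbb T$ at the prescribed \emph{Euclidean} angular positions, then fix the rotation number by composing with an appropriate rigid rotation, and finally translate between Euclidean angles and dynamical angles via a continuity/degree argument. Concretely, for a tuple of angular positions $\zeta_1=1,\zeta_2,\dots,\zeta_{d-1}$ on $\Bbb T$ one seeks $B(z)=\lambda z^{m_0}\prod_{j=1}^{d-1}\frac{1-\overline a_j z}{z-a_j}$ whose $2(d-1)$ critical points (besides $0,\infty$) are exactly the $\zeta_i$ with the right multiplicities. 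The standard device is that a degree-$d$ Blaschke product preserving $\Bbb T$ is, after conjugating the interior by a M\"obius map, determined by its behaviour on $\Bbb T$, and on $\Bbb T$ the logarithmic derivative $zB'(z)/B(z)$ is real and positive; prescribing where it vanishes (to the prescribed even orders $l_i$, $\sum l_i = 2(d-1)$) is a linear/rational interpolation problem whose solvability is classical. One gets a Blaschke product $B_0\in\mathcal H_d$ with critical points exactly at the $\zeta_i$, with $B_0|\Bbb T$ a homeomorphism of $\Bbb T$.

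**Fixing the rotation number.** Once $B_0$ is in hand, consider the one-parameter family $B_t = R_t\circ B_0$ for $t\in\Bbb T$, where $R_t$ is rotation by $t$. Each $B_t$ lies in $\mathcal H_d$, has the same critical points $\zeta_i$ on $\Bbb T$ (since precomposition is what would move critical points; postcomposition by a rotation does not), and $B_t|\Bbb T$ is an orientation-preserving circle homeomorphism. The rotation number $\rho(t)=\rho(B_t|\Bbb T)$ is a continuous, non-decreasing, degree-one monotone map of the circle parameter $t$ onto $\Bbb T$ (this is the classical monotonicity of rotation number for $R_t\circ(\text{fixed homeomorphism})$; it is surjective because as $t$ runs once around, $B_t$ runs once around the relevant family). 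Hence there exists $t_\ast$ with $\rho(t_\ast)=\alpha$. Since $\alpha$ is irrational, $B_{t_\ast}|\Bbb T$ has no periodic orbit, so it is conjugate to $R_\alpha$ by a circle homeomorphism $\phi$, which is exactly the map appearing in the definition of dynamical angle. Set $B=B_{t_\ast}$; assertions (1) and (3) of the lemma hold by construction.

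**From Euclidean angles to dynamical angles.** It remains to arrange assertion (2): the \emph{dynamical} angle from $c_i$ to $c_{i+1}$ equals the prescribed $\alpha_i$. This is where one uses a second continuity/degree argument: the conjugacy $\phi=\phi_{\zeta}$ depends continuously on the Euclidean angular data $\zeta=(\zeta_2,\dots,\zeta_{d-1})$ (the Blaschke product $B_0$ can be chosen to depend continuously on $\zeta$, $t_\ast$ depends continuously on $B_0$ by monotonicity of $\rho$, and the conjugacy of a circle homeomorphism with irrational rotation number to $R_\alpha$ is unique up to rotation and depends continuously on the map), so the assignment sending the $(d-2)$-tuple of positive Euclidean gaps of the $\zeta_i$ to the $(d-2)$-tuple of dynamical gaps $(\beta_1,\dots,\beta_{d-1})$, $\beta_i=$ dynamical angle from $c_i$ to $c_{i+1}$ with $\sum\beta_i=2\pi$, is a continuous self-map of the open $(d-2)$-simplex $\{(\beta_i): \beta_i>0,\ \sum\beta_i=2\pi\}$. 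One shows this map is proper (as a gap between consecutive critical points shrinks to $0$, the corresponding dynamical gap also shrinks to $0$, by the uniform power-law distortion of Lemma~\ref{HC2} applied near the degenerating critical point together with the real bounds of Theorem~\ref{real bounds}) and hence, being a proper continuous map between manifolds of the same dimension that is a local homeomorphism (or at least has topological degree one, by checking a near-symmetric configuration), it is surjective. Therefore the prescribed dynamical gaps $(\alpha_1,\dots,\alpha_{d-1})$ are attained for some choice of $\zeta$, and the corresponding $B$ satisfies (1)--(3). Allowing some $\alpha_i=0$ is handled by a limiting argument, merging critical points (raising the local degree), which stays inside $\mathcal H_d$.

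**Main obstacle.** The delicate point is the last step: proving that the Euclidean-gaps-to-dynamical-gaps map is surjective onto the open simplex, i.e.\ properness and a degree computation. Properness is the substantive analytic input and is exactly where the uniform real bounds (Theorem~\ref{real bounds}) and the uniform power law (Lemma~\ref{HC2}) earn their keep — they guarantee that no dynamical gap can collapse or blow up except in tandem with the corresponding Euclidean gap, so that boundary strata of the Euclidean simplex map to boundary strata of the dynamical simplex. Once properness is established, surjectivity follows from a standard degree argument seeded at the fully symmetric configuration (all $\alpha_i$ equal), which by the rotational symmetry $z\mapsto e^{2\pi i/(d-1)}z$ of the problem is a fixed point of the map; the rest is topology.
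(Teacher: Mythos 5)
Your proposal takes a genuinely different route from the paper's. The paper disposes of Lemma~\ref{basic-m} in one line --- ``It is a direct consequence of the proof of Theorem~\ref{Thurston-Siegel-Ch}, cf.\ \S6'' --- and the route through \S6 is as follows: one builds a \emph{topological} polynomial $f\in\Sigma_{\rm top}^{\alpha,d}$ with $f|\Delta$ equal to the rigid rotation $R_\alpha$ and the critical points on $\Bbb T$ placed at the prescribed dynamical angles (for a topological model the Euclidean and dynamical angles coincide, precisely because $f|\Delta$ \emph{is} the rigid rotation). Symmetrizing to $F$, perturbing to sub-hyperbolic semi-rational models $F_n$ with rational rotation number, verifying the absence of Thurston obstructions, applying the Cui--Tan/Zhang--Jiang extension of Thurston's theorem, and passing to the limit produces the Blaschke product $G\in\mathcal{H}_d$ with the required dynamical combinatorics. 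The Thurston machinery thus hands you both existence and (by the rigidity half) uniqueness in one shot, with the dynamical angles directly prescribed from the outset. Your approach instead works entirely inside $\mathcal{H}_d$: first produce a Blaschke product with prescribed \emph{Euclidean} critical data, then tune the rotation number by postcomposition with $R_t$, then argue via a properness-and-degree computation that the map from Euclidean gaps to dynamical gaps is onto the simplex. This is more elementary in the sense of avoiding the branched-covering formalism of \S6, and the monotone dependence of the rotation number on $t$ together with the simplex-to-simplex degree argument is an attractive reduction.

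That said, two of your steps are not fully supported as written. First, the ``interpolation'' claim is not a linear problem. For a degree-$(2d-1)$ Blaschke product symmetric about $\Bbb T$, the angular speed $\partial_\theta\arg B(e^{i\theta})$ is a nonnegative rational function of $e^{i\theta}$ whose numerator \emph{and denominator} both depend on the poles $a_i$; prescribing the even-order zeros of this quantity while keeping it nonnegative and with the correct total integral is a constrained, genuinely nonlinear problem. The fact that it is solvable --- that the critical locus on $\Bbb T$ with given multiplicities can be freely prescribed --- is true but requires either a degree/fixed-point argument of its own (in the style of Heins, Zakeri, or the boundary-degenerate limit of the interior Blaschke critical-point problem) or a qc-surgery construction; asserting it as ``classical linear/rational interpolation'' understates the work required. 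Second, the degree computation is sketched but not closed. You correctly identify the symmetric configuration as a fixed point of the Euclidean-to-dynamical gap map (the $(d-1)$-fold rotational symmetry forces the conjugacy to commute with $z\mapsto e^{2\pi i/(d-1)}z$, as one checks from uniqueness of the conjugacy up to rotation), but a fixed point alone does not give nonzero topological degree: you need either a local-homeomorphism statement near the symmetric point, or a boundary-behavior argument (the map of closed simplices preserving each face, whence degree $\pm1$ by induction on faces). You gesture at both, but neither is carried out, and the face-preservation statement in particular needs the two-sided properness --- Euclidean gap shrinks iff dynamical gap shrinks --- to be established in both directions, using compactness of $\mathcal{H}_d$ (Herman's normality) and continuity of the qs conjugacy, not just the power law near a single degenerating critical point. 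Both gaps are fillable in principle, but the paper's detour through Thurston's theorem is arguably shorter once \S6 is in hand, since the dynamical combinatorics is imposed at the topological level and never needs to be recovered from Euclidean data by a separate degree argument.
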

\begin{proof}
We will construct  such $B$  in the proof of Theorem~\ref{Thurston-Siegel-Ch}, cf. $\S6$.
 \end{proof}
Let $\mathcal{S}_d^\alpha \subset \mathcal{H}_d$ denote the class of all
the Blaschke products  guaranteed by Lemma~\ref{basic-m}.  In $\S6$, we shall see that for any $\alpha \in \Theta_C^b$, each $f \in \Pi_\alpha^d$ is uniquely determined by the group of angles $(\alpha_1, \cdots, \alpha_{d-1})$ formed by the $d-1$ critical points on the boundary of the Siegel disk. Thus for any  $f \in \Pi_\alpha^d$,  we can find  a $B \in \mathcal{S}_d^\alpha$ such that $f$ is obtained  through a qc surgery on $B$.

\subsection{Uniform saddle node geometry} Most of the arguments and ideas in this subsection are adapted from  \cite{dFdM} and \cite{PZ}. The difference is that in \cite{dFdM} and \cite{PZ}, the critical circle mappings have only one critical point in $\Bbb T$, and  here  the maps $B \in  \mathcal{S}_d^\alpha$ may have several critical points in $\Bbb T$.

Let $\alpha \in \Theta_C^b$ and $B \in \mathcal{S}_d^\alpha$. Then there is a circle homeomorphism $h: \Bbb T \to \Bbb T$ such that $$B|\Bbb T = h^{-1} \circ R_{\alpha} \circ h.$$
 The aim of this subsection is to show that  the conjugation map  $h$  exhibits a  uniform saddle node geometry which depends only on $d$ and $C$.

Let us recall some notations first.  For $i \ge 0$,  let  $x_i \in \Bbb T$  denote
the point such that $B^i(x_i) = 1$. For $n \ge 0$, let  $p_n/q_n$  denote the $n$-th convergent of $\alpha$  and $\Pi_n(B)$  denote the collection of intervals in the dynamical partition of level $n$,  cf.  (\ref{dnp}). Now for  each
$n \ge 0$,  define
$$
\mathcal{Q}_n = \{x_i\:|\: 0 \le i < q_n\}.
$$
Then $\mathcal{Q}_0 = \{1\}$.  The following proposition is summarized from $\S6.1$  of \cite{PZ}. Note that in \cite{PZ} the
circle homeomorphism is induced by the Douady-Ghys Blaschke model which contains exactly one (double) critical point at $1$.  Since Proposition~\ref{crp} only involves the combinatorial information about the rotation number and is  independent of the number of the critical points in $\Bbb T$, it still holds for $B \in \mathcal{S}_d^\alpha$.

\begin{pro}[cf. \S6.1 of \cite{PZ}]\label{crp}{\rm  Let $0 \le j < k < q_n$. Then
 $x_j$ and $x_k$  are adjacent in $\mathcal{Q}_n$ if and only if $k = j + q_{n-1}$
and $0 \le j <  q_{n} - q_{n-1}$, or $k = j + q_n - q_{n-1}$ and $0
\le j < q_{n-1}$. In the former case we have
\begin{equation}\label{ref-a}
[x_k, x_j] \cap \mathcal{Q}_{n+1} = \{x_k, x_{k+q_n}, x_{k+2q_{n}},
\cdots, x_{k+{(a_{n+1}-1)}q_{n}}, x_j\},
\end{equation} and in the later case we have
\begin{equation}\label{ref-b}
[x_j, x_k] \cap \mathcal{Q}_{n+1} = \{x_j, x_{j+q_{n}},
x_{j+2q_{n}}, \cdots, x_{j+a_{n+1}q_{n}}, x_k\}.
\end{equation}
Moreover,  each interval in $\Bbb T\setminus \mathcal{Q}_n$  either is a single  interval in $\Pi_{n-1}(B)$, or is the union of two adjacent intervals in $\Pi_{n-1}(B)$.
In particular, any two adjacent intervals in $\Bbb T \setminus \mathcal{Q}_n$ are $K$-commensurable with $K > 1$ being some constant depending only on $d$.
}
\end{pro}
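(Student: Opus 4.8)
The plan is to split the statement into a combinatorial part --- the neighbour criterion in $\mathcal{Q}_n$, the two formulas $(\ref{ref-a})$ and $(\ref{ref-b})$, and the description of $\Bbb T\setminus\mathcal{Q}_n$ by intervals of $\Pi_{n-1}(B)$ --- and a metric part, the $K$-commensurability. The combinatorial part will be reduced to the corresponding facts for the rigid rotation $R_\alpha$, which are classical; the metric part will follow from the uniform real bounds of Theorem~\ref{real bounds}.

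For the combinatorial part the starting point is that the conjugacy identity $h\circ(B|\Bbb T)\circ h^{-1}=R_\alpha$ gives $h(x_i)=h(1)-i\alpha$ for all $i\ge 0$. Since $h$ is an orientation preserving circle homeomorphism, the cyclic order of $\{x_i:0\le i<q_n\}$ on $\Bbb T$ agrees with that of the finite orbit $\{-i\alpha:0\le i<q_n\}$ of $R_{-\alpha}$, and, more generally, $h$ carries the combinatorics of the dynamical partition $\Pi_n(B)$ to that of the dynamical partition of $R_\alpha$. Under this identification the neighbour criterion for $\mathcal{Q}_n$, the content of $[x_k,x_j]\cap\mathcal{Q}_{n+1}$ and of $[x_j,x_k]\cap\mathcal{Q}_{n+1}$, and the statement that each gap of $\Bbb T\setminus\mathcal{Q}_n$ is split by $\Pi_{n-1}(B)$ into at most two pieces all become the classical statements about the orbit of $R_\alpha$ of length $q_n$ (the three-distance theorem and its standard refinements), proved by induction on $n$ from the recursion $q_{n+1}=a_{n+1}q_n+q_{n-1}$. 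These are exactly the assertions established in $\S6$ of \cite{PZ} for the one-critical-point Douady-Ghys model; since the arguments there use only that the restriction to $\Bbb T$ is topologically conjugate to $R_\alpha$, never the number of critical points, they apply verbatim to $B\in\mathcal{S}_d^\alpha$. I would therefore only recall the inductive step and refer to \cite{PZ} for the bookkeeping.

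For the commensurability, a direct inspection of $(\ref{dnp})$ shows that the set of endpoints of the intervals of $\Pi_{n-1}(B)$ is $\mathcal{Q}_n\cup\{x_i:q_n\le i<q_n+q_{n-1}\}$; in particular $\Pi_{n-1}(B)$ refines $\Bbb T\setminus\mathcal{Q}_n$, and by the combinatorial part each gap of $\Bbb T\setminus\mathcal{Q}_n$ is either a single interval of $\Pi_{n-1}(B)$ or a union of two adjacent ones. Now let $I$ and $J$ be adjacent gaps of $\Bbb T\setminus\mathcal{Q}_n$ meeting at a point $p\in\mathcal{Q}_n$, and let $I^{*}\subset I$ and $J^{*}\subset J$ be the intervals of $\Pi_{n-1}(B)$ having $p$ as an endpoint. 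Since $I^{*}$ and $J^{*}$ lie on opposite sides of $p$ they are adjacent in $\Pi_{n-1}(B)$, hence $C(d)$-commensurable by the second assertion of Theorem~\ref{real bounds}; the same assertion, applied inside $I$ and inside $J$, gives $|I|\le(1+C(d))\,|I^{*}|$ and $|J|\le(1+C(d))\,|J^{*}|$, while trivially $|I|\ge|I^{*}|$ and $|J|\ge|J^{*}|$. Chaining the three comparisons yields that $I$ and $J$ are $K$-commensurable with $K=(1+C(d))^{2}C(d)$, a constant depending only on $d$.

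I expect no serious obstacle. The part that needs care is the combinatorial bookkeeping --- orientations, and which of the two arcs between $1$ and $x_{q_n}$ is meant in the dynamical partition --- but this is entirely contained in \cite{PZ}, and the only genuinely new point is the trivial observation that it is insensitive to the number of critical points of $B$. The one ingredient special to our setting, the uniformity of $C(d)$ in the commensurability, is precisely what Theorem~\ref{real bounds} provides, uniformly over $\mathcal{H}_d$ and hence over all $B\in\mathcal{S}_d^\alpha$ with $\alpha\in\Theta_C^b$; so the proof is essentially an assembly of $\S6$ of \cite{PZ} with Theorem~\ref{real bounds}.
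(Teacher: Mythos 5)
Your proposal is correct and follows essentially the same route as the paper: the combinatorial statements are reduced, via the conjugacy to the rigid rotation, to the renormalization/three-distance facts established in $\S 6$ of \cite{PZ} (which, as you and the paper both note, are insensitive to the number of critical points), and the uniform $K$-commensurability is deduced from the second assertion of Theorem~\ref{real bounds} by observing that $\Pi_{n-1}(B)$ refines $\Bbb T\setminus\mathcal{Q}_n$ with each gap a union of at most two adjacent intervals. The paper states this in two brief remarks surrounding the proposition rather than as a formal proof, and your write-up merely adds the straightforward bookkeeping (identifying the endpoint set of $\Pi_{n-1}(B)$ and chaining the comparisons), so it is the same argument with more detail.
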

The last assertion of Proposition~\ref{crp} is implied by the second assertion of  Lemma~\ref{real bounds}  that any two adjacent intervals in $\Pi_{n-1}(B)$ are $C(d)$-commensurable with $C(d) > 1$ being some constant depending only on $d$.

When $d = 2$, $B$ is exactly the Douady-Ghys Blaschke model
considered in \cite{PZ}. In this case the point $1$ is the unique
critical point of $B$ in $\Bbb T$. It is  clear  that in this case for every interval component $I$ of $\Bbb T \setminus \mathcal{Q}_n$,  the map
$$
B^{q_n}: I \to B^{q_n}(I)
$$ is a diffeomorphism.

  For $d > 2$,
any $B \in S_{d}^\alpha$ has   $(d-1)$  critical points in $\Bbb T$, counting by multiplicities.  If all these critical points collide into one critical point at $1$, then for any interval component $I$ of $\Bbb T \setminus \mathcal{Q}_n$, the map
 $B^{q_n}: I \to B^{q_n}(I)$ is still a diffeomorphism.  Otherwise, there are $1 \le d' \le d-2$ distinct critical points other than $1$.
Let us denote them by $c_i, 1 \le i \le d'$ and denote $1$ by $c_0$.
For  any integer $k \ge 0$,
let $c^k_i \in \Bbb T$ denote the point such that $B^k (c^k_i) =
c_i$.  Let
$$
\mathcal{Q}_n^i  = \{c_i^k\:|\:  \:\:0 \le k < q_n\}, \: 0 \le i \le
d', \:n \ge 1.
$$ Then $\mathcal{Q}_n^0 = \mathcal{Q}_n = \{x_i\:|\: 0\le i < q_n\}$.

\begin{lem}\label{sub-int} Let $n \ge 1$. Then for each $1 \le i \le d'$, we have  \begin{itemize} \item[1.]
in the case that $k = j + q_{n-1}$ and $0 \le j <  q_{n} - q_{n-1}$,
the interval $(x_k, x_j)$ contains at most one point in
$\mathcal{Q}_n^i$, \item[2.]
 in the case that $k = j + q_n - q_{n-1}$ and $0 \le j < q_{n-1}$, the interval $(x_j, x_k)$ contains at most two points in $\mathcal{Q}_n^i$. \end{itemize}
\end{lem}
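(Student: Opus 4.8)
The plan is to reduce this purely combinatorial statement --- it records only which open arc of $\Bbb T\setminus\mathcal{Q}_n$ contains which points of $\mathcal{Q}_n^i$ --- to the rigid rotation and then invoke the three-gap theorem. Recall from $\S4.3$ that $B|\Bbb T=h^{-1}\circ R_\alpha\circ h$ for an orientation-preserving circle homeomorphism $h$. Since $h(x_k)=R_\alpha^{-k}(h(1))$ and $h(c_i^k)=R_\alpha^{-k}(h(c_i))$, the map $h$ carries $\mathcal{Q}_n$, $\mathcal{Q}_n^i$ and the open arcs $(x_k,x_j)$ to the corresponding objects for $R_\alpha$ while preserving cyclic order; composing with the rotation $z\mapsto z-h(1)$, I may therefore work in the rigid model, where $\mathcal{Q}_n=\{-k\alpha\bmod 1:0\le k<q_n\}$ and, crucially, $\mathcal{Q}_n^i=\beta_i+\mathcal{Q}_n$ is a \emph{translate} of $\mathcal{Q}_n$ for some $\beta_i\in\Bbb R/\Bbb Z$. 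Hence $\mathcal{Q}_n^i$ has exactly the same multiset of complementary-arc lengths as $\mathcal{Q}_n$.

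Next I would record those lengths. By Proposition~\ref{crp} the adjacent pairs of $\mathcal{Q}_n$ are exactly $\{x_{j+q_{n-1}},x_j\}$ for $0\le j<q_n-q_{n-1}$ (``type~1'' gaps) and $\{x_j,x_{j+q_n-q_{n-1}}\}$ for $0\le j<q_{n-1}$ (``type~2'' gaps). In the rigid model a type~1 gap joins $-j\alpha$ to $-(j+q_{n-1})\alpha$; since $\min_{1\le r<q_n}\|r\alpha\|=\|q_{n-1}\alpha\|$, the short arc of length $\|q_{n-1}\alpha\|$ between these two points contains no other point of $\mathcal{Q}_n$, so every type~1 gap is that short arc and has length $\ell_1:=\|q_{n-1}\alpha\|$. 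Counting total length ($q_n-q_{n-1}$ gaps of length $\ell_1$, $q_{n-1}$ of one remaining length, sum $1$) and using $q_n\|q_{n-1}\alpha\|+q_{n-1}\|q_n\alpha\|=1$, every type~2 gap has length $\ell_2:=\|q_{n-1}\alpha\|+\|q_n\alpha\|$ --- this is just Steinhaus' three-gap theorem for $q_n$ points. The only facts about these lengths I will need are $\ell_1\le\ell_2$ and $\ell_2<2\ell_1$, both immediate from $0<\|q_n\alpha\|<\|q_{n-1}\alpha\|$.

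The lemma then follows by a short pigeonhole argument. If a type~1 gap $G$ --- an open arc of length $\ell_1$ --- contained two distinct points $p,p'\in\mathcal{Q}_n^i$, I would pick them so that the sub-arc $J\subset G$ joining them meets $\mathcal{Q}_n^i$ only at its endpoints; then $J$ is one of the complementary arcs of $\mathcal{Q}_n^i$, so $|J|\ge\min(\ell_1,\ell_2)=\ell_1$, while $J$ is a \emph{proper} sub-arc of $G$ (its endpoints lie in the interior of $G$), so $|J|<\ell_1$ --- a contradiction, which proves assertion~1. For assertion~2, if a type~2 gap $G$ of length $\ell_2$ contained three distinct points of $\mathcal{Q}_n^i$, three of them consecutive along $G$ would cut off two sub-arcs $J_1,J_2\subset G$ with disjoint interiors, each meeting $\mathcal{Q}_n^i$ only at its endpoints and hence a complementary arc of $\mathcal{Q}_n^i$ of length $\ge\ell_1$, with $J_1\cup J_2$ a proper sub-arc of $G$; then $2\ell_1\le|J_1|+|J_2|<|G|=\ell_2<2\ell_1$, again a contradiction.

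The whole argument is short and has no analytic content; the only points I expect to need care are (i) checking that it is the cyclic order that $h$ transports and that $\mathcal{Q}_n^i$ really becomes a translate of $\mathcal{Q}_n$ after the reduction --- which works precisely because $c_i^k=(B|\Bbb T)^{-k}(c_i)$ is an orbit segment of the \emph{same} length $q_n$ under the \emph{same} map --- and (ii) keeping the arcs $(x_k,x_j)$ open, so that their endpoints, which lie in $\mathcal{Q}_n$ but not necessarily in $\mathcal{Q}_n^i$, do not enter the count and the sub-arcs $J,J_1,J_2$ are genuinely proper. The inputs are just Proposition~\ref{crp} and standard continued-fraction estimates.
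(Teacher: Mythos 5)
Your proof is correct and it isolates the same combinatorial fact the paper invokes under the phrase ``the property of closest returns.'' The paper reaches the bound by (i) applying Proposition~\ref{crp} to $\mathcal{Q}_n^i$ to see that its gaps have the same two combinatorial types $(c_i^{l+q_{n-1}},c_i^{l})$ and $(c_i^{l},c_i^{l+q_n-q_{n-1}})$ as those of $\mathcal{Q}_n$, and then (ii) asserting that a type-1 gap of $\mathcal{Q}_n$ cannot contain a full gap of $\mathcal{Q}_n^i$ while a type-2 gap can contain at most one type-1 gap of $\mathcal{Q}_n^i$ and no type-2 gap. You arrive at (ii) by a different, fully explicit route: transport both orbit sets through the conjugacy $h$ to the rigid rotation, note that the image of $\mathcal{Q}_n^i$ is a translate of the image of $\mathcal{Q}_n$ and hence has the identical multiset of gap lengths, read off from the three-gap theorem the two lengths $\ell_1<\ell_2<2\ell_1$, and finish with a pigeonhole on arc lengths. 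The mathematical content is the same, but your version replaces the paper's terse appeal to ``closest returns'' with a short, self-contained arithmetic argument; the small cost is the explicit invocation of the conjugacy $h$ (whose existence, as an orientation-preserving homeomorphism, is all that is needed), and you correctly flag that the lengths live in the rigid model, so it is the cyclic order, preserved by $h$, that carries the conclusion back to $\Bbb T$.
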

\begin{proof}  Fix an $1 \le i \le d'$. By replacing $\mathcal{Q}_n$ by $\mathcal{Q}_n^i$ in
 Proposition~\ref{crp}, it follows that each component of $\Bbb T \setminus \mathcal{Q}_n^i$ either has the form $(c^m_i, c^l_i)$, where
$m = l + q_{n-1}$ and $0 \le l <  q_n - q_{n-1}$, or has the form
$(c^l_i, c^m_i)$ where $m = l + q_{n} - q_{n-1}$ and $0 \le l <
q_{n-1}$.

Suppose $k = j + q_{n-1}$ and $0 \le j < q_n - q_{n-1}$.  By the property of closest returns, $(x_k,
x_j)$  can not  contain any interval either  of the form $[c_i^m, c_i^l]$
with $m = l + q_{n-1}$ and $0 \le l <  q_n - q_{n-1}$, or  of the form $[c_i^l, c_i^m]$
with $m = l + q_n - q_{n-1}$ and $0 \le l <  q_{n-1}$.   It follows  that $(x_k, x_j)$ contains at most one point in $\mathcal{Q}_n^i$. This  proves the
first assertion.

Suppose  $k = j + q_n - q_{n-1}$ and $0 \le j < q_{n-1}$. Again by the property of closest returns,
 the interval $(x_j, x_k)$ can  contain at most one interval of the form $[c^m_i, c^l_i]$  with
$m = l + q_{n-1}$ and $0 \le l \le q_n - q_{n-1}$ and can not contain any interval with the form $[c_i^l, c_i^m]$
with $m = l + q_n - q_{n-1}$ and $0 \le l <  q_{n-1}$.  It follows  that $(x_j, x_k)$ can contain at most two
points in $\mathcal{Q}_n^i$. This proves the second assertion.
\end{proof}

Let $I$ be an interval component of $\Bbb T \setminus \mathcal{Q}_n$.  By Proposition~\ref{crp} the points in $\mathcal{Q}_{n+1}$ divide $I$ into finitely many sub-intervals  and  any two such sub-intervals are $K$-commensurable for some  constant $K > 1$ depending only on $d$. When the number of such sub-intervals are large, however,  the ones which lie in the middle position are very small compared to the ones near the end.   This is the so called ``saddle-node" geometry.  More precisely,
\begin{lem}[cf. Theorem 6.6 of \cite{PZ}]\label{DGS}
Suppose $d = 2$, that is, $B|\Bbb T$ has only one double critical point at $1$.  Then there is a universal constant $K > 1$ such that for any interval component $I$ of $\Bbb T \setminus \mathcal{Q}_n$, if the points in $\mathcal{Q}_{n+1}$ divide $I$ into sub-intervals
$$
I_1, \cdots, I_m,
$$
then we have
$$
\frac{1}{K} \cdot \frac{|I|}{\min\{k, m-k+1\}^2} < |I_k| < K \cdot \frac{|I|}{\min\{k, m-k+1\}^2}
$$
\end{lem}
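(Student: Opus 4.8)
## Proof proposal for Lemma~\ref{DGS}

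The plan is to reduce the statement to a model computation on the standard parabolic (saddle-node) map, transported to the circle via the uniform real bounds. Since $d=2$, the Blaschke product $B$ has a single (double) critical point at $1$ in $\Bbb T$, so for every interval component $I$ of $\Bbb T\setminus\mathcal{Q}_n$ the return map $B^{q_n}\colon I\to B^{q_n}(I)$ is a diffeomorphism, and by Lemma~\ref{HC1} (Herman's cross-ratio bound applied to the disjoint orbit of a subinterval) this diffeomorphism has uniformly bounded cross-ratio distortion, with constant depending only on $d=2$, hence universal. First I would fix such an $I$ and recall from Proposition~\ref{crp} that the points of $\mathcal{Q}_{n+1}$ inside $I$ are exactly an orbit segment $x_k,x_{k+q_n},x_{k+2q_n},\dots,x_{k+(a_{n+1}-1)q_n},x_j$ (or the analogous segment in the other case), i.e.\ $m=a_{n+1}+1$ points cutting $I$ into $m$ subintervals $I_1,\dots,I_m$, and these subintervals are successive images of a single interval under iterates of the return map $g=B^{q_n}$ restricted to a neighborhood.

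Next I would set up the saddle-node model. The key dynamical fact is that $g=B^{q_n}$, restricted to the union $I\cup(\text{two adjacent components})$, is a diffeomorphism with uniformly bounded distortion which has \emph{no fixed point} but almost does: the points $x_{k+\ell q_n}$ march monotonically across $I$ and then across the next component, and by the first assertion of Theorem~\ref{real bounds} (commensurability of $[x,B^{q_n}(x)]$ with $[x,B^{-q_n}(x)]$) together with the second assertion (adjacent intervals of $\Pi_n(B)$ are $C(d)$-commensurable), the increments $|I_\ell|$ vary slowly and the configuration is uniformly close to that of a parabolic germ $w\mapsto w+w^2+O(w^3)$ near $0$. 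For such a germ the orbit $w_\ell$ with $w_0$ small and positive satisfies $w_\ell\sim 1/\ell$ and the gaps $w_{\ell+1}-w_\ell\sim 1/\ell^2$ for $1\le\ell\lesssim m/2$; by the time-reversed (or the other end's) symmetry the gaps near $\ell=m$ behave like $1/(m-\ell)^2$. Thus $|I_\ell|\asymp |I|\cdot\min\{\ell,m-\ell+1\}^{-2}$, which is exactly the asserted estimate; this is Theorem~6.6 of \cite{PZ}, and I would either cite it directly or reproduce the short argument: write the return map in a Fatou-type coordinate $\zeta$ in which it becomes $\zeta\mapsto\zeta+1+o(1)$, note that bounded cross-ratio distortion of $g$ forces the Fatou coordinate to have bounded distortion relative to Euclidean length on the relevant scale, so $|I_\ell|\asymp |d\zeta/dw|^{-1}$ and $|d\zeta/dw|\asymp (\text{distance to the "ghost" fixed point})^{-2}\asymp \min\{\ell,m-\ell+1\}^2/|I|$.

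The main obstacle is making the ``ghost fixed point'' heuristic rigorous with a \emph{universal} constant: one must show that the return map $B^{q_n}$ on $I$ together with one or two adjacent components is comparable, in the $C^1$-bounded-distortion sense and uniformly in $n$ and in $B\in\mathcal{S}_2^\alpha$, to a genuine parabolic germ. The tools for this are already in place — Lemma~\ref{HC1} gives the cross-ratio (hence Koebe-type distortion) control, Lemma~\ref{HC2} gives the uniform power law at the critical point (needed to handle the components adjacent to the one containing $1$), and Theorem~\ref{real bounds} gives the commensurability that pins the geometry of the orbit segment — so the argument is essentially a transcription of the Petersen–Zakeri proof; I would organize it as: (i) bounded distortion of the return map via Lemma~\ref{HC1}; (ii) identify the orbit segment and its length $m=a_{n+1}+1$ via Proposition~\ref{crp}; (iii) pass to Fatou coordinates and derive the $1/\ell^2$ law at each end, using Theorem~\ref{real bounds} to glue the two ends; (iv) collect the universal constant $K$. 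I would remark that only the case $d=2$ is needed here because for $d>2$ the corresponding estimate will be replaced, in the next lemmas, by one allowing finitely many critical points per interval (as prepared by Lemma~\ref{sub-int}).
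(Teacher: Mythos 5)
Your outline is essentially the Petersen--Zakeri/de Faria--de Melo argument, and it would work; but note that the paper itself does not prove Lemma~\ref{DGS} at all --- it is cited verbatim from Theorem~6.6 of \cite{PZ}. So there is no ``paper's own proof'' of this statement to match; the relevant comparison is with the paper's proof of the \emph{generalization}, Lemma~\ref{U-S-G}, which covers $d>2$ and recovers the $d=2$ case by a limiting argument.

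That proof takes a genuinely different route from yours. You propose to make the saddle-node picture explicit by passing to a Fatou-type coordinate in which the return map becomes $\zeta\mapsto\zeta+1+o(1)$, and then to argue that bounded cross-ratio distortion transfers to bounded distortion of the Fatou coordinate against Euclidean length. This is the conceptual heart of the matter, but it is also exactly where the rigor is delicate: ``bounded cross-ratio distortion of $g$ forces the Fatou coordinate to have bounded distortion on the relevant scale'' is a nontrivial claim, and pinning down a \emph{universal} constant from it requires precisely the sort of estimate that Yoccoz's almost parabolic lemma (stated in the paper as Lemma~\ref{Y-L}) packages once and for all. The paper's strategy for Lemma~\ref{U-S-G} is therefore to treat Lemma~\ref{Y-L} as a black box and reduce the problem to verifying its two hypotheses: (a) the two extreme subintervals are commensurable with the whole interval, uniformly (this is the content of Lemma~\ref{u-b-c}, which in the $d=2$ case follows already from Theorem~\ref{real bounds} applied to the first and last gaps); and (b) the return map $B^{q_n}$ has negative Schwarzian derivative on the relevant range (Lemma~\ref{neg-s}). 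Once these are checked, Lemma~\ref{Y-L} immediately delivers the $1/\min\{\ell,m-\ell+1\}^2$ law with a constant depending only on the commensurability constant $\sigma$, hence only on $d$.

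Two points worth absorbing for later in the paper. First, the hypothesis of Lemma~\ref{Y-L} is stated in terms of negative Schwarzian, not merely bounded cross-ratio distortion; your Step~(i) (Lemma~\ref{HC1}) gives the latter, which controls Koebe-type distortion of the return map but is not the same as what Lemma~\ref{Y-L} asks for. For $d=2$ the Blaschke product is a genuine cubic critical circle map and one can produce the Schwarzian bound directly (as in \cite{dFdM}), but it is a separate verification and should not be elided. Second, your count of the subintervals ($m=a_{n+1}+1$) is off by one in one of the two cases of Proposition~\ref{crp} --- one of the two formulas there has $a_{n+1}$ interior points, the other $a_{n+1}+1$ --- which does not affect the asymptotics but matters if one tries to write down precise constants. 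If you invoke Theorem~6.6 of \cite{PZ} directly, as the paper does, none of this needs to be re-derived; if you want a self-contained proof, the tidiest path is to specialize the paper's proof of Lemma~\ref{U-S-G} to $d'=0$ (no extra critical points), which requires only Lemmas~\ref{u-b-c}, \ref{neg-s} and \ref{Y-L}.
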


Now suppose $d > 2$ and $\alpha \in \Theta_C^b$. Let  $B \in \mathcal{S}_d^\alpha$. Suppose $B|\Bbb T$ has at least one critical point other than $1$, that is, $d' \ge 1$.  In the case that all the critical points of $B|\Bbb T$  collapse into one single point at $1$, that is, $d' = 0$, the above lemma still holds for some constant $K > 1$ depending only on $d$.  This can be derived by taking the limit in the following lemma.
 \begin{lem}[Uniform Saddle Node Geometry]\label{U-S-G}
There is a $K > 1$ depending only on $d$ such that the following holds.   Suppose $B \in \mathcal{S}_d^\alpha$   such that $B$ has  $d' \ge 1$ distinct critical points in $\Bbb T$ other than the critical point $1$.   Then for any   component $I$ of $\Bbb T \setminus \mathcal{Q}_n$, if  $J$ is a component  of
$$
I \setminus \bigcup_{i=1}^{d'} \mathcal{Q}_n^i
$$
which  contains at least one interval component of $I \setminus \mathcal{Q}_{n+1}$,
we have
\begin{equation}\label{fas}
\frac{1}{K} \cdot |I| < |J| \le |I|.
\end{equation}  Moreover, if $$J_1, \cdots, J_m$$ denote
all the interval components of $I \setminus \mathcal{Q}_{n+1}$ contained in $J$, labeled by order,
then we have
\begin{equation}\label{sas}
\frac{1}{K} \cdot \frac{|J|}{\min\{k, m-k+1\}^2} < |J_k| < K \cdot \frac{|J|}{\min\{k, m-k+1\}^2}.
\end{equation}
\end{lem}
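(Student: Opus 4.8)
The plan is to reduce the statement about $B \in \mathcal{S}_d^\alpha$ with $d' \ge 1$ critical points on $\Bbb T$ to the $d=2$ case handled in Lemma~\ref{DGS}, by exploiting the fact that, away from the critical points of $B$ which sit on $\Bbb T$ (other than $1$), the iterate $B^{q_n}$ behaves like a diffeomorphism with controlled distortion. First I would fix a component $I$ of $\Bbb T \setminus \mathcal{Q}_n$ and recall from Proposition~\ref{crp} that $I$ is either an interval $[x_k,x_j]$ with $k=j+q_{n-1}$, $0\le j<q_n-q_{n-1}$, or an interval $[x_j,x_k]$ with $k=j+q_n-q_{n-1}$, $0\le j<q_{n-1}$, and that $B^j$ (resp.\ $B^{q_n-q_{n-1}}$ etc.) carries the configuration $[x_{q_n},1]$ (or $[1,x_{q_{n-1}}]$) homeomorphically onto $I$. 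The key combinatorial input is Lemma~\ref{sub-int}: each set $\mathcal{Q}_n^i$, $1\le i\le d'$, puts at most one (resp.\ two) point inside $I$, so $I \setminus \bigcup_{i=1}^{d'} \mathcal{Q}_n^i$ has at most $2d'+1$ components, a number depending only on $d$. Hence $J$ is one of boundedly many pieces of $I$.

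For the first inequality (\ref{fas}), I would argue that the component $J$ containing an interval of $I\setminus \mathcal{Q}_{n+1}$ cannot be too small relative to $I$. The points of $\mathcal{Q}_{n+1}$ inside $I$ are, by (\ref{ref-a})--(\ref{ref-b}) of Proposition~\ref{crp}, the images under forward iteration of the points $x_k, x_{k+q_n},\dots$ dividing the ``mother'' interval $[x_{q_n},1]$ or $[1,x_{q_{n-1}}]$; combined with the uniform real bounds (Theorem~\ref{real bounds}), adjacent such sub-intervals of $I$ are $C(d)$-commensurable. Since $J$ contains at least one of these sub-intervals and $J$ is cut out of $I$ by at most $2d'$ points of the $\mathcal{Q}_n^i$, a sub-interval of $\mathcal{Q}_{n+1}$ adjacent to an endpoint of $J$ is $C(d)$-commensurable with one adjacent to that same endpoint on the $\mathcal{Q}_n$-side, whose length is comparable to $|I|$ by Theorem~\ref{real bounds} again. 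Chaining these commensurabilities (a bounded number of times, since there are at most $2d'+1\le 2d-3$ pieces) yields $|J|>|I|/K$ with $K$ depending only on $d$; the upper bound $|J|\le|I|$ is trivial.

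For (\ref{sas}) I would transport the problem back to the $d=2$ setting. Let $B^{\ell}$ be the iterate (with $\ell\le q_n$, namely $\ell=j$ or $\ell=j+$ something) that maps the mother interval $M\subset \Bbb T\setminus\mathcal{Q}_n$ around $1$ onto $I$; by the combinatorics of closest returns the intervals $\{B^t(M)\}$ for $0\le t<\ell$ are pairwise disjoint, so Herman's cross-ratio distortion bound (Lemma~\ref{HC1}) applies to $B^{\ell}$ on $M$ and gives uniformly bounded cross-ratio distortion, hence bounded geometry distortion on any interval compactly inside $M$. Now $J$ is the $B^{\ell}$-image of a sub-interval $\widetilde J$ of $M$ which avoids all critical points of $B$ that lie in $\Bbb T$ except (possibly) $1$ at an endpoint — this is exactly what Lemma~\ref{sub-int} buys us, since the preimages $c_i^k$ are the points of $\mathcal{Q}_n^i$. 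On $\widetilde J$ the points of $\mathcal{Q}_{n+1}$ pull back to the same arithmetic-progression configuration as in Lemma~\ref{DGS}, whose conclusion (for the Douady–Ghys model, or equivalently the $d'=0$ limit) gives the $\min\{|k|,|k-m+1|\}^{-2}$ law for the pieces of $\widetilde J$; pushing forward by $B^{\ell}$, whose distortion on $\widetilde J$ is bounded by a constant depending only on $d$ via Lemma~\ref{HC1} together with the uniform power law Lemma~\ref{HC2} near the endpoint $1$, preserves this law up to a constant. Taking $K$ to absorb all these constants and the final $d'=0$ limit gives (\ref{sas}).

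The main obstacle I anticipate is the control of the distortion of $B^{\ell}$ on the interval $\widetilde J$ near the critical endpoint $1$: the map $B^{\ell}$ has a critical point of odd order $\ge 3$ at the endpoint, so it is genuinely non-diffeomorphic there and Lemma~\ref{HC1} alone does not suffice. The remedy is precisely Lemma~\ref{HC2} (the uniform power law), which quantifies the distortion of a single critical iterate of $B$ and, composed along the disjoint orbit, yields the uniform bound; making this composition estimate precise — in particular checking that only finitely many of the $q_n$ iterates actually ``see'' a critical point on $\Bbb T$, so that the power-law losses accumulate only boundedly — is the delicate technical heart of the argument, and is where the compactness of $\mathcal{S}_d^\alpha$ (hence the uniformity of the constants in Lemmas~\ref{HC1}--\ref{HC2}) is essential.
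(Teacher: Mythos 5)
Your plan correctly identifies the combinatorial ingredients (Proposition~\ref{crp}, Lemma~\ref{sub-int}) and the reason (\ref{fas}) should hold, and your chaining-of-commensurabilities argument for (\ref{fas}) is in the same spirit as the paper's Lemma~\ref{ubc}. The gap is in the treatment of (\ref{sas}), where the proposal relies on a reduction that does not exist.

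The central problem is that you invoke Lemma~\ref{DGS} as the source of the $\min\{k,m-k+1\}^{-2}$ law, treating it as a $d'=0$ base case to which the general $B$ can be conjugated after pulling $J$ back by $B^{\ell}$ to a sub-interval $\widetilde J$ of the mother interval near $1$. But Lemma~\ref{DGS} is stated and proved only for the degree-$2$ Douady--Ghys model; for $d\ge 3$ the paper explicitly derives the ``$d'=0$'' statement \emph{from} Lemma~\ref{U-S-G} by a limiting argument, not the other way round, so the reduction is circular. More substantively, the saddle-node law is not a property of some avoided-critical-points sub-interval of the mother interval that can be transported forward by a single $B^{\ell}$; the intervals $J_1,\dots,J_m$ are consecutive fundamental domains of the almost-parabolic first-return map $B^{q_n}$ (one has $B^{q_n}(J_{k+1})\subset \overline{J_k}$), and the $1/k^2$ geometry must be extracted from the dynamics of $B^{q_n}$ \emph{as a higher-degree map} on $J_3\cup\cdots\cup J_{m-1}$. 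The paper does this by verifying the two hypotheses of Yoccoz's almost-parabolic lemma (Lemma~\ref{Y-L}): Lemma~\ref{u-b-c} gives commensurability of the two endpoint fundamental domains with $J$, and Lemma~\ref{neg-s} establishes $S(B^{q_n})<0$ on $J_3\cup\cdots\cup J_{m-1}$. The negative-Schwarzian estimate is the technical heart of the lemma --- it requires the compactness of $\mathcal{S}_d^\alpha$ to get a uniform bound $SB(x) < -{\rm dist}(x,\Omega_B)^{-2}$ near critical points, uniform distortion bounds for $B^j$ (Lemma~\ref{uniform-dr}, via Koebe), and a careful splitting of the chain-rule sum for $SB^{q_n}$ into terms near and far from $\Omega_B$ --- and it is absent from your proposal. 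The obstacle you flag at the end (distortion of $B^{\ell}$ near the critical endpoint $1$) is real, but the remedy you suggest (composing the power law Lemma~\ref{HC2} along the orbit) would at best control distortion ratios; it cannot, by itself, manufacture the quadratic decay of (\ref{sas}), which comes specifically from Yoccoz's lemma.
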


As in  the proof of Lemma~\ref{DGS} (cf. \cite{dFdM}), the basic tool used in our  proof of Lemma~\ref{U-S-G} is Yoccoz's almost parabolic lemma.
  Before we state this lemma let us introduce a terminology first.
  Let $n$  be a positive integer and  $I_1, \cdots, I_n$ be
consecutive intervals on the line or circle. According to \cite{dFdM},  by an $\emph{almost parabolic
map}$ of length $n$ and  fundamental domains $I_1, \cdots, I_n$,   we
mean a negative-Schwarzian diffeomorphism $$f: I_1 \cup \cdots I_n
\to I_2 \cup \cdots \cup I_{n+1}$$ such that $f(I_j) = I_{j+1}$. The
basic geometric estimate on almost parabolic maps is

\begin{lem}[Yoccoz's almost parabolic lemma, cf. \cite{dFdM}]\label{Y-L}  Suppose that $I = \bigcup_{i=1}^{n} I_i$
 and  $f : I \to  f(I)$ is an almost parabolic map of
length $n$  with  fundamental domains $I_j, 1 \le j \le n$.  If $|I_1|
\ge \sigma \cdot |I|$ and $|I_n| \ge \sigma \cdot |I|$ for some
$\sigma > 0$, then
$$
\frac{1}{C_\sigma } \frac{|I|}{\min\{j, n-j+1\}^2} \le |I_j| \le
{C_\sigma} \frac{|I|}{\min\{j, n-j+1\}^2}
$$
where $C_\sigma > 1$ is a constant depending only on $\sigma$.
\end{lem}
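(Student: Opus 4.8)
The asserted estimate is invariant under affine rescaling, so I would begin by normalizing so that $I=I_1\cup\cdots\cup I_n=[x_0,x_n]$ with $x_0<x_1<\cdots<x_n<x_{n+1}$, $f(x_{j-1})=x_j$ and $I_j=(x_{j-1},x_j)$; write $u_j=|I_j|/|I|$, so that $\sum_{j=1}^n u_j=1$ and $u_1,u_n\ge\sigma$. The only inputs are the negative Schwarzian of $f$ and these two macroscopic links; the argument is Yoccoz's, and I would follow the exposition in \cite{dFdM}.

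\textbf{Step 1 (toolkit).} Since the Schwarzian derivative of a composition of negative-Schwarzian maps is again negative, every iterate $f^m$ is a negative-Schwarzian diffeomorphism on the interval where it is defined. I would use the two standard consequences: the cross-ratio inequality $D(a,b,c,d,g)\ge 1$ for any such $g$ and any quadruple $a<b<c<d$ in its domain (cross-ratios are never contracted); and the Koebe distortion principle (a negative-Schwarzian diffeomorphism of an interval $J$, restricted to a subinterval both of whose complementary gaps have relative size $\ge\tau$, has distortion bounded by a constant depending only on $\tau$).

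\textbf{Step 2 (control at the ends).} Using the definite space furnished by $u_1,u_n\ge\sigma$, I would first show that a bounded number of links at each end — say $u_1,u_2$ and $u_{n-1},u_n$ — are all comparable to $1$ with constants depending only on $\sigma$: applying the cross-ratio inequality and Koebe to quadruples of orbit points clustered near $x_0$ (resp.\ near $x_n$), where the macroscopic link supplies the required gaps, rules out any large cross-ratio, hence any large distortion, near the ends. This reduces the lemma to the interior links, and it also fixes, to within $\sigma$-controlled constants, the values at $k=1$ and $k=n$ of the cross-ratio quantities introduced next.

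\textbf{Step 3 (the $1/k^2$ law — the heart).} In the bulk I would look at cross-ratios of quadruples of orbit points ``anchored'' at one end of the chain and ``sweeping'' the other, e.g.
$$(x_{k-1},\,x_k,\,x_{n-1},\,x_n)\ \xrightarrow{\,f\,}\ (x_k,\,x_{k+1},\,x_n,\,x_{n+1}),\qquad (x_0,\,x_k,\,x_{k+1},\,x_{k+2})\ \xrightarrow{\,f\,}\ (x_1,\,x_{k+1},\,x_{k+2},\,x_{k+3}).$$
Rewriting the cross-ratio inequality for these families in terms of the $u_j$ and the partial sums $x_k-x_0$, $x_n-x_k$ yields that two distinct cross-ratio quantities along the orbit are monotone, in ways that — given the end values from Step 2 and the constraint $\sum u_j=1$ — squeeze the sequence $v_k:=1/\sqrt{u_k}$ between two tents: one gets $|v_{k+1}-v_k|\le C(\sigma)$ for all $k$, and $v_k\ge c(\sigma)\min\{k,n+1-k\}$ with the maximum of $v$ attained at an index $\asymp_\sigma n$. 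Walking in from whichever macroscopic end is nearer gives $v_k\le C(\sigma)\min\{k,n+1-k\}$, and together $v_k\asymp_\sigma\min\{k,n+1-k\}$, i.e.
$$\frac1{C_\sigma}\,\frac{|I|}{\min\{k,n+1-k\}^2}\ \le\ |I_k|\ \le\ C_\sigma\,\frac{|I|}{\min\{k,n+1-k\}^2},$$
which is the assertion (the discrepancy between $n-j$ and $n+1-j$ in the stated formula only affects the constant).

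\textbf{Main obstacle.} Step 3 is the crux: extracting genuine \emph{two-sided} constants from the one-sided cross-ratio inequalities. The upper bound on the increments of $v_k$ is the soft half, coming from Koebe and the space $\sigma$. The matching lower bound — which encodes the parabolic slow-down and pins the rate at exactly $k^2$ rather than merely $\lesssim k^2$ — is delicate: it has to come from playing the two monotone cross-ratio quantities against each other together with $\sum u_j=1$, which is also what forces the turning point of the tent to sit near the middle, and is the place where the two macroscopic ends of the chain genuinely interact. This is precisely Yoccoz's computation, which I would carry out following \cite{dFdM}.
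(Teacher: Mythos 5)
The paper does not prove this lemma at all: it is imported as a black box, stated with the citation to de Faria--de Melo \cite{dFdM}, so there is no internal argument to compare yours against. Your sketch follows the same standard Yoccoz negative-Schwarzian/cross-ratio route as that cited source (normalization, Koebe and cross-ratio non-contraction, comparability of the boundary links, and the $1/\sqrt{|I_j|}$ ``tent'' estimate), with the decisive two-sided bound in your Step 3 deferred to \cite{dFdM} rather than carried out --- which matches how the paper itself treats the statement.
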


Let us begin the proof of Lemma~\ref{U-S-G}. The first assertion of Lemma~\ref{U-S-G} is implied by the following lemma.  Recall that  $\mathcal{Q}_n^0 = \mathcal{Q}_n$.

\begin{lem}\label{ubc} Let $I$ and $J$ be the intervals in Lemma~\ref{U-S-G}.  Then
 for any interval component $S$ of $I \setminus \mathcal{Q}_{n+1}$, if
 $$
 \overline{S} \cap \bigcup_{i=0}^{d'}
\mathcal{Q}_{n}^i \ne \emptyset,$$ then $|S| > |I|/K$ where $K > 1$ is some constant depending only on $d$.
\end{lem}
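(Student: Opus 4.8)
\emph{Proof proposal.} The plan is to recognise $\bigcup_{i=0}^{d'}\mathcal{Q}_n^i$ (with the convention $\mathcal{Q}_n^0=\mathcal{Q}_n$) as the set of critical points of the iterate $g:=B^{q_n}$ on $\Bbb T$: a point $z$ is critical for $B^{q_n}$ if and only if $B^j(z)$ is a critical point of $B$ for some $0\le j<q_n$, i.e. if and only if $z\in\mathcal{Q}_n^i$ for some $0\le i\le d'$. Since $I$ is a component of $\Bbb T\setminus\mathcal{Q}_n$, its two endpoints lie in $\mathcal{Q}_n$, and by Proposition~\ref{crp} together with Lemma~\ref{sub-int} the interior of $I$ contains at most $2d'\le 2(d-2)$ further points of $\bigcup_{i\ge1}\mathcal{Q}_n^i$. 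Writing $S_1,\dots,S_m$ for the closures of the components of $I\setminus\mathcal{Q}_{n+1}$ in order, Proposition~\ref{crp} shows that $g$ carries $S_{s+1}$ \emph{onto} $S_s$ when $S_{s+1}$ contains no critical point of $g$, and otherwise maps $S_{s+1}$, with a fold at that critical point, onto an interval containing $S_s$. An interval $S$ as in the statement is thus one of the $S_t$ whose closure contains a critical point $c$ of $g$: either $S=S_1$ or $S=S_m$, with $c$ an endpoint of $I$ (``type $0$''), or $2\le t\le m-1$ and $c\in\bigcup_{i\ge1}\mathcal{Q}_n^i$ lies in $\overline{S_t}$.

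The case $S=S_1$ or $S_m$ is the assertion that the extreme fundamental domain of a gap of $\mathcal{Q}_n$ is commensurable to the gap; since it involves only the combinatorics of $\mathcal{Q}_n,\mathcal{Q}_{n+1}$ and the uniform real bounds, it is proved exactly as for $d=2$ (cf. Lemma~\ref{DGS} and \cite{PZ},\cite{dFdM}). For the remaining case, write $c=c_i^{\,k}$ with $B^{k}(c)=c_i$, $0\le k<q_n$, so that $g=B^{\,q_n-k}\circ B^{k}$. The orbit of the tiny interval $S_t$ is disjoint for the next $q_n$ iterates by the closest-return property, so Herman's cross-ratio distortion bound (Lemma~\ref{HC1}) applies to $B^{k}$ and to $B^{\,q_n-k-1}$ along this orbit, while $B$ itself obeys the uniform power law of Lemma~\ref{HC2} at $c_i$. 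The uniform real bounds (Theorem~\ref{real bounds}) then show, on the one hand, that $g$ sends intervals of the level-$n$ dynamical partition to commensurable such intervals — so the ``folded power map'' $g$ near $c$ is normalised at scale $\asymp|I|$ — and, on the other, that $|S_{t-1}|\asymp|S_t|$. The upshot is an inequality
$$
|S_{t-1}|\ \lesssim\ |I|^{\,1-p}\,|S_t|^{\,p},\qquad 2\le p\le 2d-1,
$$
coming from $|g(S_t)|\ge|S_{t-1}|$ together with the fact that the degree-$p$ fold sends $S_t$ to an interval of length $\asymp|I|^{1-p}|S_t|^{p}$; combined with $|S_{t-1}|\asymp|S_t|$ this yields $|S_t|\gtrsim|I|$, with constants depending only on $d$.

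The main obstacle is the middle step: establishing that $g=B^{q_n}$ is uniformly comparable, near each of its critical points in $\overline I$, to a power map of bounded degree \emph{at the scale $\asymp|I|$}, uniformly in $B\in\mathcal{S}_d^\alpha$ and in $n$. This is precisely where the compactness of $\mathcal{H}_d$ (the normality and the uniform power law behind Lemma~\ref{HC2}) and the uniformity of the Herman-Swiatek bounds are indispensable, and where Lemma~\ref{sub-int} is essential: it bounds, independently of $n$, the number of critical points of $B$ met by the orbit segment of $S_t$ that defines $g$, so that Lemma~\ref{HC1} is used only boundedly often and the scale of ``pure power'' behaviour of $g$ at $c$ does not collapse from the level-$n$ mesh $|I|$ down to the level-$(n+1)$ mesh. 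Once this is in place the remaining estimates are the same bounded-distortion and almost-parabolic computations already used for Lemma~\ref{DGS}.
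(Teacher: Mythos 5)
Your proposed route — renormalising $B^{q_n}$ as a power map at scale $|I|$ near each of its critical points in $\overline{I}$ and then closing the recursion $|S_{t-1}|\asymp|I|^{1-p}|S_t|^{p}$, $|S_{t-1}|\asymp|S_t|$ — is not what the paper does, and its central step is stated rather than proved. You yourself flag the assertion ``$B^{q_n}$ is uniformly comparable, near each of its critical points in $\overline I$, to a power map of bounded degree at the scale $\asymp|I|$'' as the ``main obstacle''; the paragraph that follows (Lemma~\ref{sub-int} bounds the number of critical encounters, Lemma~\ref{HC1} is used boundedly often) is a heuristic sketch, not a demonstration, that the scale of power-law behaviour of $B^{q_n}$ at a critical point is $\asymp|I|$ rather than collapsing to the level-$(n+1)$ mesh. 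But that scale control \emph{is} essentially the content of the lemma being proved, so the argument as written is circular.

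The paper's proof of Lemma~\ref{ubc} sidesteps any renormalisation of $B^{q_n}$. It takes the least $t$ with $B^t(x)=c_i$ for some $x\in\overline{S}$ and $0\le i\le d'$, forms at $c_i$ two quadruples of consecutive points of the level-$(n-1)$ dynamical partition \emph{for $c_i$} (whose cross-ratios are bounded below by Theorem~\ref{real bounds}), pulls both quadruples back by $B^{-t}$, and controls the resulting cross-ratio distortion by Lemma~\ref{HC1}, using $t<q_n$ and the closest-return structure to bound intersection multiplicities. This yields (\ref{inc}), hence $|[x,B^{-q_n}(x)]|>\frac{\lambda(d)}{2+\lambda(d)}\,|I|$, and a final comparison of $[x,B^{-q_n}(x)]$ with $\overline{S}\cup[x_{k+(l+1)q_n},x_{k+(l+2)q_n}]$ via Theorem~\ref{real bounds} finishes. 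There is no split into $S=S_1,S_m$ versus interior $S$, and no appeal to a power law for $B^{q_n}$. Two further slips in your write-up: $B^{q_n}|\Bbb T$ is a circle homeomorphism, hence monotone, so there is no ``fold'' — $g(S_{s+1})=S_s$ exactly, not an interval properly containing it — and Lemma~\ref{HC2} is a statement about a single $B\in\mathcal{H}_d$, not about $B^{q_n}$, so it does not by itself supply the two-sided, correctly-normalised power bound your inequality requires.
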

\begin{proof}
The argument is  standard. By Proposition~\ref{crp} we have two cases: either $I = (x_k, x_j)$ where $k = j + q_{n-1}$
and $0 \le j <  q_{n} - q_{n-1}$, or  $I = (x_j, x_k)$ where $k = j + q_n - q_{n-1}$ and $0
\le j < q_{n-1}$.   In the first case, by (\ref{ref-a}) $S$ either has the form $(x_{k+lq_n}, x_{k+(l+1)q_n})$ for some $0 \le l \le a_{n+1}-2$, or has the form $(x_{k+(a_{n+1}-1)q_n}, x_j)$.  In the second case, by (\ref{ref-b}) $S$ either has the form $(x_{j+lq_n}, x_{j+(l+1)q_n})$ for some $0 \le l \le a_{n+1}-1$ or has the form $(x_{j+a_{n+1} q_n}, x_k)$. Since the proofs of all these four subcases are similar to each other, let us only deal with the first subcase. With very minor changes of the argument, the reader shall easily prove the remaining three subcases.

Now let us suppose $I = (x_k, x_j)$ where $k = j + q_{n-1}$
and $0 \le j <  q_{n} - q_{n-1}$ and $S = (x_{k+lq_n}, x_{k+(l+1)q_n})$ for some $0 \le l \le a_{n+1}-2$.
Since $\overline{S} \cap \bigcup_{i=0}^{d'}
\mathcal{Q}_{n}^i \ne \emptyset$, there is a least integer $0 \le t \le q_n-1$  and a point $x \in \overline{S}$ such that $B^t(x) = c_i$ for some $0 \le i \le d'$.  For $k \ge 0$, recall that $c_i^k$ denote the point in $\Bbb T$ such that $B^k(c_i^k) = c_i$.

Consider the following two group of intervals
$${\rm I.}\:\: [c_i, c_i^{q_n}], [c_i^{q_n}, c_i^{q_n - q_{n-1}}], [c_i^{q_n - q_{n-1}}, c_i^{q_n - 2q_{n-1}}]$$
and
$${\rm II.}\:\: [c_i^{2q_{n-1}}, c_i^{q_{n-1}}], [c_i^{q_{n-1}}, c_i], [c_i, c_i^{q_n}].$$
In the case that $q_n = q_{n-2} + q_{n-1}$, we replace the last interval in the first group by $[c_i^{q_{n-2}}, c_{i}^{q_{n-2} + q_n}]$.
These intervals belong to the collection of the intervals of the dynamical partition of level $n-1$ with respect to the critical point $c_i$, and moreover, they are adjacent to each other.  By Theorem~\ref{real bounds}, these  intervals are $C(d)$-commensurable with $C(d)> 1$ being  some constant depending only on $d$. Thus the cross ratios of both $(c_i, c_i^{q_n}, c_i^{q_n - q_{n-1}}, c_i^{q_n - 2q_{n-1}})$ and $(c_i^{2q_{n-1}}, c_i^{q_{n-1}}, c_i, c_i^{q_n})$ have a lower bound $\kappa(d) > 0$ depending only on $d$. Pull back the two group of intervals by $B^{-t}$. We get the following two group of intervals
$$ {\rm I'.}\:\: [x, B^{-q_n}(x)], [B^{-q_n}(x), B^{-q_n +q_{n-1}}(x)], [B^{-q_n +q_{n-1}}(x), B^{-q_n +2q_{n-1}}(x)]$$ and
$${\rm II'.}\:\:[B^{-2q_{n-1}}(x), B^{-q_{n-1}}(x)], [B^{-q_{n-1}}(x), x], [x, B^{-q_n}(x)].$$  Since $ 0\le t < q_n$, the pull backs of each interval in I and II by $B^i, i = 0, 1, \cdots, t$, are disjoint.  Thus the intersection multiplicity  of the pull backs of each of the two groups  is not greater than $3$.
Now by Lemma~\ref{HC1},  it follows that the cross ratios of both $$(x, B^{-q_n}(x), B^{-q_n +q_{n-1}}(x), B^{-q_n +2q_{n-1}}(x))$$ and $$(B^{-2q_{n-1}}(x), B^{-q_{n-1}}(x),  x,  B^{-q_n}(x))$$ have a positive lower bound $\eta(d) > 0$ with $\eta(d) > 0$ being some constant depending only on $d$. This then implies that
\begin{equation}\label{inc}
|[x, B^{-q_n}(x)]|> \lambda(d) \cdot|[B^{-q_n}(x), B^{-q_n +q_{n-1}}(x)]|\hbox{  and  } |[x, B^{-q_n}(x)]| > \lambda(d) \cdot | [B^{-q_{n-1}}(x), x]|
\end{equation} with $\lambda(d) > 0$ being some constant depending only on $d$.

By assumption we have  $I = (x_k, x_j) = (B^{-q_{n-1}}(x_j), x_j)$ and $x \in \overline{S} \subset \overline{I}$. Thus  we have $$I \subset   [B^{-q_{n-1}}(x), x]\cup[x, B^{-q_n}(x)] \cup [B^{-q_n}(x), B^{-q_n +q_{n-1}}(x)]. $$  This, together with (\ref{inc}), implies   that
\begin{equation}\label{b-1}
|[x, B^{-q_n}(x)]| > \frac{\lambda(d)}{2+\lambda(d)} \cdot |I|.
\end{equation} By assumption $\overline{S} = [x_{k+lq_n}, x_{k+(l+1)q_n}]$.
By the first assertion of   Theorem~\ref{real bounds} we have
\begin{equation}\label{check-1}
[x_{k+(l+1)q_n}, x_{k+(l+2)q_n}]| < C(d) \cdot |[x_{k+lq_n}, x_{k+(l+1)q_n}]| = C(d) \cdot |S|
 \end{equation}where $C(d) > 1$ is some constant depending only on $d$. Since $x \in \overline{S} = [x_{k+lq_n}, x_{k+(l+1)q_n}]$,  we have
\begin{equation}\label{b-2}
[x, B^{-q_n}(x)] \subset \overline{S} \cup [x_{k+(l+1)q_n}, x_{k+(l+2)q_n}].
\end{equation}
From (\ref{b-1})-(\ref{b-2}) we have
$$
|S| > \frac{1}{1 + C(d)} \cdot |[x, B^{-q_n}(x)]| > \frac{1}{1 + C(d)} \cdot \frac{\lambda(d)}{2+\lambda(d)} \cdot |I|.
$$
\end{proof}

Now let us prove the second assertion of  Lemma~\ref{U-S-G}.
Let $J_1, \cdots, J_m$ be the intervals in Lemma~\ref{U-S-G}. Since any two adjacent interval components in $\Bbb T \setminus \mathcal{Q}_{n+1}$ are $K$-commensurable for some $1 < K < \infty$ depending only on $d$,  it suffices to assume that  $m \ge 4$ and prove $J_3, \cdots, J_{m-1}$ satisfies the uniform saddle node geometry described by  (\ref{sas}).
 Let us consider the diffeomorphism $$B^{q_n}: J_3 \cup \cdots \cup J_{m-1} \to J_2 \cup \cdots \cup J_{m-2}.$$
From Lemma~\ref{Y-L},   we need only to check two conditions. The first one is to show that the two boundary sub-intervals, that is, $J_3$ and $J_{m-1}$, are uniformly commensurable with the whole interval $J_3 \cup \cdots \cup J_{m-1}$. The second one is to show that $B^{q_n}$ has negative Schwarze derivative on $J_3\cup \cdots J_{m-1}$.  Since $J \supset J_3 \cup \cdots \cup J_{m-1} \supset J_3 \cup J_{m-1}$, the following lemma implies the first condition.

\begin{lem}\label{u-b-c}
There exists a $\sigma > 0$ depending only on $d$ such that $|J_3| > \sigma\cdot |J|$ and $|J_{m-1}| > \sigma \cdot |J|$.
\end{lem}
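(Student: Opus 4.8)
The plan is to work entirely at the combinatorial level provided by Proposition~\ref{crp}, Theorem~\ref{real bounds} and Lemma~\ref{ubc}. First I would observe that $J_1,\dots,J_m$ are in fact \emph{consecutive} interval components of $I\setminus\mathcal{Q}_{n+1}$: any component of $I\setminus\mathcal{Q}_{n+1}$ lying strictly between two of the $J_k$ is trapped inside the interval $J$, does not straddle $\partial J$, and is therefore itself one of the $J_k$. By the last assertion of Proposition~\ref{crp} (which rests on the second assertion of Theorem~\ref{real bounds}), any two adjacent components of $\mathbb{T}\setminus\mathcal{Q}_{n+1}$ are $K$-commensurable for a constant $K>1$ depending only on $d$. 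Hence $|J_3|\ge K^{-2}|J_1|$ and $|J_{m-1}|\ge K^{-1}|J_m|$, so it suffices to bound $|J_1|$ and $|J_m|$ from below by a definite multiple of $|J|$; and since $J\subseteq I$ it is in fact enough to bound them below by a multiple of $|I|$.

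To do this I would look at the two endpoints $p$ (left) and $q$ (right) of $J$. Since $J$ is a component of $I\setminus\bigcup_{i=1}^{d'}\mathcal{Q}_n^i$, each of $p,q$ lies either in $\partial I\subseteq\mathcal{Q}_n=\mathcal{Q}_n^0$ or in $\bigcup_{i=1}^{d'}\mathcal{Q}_n^i$; in either case $p,q\in\bigcup_{i=0}^{d'}\mathcal{Q}_n^i$. Fix $p$ and let $S^{+}$ be the component of $I\setminus\mathcal{Q}_{n+1}$ which has $p$ in its closure and lies on the side of $p$ containing $J$. Then $\overline{S^{+}}\cap\bigcup_{i=0}^{d'}\mathcal{Q}_n^i\ne\emptyset$, so Lemma~\ref{ubc} gives $|S^{+}|>|I|/C$ with $C$ the constant of that lemma. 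Now there are two cases. If $p\in\mathcal{Q}_{n+1}$, then $p$ is the left endpoint of $S^{+}$; the hypothesis that $J$ contains at least one component of $I\setminus\mathcal{Q}_{n+1}$ forces the right endpoint of $S^{+}$ to lie in $\overline J$ (otherwise $J$ sits inside the single gap $S^{+}$ and contains no gap at all), so $S^{+}\subseteq J$ and, being the leftmost such component, $S^{+}=J_1$; hence $|J_1|>|I|/C$. If $p\notin\mathcal{Q}_{n+1}$, then $p$ is interior to $S^{+}$, so $S^{+}\not\subseteq J$; the same ``$J$ contains a gap'' argument shows there are at least two points of $\mathcal{Q}_{n+1}$ in $J$, so $J_1$ is the component of $I\setminus\mathcal{Q}_{n+1}$ immediately to the right of $S^{+}$, and since $S^{+}$ and $J_1$ are adjacent, $|J_1|\ge|S^{+}|/K>|I|/(CK)$. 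The symmetric argument at $q$ gives $|J_m|>|I|/(CK)$.

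Combining the two steps, $|J_3|\ge K^{-2}|J_1|>|I|/(CK^{3})\ge|J|/(CK^{3})$ and likewise $|J_{m-1}|\ge K^{-1}|J_m|>|J|/(CK^{2})$, so the lemma holds with $\sigma=(CK^{3})^{-1}$, a constant depending only on $d$. The step I expect to require the most care is the case analysis at the endpoints of $J$: Lemma~\ref{ubc} is phrased for components of $I\setminus\mathcal{Q}_{n+1}$, whereas an endpoint of $J$ coming from $\mathcal{Q}_n^i$ with $i\ge 1$ is typically not a point of $\mathcal{Q}_{n+1}$, so one must apply Lemma~\ref{ubc} to the neighbouring gap $S^{+}$ rather than to $J_1$ itself and then transfer the estimate across a single adjacency; and it is precisely the hypothesis that $J$ contains a component of $I\setminus\mathcal{Q}_{n+1}$ that must be invoked to rule out the degenerate configurations in which such a $J_1$ fails to exist.
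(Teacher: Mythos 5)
Your proof is correct and follows essentially the same route as the paper's: reduce $|J_3|$ (resp.\ $|J_{m-1}|$) to $|J_1|$ (resp.\ $|J_m|$) by one or two applications of $K$-commensurability of adjacent gaps, then bound $|J_1|$ below by splitting on whether the boundary point $p$ of $J$ is a point of $\mathcal{Q}_{n+1}$ (in which case Lemma~\ref{ubc} applies directly to $J_1$) or not (in which case Lemma~\ref{ubc} applies to the gap $S^{+}$ containing $p$ and one transfers the estimate to the adjacent $J_1$). The only difference is that you make explicit the role of the hypothesis that $J$ contains a full gap in ruling out degenerate configurations, which the paper leaves implicit.
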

\begin{proof}
Let us prove the first inequality only. The second one can be proved by the same argument.  Since $J_1$,  $J_2$ and $J_3$ are interval components in $\Bbb T \setminus \mathcal{Q}_{n+1}$ and adjacent to each other, $J_3$ is $K$-commensurable with $J_1$ for some $1 < K < \infty$ depending only on $d$. It suffices to prove that $|J_1| > \sigma|J|$ for some $\sigma > 0$ depending only on $d$. There are two cases. In the first case $J_1$ has a common boundary point with $J$. Then the boundary point must be a point in $\bigcup_{i=0}^{d'} \mathcal{Q}_n^i$. By Lemma~\ref{ubc}, we have $|J_1| > |I|/K > |J|/K$ for some $K > 1$ depending only on $d$. In the second case, $J_1$ is adjacent to an interval component of $I \setminus \mathcal{Q}_{n+1}$, say $S$,  which contains a boundary point of $J$. Again this boundary point must be a point in $\bigcup_{i=0}^{d'} \mathcal{Q}_n^i$.  Then we get $|J_1| > |S|/K$ by Proposition~\ref{crp}  and  get $|S| > |I|/K > |J|/ K$ by Lemma~\ref{ubc}  where  $K> 1$ is some constant depending only on $d$. This implies that $|J_1|  > |J|/K^2$.  The same argument can be used to prove that $|J_{m-1}| > \sigma \cdot |J|$ for some $\sigma > 0$ depending only on $d$. This proves the lemma.
\end{proof}

It remains to prove that $B^{q_n}$ has negative Schwarz derivative on $J_3 \cup \cdots \cup J_{m-1}$.
Here when we talk about the Schwarz derivatives of the iterations of $B$,  we regard $\Bbb T$ as $\Bbb R/\Bbb Z$ and $B: \Bbb T \to \Bbb T$ as its lift $\widetilde{B}: \Bbb R \to \Bbb R$ and regard the intervals $J_i$ in $\Bbb T$ as its lift $\tilde{J}_i$ in $\Bbb R$. In this way, $B$ is real analytic in a strip neighborhood of $\Bbb R$, and moreover, $B(x + 1) = B(x) + 1$. To simplify the notation we still use $B$ and $J_i$ to denote these objects.

 \begin{lem}\label{uniform-dr}
 There is an $M > 1$ depending only on $d$ such that for any $x$ and $y$ in  $J_3 \cup \cdots \cup J_{m-1}$ and all $1 \le k \le q_n$, we have
 $$
 M^{-1} < \frac{DB^{k}(x)}{DB^{k}(y)} < M.
 $$
 \end{lem}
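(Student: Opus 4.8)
The strategy is the standard one for deriving bounded distortion from real bounds (cf. the appendix of \cite{dFdM}). Three ingredients have to be assembled. First, for every $1\le k\le q_{n}$ the restriction $B^{k}|_{J}$ is a genuine diffeomorphism onto its image: $B|_{\Bbb T}$ is a homeomorphism, and since $J$ is, by construction, a connected component of $I\setminus\bigcup_{i=0}^{d'}\mathcal{Q}_{n}^{i}$, none of $J,B(J),\dots,B^{q_{n}-1}(J)$ meets a critical point of $B$, so $(B^{k})'$ has no zero on $J$. (At the two endpoints of $\overline{J}$ a lower iterate can land on a circle critical point, which is why one works on the open interval $J$ and only for $x,y$ in the interval $M:=\overline{J_{3}\cup\cdots\cup J_{m-1}}$, which is compactly contained in $J$; when $m\le 3$ the set $J_{3}\cup\cdots\cup J_{m-1}$ is empty and the lemma is vacuous, so assume $m\ge 4$.) Second, $M$ already carries a definite amount of Koebe space inside $J$ at the circle scale: the two components of $J\setminus M$ contain $J_{1}$ and equal $J_{m}$ respectively, and $J_{1},J_{m}$ are interval components of $I\setminus\mathcal{Q}_{n+1}$ whose closures meet $\bigcup_{i=0}^{d'}\mathcal{Q}_{n}^{i}$ (each contains an endpoint of $J$), so Lemma~\ref{ubc} gives $|J_{1}|,|J_{m}|>|I|/C(d)\ge|J|/C(d)\ge|M|/C(d)$. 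Third, the forward orbit $\{B^{j}(I)\}_{j=0}^{q_{n}-1}$ has intersection multiplicity bounded by an absolute constant: this is the combinatorics of closest returns recorded in Proposition~\ref{crp}, transported from the rigid rotation $R_{\alpha}$ through the topological conjugacy, since a homeomorphism preserves intersection multiplicity exactly.

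Granting these, the argument runs as follows. Since $J_{1}\cup J_{2}\subset I$, the forward orbit $\{B^{j}(J_{1}\cup J_{2})\}_{j=0}^{k-1}$ also has bounded intersection multiplicity, hence splits into a bounded number of subfamilies with pairwise disjoint iterates; applying Herman's cross-ratio distortion estimate (Lemma~\ref{HC1}) on each subfamily to the quadruple formed by the endpoints of $J$ and of $M$, and combining with the real bounds (Theorem~\ref{real bounds}), one converts the circle-scale Koebe space of the second ingredient into a Koebe space in the image that is uniform in $k$: there is $\tau(d)>0$, depending only on $d$, such that for every $1\le k\le q_{n}$ each of the two components of $B^{k}(J)\setminus B^{k}(M)$ has length at least $\tau(d)\cdot|B^{k}(M)|$. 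This is precisely the hypothesis of the Koebe distortion lemma applied to the diffeomorphism $B^{k}\colon J\to B^{k}(J)$ (a finite Blaschke product has negative Schwarzian derivative on $\Bbb T$, and this is inherited by every iterate on any interval where it is a local diffeomorphism), which then bounds the distortion of $B^{k}|_{M}$ by a constant depending only on $\tau(d)$, hence only on $d$; exchanging $x$ and $y$ yields the two-sided estimate asserted in Lemma~\ref{uniform-dr}.

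The step that demands the most care is exactly this passage from the circle-scale bound (Lemma~\ref{ubc}) to a Koebe space in the image that is uniform in $k$: under iteration the middle intervals of the dynamical partition get crushed, and it is only the combination of the bounded intersection multiplicity of $\{B^{j}(I)\}_{j<q_{n}}$ with Herman's cross-ratio estimate that keeps the two extreme intervals $B^{k}(J_{1})$ and $B^{k}(J_{m})$ from shrinking relative to $B^{k}(J)$. One should also note that this is another place where the \emph{compactness} of the family $\mathcal{S}_{d}^{\alpha}$ enters, through the uniformity in $d$ of the constants in Lemma~\ref{HC1} and Theorem~\ref{real bounds}; no such uniform estimate is available for the corresponding non-compact families of rational maps, which is consistent with the theme of Lemma~\ref{uniform-p} and the surrounding discussion.
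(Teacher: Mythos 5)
Your route is genuinely different from the paper's, and it contains a circularity. The paper's argument for Lemma~\ref{uniform-dr} is purely complex-analytic and needs neither cross-ratio estimates nor any Schwarzian control: observing that $B^{q_n}\colon J_2\cup\cdots\cup J_m\to J_1\cup\cdots\cup J_{m-1}$ is a diffeomorphism (so the image contains no critical value of $B^{q_n}$), and using the commensurability of $J_1$ and $J_{m-1}$ with the whole image interval, it produces a Jordan domain $U\subset\Bbb C\setminus\{0\}$ with $U\cap\Bbb T=J_1\cup\cdots\cup J_{m-1}$ and $\mathrm{mod}\bigl(U\setminus\overline{J_2\cup\cdots\cup J_{m-2}}\bigr)$ bounded below in terms of $d$ alone. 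The inverse $B^{-q_n}$ then extends univalently to $U$, and if $V$ is the component of $B^{-q_n}(U)$ containing $J_2\cup\cdots\cup J_m$, the modulus of $V\setminus\overline{J_3\cup\cdots\cup J_{m-1}}$ is preserved. Since $B^{q_n}=B^{q_n-k}\circ B^k$ is univalent on $V$, so is each intermediate iterate $B^k$, $1\le k\le q_n$, and the classical complex Koebe distortion theorem for univalent maps finishes in one stroke, uniformly in $k$.

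The concrete gap in your proposal is the parenthetical claim that a finite Blaschke product has negative Schwarzian on $\Bbb T$, from which you deduce $S(B^k)<0$ and then invoke the real (negative-Schwarzian) Koebe distortion lemma. That fact is not available here: $SB<0$ is not automatic for these Blaschke products, and the paper's explicit computation $SB(x)=-2\Sigma^2-2\Sigma\,g'/g-2\sigma+g''/g-\tfrac32(g'/g)^2$ only forces negativity near the circle critical points. Negativity of the Schwarzian of the relevant return maps $B^{q_n}$ (on the relevant intervals, for $n$ large) is exactly the content of Lemma~\ref{neg-s} — and that lemma is proved \emph{after} Lemma~\ref{uniform-dr} and \emph{uses} it: its proof needs $(DB^j(x))^2|\tilde J|^2\asymp|\tilde J_j|^2$, cited there as a consequence of Lemma~\ref{uniform-dr}, to control the terms of $SB^{q_n}(x)=\sum_{j}SB(B^jx)(DB^j(x))^2$. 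Invoking Schwarzian negativity in the proof of Lemma~\ref{uniform-dr} is therefore circular. The remaining ingredients you assemble (the circle-scale Koebe space from Lemma~\ref{ubc}, bounded intersection multiplicity of $\{B^j(I)\}_{j<q_n}$ transported through the topological conjugacy, Herman's cross-ratio estimate Lemma~\ref{HC1} split over a bounded number of disjoint subfamilies) are sound, and your route could plausibly be repaired by replacing the negative-Schwarzian Koebe lemma with the real macroscopic Koebe principle for diffeomorphisms of bounded cross-ratio distortion, which does not require Schwarzian control; but as written the argument rests on a claim that is neither independently established nor true without the very lemma under proof. The paper sidesteps all of this by passing to the univalent complex inverse, which is both shorter and logically prior.
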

\begin{proof} It is known that the map
$B^{q_n}: J_2 \cup \cdots \cup J_{m} \to J_1 \cup \cdots \cup J_{m-1}$  is a  diffeomorphism. That is to say, $J_1 \cup \cdots \cup J_{m-1}$ contains no critical values of $B^{q_n}$ in its interior.  Since $J_1$ and $J_{m-1}$ are $K$-commensurable with $
J_1 \cup \cdots \cup J_{m-1}$ with  $1< K <\infty$ being a constant depending  only on $d$ (cf. the proof of Lemma~\ref{u-b-c}),  there is a Jordan domain $U$ in the punctured plane $\Bbb C \setminus \{0\}$  such that $U \cap \Bbb T = J_1 \cup \cdots J_{m-1}$ and the modulus of the
annulus $U \setminus \overline{J_2 \cup \cdots \cup J_{m-2}}$ has a positive lower bound depending only on $d$.  Note that  $U$ does not intersect the critical values of  of $B^{q_n}$. So $B^{-q_n}$ can be holomorphically extended to a univalent function on $U$ which maps
$J_1 \cup \cdots \cup J_{m-1}$ to $J_2\cup\cdots \cup J_m$.   Let $V$ be the component of
$B^{-q_n}(U)$ which contains $J_2\cup \cdots \cup J_m$. Then the modulus of $V \setminus \overline{J_3 \cup \cdots \cup J_{m-1}}$ is equal to that of $U \setminus \overline{J_2\cup \cdots\cup J_{m-2}}$ and thus has a positive lower bound depending only on $d$.  It is clear that for every $1 \le k \le q_n$, the map  $B^k$ is univalent  in $V$.   The lemma then follows from  Koebe's distortion theorem.
\end{proof}

\begin{lem}\label{neg-s} There is an $N \ge 1$ such that for any $\alpha \in \Theta_C^b$, any $B \in \mathcal{S}_d^\alpha$ and every $n \ge N$,  if  $J_i$, $1 \le i \le m$, are the intervals in Lemma~\ref{U-S-G}, then $S(B^{q_n})(z) < 0$ for all
 $z \in J_3 \cup \cdots \cup J_{m-1}$, where $S(\cdot)$ denotes the Schwarz derivative.

\end{lem}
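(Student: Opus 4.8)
The plan is to work with the lift of $B$ to $\Bbb R$ (as in the statement) and to use the chain rule for the Schwarzian derivative, applied iteratively to $B^{q_n}=B\circ B^{q_n-1}$, in the form
$$
S(B^{q_n})(z)=\sum_{k=0}^{q_n-1}S(B)\big(B^k(z)\big)\,\big(DB^k(z)\big)^2 .
$$
I would show that, for $n$ large, the right side is dominated by the contribution of a single ``deep'' visit of the orbit $z,B(z),\dots,B^{q_n-1}(z)$ to a small neighbourhood of a critical point of $B$ in $\Bbb T$: that term is negative and of size $\asymp|J|^{-2}$, where $J\ni z$ is the interval from Lemma~\ref{U-S-G}, while the sum of \emph{all} the remaining terms is only $O\big(\rho(d)^{n}\,|J|^{-2}\big)$ for some $\rho(d)<1$; hence the total is negative once $n\ge N$ for a suitable $N=N(d)$. (For $m\le 3$ the set $J_3\cup\cdots\cup J_{m-1}$ is empty and there is nothing to prove.)

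Two uniform ingredients are needed. First, for the Schwarzian of $B$ itself: near a critical point $c\in\Bbb T$ of $B$, necessarily of odd local degree $p\ge3$, there is a real-analytic chart, with $C^3$-norm bounded in terms of $d$ alone by compactness of the Blaschke family (cf.\ the compactness of $\mathcal H_d$ and of (\ref{relt})), in which $B$ becomes $x\mapsto x^p$; since $S(x^p)=\frac{1-p^2}{2x^2}$, the chain rule together with the fact that every pole of $S(B)|_{\Bbb T}$ is a double pole with negative leading coefficient yields constants $C_0(d)>1$ and $\rho_0(d)>0$ with $S(B)(w)\le C_0(d)$ for every $w\in\Bbb T$ off the critical set, and $S(B)(w)\le -C_0(d)^{-1}\,{\rm dist}(w,{\rm Crit}(B))^{-2}$ whenever ${\rm dist}(w,{\rm Crit}(B))<\rho_0(d)$. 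Second, for the geometry of the orbit of $J$: by the uniform real bounds (Theorem~\ref{real bounds}) and Lemma~\ref{HC1}, the maps $B^k|_J$, $0\le k<q_n$, have distortion bounded in terms of $d$ only, the intervals $\{B^k(J)\}_{0\le k<q_n}$ have bounded overlap, and $\max_{I'\in\Pi_{n-1}(B)}|I'|\le C(d)\rho(d)^{n}$ for some $\rho(d)<1$ (the exponential decay of the mesh of the dynamical partitions, which follows from the real bounds and Proposition~\ref{crp}); combining these gives $\sum_{k=0}^{q_n-1}DB^k(z)\le C(d)|J|^{-1}$ and $\max_{0\le k<q_n}DB^k(z)\le C(d)\rho(d)^{n}|J|^{-1}$, hence
$$
\sum_{k=0}^{q_n-1}\big(DB^k(z)\big)^2\le C(d)\,\rho(d)^{n}\,|J|^{-2}.
$$

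It remains to exhibit the deep visit. Both endpoints of $J$ lie in $\bigcup_{i=0}^{d'}\mathcal Q_n^i$, so one of them, $p$, equals $B^{-a}(c)$ for some critical point $c$ of $B$ in $\Bbb T$ and some $0\le a<q_n$; then $c$ is an endpoint of $\overline{B^a(J)}$, so ${\rm dist}(B^a(z),c)\le|B^a(J)|\le C(d)\rho(d)^{n}$, which is $<\rho_0(d)$ once $n$ is large. On the other hand $z\in J_3\cup\cdots\cup J_{m-1}$ is separated from $p$ inside $J$ by $J_1$ (whose length is $\asymp|J|$ by Lemma~\ref{ubc}, as in the proof of Lemma~\ref{u-b-c}), so the bounded distortion of $B^a|_J$ gives ${\rm dist}(B^a(z),c)\asymp DB^a(z)\,|J|$. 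Consequently the $k=a$ term of the cocycle sum is at most $-C_0(d)^{-1}(DB^a(z))^{2}{\rm dist}(B^a(z),c)^{-2}\le -c_1(d)\,|J|^{-2}$ for some $c_1(d)>0$, while every other term with $B^k(z)$ within $\rho_0(d)$ of the critical set is $\le0$; adding this to the previous display,
$$
S(B^{q_n})(z)\le\big(C_0(d)\,C(d)\,\rho(d)^{n}-c_1(d)\big)\,|J|^{-2}<0
$$
as soon as $n\ge N$ with $C_0(d)C(d)\rho(d)^{N}<c_1(d)$ (and $n$ large enough that the normal forms near the critical points are valid at scale $|J|$). The main obstacle is the second ingredient: proving, uniformly in $d$ and in $\alpha\in\Theta_C^b$, the exponential decay of the partition mesh together with the bounded distortion and bounded overlap of the iterates $B^k|_J$ — this is where Theorem~\ref{real bounds}, Lemma~\ref{HC1} and the combinatorics of Proposition~\ref{crp} do the real work — and establishing the local normal form $B\sim x^p$ near the critical points with $d$-dependent control only; once these are in place the domination is transparent.
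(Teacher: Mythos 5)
Your proposal is correct and takes essentially the same route as the paper: both prove the lemma via the cocycle (chain-rule) formula for $S(B^{q_n})$, both isolate a single ``deep visit'' near a critical point of $B$ in $\Bbb T$ whose term is negative of size $\asymp |J|^{-2}$, and both absorb the remaining sum using $S(B)\le C_0(d)$ away from the critical set together with bounded distortion and exponential decay of the dynamical-partition mesh (Theorem~\ref{real bounds}, Lemma~\ref{uniform-dr}, Proposition~\ref{crp}). The only cosmetic differences are that you use a local normal form $x\mapsto x^p$ (with $S(x^p)=\frac{1-p^2}{2x^2}$) where the paper writes out $S(B)$ explicitly as in (\ref{swz})--(\ref{pde}), and that you locate the deep visit directly from an endpoint $p$ of $J$ with $B^a(p)\in{\rm Crit}(B)$ rather than via the auxiliary interval $J_{m+1}$ as in the paper; your step ``${\rm dist}(B^a(z),c)\asymp DB^a(z)\,|J|$'' is stated as ``bounded distortion of $B^a|_J$'' but in fact needs Lemma~\ref{uniform-dr} on $\tilde J$ plus the commensurability of $B^a(J_1),B^a(J_2),B^a(J_3)$ from Theorem~\ref{real bounds}, which is exactly what the paper's version of the argument supplies.
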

\begin{proof}  Let $\mathcal{H}_d$ be the family of Blaschke products defined in (\ref{e-H}). Let $\mathcal{S}_d \subset \mathcal{H}_d$ be the subfamily which contains all $B \in \mathcal{H}_d$ such that all the critical points of $B$, except $0$ and $\infty$, are contained in $\Bbb T$. By the compactness property of $\mathcal{H}_d$ (cf. $\S 15$ of \cite{Ch}), $\mathcal{S}_d$ is compact in the sense that there exits an annular neighborhood $H$ of $\Bbb T$, such that any $B$ in $\mathcal{S}_d$ is holomorphic in $H$, and moreover, for any sequence $\{B_n\}$ in $\mathcal{S}_d$, there is a subsequence $\{B_{n'}\}$ and a $B \in \mathcal{S}_d$ such that $B_{n'}$ converge to $B$ uniformly in any compact set of $H$.

  Recall that  each $B \in \mathcal{H}_d$ can be regarded as a holomorphic function defined in a strip neighborhood of $\Bbb R$  such that $B: \Bbb R \to \Bbb R$ is a homeomorphism and  $B(x+1) = B(x) + 1$.
By the compactness property of $\mathcal{S}_d$, there exists a $\xi > 0$ depending only on $d$  such that every $B \in \mathcal{S}_d$ is holomorphic in
$S = \{x+ iy\:| -\xi < y < \xi\}$. In particular, $B'(x)$ is a periodic function with period $1$ and all the zeros of $B'$ are contained in the real line.
Since $B'$ is periodic and  has at most $d-1$ distinct zeros in each interval $[x, x+1)$,  there is a $0< \kappa < 1$ depending only on $d$  such that for each $B \in \mathcal{S}_d$, we can find a $t \in [0, 1)$ such that all zeros of $B'$ in $[t-\kappa, t+1+\kappa]$ belong to $(t, t +1)$.  By symmetry the order of $B'$ at each zero is even and is not less than two.   Let $c_1, \cdots, c_{d-1}$ denote all the zeros of $B'$ in $(t, t+1)$, counting by multiplicities,
 such that the order of $B'$ at each $c_i$ is exactly two.     Let $U = \{x + iy\:|\: t-\kappa < x < t + 1+\kappa, -\xi < y< \xi\}$.  For $z \in U$,  let
 $$
 B'(z) = g(z) \cdot \prod_{1 \le i \le d-1} (z - c_i)^2.
 $$ Then $g$ is a holomorphic function defined in $U$.  Let $V  = \{x + iy\:|\: t-\kappa/2 < x < t + 1+\kappa/2, -\xi/2 < y< \xi/2\}$. Since $\mathcal{S}_d$ is compact, it follows that there is a $0< \eta <1$ depending only on $d$ such that for all $z \in V$, we have
\begin{itemize}
\item[i.] $|g(z)| \ge  \eta$,
\item[ii.] $|g'(z)| < 1/\eta$,
\item[iii.] $|g''(z)|< 1/\eta$.
\end{itemize}

Let  $x \in (t-\kappa/2, t + 1+ \kappa/2)$ and $x \ne c_i$, $1 \le i \le d-1$.  Let $$P(x) = \prod_{1 \le i \le d-1}  (x - c_i).$$  Then
$
B'(x) =  P^2(x) \cdot g(x)
$.
By direct calculations we have
$$
B''(x) = 2 P(x) P'(x) g(x) + P^2(x) g'(x)$$$$ = 2 P^2(x) \Sigma(x)  g(x) +
P^2(x) g'(x) = P^2(x)(2 \Sigma(x) g(x) + g'(x))
$$
where $$\Sigma(x) = \frac{P'(x)}{P(x)} = \sum_{1 \le i \le d-1} \frac{1}{x - c_i},$$ and
$$
B'''(x) = 2P^2(x) \Sigma(x)(2 \Sigma(x) g(x) + g'(x)) + P^2(x) (-2 \sigma(x) g(x) + 2 \Sigma(x)
g'(x) + g''(x))
$$
where $$\sigma(x) = -\Sigma ' (x) = \sum_{1 \le i \le d-1} \frac{1}{|x - c_i|^2}.$$ Then
$$
\frac{B'''(x)}{B'(x)} = 4 \Sigma^2(x) + 4\Sigma(x) \frac{g'(x)}{g(x)} - 2 \sigma(x)  +
\frac{g''(x)}{g(x)}
$$ and
$$
\frac{B''(x)}{B'(x)} = 2 \Sigma(x) + \frac{g'(x)}{g(x)}.
$$

From
$$
S(B) (x) = \frac{B'''(x)}{B'(x)} - \frac{3}{2}\bigg{(}\frac{B''(x)}{B'(x)}\bigg{)}^2,$$ we finally have
\begin{equation}\label{swz} S(B)(x)= -2
\Sigma^2(x) - 2 \Sigma(x) \frac{g'(x)}{g(x)} - 2 \sigma(x) +\frac{g''(x)}{g(x)} -
\frac{3}{2} \bigg{(}\frac{g'(x)}{g(x)}\bigg{)}^2.
\end{equation}

Let $\Omega_B = \{c_i, 1 \le i \le d-1\}$. Recall that $x \in (t-\kappa/2, t + 1+ \kappa/2)$ and $x \notin \Omega_B$. Let    $$\delta = \min_{1 \le i \le d-1}|x - c_i| = {\rm dist}(x, \Omega_B).$$  We clearly have  \begin{itemize} \item[1.]
$-2 \Sigma^2(x) < 0$,  \item[2.] $- 2 \sigma(x)
\le  -\frac{2}{\delta^2}$.\end{itemize}  From  (i), (ii) and (iii) and the fact that $0< \eta < 1$ depends only on $d$, it follows that there is an $0< L < \infty$ depending only on $d$ such that
 \begin{itemize}  \item[3.] $|-2 \Sigma(x)\frac{g'(x)}{g(x)}| \le  \frac{L}{\delta}$,   \item[4.] $|\frac{g''(x)}{g(x)} -
\frac{3}{2} \big{(}\frac{g'(x)}{g(x)}\big{)}^2| < L$.  \end{itemize}

From (\ref{swz}) and the above  properties (1-4) it follows that there is an $0< \epsilon <1$ depending only on $d$  such that whenever $\delta = {\rm dist}(x, \Omega_B) < \epsilon$, one has
\begin{equation}\label{pde}
S(B)(x) < -\frac{1}{{\rm dist}^2(x, \Omega_B)}.
\end{equation}

Note that until now we have been assuming $B \in \mathcal{S}_d$ only.  Now let us assume that $\alpha \in \Theta_C^b$ and  $B \in \mathcal{S}_d^\alpha$. As before, let $p_n/q_n$ denote the $n$-th convergent of $\alpha$.
Let $L = J_3 \cup \cdots \cup J_{m-1}$ and for $i \ge 0$ let $L_i = B^i(L)$.    Let $x \in L $ be an arbitrary point. Let us consider the sum
\begin{equation}\label{sum-usg}
S(B^{q_n})(x) \cdot  |L|^2  =   \sum_{j=0}^{q_n -1} SB(B^j (x)) (DB^j(x))^2 \cdot |L|^2 .
\end{equation}
By Lemma~\ref{uniform-dr}, it follows that
\begin{equation}\label{usg-1} K_1^{-1} \cdot |L_j|^2  < (DB^j(x))^2 \cdot |L|^2  < K_1 \cdot |L_j|^2\end{equation} where $K_1 > 1$ is some constant depending only on $d$.

Let $U_0 = \{x \in \Bbb T \:|\: {\rm dist}(x, \Omega_B) < \epsilon\}$ and $V_0 = \{x \in \Bbb T \:|\: {\rm dist}(x, \Omega_B) > \epsilon/2\}$. By Remark~\ref{lr-1} there is an $N >0$ depending only on $d$ and $\epsilon$  such that for any $n \ge  N$, any $B \in \mathcal{S}_d^{\alpha}$, the length of any interval $[x, B^{q_{n-1}}x]$ is less than $\epsilon/4$. Since $\epsilon > 0$ depends only on $d$, such $N$ eventually depends only on $d$.
By Proposition~\ref{crp} any component of $\Bbb T \setminus \mathcal{Q}_n$ is contained in some interval with the form $[x, B^{q_{n-1}}(x)]$ or $[B^{q_n}(x), x] \cup [x, B^{q_{n-1}}(x)]$ for some $x \in \Bbb T$.    Since $L$ is contained in some component of $\Bbb T \setminus \mathcal{Q}_n$, it follows that $L$ and thus all $L_j$  are  contained in intervlas with the same form.  This implies that
 for $n > N$,  each $L_j$ has length less than $\epsilon/2$.  Now from the definition of $U_0$ and $V_0$, it follows  that each $L_j$, $0 \le j < q_n$,   is either contained in $U_0$ or contained in $V_0$.    We split the sum in (\ref{sum-usg}) into $\Sigma_1$ and
$\Sigma_2$:  $\Sigma_1$  is taken over  all the terms such that $L_j$ is
contained in  $U_0$,  and $\Sigma_2$ is taken over all the other terms.

By (\ref{pde}) all the terms in $\Sigma_1$ are negative. Recall that in Lemma~\ref{U-S-G} the two boundary points of $J$ belong to $\bigcup_{i=0}^{d'} \mathcal{Q}_n^i$.    By the definition of $J$, $J_1 = [z, B^{-q_n}(z)]$ for some $z \in \Bbb T$.  Let $J_{0}= [B^{q_n}(z), z]$.
 Then there exists a $0 \le j < q_n$ and a critical point $c_i$ of $B$
 such that either $B^j(z) = c_i$ or $B^{j}(J_{0})$ contains $c_i$.      In either of the two cases, by Theorem~\ref{real bounds}, both $B^j(J_{0})$ and $B^j(J_1)$  are $K(d)$-commensurable with $[c_i, B^{q_n}(c_i)]$, which is then  $K(d)$-commensurable with both $[c_i, B^{q_{n-1}}(c_i)]$ and $[c_i, B^{q_{n-1}-q_n}(c_i)]$.  By Proposition~\ref{crp}    $L_j$ is contained either in $[c_i, B^{q_{n-1}}(c_i)]$  or  $[c_i, B^{q_{n-1}-q_n}(c_i)]$. We thus have  $|L_j| \le K(d) \cdot |B^j(J_0)|$. On the other hand, By Theorem~\ref{real bounds},
 $B^j(J_1)$ is $K(d)$-commensurable with $B^j(J_2)$, and  $B^j(J_2)$ is $K(d)$-commensurable with $B^j(J_3)$. Here $1 < K(d) < \infty$ is some constant depending only on $d$. Since $B^j(J_3) \subset L_j$, we finally have
 $$
 |B^{j}(J_0)| \asymp |B^j(J_1)| \asymp |B^j(J_2)| \asymp|L_j|
 $$ where the implicit constants depend only on $d$.
 So for any $x \in  L$,
 $$
 {\rm dist}(B^j(x), \Omega_B) \le {\rm dist}(B^j(x), c_i) \le |B^{j}(J_0)| + |B^j(J_1)| + |B^j(J_2)| + |L_j|  < K_2 \cdot    |L_j|$$
 where  $K_2 > 1$ is some constant depending only on $d$.  By Theorem~\ref{real bounds}  and by taking $N$ larger if necessary,   we may assume that $K_2 \cdot    |L_j| < \epsilon$.      Thus by (\ref{pde}) we have for such $j$,
 $$
 S(B) (B^j(x)) < -\frac{1}{K_2^2 \cdot |L_j|^2}.$$  Since all the terms in $\Sigma_1$ are negative,  it follows from (\ref{sum-usg}) and (\ref{usg-1}) that
\begin{equation}\label{sum-1}
\Sigma_1 < -\frac{1}{K_1  K_2^2}
\end{equation} provided that $n \ge N$.

On the other hand, for all $x \in V_0$, by the compactness property of $\mathcal{S}_d$, $|SB(x)| < M$ for some $0< M < \infty$ depending only on $d$ and $\epsilon$. Since $\epsilon$ depends on $d$, such $M$ depends eventually on $d$.  Since  all $L_j$, $ 0\le j < q_n$,  are disjoint, we have $$\sum_{j=0}^{q_n -1} |L_j| \le  2\pi.$$
By taking $N$ larger if necessary, we can make sure that $|L_j| < (2\pi  M K_1^2   K_2^2 )^{-1}$ provided that $n \ge N$. Then for all $n \ge N$, from (\ref{usg-1}) we have
\begin{equation}\label{sum-2}
\Sigma_2 < M   \sum_{j=0}^{q_n-1}  K_1  |L_j|^2     <  M  K_1  (2\pi  M K_1^2   K_2^2 )^{-1} \cdot \sum_{j=0}^{q_n-1}  |L_j| <   \frac{1}{K_1 K_2^2}.
\end{equation}
Lemma~\ref{neg-s} now follows from (\ref{sum-1}) and (\ref{sum-2}).
\end{proof}

Now apply Lemma~\ref{Y-L} to the diffeomorphism $B^{q_n}: J_3 \cup \cdots \cup J_{m-1} \to J_2 \cup \cdots \cup J_{m-2}$.
By  Lemmas~\ref{u-b-c} and ~\ref{neg-s} it follows that the  two conditions in Lemma~\ref{Y-L} are satisfied. The second assertion of
Lemma~\ref{U-S-G} now follows from Lemma~\ref{Y-L}. This completes the proof of Lemma~\ref{U-S-G}.

\subsection{Constructing qc homeomorphisms between polygons}
In this subsection we will introduce the key construction in the proof of Lemma~\ref{uniform-p}.  The basic idea comes from  \cite{PZ}, but due to the presence of more than one critical point in $\Bbb T$,  we need to deal with some new difficulty in the construction, cf. Lemma~\ref{g-f-s}.

Let $\alpha \in \Theta_C$ and
 $B \in \mathcal{S}_{d}^{\alpha}$.  As in  \cite{PZ}  we will  give two ways to divide $\Delta$ into countably many polygons, one for the circle homeomorphism $B|\Bbb T: \Bbb T \to \Bbb T$  and the other for the rigid rotation $R_\alpha: \Bbb T \to \Bbb T$.  For each pair of corresponding polygons, we construct a qc homeomorphism between them so that the restriction of the homeomorphism to each edge of the polygon is linear.  We then glue all these qc homeomorphisms along the edges of the polygons and get a desired David homeomorphism $H: \Delta \to \Delta$.
  Compared with the situation in \cite{PZ}, a slight difference arises here. For the Douady-Ghys' Blaschke model $G$ used in \cite{PZ},
the bottom side of each polygon is a polyline satisfying the
saddle-node condition, while in our case, the bottom side of each polygon  consists of several pieces of polylines each of which satisfies  the saddle-node condition.   The  idea here is to find an appropriate way  to divide each polygon into finitely many subpolygons  so that the bottom side of   each subpolygon is a polyline satisfying the saddle-node condition. The following lemma, which is essentially  Lemma 6.5 in \cite{PZ}, is the fundamental block in this construction.
\begin{lem}[Yoccoz, cf. Theorem 6.5,  \cite{PZ}] \label{fundamental square}
Let $P$ and $Q$ be two unit squares. Let
$$
X = \{x_1, x_2, \cdots, x_m\} \hbox{  and  } Y = \{y_1, y_2, \cdots,
y_m\}
$$ be two partitions of the two bottom sides of $P$ and $Q$, respectively. Then $P$ and $Q$ become into two polygons by
adding the points in $X$ and $Y$ to the set of vertices of $P$ and $Q$ respectively.  Suppose
the partition $X$ satisfies the $C_0$-bounded saddle-node condition for some
$C_0 > 1$, that is,
$$
\frac{1}{C_0} \frac{|x_1-x_{m}|}{\min\{i, m-i\}^2} \le |x_i -
x_{i+1}| \le {C_0} \frac{|x_1 - x_m|}{\min\{i, m-i\}^2},\:\: 1 \le i
\le m-1,
$$ and $Y$  satisfies the $C_1$-bounded linear condition for some $C_1 > 1$, that is,
$$
\frac{1}{C_1 } \cdot \frac{|y_1 - y_m|}{m} \le |y_i - y_{i+1}| \le
{C_1} \cdot \frac{|y_1 - y_m|}{m}, \:\: 1 \le i \le m-1.
$$ Then there is a $K$-qc homeomorphism $F: P \to Q$ such that when restricted to the corresponding edges,
$F$ is  linear  and
$$
K < \lambda \cdot (1 + (\log{m})^2)
$$
where $\lambda > 1$ is a constant depending only on $C_0$ and $C_1$.
\end{lem}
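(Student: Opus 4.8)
The plan is to reproduce, with a change of normalization, the construction behind Lemma~6.5 of \cite{PZ}. After an affine rescaling we may assume $P=Q=[0,1]^2$, that $x_1=y_1=0$ and $x_m=y_m=1$, and that the map we seek is the identity on the top and the two vertical sides of the square and the prescribed piecewise-linear homeomorphism $g\colon x_i\mapsto y_i$ on the bottom side. Since the weights $\min\{i,m-i\}$ of the saddle-node geometry are symmetric about $i=m/2$, it is enough to construct $F$ over the left half of the partition and to reflect; so we may assume $|I_i|\asymp 1/i^2$ is decreasing while $|J_i|\asymp 1/m$ is constant. A preliminary reduction, costing only a multiplicative constant depending on $C_0$ and $C_1$, replaces $g$ by a map which is exactly affine on each $I_i$ and replaces $P,Q$ by unions of genuine rectangles stacked over the partitions; this is a uniformly quasiconformal change of coordinates supported in a collar of the bottom side and equal to the identity above a fixed height.

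Next I would decompose both squares along dyadic scales. Put $k_0=\lfloor\log_2 m\rfloor$ and, for $0\le k\le k_0$, let $B_k=\{i:2^k\le i<2^{k+1}\}$; the block $\bigcup_{i\in B_k}I_i$ on the $P$-side has width $\asymp 2^{-k}$ whereas $\bigcup_{i\in B_k}J_i$ on the $Q$-side has width $\asymp 2^k/m$. I would choose quasi-arcs $\gamma_k^P\subset P$ and $\gamma_k^Q\subset Q$ — for definiteness, upper half-ellipses standing over the terminal segments $\bigcup_{i\ge 2^k}I_i$ and $\bigcup_{i\ge 2^k}J_i$ — cutting $P$ and $Q$ into nested sub-polygons $P_0\supset P_1\supset\cdots$ and $Q_0\supset Q_1\supset\cdots$ shrinking toward the accumulation point of the partition. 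The closure of $P_k\setminus P_{k+1}$ (resp. $Q_k\setminus Q_{k+1}$) is then a curved quadrilateral of bounded shape whose bottom carries the $\asymp 2^k$ intervals $\{I_i\}_{i\in B_k}$ (resp. $\{J_i\}_{i\in B_k}$), all mutually $O(1)$-commensurable by hypothesis. Because these dividing curves are uniform quasi-arcs, a homeomorphism built frame by frame and continuous across them is automatically quasiconformal, with global dilatation the supremum of the dilatations of the individual frame maps (the standard gluing/removability lemma for quasiconformal maps).

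It then remains to build, on the $k$-th frame, a quasiconformal map carrying the $\asymp 2^k$ equal-size bottom intervals of $P$ onto the $\asymp 2^k$ equal-size bottom intervals of $Q$ while interpolating to the identity along the outer arc $\gamma_k^P\to\gamma_k^Q$. Locally the two frames carry comparably many, mutually comparable intervals, so the boundary map is uniformly quasisymmetric there; what has to be absorbed is that the ambient widths differ by the factor $\asymp 4^k/m$, inside a region of bounded aspect ratio. Following \cite{PZ} one passes to logarithmic coordinates, in which both the saddle-node and the linear partitions become $O(1)$-uniform and the width mismatch becomes a bounded shear, performs the affine-on-pieces interpolation there, and transports back; the bounded geometry hypotheses are exactly what control the distortion of the coordinate change, and the outcome is a frame map of dilatation $O\bigl(1+|\log(4^k/m)|\bigr)=O(\log m)$. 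Assembling these frame maps into a single homeomorphism of $P$, exactly as in \cite[Lemma~6.5]{PZ}, yields the bound $K\le C\,(1+(\log m)^2)$ with $C=C(C_0,C_1)$. (If one keeps the frames disjoint and glues rather than composes the nested extensions, one can even reach $O(\log m)$; the weaker bound in the statement is all that is needed in the sequel.)

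The main obstacle is precisely the dilatation estimate for the individual frame maps: one must verify that a bounded-shape curved quadrilateral whose bottom carries a $C_0$-saddle-node partition, mapped to one whose bottom carries a $C_1$-linear partition with the outer boundary held fixed, admits such an interpolation with dilatation only logarithmic in the width ratio. This is the heart of \cite[Lemma~6.5]{PZ} and rests on the quasisymmetry estimates supplied by the bounded-geometry hypotheses; once it is in hand, the reduction to the model, the choice of the dividing arcs, and the gluing are all routine.
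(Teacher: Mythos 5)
The paper does not prove this lemma; it is quoted verbatim from Lemma~6.5 of \cite{PZ}, so there is no in-text proof to compare your argument against. Assessing your sketch on its own terms, though, there is a concrete gap.

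The decomposition into nested ``frames'' by half-ellipses over the terminal segments $\bigcup_{i\ge 2^k}I_i$ and $\bigcup_{i\ge 2^k}J_i$ does not yield regions of bounded geometry on the $Q$-side. On the $P$-side this is fine: the terminal segments have lengths $\asymp 2^{-k}$, so consecutive half-ellipses are geometrically nested and each crescent $P_k\setminus P_{k+1}$ has uniformly bounded modulus. But on the $Q$-side (linear geometry) the terminal segment $\bigcup_{i\ge 2^k}J_i$ has length $\asymp (n-2^k)/n$, which for small $k$ is $\asymp 1$; the lengths of $\bigcup_{i\ge 2^k}J_i$ and $\bigcup_{i\ge 2^{k+1}}J_i$ then differ only by $\asymp 2^k/n$. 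The two half-ellipses are therefore nearly coincident, and the frame $Q_k\setminus Q_{k+1}$ is a thin crescent of thickness $\asymp 2^k/n$ and diameter $\asymp 1$; its aspect ratio is $\asymp n/2^k$, which for $k=0$ is $\asymp m$. Mapping the bounded-modulus $P$-frame onto this thin $Q$-frame, with the boundary expansion on $\gamma_k$ being $\asymp 2^k$ while the expansion on the bottom intervals of $B_k$ is $\asymp 4^k/n$, forces a dilatation of order $n/2^k$, not $O(\log m)$. So the ``logarithmic-coordinate shear'' heuristic, as stated, fails for this choice of dividing arcs: the two partitions are geometric relative to opposite endpoints of the bottom side, and no common family of nested half-ellipses can make both sides' frames simultaneously of bounded shape.

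Beyond the incorrect frame geometry, the asserted estimate --- that a width mismatch of $\lambda$ inside a bounded-shape quadrilateral can be absorbed with dilatation $O(\log\lambda)$ --- is precisely the heart of \cite[Lemma~6.5]{PZ}, and you say so yourself at the end. But stating that the needed estimate ``rests on the quasisymmetry estimates supplied by the bounded-geometry hypotheses'' is a placeholder, not an argument; without it the proposal does not establish the bound $K\lesssim 1+(\log m)^2$. In short, the overall skeleton (symmetry reduction, dyadic grouping of indices, cut into sub-regions, local interpolation, glue) is in the right spirit, but the specific cutting curves you propose produce frames of unbounded eccentricity on the $Q$-side, and the key dilatation estimate for the interpolating maps is asserted rather than proved.
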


The following lemma, which is a generalized version of  Lemma~\ref{fundamental square}, is the key of the proof of Lemma~\ref{uniform-p}.
\begin{lem}\label{g-f-s}
Let $P$ and $Q$ be two unit squares. Let
$$
X = \{x_1, x_2, \cdots, x_m\} \hbox{  and  } Y = \{y_1, y_2, \cdots,
y_m\}
$$ be two partitions of the two bottom sides of $P$ and $Q$, respectively. Then $P$ and $Q$ become into two polygons by
adding the points in $X$ and $Y$ to the set of vertices of $P$ and $Q$ respectively. Let $l \ge 1$.

Suppose the
partition $X$ consists of $l$ pieces all of which satisfy the $C_0$-bounded saddle-node condition and are $C_0$-commensurable with each other, with $C_0 > 1$ being some constant,  that is,
\begin{itemize}
\item[1.] there exist
$$
1=m_0 < m_1 < m_2 < \cdots < m_{l-1} < m_l = m
$$ such that $m_{i+1}-m_i \ge 2$ for all $0 \le i \le l-1$, and
$$
|x_1 - x_{m}|/C_0 \le |x_{m_{i-1}} - x_{m_i}|\le  |x_1 - x_{m}| ,\:\:
1 \le i \le l,
$$   and  \item[2.] for $0 \le i \le m-1$  and  $m_i \le  j < m_{i+1}$,
$$
\frac{1}{C_0} \cdot \frac{|x_{m_i}-x_{m_{i+1}}|}{\min\{j-m_i+1,
m_{i+1}-j\}^2} \le |x_j - x_{j+1}| \le C_0 \cdot \frac{|x_{m_i} -
x_{m_{i+1}}|}{\min\{j-m_i +1, m_{i+1}-j\}^2}.
$$ \end{itemize}

Suppose in addition that  $Y$  satisfies the  $C_1$-bounded linear condition for some $C_1 > 1$, that is,
$$
\frac{1}{C_1} \cdot  \frac{|y_1 - y_m|}{m} \le |y_j - y_{j+1}| \le
C_1 \cdot \frac{|y_1 - y_m|}{m}, \:\: 1 \le j \le m-1.
$$ Then there is a $K$-qc homeomorphism $F: P \to Q$ such that when restricted to the corresponding edges,
$F$ is a linear map and
$$
K < \lambda \cdot (1 + (\log{m})^2)
$$
where $\lambda > 1$ is a constant depending only on $C_0$, $C_1$ and $l$.
\end{lem}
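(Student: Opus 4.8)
The plan is to reduce Lemma~\ref{g-f-s} to Lemma~\ref{fundamental square} by cutting $P$ and $Q$ into $l$ matching pieces, one over each saddle node block of $X$, and running on each piece the construction used in the proof of Lemma~\ref{fundamental square} in \cite{PZ}. Write $p_i=m_i-m_{i-1}$ for the number of cells of $X$ in the $i$-th block and $w_i=|x_{m_{i-1}}-x_{m_i}|$ for its width; by hypothesis~1 each $w_i$ is, up to the factor $C_0$, the full width of the bottom side of $P$, hence comparable to $1/l$. First I would cut $P$ along the vertical segments over $x_{m_1},\dots,x_{m_{l-1}}$ into rectangles $P^{(1)},\dots,P^{(l)}$, where $P^{(i)}$ has bottom side $[x_{m_{i-1}},x_{m_i}]$ carrying, by hypothesis~2, a single $C_0$-bounded saddle node partition with $p_i+1$ points. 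Since $w_i\asymp 1/l$, the affine map normalizing $P^{(i)}$ to a unit square has dilatation at most $C_0$; and $Y$ restricts on $\{y_{m_{i-1}},\dots,y_{m_i}\}$ to a $C_1^2$-bounded linear partition, again with $p_i+1$ points.

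The delicate point is the choice of the target pieces $Q^{(i)}\subset Q$. One must \emph{not} take $Q^{(i)}$ to be the vertical strip of $Q$ over $[y_{m_{i-1}},y_{m_i}]$: this strip has width $\asymp p_i/m$, which for a sparse block (small $p_i$, yet $w_i\asymp 1/l$) is far below $w_i$, so already the moduli of $P^{(i)}$ and of this strip differ by a factor of order $m/p_i$, and no quasiconformal map between them can have dilatation $\lesssim(\log m)^2$. Instead I would keep $F$ affine on the top edge of $P$ and cut $Q$ by $l-1$ pairwise disjoint arcs, the $i$-th joining $(y_{m_i},0)$ to $(x_{m_i},1)$; they do not cross, since both $i\mapsto y_{m_i}$ and $i\mapsto x_{m_i}$ are increasing, so the regions $Q^{(i)}$ between consecutive arcs tile $Q$, with $Q^{(i)}$ having bottom side $[y_{m_{i-1}},y_{m_i}]$ (carrying the linear sub-partition of $Y$) and top side $[x_{m_{i-1}},x_{m_i}]$. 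Drawing the arcs so that each $Q^{(i)}$ interpolates between its narrow foot and its width-$\asymp 1/l$ body within a vertical extent small enough to keep the modulus under control, one arranges that $Q^{(i)}$ has conformal modulus bounded in terms of $l$ alone and is $C(l)$-quasiconformally equivalent to the unit square by a map which is affine near the foot, so that the linear partition there is distorted by only a bounded amount. Running the construction of \cite{PZ} for the pair consisting of the unit square with the $C_0$-bounded saddle node partition of $p_i+1$ points and the unit square with the $C_1^2$-bounded linear partition of $p_i+1$ points, and conjugating by the two normalizations, then yields a homeomorphism $F^{(i)}\colon P^{(i)}\to Q^{(i)}$, linear on the bottom sub-edges, with dilatation
\[
K_i<C(C_0,C_1,l)\,\bigl(1+(\log p_i)^2\bigr)\le C(C_0,C_1,l)\,\bigl(1+(\log m)^2\bigr).
\]

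Finally I would glue. Taking $F$ affine on the top, left and right edges of $P$ and affine in the vertical coordinate along each interior segment and arc, the maps $F^{(i)}$ agree along the common boundaries $\{x_{m_i}\}\times[0,1]$, hence assemble into a homeomorphism $F\colon P\to Q$ which is linear on every edge of the two polygons and has dilatation $\max_i K_i$, which is the asserted bound. I expect the main obstacle to be the construction of the target regions in the middle step: producing, for each prescribed thin foot $[y_{m_{i-1}},y_{m_i}]$, a region $Q^{(i)}$ of modulus bounded independently of $m$ that still tiles $Q$ together with its neighbours, and checking that the straightening of $Q^{(i)}$ onto a unit square respects the linear geometry of the foot up to a bounded factor; after that, the estimate follows from the construction of \cite{PZ} applied blockwise exactly as for a single saddle node block. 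The degenerate cases are immediate: for $l=1$ the statement is Lemma~\ref{fundamental square} itself, and when $m$ is bounded the bound is trivially met by any bounded-distortion extension of the prescribed edge map.
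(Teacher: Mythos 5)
Your decomposition is genuinely different from the paper's: you cut $P$ and $Q$ into $l$ pieces at once (vertical strips for $P$, trumpet regions for $Q$ whose arcs fan out from the $y$-partition on the bottom to the $x$-partition on the top), whereas the paper proceeds by induction on $l$, peeling off one saddle-node block per step by cutting along two polylines $L$ and $S$ carrying a prescribed number of marked points.

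Unfortunately the obstacle you flag is not a detail to be checked but a genuine obstruction: the gluing of the $F^{(i)}$ along the arcs cannot be made with the advertised dilatation. You prescribe $F$ to be affine in the vertical coordinate along each arc so that consecutive $F^{(i)}$ agree there. But this forces $F^{(i)}$ to carry the vertical segment $\{x_{m_i}\}\times[0,\delta]$ onto the portion of arc $i$ at heights $\le\delta$, for every $\delta>0$. Take a sparse block, $p_i\asymp 1$, so that its foot has width $a_i=|y_{m_i}-y_{m_{i-1}}|\asymp 1/m$, and consider the sub-quadrilateral $P^{(i)}_\delta=P^{(i)}\cap\{y<\delta\}$, a rectangle of width $w_i\asymp 1$ and height $\delta$ whose bottom-to-top modulus is $\asymp\delta$. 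The height-preserving constraint pins the left and right sides of $F^{(i)}(P^{(i)}_\delta)$ to the parts of the two arcs at heights $\le\delta$, and an extremal-length estimate (take $\rho(y)=1/f(y)$ on $N_\delta=Q^{(i)}\cap\{y<\delta\}$, where $f(y)$ is the width of the trumpet at height $y$) gives bottom-to-top modulus $\gtrsim\delta/a_i\asymp m\delta$ for the image when $\delta$ is small. Hence $F^{(i)}$ has dilatation $\gtrsim m$, not $1+(\log m)^2$. Put differently, a $C(l)$-qc straightening $\sigma_i\colon Q^{(i)}\to[0,1]^2$ must induce on arc $i$ a boundary parametrization within bounded quasisymmetry of the conformal one; the conformal parametrization seen from $Q^{(i)}$ is governed by its foot width $a_i\asymp p_i/m$, and from $Q^{(i+1)}$ by $a_{i+1}\asymp p_{i+1}/m$. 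When $p_i$ and $p_{i+1}$ are far apart no common parametrization of arc $i$ — the height parametrization in particular — is compatible with bounded dilatation on both sides.

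The paper's construction avoids this trap in two ways that your one-shot decomposition cannot reproduce. First, its cutting curves are polylines whose marked points become vertices of the sub-polygons $P_1,\dots,P_4$, and the maps (as in Lemma~\ref{fundamental square}) are required to be \emph{linear} on each marked segment; two adjacent pieces then automatically agree on a shared segment because an affine map of a segment is determined by the two endpoint images, so there is no parametrization to coordinate. Second, the polyline $L$ is given $n\approx (m_l-m_0)/(m_l-m_{l-1})+16$ marked points with saddle-node geometry, so the top polygon $P_1$ and the corner polygon $P_2$ are handled by the base case at dilatation $\lesssim(\log n)^2\le(\log m)^2$, while the remaining $l-1$ blocks are concentrated in $P_4$ and handled by the inductive hypothesis. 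It is this recursive peeling, together with the linear-on-segments gluing rule, that keeps the dilatation at $1+(\log m)^2$; both are absent from your proposal, and the estimate fails precisely at the step you identify as the main obstacle.
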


\begin{proof} Let $A$ and $B$ denote the two vertices of the bottom side of $P$.
Let $A'$ and $B'$ denote the two corresponding vertices of the bottom side of $Q$.

If $l = 1$, then the lemma is implied by Lemma~\ref{fundamental
square}.

Suppose $l \ge 2$ and the lemma holds for $l-1$. Let us prove  the lemma  for $l$.
   Let us assume that $m_1 - m_0 \ge m_l - m_{l-1} (\ge 2)$. The case that  $m_1 - m_0 < m_l - m_{l-1}$ can be treated in a similar  way.
   Let $$n  =  \bigg{[}\frac{m_l - m_0}{m_l - m_{l-1}}\bigg{]}+16$$ where $[\:\cdot\:]$ denote the integer part of a number. Then $n \ge 18$. Since $m_l-m_{l-1} \ge 2$, we get $n < m/2 + 16$.

Claim:
 There exist  $K_1, C_2  > 1$ depending only on
$C_0$, and  $K_2, C_3 > 1$ depending only on $C_1$,  and  two group of points  $x_1', \cdots, x_n'$ and $y_1',  \cdots,  y_n'$,
such that  (see Figure 1 for an illustration) \begin{itemize} \item[1.] $x_1' = x_1 = A$,
  $x'_{n-3} = x_{m_{l-1}}$,   $x'_n = x_{m_l} = B$,
  \item[2.] $x'_j$ lies in the interior of $P$ for all $2 \le j \le n-4$  and $j =n-2, n-1$,
  \item[3.] $y_1' = y_1 = A'$,
  $y'_{n-3} = y_{m_{l-1}}$,  $y'_n = y_{m_l} = B'$, \item[4.] $y'_j$ lies in the interior of $Q$ for all $2 \le j \le n-4$ and $j =n-2,  n-1$, \end{itemize} and moreover,   let $L$ and $L'$  be respectively the two polylines connecting $x_1', \cdots, x_n'$, and $y_1', \cdots, y_n'$ in order, then
  \begin{itemize}

  \item[5.]  Let $L_1 $ be the part of $L$ connecting  $A(=x_1')$ and $x_{m_{l-1}}(=x_{n-3}')$ and $L_2$ be the remaining part of $L$.  There exist  a polyline  $S$ between $L_1$ and the straight segment $[A,   x_{m_{l-1}}]$ which consists of three straight segments and connects  $A$ and $x_{m_{l-1}}$ such that the following properties hold. $P$ is divided by $L$ and $S$ into four polygons $P_{1}$, $P_2$, $P_3$, $P_4$, where $P_1$ is the top one, $P_2$ is the one at the right-lower corner, $P_3$ is the one bounded by $S$  and $L_1$, $P_4$ is the one bounded by $S$ and $[A, x_{m_{l-1}}]$.  Moreover,  for each $i = 1, 2, 3$, $4$, there is a $K_1$-qc homeomorphism $\phi_i$  mapping $P_i$ to a polygon which is the  standard unit square with the bottom side consisting of either a single polyline or $(l-1)$ polylines   satisfying   the $C_2$-bounded saddle-node condition.   More precisely,  for $P_1$, $L$ is mapped to the bottom side which satisfies the $C_2$-bounded saddle-node condition;  for $P_2$, $[x_{m_{l-1}},   B]$ is mapped to the bottom side which satisfies the $C_2$-bounded saddle-node condition;  for $P_3$,  $L_1$ is mapped to the bottom side which satisfies $C_2$-bounded saddle-node condition;  for $P_4$, $[A, x_{m_{l-1}}]$ is mapped to the bottom side which consists of $(l-1)$ polylines all of which  satisfy $C_2$-bounded saddle-node condition and are $C_2$-commensurable with each other.  For each $P_i$, $1 \le i \le 4$, the map $\phi_i$ is linear on each edge of $P_i$ and maps each edge of $P_i$ to the corresponding edge of the polygon.

   \item[6.]  Let $L'_1 $ be the part of $L'$  connecting $A'(=y_1')$ and $y_{m_{l-1}}(=y_{n-3}')$  and $L_2'$ be the remaining part of $L'$.   There exists  a  polyline $S'$   between $L'_1$ and the straight segment $[A' ,  y_{m_{l-1}}]$ which consists of three straight segments and   connects $A'$ and $y_{m_{l-1}}$ such that  the following properties hold. $Q$ is divided by $S'$ and $L'$  into four polygons $Q_{1}$, $Q_2$, $Q_3$, $Q_4$, where $Q_1$ is the top one, $Q_2$ is the one at the right-lower corner, $Q_3$ is the one bounded by $S'$ and  $L_1'$, $Q_4$ is the one bounded by $S'$ and $[A', y_{m_{l-1}}]$.  Moreover, for each $i$, there is a $K_2$-qc homeomorphism $\psi_i$  mapping $Q_i$  to a polygon which is the standard square with the bottom  side   satisfying $C_3$-bounded linear condition.  More precisely,  for $Q_1$, $L'$ is mapped to the bottom side satisfying $C_3$-bounded linear condition, for $Q_2$, $[y_{m_{l-1}},  B']$ is mapped to the bottom side satisfying $C_3$-bounded linear condition, for $Q_3$,  $L_1'$ is mapped to the bottom side satisfying $C_3$-bounded linear condition,  for $Q_4$, $[A', y_{m_{l-1}}]$ is mapped to the bottom side which  satisfies  $C_3$-bounded linear condition.  For each $Q_i$, $1 \le i \le 4$, the map $\psi_i$  is linear on each edge of $Q_i$ and maps each edge of $Q_i$ to the corresponding edge of the polygon.
  \end{itemize}

Let us first prove Lemma~\ref{g-f-s} by assuming the Claim. It suffices to prove the lemma in the case that $m$ is large.  Since $n < m/2 + 16$, we may assume that $m > 32$ such that $n < m$.   Our proof is by induction on $l$. When $l=1$, the lemma follows by  Lemma~\ref{fundamental
square}. We assume that the lemma holds when the number of the polylines in the bottom side of the unit square is not more than $l$.  Then from (5) and (6) in the Claim and by the induction assumption,    there exists a $C_4 > 1$ depending only on $C_0, C_1, C_2, C_3, l$ and thus only on $C_0$, $C_1$ and $l$ (since $C_2$ and $C_3$ depend respectively on $C_0$ and $C_1$) such that for each $i =1, 2, 3, 4$, there is a $K_0$-qc homeomorphism $\sigma_i: \phi_i(P_i) \to \psi_i(Q_i)$ which satisfies  the following properties. \begin{itemize}\item[1.]   $K_0 < C_4 \cdot (1 + (\log{m})^2)$, \item[2.]  each edge of $\phi_i(P_i)$ is linearly mapped to the corresponding edge of $\psi_i(Q_i)$. \end{itemize}   Note that $P_i$ is mapped to $Q_i$ by $\psi_i^{-1} \circ \sigma_i \circ \phi_i$ such that each edge of $P_i$ is linearly mapped to the corresponding edge of $Q_i$.  Since   $\phi_i$ is $K_1$-qc and $\psi_i$ is $K_2$-qc,   thus $\psi_i^{-1} \circ \sigma_i \circ \phi_i$ is $K_1 \cdot K_2 \cdot K_0$-qc.   We can now glue the maps $\psi_i^{-1} \circ \sigma_i \circ \phi_i$ along the edges of $P_i$ and get a $K$-qc map $F: P \to Q$ with   $K= K_1K_2K_0 < K_1 K_2  C_4 \cdot (1 + (\log{m})^2)$. Since $K_1$, $K_2$ and $C_4$ depend eventually on $C_0$, $C_1$ and $l$,   the lemma thus follows  by assuming the Claim.

Now let us prove the Claim.  Let us first describe  how to choose the points $x_i'$, $i = 1, \cdots, n$. Let $a  = |[A, x_{m_{l-1}}]|$ and $b = |[x_{m_{l-1}}, B]|$. Let $x_1' = x_1 = A$,  $x_{n-3}' = x_{m_{l-1}}$ and $x_n' = B$.  Let $x_2'$ be the point in the interior of $P$ such that  $|[x_1', x_2']|=a/2$, and the angle formed by $[x_1', x_2']$ and $[A, B]$ is $\pi/3$.  Let $x_{n-4}'$ be the point in the interior of $P$ such that  $|[x_{n-4}', x_{n-3}']||=a/2$ and the angle formed by $[x_{n-4}', x_{n-3}']|$ and $[x_{m_{l-1}}, A]$ is equal to $\pi/3$. Then $|[x_2', x_{n-4}']|=a/2$. There is an obvious way to insert points $x_3', \cdots, x_{n-5}'$ in the interior of $[x_2', x_{n-4}']$ so that the horizontal polyline
$$
[x_2', \cdots, x_{n-4}']
$$
satisfies $\Lambda$-bounded saddle-node condition with $\Lambda > 1$ being some universal constant.

Now let $x_{n-2}'$ be the point in the interior of $P$ such that  $|[x_{n-3}', x_{n-2}']|=b/2$ and the angle formed by $[x_{n-3}', x_{n-2}']$ and $[x_{n-3}', B]$ is $\pi/3$. Let $x_{n-1}'$ be the point in the interior of $P$ such that $|[x_n', x_{n-1}']|=b/2$ and the angle formed by $[x_{n}', x_{n-1}']$ and $[B, A]$ is equal to $\pi/3$.  Then the length of the horizontal straight segment $[x_{n-2}', x_{n-1}']$ is $b/2$.

\begin{figure}
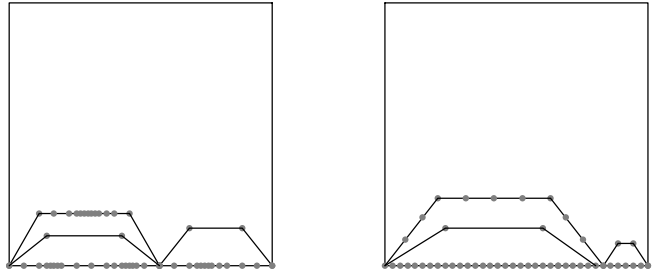

\bigskip
\begin{center}
\centertexdraw { \drawdim cm \linewd 0.02 \move(-2 1)

\move(-0.5  0) \lvec(-4  0) \lvec(-4 -3.5) \lvec(-0.5 -3.5)\lvec(-0.5 0)

\move(-2 -3.5) \fcir f:0.1 r:0.04
\lvec(-1.6 -3)
\fcir f:0.5 r:0.04
\lvec(-0.9 -3)
\fcir f:0.5 r:0.04
\lvec(-0.5 -3.5)
 \fcir f:0.5 r:0.04
\move(-2 -3.5)
\fcir f:0.5 r:0.04
\lvec(-2.4 -2.8)
\fcir f:0.5 r:0.04
\lvec(-3.6 -2.8)
\fcir f:0.5 r:0.04
\lvec(-4 -3.5)
\fcir f:0.5 r:0.04

\move(-3.4 -2.8)
\fcir f:0.5 r:0.04
\move(-3.2 -2.8)
\fcir f:0.5 r:0.04
\move(-3.1 -2.8)
\fcir f:0.5 r:0.04
\move(-3.05 -2.8)
\fcir f:0.5 r:0.04
\move(-3 -2.8)
\fcir f:0.5 r:0.04
\move(-2.95 -2.8)
\fcir f:0.5 r:0.04
\move(-2.9 -2.8)
\fcir f:0.5 r:0.04
\move(-2.85 -2.8)
\fcir f:0.5 r:0.04
\move(-2.8 -2.8)
\fcir f:0.5 r:0.04   \move(-2.7 -2.8)
\fcir f:0.5 r:0.04  \move(-2.6 -2.8)
\fcir f:0.5 r:0.04

\move(-3.8 -3.5)
\fcir f:0.5 r:0.04
\move(-3.6 -3.5)
\fcir f:0.5 r:0.04
\move(-3.5 -3.5)
\fcir f:0.5 r:0.04
\move(-3.45 -3.5)
\fcir f:0.5 r:0.04
\move(-3.4 -3.5)
\fcir f:0.5 r:0.04  \move(-3.35 -3.5)
\fcir f:0.5 r:0.04 \move(-3.3 -3.5)
\fcir f:0.5 r:0.04 \move(-3.1 -3.5)
\fcir f:0.5 r:0.04 \move(-2.9 -3.5)
\fcir f:0.5 r:0.04

\move(-2.7 -3.5)
\fcir f:0.5 r:0.04  \move(-2.6 -3.5)
\fcir f:0.5 r:0.04 \move(-2.5 -3.5)
\fcir f:0.5 r:0.04  \move(-2.45 -3.5)
\fcir f:0.5 r:0.04  \move(-2.4 -3.5)
\fcir f:0.5 r:0.04  \move(-2.35 -3.5)
\fcir f:0.5 r:0.04  \move(-2.3 -3.5)
\fcir f:0.5 r:0.04  \move(-2.2 -3.5)
\fcir f:0.5 r:0.04  \move(-2 -3.5)
\fcir f:0.5 r:0.04

\move(-1.8 -3.5)
\fcir f:0.5 r:0.04  \move(-1.6 -3.5)
\fcir f:0.5 r:0.04  \move(-1.5 -3.5)
\fcir f:0.5 r:0.04  \move(-1.45 -3.5)
\fcir f:0.5 r:0.04  \move(-1.4 -3.5)
\fcir f:0.5 r:0.04  \move(-1.35 -3.5)
\fcir f:0.5 r:0.04  \move(-1.3 -3.5)
\fcir f:0.5 r:0.04  \move(-1.2 -3.5)
\fcir f:0.5 r:0.04  \move(-1.1 -3.5)
\fcir f:0.5 r:0.04  \move(-0.9 -3.5)
\fcir f:0.5 r:0.04  \move(-0.7 -3.5)
\fcir f:0.5 r:0.04  \move(-0.5 -3.5)
\fcir f:0.5 r:0.04

\move(-4  -3.5)
\fcir f:0.5 r:0.04
\lvec(-3.5 -3.1)
\fcir f:0.5 r:0.04
\lvec(-2.5 -3.1)
\fcir f:0.5 r:0.04
\lvec(-2 -3.5)
\fcir f:0.5 r:0.04

\move(1  0) \lvec(4.5  0) \lvec(4.5 -3.5) \lvec(1 -3.5)\lvec(1 0)

\move(4.5 -3.5)\fcir f:0.5 r:0.04
 \lvec(4.3 -3.2)\fcir f:0.5 r:0.04
 \lvec(4.1 -3.2)\fcir f:0.5 r:0.04
 \lvec(3.9 -3.5) \fcir f:0.5 r:0.04
 \lvec(3.2 -2.6) \fcir f:0.5 r:0.04
 \lvec(1.7 -2.6)\fcir f:0.5 r:0.04
 \lvec(1 -3.5) \fcir f:0.5 r:0.04
\move(1.1 -3.5)\fcir f:0.5 r:0.04
\move(1.2 -3.5)\fcir f:0.5 r:0.04
\move(1.3 -3.5)\fcir f:0.5 r:0.04
\move(1.4 -3.5)\fcir f:0.5 r:0.04
\move(1.5 -3.5)\fcir f:0.5 r:0.04
\move(1.6 -3.5)\fcir f:0.5 r:0.04
\move(1.7 -3.5)\fcir f:0.5 r:0.04
\move(1.8 -3.5)\fcir f:0.5 r:0.04
\move(1.9 -3.5)\fcir f:0.5 r:0.04
\move(2 -3.5)\fcir f:0.5 r:0.04
\move(2.1 -3.5)\fcir f:0.5 r:0.04
\move(2.2 -3.5)\fcir f:0.5 r:0.04
\move(2.3 -3.5)\fcir f:0.5 r:0.04

\move(2.4 -3.5)\fcir f:0.5 r:0.04
\move(2.5 -3.5)\fcir f:0.5 r:0.04
\move(2.6 -3.5)\fcir f:0.5 r:0.04
\move(2.7 -3.5)\fcir f:0.5 r:0.04
\move(2.8 -3.5)\fcir f:0.5 r:0.04
\move(2.9 -3.5)\fcir f:0.5 r:0.04
\move(3 -3.5)\fcir f:0.5 r:0.04
\move(3.1 -3.5)\fcir f:0.5 r:0.04

\move(3.2 -3.5)\fcir f:0.5 r:0.04
\move(3.3 -3.5)\fcir f:0.5 r:0.04
\move(3.3 -3.5)\fcir f:0.5 r:0.04
\move(3.4 -3.5)\fcir f:0.5 r:0.04
\move(3.5 -3.5)\fcir f:0.5 r:0.04
\move(3.6 -3.5)\fcir f:0.5 r:0.04
\move(3.7 -3.5)\fcir f:0.5 r:0.04
\move(3.8 -3.5)\fcir f:0.5 r:0.04
\move(3.9 -3.5)\fcir f:0.5 r:0.04
\move(4 -3.5)\fcir f:0.5 r:0.04

\move(4.1 -3.5)\fcir f:0.5 r:0.04
\move(4.2 -3.5)\fcir f:0.5 r:0.04
\move(4.3 -3.5)\fcir f:0.5 r:0.04
\move(4.4 -3.5)\fcir f:0.5 r:0.04
\move(4.5 -3.5)\fcir f:0.5 r:0.04

\move(1.5 -2.85)\fcir f:0.5 r:0.04
\move(1.27 -3.15)\fcir f:0.5 r:0.04

\move(3.4 -2.85)\fcir f:0.5 r:0.04
\move(3.63 -3.15)\fcir f:0.5 r:0.04

\move(2.45 -2.6)\fcir f:0.5 r:0.04
\move(2.075 -2.6)\fcir f:0.5 r:0.04
\move(2.825 -2.6)\fcir f:0.5 r:0.04

\move(1 -3.5) \lvec(1.8 -3)\fcir f:0.5 r:0.04  \lvec(3.1 -3) \fcir f:0.5 r:0.04 \lvec(3.8 -3.5)
 }
\end{center}
\caption{ Divide $P$ and $Q$ into four polygons with one side satisfying saddle-node and linear geometry respectively }
\end{figure}
Now let us describe how to construct the polyline $S$. Let $s_1 = A$ and $s_4 = x_{m_{l-1}}$.
Let $s_2$ be the point in the interior of $P$ such that  $|[s_1, s_2]|=a/3$ and the angle formed by $[s_1, s_2]$ and $[A, B]$ is $\pi/4$. Let $s_3$ be the point in the interior of $P$ such that  $|[s_4, s_3]|=a/3$ and the angle formed by $[s_4, s_3]$ and $[x_{m_{l-1}}, A]$ is $\pi/4$. Let $S$ be the polyline which connects $s_1, s_2, s_3$ and $s_4$ in order.

Let $L_1$ be the part of $L$ connecting $x_1'$ and $x_{m_{l-1}}$ and $L_2$ be the remaining part of $L$. Then
from the construction we see that $L$ and $S$ divide $P$ into four polygons $P_i$, $1 \le i \le 4$:
 $P_1$ is the top one; $P_2$ is the one bounded by $L_2$ and $[x_{m_{l-1}}, B]$; $P_3$  is the one bounded by $L_1$ and $S$; and $P_4$ is the one bounded
 by $S$ and $[A, x_{m_{l-1}}]$. Each of these polygons has four sides, the three of which are straight segments and the last one is a polyline.  From the construction it is also clear that for $P_1$, $P_2$ and $P_3$,   the polyline side satisfies  the $C_2'$-bounded saddle-node condition, and for $P_4$, the polyline side  consists of $l-1$ polylines  all of which satisfy  the $C_2'$-bounded saddle-node condition and are $C_2'$-commensurable with the whole polyline side,  where   $C_2' > 1$  is some constant depending only on $C_0$.     Again from the construction,  the geometry of each $P_i$ is bounded and relies only on $C_0$.  This implies the existence of the constants  $K_1>0$ and $C_2> 1$ depending only on $C_0 $ and the desired $K_1$-qc homeomorphisms $\phi_i$, $i=1, 2, 3, 4$.

 Let us now describe  how to choose the points $y_i'$, $i = 1, \cdots, n$.
  Let $a'  = |[A', y_{m_{l-1}}]|$ and $b' = |[y_{m_{l-1}}, B']|$.
   Let $y_1' = y_1 = A'$,  $y_{n-3}' = y_{m_{l-1}}$ and $y_n' = B'$.  Let $n_1 = [n/3]$ and $n_2 = [2n/3]$.  Let $y_{n_1}'$ be the point in the interior of $Q$ such that  $|[y_1', y_{n_1}']|=a'/2$, and the angle formed by $[y_1', y_{n_1}']$ and $[A', B']$ is $\pi/3$.  Let $y_{n_2}'$ be the point in the interior of $Q$ such that the length of $[y_{n_2}', y_{n-3}']|$ is equal to $a'/2$ and the angle formed by $[y_{n_2}', y_{n-3}']|$ and $[y_{m_{l-1}}, A']$ is equal to $\pi/3$. Then the straight segment $[y_{n_1}', y_{n_2}']$ has length $a' /2$. Now we insert points $y_2', \cdots, y_{n_1 -1}'$ in the interior of $[y_1', y_{n_1}']$, and  insert points  $y_{n_1 +1}', \cdots, y_{n_2 -1}'$  in the interior of $[y_{n_1}', y_{n_2}']$, and insert points $y_{n_2 +1}', \cdots, y_{n-4}'$ in the interior of $[y_{n_2}', y_{n-3}']$, so that
   $$
   |[y_i',  y_{i+1}']| = |[y_1', y_{n_1}']|/(n_1-1), \:\: 1 \le i \le n_1 -1, $$
   $$|[y_i',  y_{i+1}']| = |[y_{n_1}', y_{n_2}']|/(n_2-n_1), \:\: n_1 \le i \le n_2 -1,$$ and  $$|[y_i',  y_{i+1}']| = |[y_{n_2}', y_{n-3}']|/(n-3-n_2), \:\: n_2 \le i \le n -4.$$

Now let $y_{n-2}'$ be the point in the interior of $Q$ such that  $|[y_{n-3}', y_{n-2}']|= b'/2$ and the angle formed by $[y_{n-3}', y_{n-2}']$ and $[y_{n-3}', B']$ is $\pi/3$. Let $y_{n-1}'$ be the point in the interior of $Q$ such that  $|[y_n', y_{n-1}']|= b'/2$ and the angle formed by $[y_{n}', y_{n-1}']$ and $[B', A']$ is equal to $\pi/3$.  Then the length of the horizontal straight segment $[y_{n-2}', y_{n-1}']$ is $b'/2$.

The construction of $S'$ is very similar to that of $S$.  Let $s'_1 = A'$ and $s'_4 = y_{m_{l-1}}$.
Let $s'_2$ be the point in the interior of $Q$ such that  $|[s'_1, s'_2]|= a'/3$ and the angle formed by $[s'_1, s'_2]$ and $[A', B']$ is $\pi/4$. Let $s_3'$ be the point in the interior of $Q$ such that the length of $[s'_3, s'_4]$ is equal to $a'/3$ and the angle formed by $[s'_4, s'_3]$ and $[y_{m_{l-1}}, A']$ is $\pi/4$. Let $S'$ be the polyline which connects $s'_1, s'_2, s'_3$ and $s'_4$  in order.

Let $L'_1$ bet the part of $L'$ connecting $y_1'(=A')$ and $y_{n-3}'(=y_{m_{l-1}})$ and $L'_2$ be the remaining part of $L'$. Then
from the construction we see that $L'$ and $S'$ divide $Q$ into four polygons $Q_i$, $1 \le i \le 4$:
 $Q_1$ is the top one; $Q_2$ is the one bounded by $L_2'$ and $[y_{m_{l-1}}, B']$; $Q_3$  is the one bounded by $L'_1$ and $S'$; and $Q_4$ is the one bounded
 by $S'$ and $[A', y_{m_{l-1}}]$. Each of these polygons has four sides, the three of which are straight segments and the last one is a polyline satisfying $C_3'$-bounded linear condition with $C_3' >1$ being some constant depending only on $C_1$.    From the  construction, it follows that the geometry of each $Q_i$ is bounded relies only on $C_1$.  This implies the existence of constants $K_2> 0$ and $C_3 > 1$  depending only on $C_1$  and the desired  $K_2$-qc homeomorphisms.

 This proves the Claim and  the proof of the lemma is completed.
\end{proof}

\subsection{Proof of Lemma~\ref{uniform-p}}
Let  $\alpha \in \Theta_C^b$ and  $f \in \Pi_\alpha^d$. Let $B_f \in \mathcal{S}_d^\alpha$ be the Blaschke
product which models $f$  and  $R_\alpha$ denote the rigid rotation given by $z \mapsto e^{2 \pi i \alpha} z$.  Let $h_f: \Bbb T \to \Bbb T$ be the circle homeomorphism such that $B_f|\Bbb T = h_f^{-1} \circ R_\alpha \circ h_f$ and $h_f(1) = 1$.  The aim of this subsection is to construct a David extension $H_f: \Delta \to \Delta$ of $h_f$ which satisfies the uniform integrability condition described in Lemma~\ref{uniform-p}. The idea is the same as the one used in \cite{PZ}. Namely, we will construct two decompositions of the unit disk into polygons, one for the circle homeomorphism $B|\Bbb T: \Bbb T \to \Bbb T$, and the other for the rigid rotation $R_\alpha: \Bbb T \to \Bbb T$. We then construct a qc homeomorphism from each polygon in the first decomposition to the corresponding polygon in the second decomposition so that when restricted to each edge of the polygon, the map is linear. The $H_f$ is then obtained by gluing all these qc homeomorphisms along the edges of the polygons. To get the uniform integrability of $\mu_{H_f}$, we will replace Theorem 6.5 in \cite{PZ} by Lemma~\ref{g-f-s}. Since the idea is generally the same as in $\S6$ of \cite{PZ}, let us merely provide the outline of the proof in the following.

 First recall that $x_i \in \Bbb T$  denotes the point such that $f^i(x_i) = 1$, and
$\mathcal{Q}_n  = \{x_{i} \: \big{|}\: 0\le i <  q_{n}\}$.  Let $x_i'\in \Bbb T$ denote the point such that $R_{\alpha}^i(x_i') =
1$ and $\mathcal{Q}_n' =  \{x_{i}' \: \big{|}\: 0\le i <  q_{n}\}$.
By Remark~\ref{lr-1} there is an integer
\begin{equation}\label{constant-n} N_{0} \ge 1\end{equation}  depending only on $d$  such that for all $n \ge N_{0}$, if $x_i$ and $x_j$ are two adjacent points in $\mathcal{Q}_{n}$ and $x_i'$ and $x_j'$ are two adjacent points in $\mathcal{Q}_{n}'$, then $d(x_{i}, x_{j})< 1$ and $d(x_{i}', x_{j}')< 1$.

For each $x_{i} \in
\mathcal{Q}_{n}$, let $y_{i}$ be the point on the radial segment
$[0, x_{i}]$ such that
$$
|y_{i}-x_{i}| = d(x_{r}, x_{l})/2
$$
where $x_{r}$ and $x_{l}$ denote the two  points immediately to the
right and left of $x_{i}$ in $\mathcal{Q}_{n}$.

\begin{defi}[Yoccoz's cells]\label{b-s}{\rm
Let $x_{i}$ and $x_{j}$ be any two adjacent points in
$\mathcal{Q}_{n}$. Connect $y_{i}$ and $y_{j}$ by a straight
segment. Then the three straight segments $[x_{i}, y_{i}]$, $[y_{i},
y_{j}]$, $[x_{j}, y_{j}]$,  and the  arc segment  $[x_{i}, x_{j}]$
bound a domain in $\Delta$. We call the closure of this domain  a
$\emph{cell of level}$ $n$. The segment $[y_i, y_j]$ is called the top side of the cell. }
\end{defi}
From the last assertion of Proposition~\ref{crp}, it is not difficult to see
\begin{lem}[cf. Lemma 6.1  of \cite{Zh2} or  Lemma 6.3 of \cite{PZ}]\label{geometry-of-cells}
There exist  $C(d) > 1$ and $0< \gamma(d) < \sigma(d) < \pi$ depending only on $d$ such that for any cell $E$ with level $n \ge N_0$, the diameters of the four sides of the cell $E$ are $C(d)$-commensurable with each other, and moreover, the angles formed by the top side and its two radial sides  belong to  $[\gamma(d), \sigma(d)]$.
\end{lem}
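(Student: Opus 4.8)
The plan is to reduce the statement to elementary Euclidean geometry of the quadrilateral $E$, using as the only nontrivial input the last assertion of Proposition~\ref{crp}: any two adjacent gaps of $\mathcal{Q}_n$ have lengths that are $K$-commensurable, with $K>1$ depending only on $d$. First I would fix notation for a cell $E$ of level $n\ge N_0$ bounded by adjacent points $x_i,x_j\in\mathcal{Q}_n$ and the corresponding points $y_i,y_j$. Let $a=d(x_i,x_j)$, and let $b$ (resp.\ $c$) be the length of the gap of $\mathcal{Q}_n$ on the other side of $x_i$ (resp.\ $x_j$). Since one neighbour of $x_i$ is $x_j$, the definition of $y_i$ gives $|x_i-y_i|=\tfrac12 d(x_l,x_r)=\tfrac12(a+b)$, and likewise $|x_j-y_j|=\tfrac12(a+c)$. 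By Proposition~\ref{crp} the three numbers $a,b,c$ are pairwise $K$-commensurable. I would also note that $N_0$ can be chosen depending only on $C,d$ so that for $n\ge N_0$ every gap of $\mathcal{Q}_n$ and of $\mathcal{Q}_n'$ has length at most a small constant $\delta_0\le\tfrac14$; here one uses that $\bigcup_{\alpha\in\Theta_C^b}\mathcal{S}_d^\alpha$ is compact, so $q_n\to\infty$ and the real bounds of Theorem~\ref{real bounds} force the mesh to zero uniformly. Writing $y_i=s_ix_i$, $y_j=s_jx_j$ with $|x_i|=|x_j|=1$, we then have $1-s_i=\tfrac12(a+b)$, $1-s_j=\tfrac12(a+c)$ and $s_i,s_j\ge 1-\delta_0\ge\tfrac34$.

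For the first assertion I would estimate the diameters of the four sides in terms of $a$. The two radial sides satisfy $|x_i-y_i|=\tfrac12(a+b)$ and $|x_j-y_j|=\tfrac12(a+c)$, both $C(d)$-commensurable with $a$. The arc $[x_i,x_j]$ has diameter $|x_i-x_j|=2\sin(\pi a)$, which for $a\le\tfrac14$ is $C(d)$-commensurable with $a$. For the top side I would use the identity
\[
|y_i-y_j|^2=(s_i-s_j)^2+s_is_j\,|x_i-x_j|^2 ,
\]
which gives $|y_i-y_j|\ge\sqrt{s_is_j}\,|x_i-x_j|\ge\tfrac34|x_i-x_j|$ from below and, since $|s_i-s_j|=\tfrac12|b-c|\le Ka$ and $|x_i-x_j|\le 2\pi a$, the bound $|y_i-y_j|\le C(d)a$ from above. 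Hence all four sides have diameter in $[a/C(d),\,C(d)a]$, which is the desired commensurability.

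For the angles, let $\theta_i$ be the angle of $E$ at $y_i$ between the radial side $[y_i,x_i]$ and the top side $[y_i,y_j]$. Since $x_i-y_i=(1-s_i)x_i$ and $y_j-y_i=s_jx_j-s_ix_i$, a direct computation, with $\psi=2\pi a$ the central angle subtended by $[x_i,x_j]$, gives $(x_i-y_i)\cdot(y_j-y_i)=(1-s_i)(s_j\cos\psi-s_i)$, hence $\cos\theta_i=(s_j\cos\psi-s_i)/|y_i-y_j|$. The same algebra used above yields the cancellation $|y_i-y_j|^2-(s_j\cos\psi-s_i)^2=s_j^2\sin^2\psi$, so that
\[
\sin\theta_i=\frac{s_j\,|\sin\psi|}{|y_i-y_j|} .
\]
Since $|\sin\psi|=|\sin(2\pi a)|$ is $C(d)$-commensurable with $a$ for $n\ge N_0$, $s_j\ge\tfrac34$, and $|y_i-y_j|\le C(d)a$ by the previous step, this gives $\sin\theta_i\ge c(d)>0$, hence $\theta_i\in[\gamma(d),\pi-\gamma(d)]$ with $\gamma(d)=\arcsin c(d)$; by symmetry the angle at $y_j$ obeys the same bound, and one takes $\sigma(d)=\pi-\gamma(d)$.

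The computation is routine once Proposition~\ref{crp} is available; the only steps needing some care are the uniform choice of $N_0$ (which makes the gaps small, $s_i,s_j$ close to $1$, and chord lengths comparable to arc lengths — this is where compactness of $\bigcup_\alpha\mathcal{S}_d^\alpha$ and the real bounds enter) and the trigonometric cancellation $|y_i-y_j|^2-(s_j\cos\psi-s_i)^2=s_j^2\sin^2\psi$ that makes the angle estimate come out cleanly. I do not expect any genuine obstacle beyond bookkeeping of the commensurability constants.
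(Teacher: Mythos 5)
Your proof is correct and takes essentially the same approach the paper intends: the paper omits the computation (saying only that it follows from the construction and the last assertion of Proposition~\ref{crp}, with a pointer to Lemma 6.1 of \cite{Zh2}), and your argument supplies exactly the elementary Euclidean bookkeeping that is being left to the reader, with the commensurability of adjacent gaps from Proposition~\ref{crp} doing all the real work and the identity $\sin\theta_i = s_j|\sin\psi|/|y_i-y_j|$ giving the angle bound cleanly.
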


Let $E$ be a cell of level $n$. Let $E_1, \cdots, E_m$ be all the cells of level $(n+1)$ which are contained in $E$. Then $$ E \setminus \bigcup_{i=1}^m E_i$$ is either empty, or a triangle, or a polygon. In fact, let $x_l, x_i, x_j, x_r$ be four adjacent points in $\mathcal{Q}_n$ and $E$ be the cell determined by  $x_i$ and $x_j$. In the case that the four points are still adjacent in $\mathcal{Q}_{n+1}$, $E$ is also a cell of level $n+1$ and the above set is empty. If only $x_l$, $x_i$ and $x_j$ or $x_i, x_j$ and $x_r$ are adjacent in $\mathcal{Q}_{n+1}$, then $E$ contains only one cell of level $n+1$ which have three common vertices with $E$. In this case, the above set is a triangle. Otherwise, the above set is a $k$-polygon with $k \ge 4$.  The boundary of each such polygon is the union of four sides: one side is the top side of $E$ which is still called the top side, two sides are the two radial edges of the polygon, and are called radial sides, and the remaining side is   a polyline which is the union of  the top sides of all the cells of level $(n+1)$  contained in $E$. We call the last side the bottom side of the polygon.
By  Lemmas~\ref{sub-int}, ~\ref{U-S-G}  and Lemma~\ref{geometry-of-cells}, we have  $K(d), C(d) > 1$ depending only on $d$ such that for any such polygon, there is a
$K(d)$-qc homeomorphism $\xi$ which maps the polygon homeomorphically onto the standard polygon  $P$ described in Lemma~\ref{g-f-s}, and moreover, when restricted to each edge of the polygon, $\xi$ is linear.

 Now replacing $\mathcal{Q}_{n}$ by $\mathcal{Q}_{n}'$, and  using the same construction as above, we can construct cells and polygons for $R_\alpha$.   From Proposition~\ref{crp} it follows that there exists a universal constant
$K > 1$ such that for any such $k$-polygon with $k \ge 4$, there is a $K$-qc homeomorphism $\sigma$ which maps the polygon homeomorphically onto the standard polygon  $Q$ described in Lemma~\ref{g-f-s},  and moreover, when restricted to each edge of the polygon, $\sigma$ is linear.

Let us now construct the David extension $H_f: \Delta \to \Delta$.
   Suppose $A$ is a polygon in the first decomposition of level $n \ge N_0$ and $B$ is the corresponding polygon in the second decomposition. If both $A$ and $B$ are triangles, then from Lemma~\ref{real bounds}, both of them have bounded geometry in the sense that all the three edges of each triangle are universally commensurable.  So there is a universal $K > 1$ and $K$-qc homeomorphism $\phi$ which maps $A$ homeomorphically onto $B$, and moreover, when restricted to each edge of $A$, $\phi$ is linear.   Otherwise both $A$ and $B$ are $k$-polygons with $k \ge 4$.   Let $\xi$ and $\sigma$ be the
   qc-homeomorphisms described as above. Then $\xi(P)$ and $\sigma(Q)$ are respectively the standard polygons $P$ and $Q$ satisfying the conditions in Lemma~\ref{g-f-s} such that all the involved constants depends only on $d$.    By Lemma~\ref{g-f-s} there is a constant $C(d) > 1$ depending only on $d$ and  a qc map $\tau : \xi(P) \to \sigma(Q)$ such that $\tau$ maps each edge of $\xi(P)$  linearly to the corresponding edge of $\sigma(Q)$ and the qc constant of $\tau$  is bounded by $$ C (d) \cdot (1 + (\log a_{n+1})^2) <\lambda(d, C) \cdot n$$ where $\lambda(d, C) > 0$ is some constant depending only on $d$ and $C$. Here we uses the arithmetic condition that $\log a_n \le C \sqrt n$.   Now define $$\phi = \sigma^{-1} \circ \tau \circ \xi.$$ Since the qc constants of $\xi$ and $\sigma$ are bounded by some constant depending only on $d$, by increasing $\lambda(d, C)$ if necessary, we may assume that the qc constant of
   $\phi$ is bounded by $\lambda(d, C) \cdot n$.

Let $\Phi$ be  the union of all cells of level $N_0$ in the first decomposition. Then  $P_0 = \Delta \setminus \Phi$ is a polygon which contains the origin in its interior whose boundary is the union of  the top sides of all cells of level $N_0$ for the first decomposition.   Similarly, let $\Psi$ be  the union of all cells of level $N_0$ in the second decomposition. Then  $Q_0 = \Delta \setminus \Psi$ is a polygon which contains the origin in its interior whose boundary is the union of  the top sides of all cells of level $N_0$ for the second decomposition.  Then both $P_0$ and $Q_0$ are $k$-polygons with $k$ having an upper bound depending only on $N_0$ and $C$ (thus eventually depending only on $d$ and $C$). Connect the origin to each vertex of $P_0$ and $Q_0$ by a straight segment. Then $P_0$ and $Q_0$ are decomposed into $k$ triangles.
Note that $\bigcup_{\alpha \in \Theta_C}\mathcal{S}_d^\alpha$ is compact in the following sense: there is an open neighborhood $U$ of $\Bbb T$ such that for any sequence $\{B_n\} \subset \bigcup_{\alpha \in \Theta_C}\mathcal{S}_d^\alpha$, there is a subsequence $\{B_{n'}\}$ and a $B_0 \in \bigcup_{\alpha \in \Theta_C}\mathcal{S}_d^\alpha$ such that $B_{n'} \to B_0$ uniformly in any compact set of $U$. From the compactness of $\bigcup_{\alpha \in \Theta_C}\mathcal{S}_d^\alpha$,   it follows that there exists a $\eta(d, C) > 1$ depending only on $d$ and $C$ such that
all the three edges of each triangle of $P_0$ nd $Q_0$  are $\eta(d, c)$-commensurable.  Thus  by increasing $\lambda(d, C)$ if necessary,  for each triangle  of $P_0$,  there is a $\lambda(d, C)$-qc homeomorphism $\psi$  which maps it to the corresponding triangle of $Q_0$, and moreover, when restricted to each edge of the triangle, the map is linear. Now by gluing all these maps along the edges of the triangles of $P_0$ we get a $\lambda(d, C)$-qc homeomorphism $\psi: P_0 \to Q_0$.   Now we can  define $H_f: \Delta \to \Delta$ by gluing  $\phi$ and $\psi$ along  all the edges of  $P_0$.

  Let us now prove the existence of the constants $M, \alpha > 0$ and $0< \epsilon_0 < 1$ so that Lemma~\ref{uniform-p} holds.
  Let
\begin{equation}\label{immc}
  \epsilon_0 = \frac{2}{1 + \lambda(d, C) \cdot N_0}.
\end{equation}   For any  $0< \epsilon < \epsilon_0$,
  let $n > 0$ be the least integer such that $\epsilon > \frac{2}{1 + \lambda(d, C) \cdot n}$. Thus  $n > \frac{1}{\lambda(d, C)} \cdot (\frac{2}{\epsilon} -1) >  \frac{1}{   \lambda(d, C) \epsilon }$.  Since $0< \epsilon < \epsilon_0$, from (\ref{immc}) we have $n \ge N_0 + 1$.      By the minimal property of $n$ it follows that $\epsilon \le \frac{2}{1 + \lambda(d, C) \cdot (n-1)}$.  This implies
  $$
  \{z \in \Delta \:|\: |\mu_{H_f}(z)| > 1- \epsilon\} \subseteq \{z\in \Delta\:|\:  |\mu_{H_f}(z)| > \frac{\lambda(d, C)(n-1)- 1}{\lambda(d, C)(n-1) +1}\}.
  $$
On the other hand, from the construction of $H_f$, it follows that for $n \ge N_0 + 1$,  the dilatation of $H_f$ in the complement of the union of all the cells of level $n$ is less than $\lambda(d, C) \cdot (n-1)$. So the above set is contained in the union of all the cells of level $n$.  By Theorem~\ref{real bounds},  there exist $C_1(d) > 1$ and  $0< \delta(d) < 1$ depending only on $d$ such that the area of the union of all the cells of level $n$ is less $C_1(d)\cdot  \delta^n(d)$.  Thus the area of the above set is bounded by $C_1(d) \delta^n(d)$.
Since $$C_1(d) \delta(d)^n = C_1 (d) \cdot e^{-n \ln \frac{1}{\delta(d)}} < C_1(d)\cdot  e ^{- \frac{1}{ \lambda(d, C)\epsilon} \cdot \ln \frac{1}{\delta(d)} },$$ Lemma~\ref{uniform-p} follows by taking $M = C_1(d)$ and $\alpha = \frac{1}{\lambda(d, C)} \cdot \ln \frac{1}{\delta(d)}$.

\subsection{Proof of Lemma~\ref{uniform-w}} To simplify the notations,  from now on let us just denote $H_f$ and $B_f$ by $H$ and $B$ respectively.  Let $\mu_H$ be the Beltrami differential in $\Delta$ which is given by $H$.
 Let $\mu$ denote  the Beltrami differential on the plane which is the pull back of
$\mu_H$ by the iterations of $\widehat{B}$.

For $n \ge N_0$, let  $Y_n$  be   the union of all the Yoccoz's cells of level
$n$. Then  the outer boundary component  of $Y_n$ is $\Bbb T$,
and the  inner boundary component of $Y_n$  is the union of finitely
many straight segments, and moreover,
$$
Y_{N_{0}}  \supset Y_{N_{0}+1} \supset \cdots \supset Y_n \supset
Y_{n+1} \supset \cdots .
$$
  From the proof of Lemma~\ref{uniform-p}, it follows that
 \begin{lem}\label{basic-fact-1}  Let $C > 0$ and $d \ge 2$. Then there exists a
constant $1 < \lambda(d, C)< \infty$ depending only on $d$ and $C$  such that for any $\alpha \in \Theta_C$, any  $B \in
\mathcal{S}_d^\alpha$ and any $n \ge N_0$,  the dilatation of $H$  in $\Delta\setminus
Y_{n}$ is not greater than $\lambda(d, C) \cdot n$.
\end{lem}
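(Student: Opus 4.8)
The proof is a bookkeeping consequence of the construction of $H$ carried out in the proof of Lemma~\ref{uniform-p}, so the plan is simply to track the level-by-level dilatation estimates obtained there. First I would record the decomposition
$$
\Delta\setminus Y_n \;=\; P_0\,\cup\,\bigcup_{k=N_0}^{n-1}\bigl(Y_k\setminus Y_{k+1}\bigr),
$$
where $P_0=\Delta\setminus Y_{N_0}$ is the central polygon and, for each $N_0\le k\le n-1$, the region $Y_k\setminus Y_{k+1}$ is, by Definition~\ref{b-s} and the definition of polygons of type I, the disjoint union over all cells $E$ of level $k$ of the type I polygons $E\setminus\bigcup_i E_i$, the $E_i$ being the cells of level $k+1$ contained in $E$. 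Thus it suffices to bound the dilatation of $H$ on $P_0$ and on an arbitrary type I polygon of level $k$ with $k\le n-1$.

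On $P_0$ the map $H$ was built as a single $K(d,C)$-qc homeomorphism onto $Q_0$, so its dilatation there is $\le K(d,C)\le K(d,C)\,n$ since $n\ge N_0\ge 1$. On a type I polygon $P$ of level $k$ one has $H=\sigma^{-1}\circ\tau\circ\xi$ with $\xi,\sigma$ being $K(d)$-qc and, by Lemma~\ref{g-f-s}, $\tau$ having dilatation $\le C(d)\bigl(1+(\log a_{k+1})^2\bigr)$. The arithmetic condition $\theta\in\Theta_C$ gives $\log a_{k+1}\le C\sqrt{k+1}$, hence $C(d)(1+(\log a_{k+1})^2)\le C(d)(1+C^2(k+1))$, which, after absorbing the factors coming from $\xi$ and $\sigma$ and the shift $k+1\mapsto k$ (legitimate since $k\ge N_0\ge 1$), is $\le K(d,C)\,k$ for a constant $K(d,C)$ depending only on $d$ and $C$; this is exactly the bound $C(d)(1+(\log a_{n+1})^2)<K(d,C)\,n$ recorded in the proof of Lemma~\ref{uniform-p}. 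Therefore the dilatation of $H$ on such a $P$ is $\le K(d,C)\,k\le K(d,C)(n-1)<K(d,C)\,n$.

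Combining the two cases, $H$ has dilatation $\le K(d,C)\,n$ on all of $\Delta\setminus Y_n$ for every $n\ge N_0$, and the lemma follows with $\lambda(d,C)=K(d,C)$, enlarged by a harmless constant factor if one wishes to absorb the $K(d)$-factors cleanly. There is no genuine obstacle here: the only points requiring attention are the identification of $Y_k\setminus Y_{k+1}$ with the union of the type I polygons of level $k$, and the observation that the $O(\log^2 a_{k+1})$ bound furnished by Lemma~\ref{g-f-s} becomes linear in the level $k$ precisely because of the Diophantine restriction defining $\Theta_C$, which is what makes the resulting constant independent of $\alpha$, of $B\in\mathcal{S}_d^\alpha$, and of $n$.
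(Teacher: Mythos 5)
Your proof is correct and is exactly the argument the paper has in mind: the paper proves Lemma~\ref{basic-fact-1} simply by pointing back to the construction of $H$ in the proof of Lemma~\ref{uniform-p}, where the dilatation of the gluing map on each type~I polygon of level $k$ is bounded by $K(d,C)\cdot k$ via Lemma~\ref{g-f-s} and the arithmetic condition $\log a_{k+1}\le C\sqrt{k+1}$, and on the central polygon $P_0$ by $K(d,C)$. Your decomposition of $\Delta\setminus Y_n$ into $P_0$ and the type~I polygons of levels $N_0,\dots,n-1$ is the correct bookkeeping, and taking the maximum of the resulting bounds gives $\lambda(d,C)\cdot n$ as claimed.
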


Define $$ X = \{z \in {\Bbb C} \setminus
\overline{\Delta}\:\big{|}\: B^{k}(z) \in \Delta \hbox{ for some
integer } k \ge 1\}.
$$
For each $z \in X$, let $k_{z} \ge 1$ be the least positive integer
such that $B^{k_{z}}(z) \in \Delta$.    Define
$$
X_{n} = \{z \in X \:\big{|}\: B^{k_{z}}(z) \in Y_{n}\}.
$$

\begin{lem}\label{main lemma} Let $d \ge 2$ be an integer and $C > 0$. Let $\alpha \in \Theta_C$. Then
there exist $C_1(d, C) > 0$, $0< \epsilon_1(d, C) < 1$,  $0< \delta_1(d, C) < 1$ and an
integer $N_{1}(d, C) \ge N_{0}$ depending only on $d$ and $C$  such that for all $B \in \mathcal{S}_d^\alpha$,
\begin{equation}\label{pz-cci}
area(X_{n+2}) \le C_1(d, C) \cdot \epsilon_1(d, C)^{n} + \delta_1(d, C) \cdot \:
area(X_{n}), \:\:\:\forall \:n >  N_1(d, C).
\end{equation}
\end{lem}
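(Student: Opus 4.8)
The plan is to decompose $X_{n+2}$ according to the first-return dynamics to $\Delta$ and to separate the pieces that land near the unit circle from those that land deep inside. Recall that for $z \in X$, $k_z$ is the first time $B^{k_z}(z) \in \Delta$. Every point of $X_{n+2}$ has $B^{k_z}(z) \in Y_{n+2}$, and since $Y_{n+2} \subset Y_{n+1} \subset Y_n$, I will analyze how a point of $Y_{n+2}$ is reached. Fix $z \in X_{n+2}$ and set $w = B^{k_z}(z) \in Y_{n+2}$. There are two cases. \emph{Case A:} the whole backward orbit $z, B(z), \dots, B^{k_z-1}(z)$ stays outside $\overline{\Delta}$ except we also require that $B^{k_z - 1}(z)$ already lies in $\widehat{\Bbb C}\setminus\overline\Delta$ and maps into $\Delta$ — these are the points whose entry to $\Delta$ happens at the ``first generation''. \emph{Case B:} the entry point $w$ lies in $Y_{n+2}$ but the \emph{immediately preceding} point also already lay in $\Delta$; this cannot happen by minimality of $k_z$, so the real dichotomy must be organized differently — via the location of the first entry point relative to the cell structure. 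I would instead split according to whether $w \in Y_{n+2}$ is reached from a point in $\widehat{\Bbb C}\setminus\overline\Delta$ that is itself $B^{-1}$ of a cell of level $n$ (the ``shallow'' part) or not.

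The cleaner organization, which I would actually carry out, is the one used in \cite{Zh2}: write $X_{n+2} = A_{n+2} \cup B_{n+2}$, where $A_{n+2}$ consists of those $z$ whose first-return point $w = B^{k_z}(z)$ lies in $Y_{n+2}$ \emph{and} the preimage component under the appropriate branch of $B^{-1}$ covering the step from $Y_n$ into $Y_{n+2}$ is ``univalent-like'', versus $B_{n+2}$ where the relevant preimage passes near a critical point of $B$ on $\Bbb T$. For the $B_{n+2}$ part I will use the uniform power law, Lemma~\ref{HC2}: near a critical point the map contracts like a uniform power, so the preimage of the thin set $Y_{n+2}$ inside $Y_n$ (which by Theorem~\ref{real bounds} has area $\le C(d)\delta(d)^{n}$ relative to $Y_n$, hence absolutely $\le C(d)\delta(d)^{n+2}$) has area bounded by $C_1(d,C)\epsilon_1(d,C)^n$ for suitable constants. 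For the $A_{n+2}$ part, away from critical points $B$ has bounded distortion on the relevant scales (Koebe, as in Lemma~\ref{uniform-dr}), so pulling back $Y_{n+2}$ through one ``first-return'' step from $Y_n$ multiplies area by a factor comparable to the ratio $area(Y_{n+2})/area(Y_n) \le \delta(d)^2$; since the pullback of all of $X_n$ through such steps reconstitutes (a piece of) $X_{n+2}$, this yields the term $\delta_1(d,C)\cdot area(X_n)$ with $\delta_1(d,C) < 1$ provided $\delta(d)^2 < 1$, which holds, and after possibly enlarging $N_1(d,C)$ so that the bounded-distortion and normality estimates (valid since $\bigcup_{\alpha\in\Theta_C}\mathcal S_d^\alpha$ is compact) kick in.

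Concretely the steps are: (1) from Theorem~\ref{real bounds}, $area(Y_{n+2}) \le C(d)\delta(d)^{n+2}$; (2) partition $X_{n+2}$ by the first-entry map into the ``shallow'' preimages (one $B$-step from outside $\Delta$ into $Y_{n+2}$) and the ``deep'' part (entry point in $Y_{n+2}$ but reached from a point of $X$ whose own first-entry point lay in $Y_n \setminus Y_{n+2}$ or in $Y_n$) — more precisely, each $z\in X_{n+2}$ with $k_z \ge 2$ has $B(z) \in X$ with first-entry point in $Y_{n+2} \subset Y_n$, so $B(z) \in X_n$, and $z$ lies in a component of $B^{-1}(X_n)$; (3) estimate $area$ of $B^{-1}$ of $X_n \cap (\text{near-critical region})$ using Lemma~\ref{HC2} to get the geometric term; (4) estimate $area$ of $B^{-1}$ of $X_n$ away from critical points using bounded distortion plus the fact that the specific set being pulled back, when it lands in $Y_{n+2}$ rather than all of $Y_n$, carries the extra factor $\delta(d)^2$, giving the $\delta_1(d,C)\cdot area(X_n)$ term; (5) add the $k_z = 1$ contribution, which is a single $B$-preimage of $Y_{n+2}$ and is absorbed into $C_1(d,C)\epsilon_1(d,C)^n$. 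The main obstacle will be step (4): making precise the bookkeeping that pulling back through one first-return step to $\Delta$ genuinely gains the factor $area(Y_{n+2})/area(Y_n)$ rather than just $1$, uniformly over the branches, and controlling the total overlap of the preimage components — this is exactly where the method of \cite{Zh2} is needed, combining the uniform real bounds with the distortion control to sum the areas of the countably many preimage pieces without loss.
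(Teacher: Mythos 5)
Your decomposition by first-return step does not produce the recursion in (\ref{pz-cci}), and the place where you flag an ``obstacle'' in step (4) is exactly where the approach breaks. If $z\in X_{n+2}$ has $k_z\ge 2$, then $B(z)$ has first-entry time $k_z-1$ and its entry point $B^{k_z}(z)$ still lies in $Y_{n+2}$, so in fact $B(z)\in X_{n+2}$, not merely $X_n$. Pulling $X_{n+2}$ back one step of $B$ essentially reproduces $X_{n+2}$; the first-return class is preserved, so there is no ``extra factor'' of $area(Y_{n+2})/area(Y_n)$ to be harvested from that bookkeeping, and the bounded-distortion estimate you propose would only give $area(X_{n+2})\lesssim area(X_{n+2})$, which is vacuous. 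In short, a direct ``area gain under pullback'' heuristic cannot produce a coefficient strictly less than $1$ in front of $area(X_n)$.

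The paper's mechanism is different and is the ingredient you are missing. It introduces the thin outer annular set $Z_n$ (outer halves of hyperbolic neighborhoods of the level-$n$ partition intervals) whose area decays geometrically by the real bounds, and proves (Lemma~\ref{pre-lem}) that every $z\in X_{n+2}\setminus Z_n$ is \emph{associated to a $K$-admissible pair} $(U,V)$: two topological disks with $z\in U$, $(U,V)$ of $K$-bounded geometry, $\mathrm{diam}(U)<K\,\mathrm{dist}(U,\Bbb T)$, and crucially $V\subset X_n\setminus X_{n+2}$. The companion $V$ lives in the \emph{complement} of $X_{n+2}$ inside $X_n$. The Vitali-type covering lemma (Lemma~\ref{covering lemma}) then yields $area\bigl(\bigcup U_i\bigr)\le\lambda(K)\,area\bigl(\bigcup V_i\bigr)\le\lambda(K)\bigl(area(X_n)-area(X_{n+2})\bigr)$, and the inequality $area(X_{n+2})\le area(Z_n)+\lambda(K)\bigl(area(X_n)-area(X_{n+2})\bigr)$ rearranges to (\ref{pz-cci}) with $\delta_1=\lambda(K)/(1+\lambda(K))<1$. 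Your step (3), using the uniform power law near critical points, does parallel the cone-geometry arguments inside the proof of Lemma~\ref{pre-lem}, but without the companion-set device feeding $X_n\setminus X_{n+2}$ into a covering lemma, the recursion cannot close.
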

\begin{pro}\label{pr2}{\rm
Lemma~\ref{main lemma} implies Lemma~\ref{uniform-w}.}
\end{pro}
\begin{proof} The argument  is completely the same as the one used in the proof of Proposition 8.1 in \cite{Zh2}.  The reader may refer  to \cite{Zh2} for the details.

\end{proof}

The remaining part of this section is  devoted to the proof of Lemma~\ref{main lemma}.
The idea of the proof is adapted from \cite{Zh2}.   Before we present the proof, let us introduce some notations and terminologies first.  For $z \in {\Bbb
C}$ and $r
> 0$, let $B_{r}(z)$ denote the Euclidean disk with radius $r$ and
center $z$.
\begin{defi}[$K$-bounded geometry]\label{bgp} Let $K > 1$ and $(U, V)$ be a
pair of sets in $\Bbb C$ such that $V \subset U$. We say $(U, V)$
has $K$-bounded geometry if there exist $x \in V$ and $r > 0$ such
that
$$
B_{r}(x) \subset V \subset U \subset B_{Kr}(x).
$$
\end{defi}  The following lemma is a variant of Vitali's covering lemma. For a proof, see \cite{Zh2}.
\begin{lem}[ cf.  Lemma 2.1 of \cite{Zh2}]\label{covering lemma}
Let $K > 1$ and $L = 8K + 9$.  Let  $\{(U_{i}, V_{i})\}_{i \in \Lambda}$ be a finite family of pairs of measurable sets in $\Bbb C$.  Suppose all $(U_i, V_i)$
have $K$-bounded geometry, namely, for each $i \in \Lambda$, there
exist $x_i \in V_i$ and $r_i
> 0$ satisfying   \begin{equation}\label{m-qq}
B_{r_i}(x_i) \subset V_i \subset U_i \subset
B_{Kr_i}(x_i).\end{equation} Then  there is a subfamily $\sigma_{0}$ of
$\Lambda$ such that all $B_{r_{j}}(x_{j}), j \in \sigma_{0}$, are
disjoint, and moreover,
$$
\bigcup_{i \in \Lambda} U_{i} \subset \bigcup_{j \in \sigma_{0}}
B_{Lr_{j}}(x_{j}).
$$
In particular, we have
$$
m \big{(}\bigcup_{i \in \Lambda} U_{i}\big{)} \le L^2 \cdot
m\big{(}\bigcup_{i \in \Lambda} V_{i}\big{)}
$$ where $m(\cdot)$ denotes the area with respect to the Euclidean
metric.
\end{lem}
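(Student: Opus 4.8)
The plan is to run the classical greedy (Vitali-type) selection on the \emph{inner} balls $B_{r_i}(x_i)$, ordering them by decreasing radius, and afterwards to dilate the chosen balls by the fixed factor $L$. Since $\Lambda$ is finite, relabel its elements as $i_1,\dots,i_M$ with $r_{i_1}\ge r_{i_2}\ge\cdots\ge r_{i_M}$, and build $\sigma_0$ inductively: put $i_1\in\sigma_0$, and, having decided the status of $i_1,\dots,i_{s-1}$, put $i_s\in\sigma_0$ precisely when $B_{r_{i_s}}(x_{i_s})$ is disjoint from $B_{r_j}(x_j)$ for every $j$ already placed in $\sigma_0$. By construction the balls $\{B_{r_j}(x_j)\}_{j\in\sigma_0}$ are pairwise disjoint, which is the first assertion.

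For the covering property, fix $i\in\Lambda$. If $i\in\sigma_0$ there is nothing to prove; otherwise $i=i_s$ was rejected, so there is $j=i_t\in\sigma_0$ with $t<s$ — hence $r_j\ge r_i$ — and $B_{r_i}(x_i)\cap B_{r_j}(x_j)\ne\emptyset$. These two facts give $|x_i-x_j|\le r_i+r_j\le 2r_j$, and since $U_i\subset B_{Kr_i}(x_i)$ with $r_i\le r_j$, every $y\in U_i$ satisfies $|y-x_j|\le|y-x_i|+|x_i-x_j|<Kr_i+2r_j\le(K+2)r_j$. Hence any $L\ge K+2$ forces $U_i\subset B_{Lr_j}(x_j)$; the value $L=8K+9$ in the statement is comfortably large enough, and leaves room to spare for the bookkeeping about open versus closed balls and boundary contacts. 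Taking the union over $i$ yields $\bigcup_{i\in\Lambda}U_i\subset\bigcup_{j\in\sigma_0}B_{Lr_j}(x_j)$.

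For the area bound I would combine the disjointness of the selected inner balls with the inclusions $B_{r_j}(x_j)\subset V_j\subset\bigcup_{i\in\Lambda}V_i$ to get
$$
area\Big(\bigcup_{i\in\Lambda}U_i\Big)\le\sum_{j\in\sigma_0}area\big(B_{Lr_j}(x_j)\big)=L^2\sum_{j\in\sigma_0}area\big(B_{r_j}(x_j)\big)=L^2\,area\Big(\bigcup_{j\in\sigma_0}B_{r_j}(x_j)\Big)\le L^2\,area\Big(\bigcup_{i\in\Lambda}V_i\Big),
$$
where the second-to-last equality is exactly the pairwise disjointness just established.

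There is no serious obstacle in this lemma: it is a finite combinatorial selection followed by one triangle-inequality estimate. The only two points that need attention are ensuring that every rejected index is dominated by a selected ball of radius \emph{at least as large} — which is precisely what ordering by decreasing $r_i$ secures — and remembering in the dilation step that it is the full set $U_i$, not merely $B_{r_i}(x_i)$, that must be swallowed, so the expansion factor must dominate $K$ rather than the bare Vitali constant $3$.
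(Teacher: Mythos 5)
Your proof is correct. This is the standard finite Vitali-type selection applied to the inner balls, ordered by decreasing radius, and the triangle-inequality step gives the sharper constant $L=K+2$, which is well within the stated $L=8K+9$; the paper itself defers the proof to \cite{Zh2}, whose presumably looser selection scheme (e.g.\ a maximal subfamily chosen without ordering by radius) produces the larger constant, but your ordered greedy selection is complete and covers the lemma as stated. You correctly identify the two points that matter: the selected ball dominating a rejected one must have at least as large a radius (secured by the ordering), and the dilation factor must swallow all of $U_i\subset B_{Kr_i}(x_i)$ rather than just $B_{r_i}(x_i)$ (so it must dominate $K$, not merely the bare Vitali constant).
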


Recall $\Delta$ and $\Bbb T$ denote the unit disk and unit circle respectively.  Let ${\rm diam}(\cdot)$ and ${\rm dist}(\:,\: )$ denote the diameter and distance with respect to the Euclidean metric.
Let $\Omega = \Bbb C \setminus \overline{\Delta} = \{z \in \Bbb C\:|\: |z| > 1\}$. Then $\Omega$ is a hyperbolic Riemann surface.

\begin{defi}\label{admiss}{\rm Let $1 < K  < \infty$ and $z \in
X_{n+2}$.  We say $z$ is associated to a $K$-admissible pair $(U,
V)$
 if $V \subset U \subset \Omega$ are two
open topological disks such that $z \in U$ and
\begin{itemize}
\item[(1)] $V \subset X_{n}\setminus X_{n+2}$,
\item[(2)] the pair $(U, V)$ has $K$-bounded geometry,
\item[(3)] there is a Jordan domain $W$  such that  $\overline{U} \subset W \subset \Omega$ and  ${\rm mod}(W \setminus \overline{U}) >
1/K$.
\end{itemize}}
\end{defi}

Let $I \subset \Bbb T$ be an open interval. Let $\Bbb C^{*} = \Bbb C
\setminus \{0\}$ be the punctured plane. Set
\begin{equation}\label{slt}
\Omega_{I} = {\Bbb C}^{*} \setminus ({\Bbb T} \setminus I).
\end{equation}Then $\Omega_I$ is a hyperbolic Riemann surface.  For $d > 0$,
the hyperbolic neighborhood of $I$ is defined by
\begin{equation}\label{dhs}
\Omega_{d}(I) = \{z \in \Omega_{I}\:\big{|}\: d_{\Omega_{I}}(z, I) <
d\} \end{equation} where $d_{\Omega_{I}}(\:, \:)$ denotes the
hyperbolic distance in $\Omega_{I}$. The next lemma says when $I$ is small enough, $\Omega_d(I)$ is very much like the hyperbolic neighborhood in the slit plane.
\begin{lem}[cf.  Lemma 2.2 of \cite{Zh2}]\label{hn-ps}
Let $d > 0$ be given. Then for any $\epsilon > 0$ the following three assertions hold provided that $I$ is small enough:
\begin{itemize}
\item[1.] $\partial \Omega_{d}(I) = \gamma_{int} \cup \gamma_{out}$ where $\gamma_{int}$ and $\gamma_{out}$
are real analytic curve segments connecting the two end
points of $I$. Moreover,  $\gamma_{int}$ and $\gamma_{out}$ are symmetric about $\Bbb T$  such that $\gamma_{int}\setminus \partial I \subset
\Delta$ and $\gamma_{out}\setminus \partial I \subset \Bbb C
\setminus \overline{\Delta}$;

\item[2.] let $\sigma$ denote the exterior angles formed by $\gamma_{int}$ and $\Bbb T$,
$\gamma_{out}$ and $\Bbb T$, all of which are the same, then $ d =
\log \cot (\sigma/4)$,

\item[3.] Let $C_{int}$
 and $C_{out}$ be the pair of arc segments of Euclidean circles  connecting the two end points of $I$ such that the angles formed by
 $C_{int}$ and $\Bbb T$ and the angles formed by $C_{out}$ and $\Bbb
 T$ are all equal to $\sigma$, then
 $${\rm dist_H}(\gamma_{int}, C_{int})< \epsilon \cdot |I| \hbox{  and  } {\rm dist_H}(\gamma_{out}, C_{out}) < \epsilon \cdot
 |I|,$$
 where $dist_{H}$ denotes the distance between compact sets in the plane with respect to the Hausdorff
 metric.
\end{itemize}
\end{lem}

Note that  $\Omega_d(I)$ is divided by $I$ into two parts: one is in
the interior of $\Delta$ and the other one is in the exterior of
$\Delta$.  We only consider  the part which is in the exterior of
$\Delta$.  Let  $H_\sigma(I)$  denote this part.  That is,
\begin{equation}\label{hndd}
H_\sigma(I) = \{z \in \Omega_d (I)\:|\: |z| > 1\}
\end{equation}
 where $\sigma$ is
determined by the formula $d = \log \cot (\sigma/4)$.  Take  $\sigma = \pi/3$ and  let
$$
\Pi_{n-1}(B) = \{I_{n}^i, \:\:0 \le i \le q_{n-1} -1;\:\:\: I_{n-1}^i,\:\: 0 \le i \le q_n -1\}
$$
be the collection of intervals (cf. \ref{dnp}).
Define
\begin{equation}\label{ZN}
Z_{n}  =  \bigcup_{0\le i\le q_{n-1}-1} H_{\sigma}(I_{n}^{i}) \cup
\bigcup_{0\le i\le q_{n}-1} H_{\sigma}(I_{n-1}^{i}). \end{equation}
It is easy to see that $Z_n$ is the outer half of an open
neighborhood of $\Bbb T$. See Figure 2 for an illustration. As a
 consequence of  Theorem~\ref{real bounds} and the fact that ${\rm diam}(H_\sigma(I)) = O(|I|)$ (cf. Lemma~\ref{hn-ps}) ,   it follows  that there exist
$C(d) > 1$ and $0 < \epsilon(d) < 1$ depending only on $d$ such that for all  $n \ge 1$ and $B \in \mathcal{S}_d^\alpha$ with $\alpha \in \Theta_C$,
\begin{equation}\label{artes}
area(Z_n) < C(d) \cdot \epsilon(d)^n.
\end{equation}
\begin{figure}
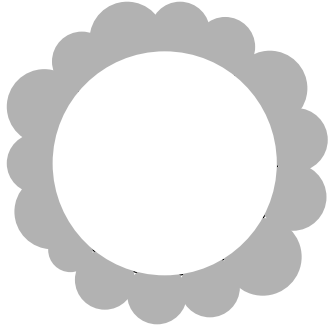

\bigskip
\begin{center}
\centertexdraw { \drawdim cm \linewd 0.02 \move(-2 1)

\move(-1.1 -1.75) \lcir  r:1.5

 \move(-2.8 -1.75) \fcir f:0.7 r:0.4
 \move(-2.7 -1.) \fcir f:0.7 r:0.5
  \move(-2.2 -0.4) \fcir f:0.7 r:0.4

  \move(-1.6 -0.1) \fcir f:0.7 r:0.5
  \move(-0.9 0) \fcir f:0.7 r:0.4
    \move(-0.3 -0.2) \fcir f:0.7 r:0.4
   \move(0.3 -0.75) \fcir f:0.7 r:0.5
  \move(0.64 -1.44) \fcir f:0.7 r:0.42
\move(0.6 -2.2) \fcir f:0.7 r:0.45

\move(0.2 -3) \fcir f:0.7 r:0.52

\move(-0.5 -3.4) \fcir f:0.7 r:0.4

\move(-1.2 -3.5) \fcir f:0.7 r:0.4

\move(-1.9 -3.3) \fcir f:0.7 r:0.4

\move(-2.6 -2.4) \fcir f:0.7 r:0.5

\move(-2.35 -2.9) \fcir f:0.7 r:0.3

\move(-1.1 -1.75) \fcir f:1 r:1.49

}
\end{center}
\vspace{0.2cm} \caption{The set $Z_n$}
\end{figure}

\begin{lem}\label{pre-lem} Let $C > 0$ and $d \ge 2$ be an integer. Then
there exist  $K > 1$  and $N_1 \ge N_0$ depending only on $d$ and $C$ such that for all $n \ge N_1$ and $B \in \mathcal{S}_d^\alpha$  with $\alpha \in \Theta_C$,  if
 $z \in X_{n+2}$,  then either $z \in Z_{n}$, or $z$ is associated to
some $K$-admissible pair $(U, V)$.
\end{lem}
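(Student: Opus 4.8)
The plan is to take a point $z \in X_{n+2}$ and follow its forward orbit under $B$ until the first moment $k_z$ at which it enters $\Delta$, landing in $Y_{n+2}$. If along the way the orbit stays ``close'' to $\Bbb T$ in the hyperbolic sense, then $z$ should already lie in $Z_n$; otherwise, the orbit must make a ``deep'' excursion away from $\Bbb T$, and it is precisely at such an excursion that one can pull back a definite-size round disk and produce the $K$-admissible pair. First I would fix the combinatorial data: by Theorem~\ref{real bounds} the intervals in the dynamical partitions $\Pi_n(B)$ and $\Pi_{n+1}(B)$ are $C(d)$-commensurable with their neighbors, and the hyperbolic neighborhoods $H_\sigma(I)$ have diameter comparable to $|I|$ by Lemma~\ref{hn-ps}. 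I would choose the angle $\sigma$ (equivalently the hyperbolic width $d$) to be a small constant depending only on $d$, and $N_1 \ge N_0$ large enough that for $n \ge N_1$ all intervals of level $n$ and $n+1$ are small enough for Lemma~\ref{hn-ps} to apply and for the sets $H_\sigma(I)$ over a fixed dynamical partition to be pairwise essentially disjoint in a controlled way.

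The core of the argument is a pull-back construction. Let $w = B^{k_z}(z) \in Y_{n+2}$; since $Y_{n+2} \subset \Delta$ is within a uniformly bounded hyperbolic distance of $\Bbb T$ (the cells of level $n+2$ abut $\Bbb T$), one can find a round disk $B_{r_0}(x_0)$ with $x_0 \in \Bbb T$, $r_0$ comparable to the size of the relevant level-$(n+2)$ interval, such that $B_{r_0}(x_0) \cap \Omega$ is contained in $X_n \setminus X_{n+2}$ — this uses that points just outside $\Bbb T$ near $w$'s landing interval enter $\Delta$ after $k_z$ steps but land in $Y_n \setminus Y_{n+2}$, because the level-$n$ cells strictly contain the level-$(n+2)$ cells with a definite ratio (again Theorem~\ref{real bounds}). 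Now pull this disk back by the branch of $B^{-k_z}$ along the orbit of $z$. Here is where the two cases arise: the pull-back of $B^{k_z}$ is a composition of maps that are univalent away from the critical points of $B$ on $\Bbb T$ and their preimages. If the whole backward orbit of the disk avoids definite-size neighborhoods of the critical points, then the Koebe distortion theorem (applied in the slit-plane picture $\Omega_I$, exactly as in the uniform-distortion arguments of \cite{Zh2}) gives that $U = B^{-k_z}(B_{Kr_0}(x_0))$ and $V = B^{-k_z}(B_{r_0}(x_0))$ form a pair of $K$-bounded geometry with $K$ depending only on $d$ and the chosen $\sigma$, and condition (3) of Definition~\ref{admiss} follows because ${\rm diam}(U) = O({\rm dist}(U,\Bbb T))$ from the commensurability of adjacent intervals. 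If instead the backward orbit does pass near a critical point at some intermediate time, then the distortion of the branched covering is controlled by the uniform power law (Lemma~\ref{HC2}) and the uniform real bounds, which still yield bounded geometry with a possibly worse but still $d$-dependent constant — alternatively, in that situation $z$ itself lies inside one of the sets $H_\sigma(I_n^i)$ or $H_\sigma(I_{n+1}^i)$, i.e. $z \in Z_n$, because the critical excursion forces the orbit to remain hyperbolically close to $\Bbb T$ in the slit plane, and hyperbolic neighborhoods are preserved under the (branched) covering $B$.

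The main obstacle I anticipate is handling the pull-back through critical points of $B$ on $\Bbb T$: unlike the degree-two Douady–Ghys model used in \cite{PZ}, here $B$ has up to $d-2$ additional critical points on $\Bbb T$, and a backward orbit can pass near several of them, so one cannot naively invoke Koebe. The resolution is to use that $B|\Bbb T$ has a critical orbit that is, by the property of closest returns, ``spread out'' — Lemma~\ref{sub-int} bounds how many critical preimages sit in any interval of a dynamical partition — so any single pull-back step involves at most a bounded number of critical branches, and the composition of the resulting bounded-distortion maps remains bounded-distortion. Concretely I would partition the backward orbit into the steps that are uniformly away from $\mathrm{Crit}(B) \cap \Bbb T$ (handled by Koebe in $\Omega_I$) and the finitely-many bounded-length ``near-critical'' blocks (handled by Lemma~\ref{HC2} together with the real bounds of Theorem~\ref{real bounds}), concluding that either the total excursion was shallow, putting $z$ in $Z_n$, or the pulled-back round disk gives the $K$-admissible pair with $K = K(d)$. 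Once Lemma~\ref{pre-lem} is established, Lemma~\ref{main lemma} follows by applying the covering Lemma~\ref{covering lemma} to the family of admissible pairs together with the area bound \eqref{artes} for $Z_n$.
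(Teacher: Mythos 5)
Your plan and the paper's proof share the same opening move (follow the forward orbit of $z$ until it first enters $\Delta$) and both ultimately rest on pulling back a round disk with bounded Koebe distortion and then invoking the Vitali-type Lemma~\ref{covering lemma}.  After that the two diverge, and yours has a gap.

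The paper does not attempt to pull a disk from $Y_{n+2}$ all the way back along the $k_z$-step orbit.  Instead it locates the \emph{last} index $k_0 < k_z$ with $z_{k_0}\notin Z_n$, sets $\zeta=z_{k_0}$, $\omega=B(\zeta)$, and observes that $\omega$ lies in $Z_n$ (or in $Y_{n+2}$ when $k_0=k_z-1$).  The entire pull-back from $\zeta$ back to $z$ then travels through $\Omega=\Bbb C\setminus\overline\Delta$, a region where $B$ has \emph{no} critical points (they all sit at $0$, $\infty$ and on $\Bbb T$), so by Lemma~\ref{Koe} a single application of Koebe's distortion theorem transports an admissible pair at $\zeta$ to one at $z$, with no ``near-critical blocks'' to worry about at all.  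The genuinely delicate part of the paper's argument is therefore confined to a \emph{single} iterate $B:\zeta\mapsto\omega$: one enumerates the $d$ (or $d-1$) preimages of $\omega$ in $\Omega$, shows that the ones associated to pre-circles are admissible using Koebe plus the uniform cone geometry (Lemma~\ref{ucl}), and shows that the remaining preimage close to $\Bbb T$ must lie in $Z_n$, by running a short-geodesic/modulus argument near the critical values.  The cases ``$\omega$ near a critical value'' vs.\ ``$\omega$ far from all critical values,'' and the three sub-cases of the latter (near $\Bbb T$, no critical value in $I$, critical value or special interval $I$) are what the bulk of the proof actually does.

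Your proposal instead tries to pull a disk back through the full orbit and to control the composition by splitting into ``away-from-critical'' steps (Koebe) and ``near-critical blocks'' handled by the power law.  This runs into two problems.  First, the backward orbit in $\Omega$ never meets a critical point of $B$ at all, so the ``near-critical blocks'' don't occur where you think they do; the difficulty is not critical points along the orbit but critical \emph{values} landing inside the disk you are pulling back, and the interval power law of Lemma~\ref{HC2} is a one-dimensional statement on $\Bbb T$ that doesn't directly control the distortion of a planar inverse branch near a branch point of $B$.  Second, your dichotomy ``shallow excursion $\Rightarrow z\in Z_n$'' is not justified: the orbit can exit $Z_n$ and re-enter several times, so having a late near-critical passage does not force $z$ itself to be hyperbolically close to $\Bbb T$.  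The paper's ``last exit from $Z_n$'' reduction is precisely what makes the dichotomy clean, because it isolates one crossing of $Z_n$ and reduces everything else to a univalent Koebe pull-back.  Your choice of the initial round disk at $w\in Y_{n+2}$ also needs justification: nearby points of $\Omega$ need not have the same $k_{z}$, so it is not automatic that $B_{r_0}(x_0)\cap\Omega\subset X_n\setminus X_{n+2}$; in the paper the admissible set $V$ is taken as a suitable \emph{preimage} of a disk $V_0\subset E\setminus Y_{n+2}$ in the level-$n$ cell, and admissibility is checked directly.
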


\begin{pro} \label{pro-3}
{\rm Lemma~\ref{pre-lem} implies Lemma~\ref{main lemma}. }
\end{pro}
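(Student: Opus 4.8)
The plan is to cover $X_{n+2}$ by the set $Z_n$ together with the family of topological disks $U$ coming from the admissible pairs supplied by Lemma~\ref{pre-lem}, to control the total area of the latter family by the Vitali-type Lemma~\ref{covering lemma}, and finally to rearrange the resulting inequality so that a contraction constant $\delta_1 < 1$ appears on the right.

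First I would fix $n > N_1$, where $N_1 \ge N_0$ and $K > 1$ are the constants produced by Lemma~\ref{pre-lem}. Since $Y_{n+2} \subset Y_n$ we have $X_{n+2} \subset X_n$, so $area(X_n \setminus X_{n+2}) = area(X_n) - area(X_{n+2})$. By Lemma~\ref{pre-lem}, every $z \in X_{n+2}$ either lies in the open set $Z_n$, or is associated to a $K$-admissible pair $(U_z, V_z)$ with $z \in U_z$, $U_z$ open, $V_z \subset X_n \setminus X_{n+2}$, and $(U_z, V_z)$ of $K$-bounded geometry. Hence $X_{n+2}$ is contained in the open set $Z_n \cup \bigcup_z U_z$, and
\begin{equation*}
area(X_{n+2}) \le area(Z_n) + area\Big(\bigcup_z U_z\Big).
\end{equation*}

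To bound the second term I would invoke Lemma~\ref{covering lemma}. Since that lemma is stated for finite families, given $\epsilon > 0$ I would first choose, by inner regularity of Lebesgue measure, a compact $\mathcal{K} \subset \bigcup_z U_z$ with $area(\mathcal{K}) \ge area(\bigcup_z U_z) - \epsilon$, and then a finite subfamily $\{U_z\}_{z \in \Lambda}$ covering $\mathcal{K}$. All pairs $(U_z, V_z)$ with $z \in \Lambda$ have $K$-bounded geometry, so with $L = 8K + 9$,
\begin{equation*}
area(\mathcal{K}) \le area\Big(\bigcup_{z \in \Lambda} U_z\Big) \le L^2 \, area\Big(\bigcup_{z \in \Lambda} V_z\Big) \le L^2 \, area(X_n \setminus X_{n+2}),
\end{equation*}
the last inequality because each $V_z$ is contained in $X_n \setminus X_{n+2}$. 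Letting $\epsilon \to 0$ gives $area(\bigcup_z U_z) \le L^2 \big( area(X_n) - area(X_{n+2}) \big)$.

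Combining this with the bound $area(Z_n) < C(d)\,\epsilon(d)^n$ from (\ref{artes}), where $C(d) > 1$ and $0 < \epsilon(d) < 1$ depend only on $d$, yields $(1 + L^2)\, area(X_{n+2}) \le C(d)\,\epsilon(d)^n + L^2\, area(X_n)$, i.e.\ inequality (\ref{pz-cci}) holds with $C_1(d,C) = C(d)/(1 + L^2)$, $\epsilon_1(d,C) = \epsilon(d)$, $\delta_1(d,C) = L^2/(1 + L^2) \in (0,1)$, and $N_1(d,C) = N_1$; since $K$, and hence $L$, depends only on $d$ and $C$, so do all these constants. The only point that needs any care is the passage from the possibly uncountable family $\{(U_z, V_z)\}$ to a finite subfamily, which is exactly the inner-regularity step above; everything else is bookkeeping. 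The substantial work in this part of the argument --- producing the admissible pairs --- is the content of Lemma~\ref{pre-lem}, which is established separately.
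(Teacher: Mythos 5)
Your proposal is correct and follows essentially the same route as the paper's proof: cover the region outside $Z_n$ by the admissible $U$'s, pass to a finite subfamily by a compactness/inner-regularity argument, apply the Vitali-type Lemma~\ref{covering lemma} to transfer to the disjoint $V$'s, use $V \subset X_n \setminus X_{n+2}$, and then rearrange so that the coefficient $L^2/(1+L^2) < 1$ appears in front of $area(X_n)$. The only cosmetic difference is the order in which the measure-theoretic approximation is performed (the paper approximates $X_{n+2}$ from inside by a closed set $F$ and then covers the compact set $F \setminus Z_n$; you approximate $\bigcup_z U_z$ from inside by a compact set), but these are interchangeable and lead to the same estimate.
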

\begin{proof}
The argument is completely the same as the one used in the proof of Proposition 8.2 in \cite{Zh2}. The reader may refer to \cite{Zh2} for the details.
\end{proof}

The remaining part of the subsection is devoted to the proof of Lemma~\ref{pre-lem}.

\begin{lem}\label{Koe}
 Let $1 < L < \infty$.  Then there is a $1 < K < \infty$ depending only on $L$ such
that for any $B \in \mathcal{S}_d^\alpha$ with $\alpha \in \Theta_C$, any $z \in X_{n+2}$  and any integer $m \ge 1$, if $B^{i} (z) \in \Bbb
C \setminus \overline{\Delta}$ for all $1 \le i \le m$ and $\zeta =
B^{m}(z)$ is associated to some $L$-admissible pair,
then $z$ is associated to some $K$-admissible pair $(U, V)$.
\end{lem}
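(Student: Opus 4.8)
The plan is to pull back the admissible pair $(U_1,V_1)$ at $\zeta=B^m(z)$ through the iterate $B^m$ and show that the resulting pair at $z$ is again admissible, with a uniform constant. The map $B^m$ restricted to an appropriate neighborhood of $z$ is univalent, so Koebe distortion will convert the bounded geometry of $(U_1,V_1)$ into bounded geometry of the pullback. First I would choose the point $x_1\in V_1$ and radius $r_1>0$ with $B_{r_1}(x_1)\subset V_1\subset U_1\subset B_{Kr_1}(x_1)$ given by $L$-bounded geometry, and also invoke condition 3 of admissibility, $\operatorname{diam}(U_1)<L\cdot\operatorname{dist}(U_1,\Bbb T)$, to see that $U_1$ is contained in a Euclidean ball $W$ in $\Omega=\Bbb C\setminus\overline\Delta$ whose distance to $\Bbb T$ is comparable to its radius; concretely I would take $W=B_{\rho}(x_1)$ with $\rho$ a fixed multiple of $\operatorname{dist}(U_1,\Bbb T)$ so that $W\subset\Omega$ and $W\supset B_{2Kr_1}(x_1)$, say.

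Next I would produce the univalent inverse branch of $B^m$ along the orbit. Since $B^i(z)\in\Bbb C\setminus\overline\Delta$ for $1\le i\le m$ and $B$ has no critical points in $\Bbb C\setminus\overline\Delta$ other than $\infty$ (all the non-trivial critical points of $B\in\mathcal S_d^\alpha$ lie on $\Bbb T$, and $B^{-1}(\infty)=\{\infty\}$), the branch $g=(B^m)^{-1}$ sending $\zeta$ to $z$ extends univalently to a neighborhood of $\zeta$. To get a definite Koebe space I would apply $g$ not to $W$ itself but to a slightly larger ball $\widetilde W\supset W$, still inside $\Omega$ and with $\operatorname{dist}(\widetilde W,\Bbb T)$ comparable to its radius; the point is that $g$ is defined and univalent on all of $\Omega$-type regions that stay off $\Bbb T$, because the only obstruction to extending the inverse branch is hitting a critical value, and the critical values in question sit on $\Bbb T$ or at $\infty$. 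Then Koebe's distortion theorem applied to $g$ on $\widetilde W$ bounds the distortion of $g$ on the sub-ball $W$ by a constant depending only on the ratio $\operatorname{diam}(\widetilde W)/\operatorname{diam}(W)$, hence only on $L$.

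With the distortion bound in hand, set $U=g(U_1)$, $V=g(V_1)$, $x=g(x_1)$. Koebe distortion gives $B_{r}(x)\subset V\subset U\subset B_{K'r}(x)$ with $r$ comparable to $|g'(x_1)|r_1$ and $K'$ depending only on $L$, which is condition 2. Condition 1, $V=g(V_1)\subset X_n\setminus X_{n+2}$, is immediate: if $B^i(z)\notin\Delta$ for $1\le i\le m$ then for $w\in V$ we have $B^i(w)\notin\Delta$ for the same range (shrinking $\widetilde W$ if necessary so that the whole pullback stays in $\Omega$), and $B^m(w)\in V_1\subset X_n\setminus X_{n+2}$ forces $w\in X_n\setminus X_{n+2}$ because membership in $X_k$ is determined by where the first return to $\Delta$ lands. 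Finally condition 3, $\operatorname{diam}(U)<K\cdot\operatorname{dist}(U,\Bbb T)$, follows because $g$ maps $\widetilde W$ into $\Omega$ with bounded distortion, so the image $g(\widetilde W)$ is a near-round region sitting off $\Bbb T$, and $U\subset g(W)$ is deep inside it; the ratio $\operatorname{diam}(U)/\operatorname{dist}(U,\Bbb T)$ is controlled by the corresponding ratio for $W$ times the distortion, again a constant depending only on $L$. Taking $K$ to be the largest of these constants finishes the proof.

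\textbf{Main obstacle.} The delicate point is arranging the univalent inverse branch with a \emph{definite} amount of Koebe space while keeping the whole backward orbit in $\Bbb C\setminus\overline\Delta$, i.e. choosing $\widetilde W$ large enough for Koebe but small enough that $B^i$ maps $g(\widetilde W)$ into $\Omega$ for all $1\le i\le m$; this is where condition 3 of admissibility for $(U_1,V_1)$ — that $U_1$ is not too close to $\Bbb T$ relative to its size — is essential, since it guarantees there is room to fatten $U_1$ to $\widetilde W$ inside $\Omega$ before pulling back. Once that is set up, everything else is a routine application of Koebe distortion and the definition of $X_n$.
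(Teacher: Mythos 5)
Your plan — pull back the admissible pair through the univalent inverse branch of $B^m$ along the orbit of $z$, and control the distortion by Koebe — is exactly what the paper has in mind (the paper's entire proof is the single sentence ``This is a direct consequence of the Koebe's distortion theorem''). Two remarks on the details.

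First, the concern you flag as the ``main obstacle'' is not actually an issue, and the fix you propose (shrinking $\widetilde W$) would be the wrong move. Once $\widetilde W\subset\Omega=\Bbb C\setminus\overline\Delta$, the intermediate pullbacks stay in $\Omega$ automatically. Indeed, for $B\in\mathcal S_d^\alpha$ every critical value of $B$ lies in $\{0,\infty\}\cup\Bbb T$, and since this set is forward invariant the same holds for every iterate $B^j$. Hence $\widetilde W$, being simply connected and disjoint from $\{0,\infty\}\cup\Bbb T$, carries a single-valued inverse branch $g$ of $B^m$ with $g(\zeta)=z$, and for $0\le j\le m$ the intermediate domain $(B^j\circ g)(\widetilde W)$ is connected, contains $B^j(z)\in\Omega$, and is disjoint from $(B^{m-j})^{-1}(\Bbb T)\supset\Bbb T$; therefore it lies in $\Omega$. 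No shrinking is ever required, and the factor between $\widetilde W$ and $U_1$ can be chosen once and for all in terms of $L$.

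Second, there is a small genuine gap in your concrete choice $\widetilde W=B_\rho(x_1)$ with $\rho$ a multiple of $\operatorname{dist}(U_1,\Bbb T)$ and $\widetilde W\supset B_{2Lr_1}(x_1)$: the $L$-bounded geometry only gives $\operatorname{dist}(U_1,\Bbb T)>2r_1/L$, so for large $L$ one cannot have a ball centered at $x_1$ simultaneously containing $B_{Lr_1}(x_1)$ and lying in $\Omega$. The remedy is to replace the concentric ball by the open $\tfrac12\operatorname{dist}(U_1,\Bbb T)$-neighborhood $\widetilde W$ of $U_1$; then $\widetilde W\subset\Omega$ by construction, and condition 3 of admissibility for $(U_1,V_1)$ yields $\operatorname{mod}(\widetilde W\setminus\overline{U_1})\ge m_0(L)>0$, which is exactly the quantitative Koebe space you need. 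With $g$ univalent on $\widetilde W$, the standard Koebe-type estimate (bounded distortion across an annulus of definite modulus) gives $K$-bounded geometry for $(g(U_1),g(V_1))$, and $\operatorname{diam}(g(U_1))\le K\operatorname{dist}(g(U_1),\Bbb T)$ follows because $\Bbb T$ lies outside $g(\widetilde W)$; condition 1 is then exactly the argument you give, using that membership in $X_n$ is read off from the first entry to $\Delta$. So the proof is sound once the enlarged domain is chosen as an annular thickening of $U_1$ rather than a concentric ball.
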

\begin{proof}
This is a direct consequence of  Koebe's distortion theorem. The argument is completely the same as the one used in the proof of Lemma 8.2 in \cite{Zh2}. The reader may refer to \cite{Zh2} for the details.
\end{proof}

Let us now begin the proof of Lemma~\ref{pre-lem}.  Let $C > 0$ and $d \ge 2$ be an integer.
Let $\alpha \in \Theta_C$ and $B \in \mathcal{S}_d^\alpha$.
  It suffices to prove that there exist $1 < K< \infty$ and $N_1 \ge N_0$
   depending only on $d$ and $C$  such that if  $z \in
X_{n+2} \setminus Z_n$ for some $n \ge N_1$, then $z$  is associated to some $K$-admissible pair $(U, V )$.

Recall that $k_z \ge 1$ is the least positive integer such
that $B^{k_z}(z) \in \Delta$. Since $z \in X_{n+2}$ we have
$B^{k_z}(z) \in Y_{n+2}$. Let us denote
$$z_l = B^l(z),\:\: 0  \le  l  \le k_z.$$
Since $z_0 = z \notin Z_n$, the set
$$\Pi= \{k \in \Bbb Z \:|\: 0 \le k < k_z \hbox{  and  } B^k(z)
\notin Z_n\}$$ is not empty. It is clear that $\Pi$ contains at most
$k_z$ elements and is thus a finite set. Let
$$k_0 = \max
_{k\in \Pi}\{k\}.$$ Then $0 \le k_0 \le k_z -1$.  Set
$$\zeta  = z_{k_0} \hbox{ and }\omega = B(\zeta) = z_{k_0+1}.$$
By the definition of $k_0$,  $\zeta \notin Z_n$, and moreover,
\begin{equation}\label{refl}
\omega \in Z_n \hbox{ if } k_0 < k_z - 1 \hbox{ and }\omega \in
Y_{n+2} \hbox{ if }k_0 = k_z - 1.\end{equation}  In the case that $\omega \in Z_n$,  there are $d$ pre-images of $\omega$ in the exterior of $\Bbb T$,  and in  the case that $\omega \in Y_{n+2}$,  there are $d-1$ pre-images of $\omega$ in the exterior of $\Bbb T$.  Since $\zeta$ belongs to the exterior of $\Bbb T$, thus  $\zeta$ is one of these pre-images. By Lemma~\ref{Koe}
it suffices to prove that $\zeta$ is associated to some
$L$-admissible pair $(U_1, V_1)$ for some uniform $1 < L < \infty$
depending only on $d$.

Let $I$ be the the arc interval in $\Pi_{n-1}(B)$ defined by (\ref{dnp}) such that either $\omega \in H_\sigma(I)$ or $\omega$ belongs to a  cell $E$  of level $n$  with $I \subset \partial E \cap \Bbb T$. By Proposition~\ref{crp} in the later case $\partial E\cap \Bbb T$ is either equal to $I$  or equal to the union of $I$ and  one of its adjacent intervals in $\Pi_{n-1}(B)$.

Now take a large constant $R = R(d) > 1$ and fix it. The dependence of $R$ on $d$ will be seen in the sequel.
Since there are at most  $d-1$ critical values  in $\Bbb T$,  for any $B \in \bigcup_{\alpha \in \Theta_C}\mathcal{S}_d^\alpha$,  there exist $\epsilon$ and $\delta$ satisfying
   \begin{itemize}
   \item[1.] $R^{-(d+1)} < \epsilon < \delta < 1$ and  $\delta/\epsilon > R$,
   \item[2.] if $v$ is a critical value of $B$ in $\Bbb T$,  then either
${\rm dist}(\omega, v) < \epsilon \cdot|I|$ or ${\rm dist}(\omega, v) > \delta \cdot|I|$.
 \end{itemize}

$\bold{General\:\:construction}$ By the geometry of the cells (cf. Lemma~\ref{geometry-of-cells}), we can construct a tuple of Jordan
domains  $V_0 \subset U_0 \subset W_0$ such that the following properties hold.
\begin{itemize}
\item[1.] $V_0 \subset E\setminus Y_{n+2}$  is a Euclidean disk,
\item[2.] $B_\omega(\delta\cdot|I|/2) \subset U_0$ and $W_0\setminus B_\omega(\delta\cdot|I|/2)$ contains no critical values of $B$,
\item[3.] $W_0 \cap \Bbb T$ is a connected arc segment,
\item[4.] ${\rm mod} (W_0 \setminus \overline{U_0}) \asymp 1$,
\item[5.]${\rm diam}(V_0) \asymp  {\rm diam}(U_0)\asymp {\rm diam}(W_0)\asymp |I|$.
\end{itemize}
The construction is very similar with the construction of the domains $B \subset A \subset \tilde{A}$ in the proof of Lemma 2.3 of \cite{Zh2}.  Now the proof is divided into two cases.

Case I. $B$ has a critical value $v$ in $\Bbb T$ such that ${\rm dist}(\omega, v) < \epsilon \cdot |I|$.

Suppose  $v \in \Bbb T$ is a critical value such that ${\rm dist}(\omega, v) > \delta \cdot |I|$.  Let $c \in \Bbb T$ be the critical point with $B(c) = v$ and $\Gamma$ be one of  the pre-circles (i.e., the pre-images of the unit circle) attached to $\Bbb T$ at $c$.   Then there is exactly one component of $B^{-1}(W_0)$, say $W_1$, which intersects $\Gamma$. It is clear that $B: W_1 \to W_0$ is a holomorphic isomorphism. Let $V_1 \subset U_1 \subset W_1$ be the domains such that $B(V_1) = V_0$ and $B(U_1) = V_0$.  Then $W_1$ contains a pre-image of $\omega$. If $\zeta$ is contained in $W_1$, then $\zeta$ is associated to the $L$-admissible pair $(U_1, V_1)$ with $L > 1$ being some constant depending only on $d$.

Suppose there are $1 \le l \le d-1$ critical points $c$, counting by multiplicities, such that ${\rm dist}(\omega, B(c)) > \delta \cdot |I|$.  Then there are $l$ pre-circles $\Gamma$ attached to $\Bbb T$ at these critical points. We have seen  that  each such  pre-circle $\Gamma$  corresponds to an $L$-admissible pair  $(U_1, V_1)$ such that $U_1$ contains a pre-image of $\omega$. Now suppose $\zeta$  is not any of these $l$ pre-images of $\omega$.  Let us   prove that $\zeta$  must belong to $Z_n$.

Note that,  if $\omega \in Z_n$, there are $d-l$ other pre-images of $\omega$ in the exterior of $\Bbb T$, and if $\omega \in Y_{n+2}$, there are $d-l-1$ other pre-images of $\omega$ in the exterior of $\Bbb T$.
Since $U_0$ contains all the  critical values $v$ in $\Bbb T$ with ${\rm dist}(\omega, v) < \epsilon \cdot |I|$,  there is a component $U_1$ of $B^{-1}(U_0)$   which intersects $\Bbb T$ such that the map $B: U_1 \to U_0$ is of degree $2d-2l-1$.  When  $\omega \in Z_n$, $U_1$ contains $d-l$ pre-images of $\omega$ which are in the exterior of $\Bbb T$, and when $\omega \in Y_{n+2}$, $U_1$ contains $d-l-1$ pre-images of $\omega$ which are in the exterior of $\Bbb T$.  This means all the remaining  pre-images of $\omega$, which we are concerned about,  are all contained in  $U_1$.  We need only to prove that these pre-images of $\omega$ must be contained in $Z_n$.   To see this,  let $I_r$ and $I_l$ be the two neighbor intervals of $E \cap \Bbb T$ in $\Pi_{n-1}(B)$. Let $S = I_r \cup (E\cap \Bbb T) \cup I_l$. Then $|S| \asymp |I|$. Then $S$ is the union of either three or four adjacent intervals in $\Pi_{n-1}(B)$.  Let $\partial S = \{p, q\}$. Let  $\omega^*$ denote the symmetric image of $\omega$ about $\Bbb T$ and  $\Pi$ denote  the set  of all the critical values.
 Consider the hyperbolic Riemann surface  $$X = \widehat{\Bbb C} \setminus (\Pi \cup \{\omega, \omega^*, p, q\}).$$   Because
$0< \epsilon < \epsilon/\delta <  R^{-1} =  R(d)^{-1}$, by taking $R$ large, we can make sure that there is a simple closed geodesic  $\gamma$ in $X$ which can be arbitrarily
 short  such that $\gamma$ encloses     $\omega$, $\omega^*$, and all those critical values with ${\rm dist}(\omega, v) < \epsilon \cdot |I|$. Since $\gamma$ is short,     $\gamma$ must intersects $S$. Since $B_\omega(\delta\cdot|I|/2) \subset U_0$ by the general construction, we have $\gamma \subset U_0$. Let $T \subset \Bbb T$ be the arc such that $B(T) = S$.  Let $\eta$ be the pre-image of $\gamma$ which intersects $T$.  Then $\eta$ is a simple closed geodesic in $Y = \widehat{\Bbb C} \setminus B^{-1} (\Pi \cup \{\omega, \omega^*, p, q\})$.  Since $\gamma \subset U_0$, $\eta \subset U_1$.  Since the covering degree of $B: \eta \to \gamma$ is not greater than $2d-1$, $\eta$ can be arbitrarily short provided that  $\gamma$ is short enough.  Thus  compared with $T$, the Euclidean diameter of $\eta$ can be arbitrarily small provided that $\gamma$ is short enough. Note that $T$ is contained in the union of at most five adjacent intervals in $\Pi_{n-1}(B)$.  By Theorem~\ref{real bounds} and the construction of $Z_n$, it follows that $Z_n \cup \overline{\Delta}$ contains a $\tau(d)\cdot |T|$-neighborhood of $T$ with $\tau(d) > 0$ depending only on $d$.  This implies that by taking $R$ large enough, we have $\eta \subset Z_n \cup \overline{\Delta}$.
  Note that the covering degree of $B: \eta \to \gamma$ is $2d-2l-1$,  it follows that all the other pre-images of $\omega$, which  belong to the exterior of $\Bbb T$,  are all contained in the interior of $\eta$.  So all the other   pre-images of $\omega$, which we are concerned about,  belong to $Z_n$.   This proves Lemma~\ref{pre-lem}  in  Case I.

Case II.  ${\rm dist}(\omega, v) > \delta |I|$ for all critical values $v$.  In this case we  may assume that $\omega \in Z_n$. This is because if $\omega \in Y_{n+2}$,  then $\omega$ has exactly $d-1$ pre-images which belong to the exterior of $\Bbb T$. As we have seen in the proof of Case I,  each of the $d-1$ pre-circles  corresponds to exactly one of the $d-1$ pre-images of $\omega$ which is associated to an $L$-admissible pair $(U_1, V_1)$ with $L >1$ being some constant depending only on $d$. Thus from now on we may assume that $\omega \in H_\sigma(I)$.

Subcase I of Case II:   $I$ contains no critical values and $I \ne I_{n}^{q_{n-1}-1}$.

Let $J \subset \Bbb T$ be the arc such that $B(J) = I$. Since $I$ contains no critical values and $B(1) \in I_{n-1}^{q_{n}-1}$,  we have  $I \ne I_{n-1}^{q_{n}-1}$. So $J$ is one of the intervals in the collection $\Pi_{n-1}(B)$.    Let $V_0 \subset U_0 \subset W_0$ be the tuple of Jordan domains described in the general construction. By a slight modification of the general construction, we may additionally assume
$$
U_0 \setminus \overline{\Delta} \subset H_\sigma(I).
$$  As we have seen before, each of the $d-1$ pre-circles intersects exactly one of the components of $B^{-1}(W_0)$, which contains a pre-image of $\omega$ and an $L$-admissible pair of domains for this pre-image. So if $\zeta$ is one of these $d-1$ pre-images of $\omega$,  the lemma has been proved.   Suppose it is not the case. Then $\zeta$ does not belong to any of these $d-1$ components.  Let $J \subset \Bbb T$ be the arc such that $B(J) = I$. Since $U_0 \cap I \ne \emptyset$, there is a component of $B^{-1}(U_0)$, say $U_1$, such that $U_1 \cap J \ne \emptyset$. This implies that $U_1$ does not intersect any of the $d-1$ pre-circles.  Thus   the last pre-image of $\omega$, which belongs to the exterior of $\Bbb T$, is contained in $U_1$.  Since $U_0 \setminus \overline{\Delta} \subset H_\sigma(I)$,
by Schwarz lemma, $U_1 \setminus \overline{\Delta}   \subset H_\sigma(J) \subset Z_n$.  It follows  that the last pre-images of $\omega$  is contained in $Z_n$.   This proves Lemma~\ref{pre-lem} in the Subcase I of Case II.

Subcase II of Case II: ${\rm dist}(\omega, \Bbb T) < \epsilon |I|$.
Let $V_0 \subset U_0 \subset W_0$ be the tuple of Jordan domains described in the general construction. Let $U_1$ be the component of $B^{-1}(U_0)$ which intersects $\Bbb T$. As in the Subcase I, it suffices to prove that the pre-image of $\omega$, which is contained in $U_1$. must be contained in $Z_n$.
To see this, let $\omega^*$ be  the symmetric image of $\omega$ with respect to  $\Bbb T$. Let $I_r$, $I_l$ be the two neighbor intervals of $I$ in $\Pi_{n-1}(B)$. Let $S = I_r \cup I \cup I_l$ and $\partial S = \{p, q\}$.  Then $|S| \asymp |I|$. Let $\pi$ denote the set of the critical values of $B$.  Since ${\rm dist}(\omega, \omega^*) < 2\epsilon |I|$,  ${\rm dist}(\omega, v) > \delta |I|$  and $\delta / \epsilon > R = R(d)$,
there is a short simple closed geodesic $\gamma$ in $$X = \widehat{\Bbb C} \setminus (\Pi \cup \{\omega, \omega^*, p, q\})$$ which  contains  $\omega$ and $\omega^*$ and no critical values  in its inside, provided that $R(d)$ is chosen large enough.   Then  $\gamma$ intersects $S$ and  can be arbitrarily short provided that $R$ is  large enough. Since $B_\omega(\delta\cdot|I|/2) \subset U_0$, we have $\gamma \subset U_0$.
  Let $T \subset \Bbb T$ be the arc such that $B(T) = S$.  Let  $\eta$ be the pre-image of $\gamma$ which intersects $T$. Then $\eta \subset U_1$. Then $\eta$  is a short simple closed geodesic in $Y = \widehat{\Bbb C} \setminus B^{-1}(\Pi \cup \{\omega, \omega^*, p, q\})$.  Since $\gamma$ encloses no critical values, the degree of $B: \eta \to \gamma$ is $1$. Hence     $\eta$ can be arbitrarily short provided that $R$ is large enough.   This implies that,  compared with $|T|$, the Euclidean diameter of $\eta$ can be arbitrarily small provided that $R$ is large enough. Since $T$ is contained in the union of at most four adjacent intervals in $\Pi_{n-1}(B)$,  by Theorem~\ref{real bounds} and the construction of $Z_n$, it follows that $Z_n \cup \overline{\Delta}$ contains a $\tau(d)\cdot |T|$-neighborhood of $T$ with $\tau(d) > 0$ depending only on $d$.  Thus by taking $R = R(d)$ large enough, we can make sure that    $\eta \subset Z_n \cup \overline{\Delta}$.   Since $\eta$ encloses a pre-image  of $\omega$, say $\zeta_0$,   it follows that $Z_n$ contains  $\zeta_0$. Since $\eta \subset U_1$, $\zeta_0 \in U_1$. This proves the lemma in the Subcase II of Case II.

Subcase III of Case II:  ${\rm dist}(\omega, \Bbb T) \ge  \epsilon |I|$ and
$I$ either contains at least one critical value or  $I = I_{n}^{q_{n-1}-1}$.  Since $I_{n-1}^{q_{n}-1}$ contains $B(1)$ and is adjacent to  $I = I_{n}^{q_{n-1}-1}$ in the collection $\Pi_{n-1}$,  either $I$ or one of its adjacent intervals in $\Pi_{n-1}$ contains  at least one critical value of $B$.   Since ${\rm dist}(\omega, v) > \delta |I|$ for all critical values $v$, we can construct two tuple of Jordan domains $V_0 \subset U_0 \subset W_0$ and $V_0' \subset U_0' \subset W_0'$ satisfying the properties (1), (3), (4) and (5) described in the General construction, and besides, the following three properties hold also:
\begin{itemize}
\item[1.] $V_0 = V_0'$,
\item[2.] both $W_0$ and $W_0'$  contain no critical values,
\item[3.]  $W_0 \cup W_0'$ is a topological annulus and separates  at least one critical value from $\infty$.
\end{itemize}

Let $c_i, 1 \le i \le d-1$ denote all the critical points in $\Bbb T$, counting by multiplicities and labeled by order. For $1 \le i \le d-1$, let $\Gamma_i$ denote  the pre-circle attached to $\Bbb T$ at $c_i$, and let $W_1^i$ and ${W_1^i}'$  denote respectively the component of $B^{-1}(W_0)$ and $B^{-1}(W_0')$ which intersects $\Gamma_i$, and let $\phi_i: W_0 \to W_1^i$ and $\phi_i': W_0 \to {W_1^i}'$ denote the corresponding inverse branch of $B^{-1}$. Let $F_0$ denote the bounded component of $\Bbb C \setminus \overline{W_0 \cup W_0'}$.  Suppose $B(c_i), 1 \le i \le l$, are all the critical values which are separated by $W_0 \cup W_0'$ from $\infty$, that is, contained in $F_0$.

For $l+1 \le i \le d-1$, since $F_0$ does not contain $B(c_i)$,
 $\phi_i$ and $\psi_i$ can be extended to the same univalent function defined on the Jordan domain $\overline{F_0} \cup W_0 \cup W_0'$. Thus $W_1^i$ and ${W_1^i}'$ contains the same pre-image of $\omega$. As we have seen before, each of these pre-image is associated to some $L$-admissible pair.

The situation is a little bit  subtle for $1 \le i \le l$. For these $\Gamma_i$, since $B(c_i)$ are separated by $W_0 \cup W_0'$, the  maps  $\phi_i: W_0 \to W_1^i$ and $\phi_i': W_0 \to {W_1^i}'$  represent different branches of $B^{-1}$.  Thus  $W_1^i$ and ${W_1^i}'$ contain two different pre-images of $\omega$:  one is to the left of $\Gamma_i$, and the other one is to the right of $\Gamma_i$. Without loss of generality, let us assume that for all $1 \le i \le l$,  the pre-image of $\omega$ contained in $W_1^i$ is to the left of $\Gamma_i$, and the pre-image of $\omega$ contained in ${W_1^i}'$ is to the right of $\Gamma_i$. Let us also assume that $\Gamma_i$, $1 \le i \le l$,  are ordered from left to right. Then the leftmost pre-image of $\omega$  is contained in $W_1^1$, next to that, for each $1 \le i \le l-1$, there is a pre-image of $\omega$ contained in both $W_1^{i+1}$ and ${W_1^i}'$, and the rightmost one is contained in ${W_1^l}'$.  These are exactly the remaining $l+1$ pre-images of $\omega$. As before  each of these pre-images of $\omega$ is associated to some $L$-admissible pair.

This proves Lemma~\ref{pre-lem} in the Subcase III of Case II. The proof of Lemma~\ref{pre-lem} has been completed.  Lemma~\ref{uniform-w} thus follows.

\subsection{Proof of  Key-Lemma 1}
\begin{lem}\label{dr}
Let $M, \beta > 0$ and $0< \epsilon_0 < 1$. Let $\Psi_{M, \beta, \epsilon_0}$ denote the family of all $(M, \beta, \epsilon_0)$-David homeomorphisms
 of the plane to itself which fix $0$ and $1$. Then there exist positive functions $\vartheta, \iota: (0, 2] \to (0, \infty)$ such that \begin{equation}\label{zero-c}\lim_{\delta \to 0+} \vartheta(\delta) =  \lim_{\delta \to 0} \iota(\delta) = 0\end{equation} and  for any $\phi \in \Psi_{M, \beta, \epsilon_0}$  and any two $z_1, z_2 \in \Bbb T$ we have
\begin{equation}\label{sup-inf}
  |\phi(z_1) - \phi(z_2)|  \ge \iota(|z_1 - z_2|) \hbox{  and  } |\phi^{-1}(z_1) - \phi^{-1}(z_2)| \le \vartheta (|z_1 - z_2|).
 \end{equation}
\end{lem}
\begin{proof}
Let $$\vartheta(\delta) = \max\{\sup |\phi(z_1) - \phi(z_2)|, \sup |\phi^{-1}(z_1) - \phi^{-1}(z_2)|\} $$  and
$$
 \iota(\delta) = \min\{\inf |\phi(z_1) - \phi(z_2)|, \inf |\phi^{-1}(z_1) - \phi^{-1}(z_2)|\}
$$
where ${\rm sup}$  is taken over all $\phi \in \Psi_{M, \beta, \epsilon_0}$ and all  the pairs $z_1, z_2 \in \Bbb T$ with $|z_1 - z_2| \le  \delta$
and ${\rm inf}$ is  taken over  all $\phi \in \Psi_{M, \beta, \epsilon_0}$ and all the pairs $z_1, z_2 \in \Bbb T$ with $|z_1 - z_2| \ge  \delta$.  Then By Lemma~\ref{normal} and a compactness argument it follows that both $\vartheta$ and $\iota$ are positive functions satisfying (\ref{zero-c}) and  (\ref{sup-inf}).
\end{proof}

Recall that $B_f$ is the Blaschke product which models $f$.   Let $H_f: \Delta \to \Delta$ be the David homeomorphism  constructed in $\S4.5$.   Let $\mu$ be the $\widehat{B}_f$-invariant Beltrami differential obtained by pulling back $\mu_{H_f}$ through the iteration of $\widehat{B}_f$.   Then  $\mu$ satisfies the integrability condition (\ref{end-k1}) in Lemma~\ref{uniform-w}.  Let $\phi$ be the David homeomorphism of the complex plane to itself which fixes $0$ and $1$, and satisfies the Beltrami equation $\phi_{\bar{z}} = \mu(z) \phi_z$. Then $\phi$ is a $(\tilde{M}, \tilde{\beta}, \tilde{\epsilon}_0)$-David homeomorphism with $\tilde{M}, \tilde{\beta}, \tilde{\epsilon}_0$ depending only on $d$ and $C$. Since $\alpha \in \Theta_C^b$ is of bounded type, $\phi: \Bbb C \to \Bbb C$ is still a qc homeomorphism. By the combinatorial rigidity of $f$ (cf.  Theorem~\ref{Thurston-Siegel-Ch}),
we have  $$f =\phi \circ \widehat{B}_f \circ \phi^{-1}.$$
Now define
\begin{equation}
\widehat{H}_f(z) =
\begin{cases}

 H_f(z) & \text{for $z \in  \overline{\Delta}$}, \\

[H_f(z^*)]^* & \text{for $z \in \Bbb C \setminus \Delta$}.
\end{cases}
\end{equation}
 Here $w^*$ denote the symmetric image of $w$ with respect to $\Bbb T$. Note that there exist $r > 0$ and $K > 1$ depending only on $d$ and $C$ such that the dilatation of $H_f$ is bounded by $K$ in $B_r(0)$ (cf. $\S$ 4.5). Note also that for any measurable set $E \subset \{z\:|\: r < z < 1\}$, we have $area(E^*) \le L \cdot area(E)$, where $E^*$ denote the symmetric image of $E$ with respect to $\Bbb T$, and $L >1$ is a constant depending only on $r$ and thus depending only on $d$ and $C$, and $area(\cdot)$ denotes the area with Euclidean metric. Thus by increasing $\tilde{M}$ if necessary, we may assume that  $\widehat{H}_f: \Bbb C \to \Bbb C$ is a $(\tilde{M}, \tilde{\beta}, \tilde{\epsilon}_0)$-David homeomorphism.  For such $\tilde{M}, \tilde{\beta}, \tilde{\epsilon}_0$, let $\vartheta$ and $\iota$ be the functions in Lemma~\ref{dr}.  Then define $\lambda_1, \eta_1: (0, 2] \to (0, \infty)$ by setting for any $\delta \in (0, 2]$
 $$
 \lambda_1(\delta) = \vartheta(\min\{\vartheta(\delta), 2\}) \hbox{  and  }  \eta_1(\delta) = \iota(\min\{\iota(\delta), 2\}).
 $$
 By (\ref{zero-c}) we have $\lim_{\delta \to 0+} \lambda_1(\delta) =  \lim_{\delta \to 0} \eta_1(\delta) = 0$.  Suppose $k > m \ge 0$ are two integers such that $\delta'< |e^{2 \pi i k\alpha} - e^{2 \pi i m \alpha}| < \delta$ for some $0 < \delta'< \delta \le 2$. Then
 $$
 |f^k(1) - f^m(1)| = |\phi \circ H_f^{-1} \circ R_{\alpha}^k \circ H_f \circ \phi^{-1}(1) - \phi \circ H_f^{-1} \circ R_{\alpha}^m \circ H_f \circ \phi^{-1}(1)|.
 $$
 Since both $H_f$ and $\phi$ fix $1$, it follows that
\begin{equation}\label{fin}
 |f^k(1) - f^m(1)| = |\phi \circ H_f^{-1} (e^{2 \pi i k \alpha})  - \phi \circ H_f^{-1} (e^{2 \pi i m \alpha})|.
 \end{equation}
 Since $H_f^{-1} (e^{2 \pi i k \alpha}), H_f^{-1} (e^{2 \pi i m \alpha}) \in \Bbb T$ and
 $$\min\{\iota(\delta'), 2\} \le |H_f^{-1} (e^{2 \pi i k \alpha})- H_f^{-1} (e^{2 \pi i m \alpha})|  \le \min\{\vartheta(\delta), 2\},$$  by Lemma~\ref{dr} we have $$
 \iota(\min\{\iota(\delta'), 2\})\le |\phi \circ H_f^{-1} (e^{2 \pi i k \alpha})  - \phi \circ H_f^{-1} (e^{2 \pi i m \alpha})|\le \vartheta(\min\{\vartheta(\delta), 2\}).
 $$
This completes the proof of Key-Lemma 1.

\section{Proofs of  Key-Lemmas 3 and 4}

Let $m , l \ge 1$ be two integers. Let $z = (z_1, \cdots, z_m)$ and $w = (w_1, \cdots, w_l)$ denote the points in $\Bbb C^m$ and $\Bbb C^l$, respectively.

\begin{lem}\label{fundamental-E-a}
Let  $Q_i(z, w)$, $1 \le i \le m$, be $m$ polynomials of $m+l$
complex variables.
Suppose   there exist open sets $U \subset \Bbb C^l$ and  $V \subset \Bbb C^m$   and  $m$ holomorphic functions
$$
z_i = g_i(w_{1}, \cdots, w_{l}), \:\: 1 \le i \le m,
$$ defined in $U$  such that
\begin{itemize}
\item[(1)]  for any $w \in U$, $(g_1(w), \cdots, g_m(w)) \in V$ and $Q_i(g_1(w), \cdots, g_m(w), w) = 0$ for all $1 \le i \le m$,
\item[(2)]   for any points $w \in U$ and $z \in V$, if
 $Q_i(z,w) = 0$ for all $1 \le i \le m$,  then
 $z_i = g_i(w)$ for  all $1\le i \le m$.
\end{itemize}
Then there  exist $m$ irreducible polynomials $P_i$, $1 \le i \le m$,
of $l+1$ variables such that
$$
P_i(g_i(w), w) = 0, \:\forall w \in U, \:\: 1 \le i \le m.
$$
\end{lem}
\begin{proof}
Let $\mathcal{S}$ denote the system of the $m$ polynomials $Q_i(z, w)$, $1 \le i \le m$. By replacing $Q_i$ by one of its irreducible factors if necessary,  we may assume that all $Q_i$, $1 \le i \le m$,  are irreducible.

Let us first  write all $Q_i$ into polynomials of $z_1$ with
coefficients being polynomials of the other $m+l-1$ variables.
If some $Q_i$ has a term with coefficient, say $h(z_2, \cdots, z_m, w)$,
satisfying $h(g_2(w), \cdots, g_{m}(w), w) = 0$ for all $w\in U$, then we add the polynomial
$h$ to  $\mathcal{S}$,  and at the same time, delete the
corresponding term from $Q_i$. In this way we get a new system of polynomials. By replacing a polynomial by one of its irreducible factors if necessary,   we can make sure all the polynomials in the new system are still irreducible. Besides this, if one polynomial is the constant multiple of the other, we just
remove one of them from the system. In the following we will repeat this process to keep  the new system  not redundant. Let us still use $\mathcal{S}$ to denote the new system.  Note that for the new system, the conditions (1)  and (2)  in the lemma are still satisfied.

We claim that there is at least one  polynomial in $\mathcal{S}$ which involves
$z_1$.  Suppose the claim were not true. Then take $z^0 = (z_1^0, \cdots, z_m^0) \in V$ and $w^0 = (w_1^0, \cdots, w_l^0) \in U$ such that $Q(z^0,w^0) = 0$ for all polynomial $Q$ in $\mathcal{S}$.
Now take $z_1^* \ne z_1^0$ such that $(z_1^*, z_2^0, \cdots, z_m^0) \in V$. Then
   for each polynomial $Q $ in $\mathcal{S}$, since $Q$ does not involves $z_1$, we have $Q(z_1^*,  z_2^0,\cdots, z_m^0, w^0) = 0$.
 This contradicts with the condition (2).  The claim has been proved.

Now suppose $Q_1$ is  a polynomial in  $\mathcal{S}$ which involves $z_1$ and  moreover, among all the polynomials in $\mathcal{S}$  which involve $z_1$,  the degree of $Q_1$ with respect to $z_1$ is the  lowest.
Assume  there is some other polynomial, say $Q_2$,  in $\mathcal{S}$, which also involves $z_1$.  Otherwise we go to the next step. let us do the polynomial long division as follows,
\begin{equation}\label{division}
 Q_2 = D \cdot Q_1 + R
 \end{equation}
 where $D$ and $R$ are  polynomials of $z_1$ with
 coefficients being rational functions of $$z_2, \cdots, z_m, w_1, \cdots, w_l.$$
Note that the degree of $R$  with respect to $z_1$ is less than that of $Q_1$, and is thus less than that of $Q_2$. Since all polynomials in $\mathcal{S}$ are irreducible,
  $R$ is not identically zero. Let $h(z_2, \cdots, z_m, w)$ be the coefficient of the leading term of $Q_1$.
From the process of polynomial long division, we  know if $D = D_1/D_2$ and $R = R_1/R_2$,  then  $D_2$ and $R_2$ are  both the powers of $h(z_2, \cdots, z_m, w)$. Since $h(g_2(w), \cdots, g_m(w), w)$ is not identically zero in $U$,  both
$D_2(g_2(w), \cdots, g_m(w), w)$ and   $R_2(g_2(w), \cdots, g_m(w), w)$
are not identically zero in $U$.  Now we replace $Q_2$ by $R_1$ and get a new system of polynomials.  We then repeat the procedure used before to make sure that the polynomials in the new system are all irreducible and the new system is  not redundant.  From (\ref{division}) it follows that
  under the condition that $h(g_2(w), \cdots, g_m(w), w)\ne 0$,   the two equations $R_1 = 0$ and $Q_1 = 0$
  imply $Q_2 = 0$; and on the other hand, the two equations $Q_1 = 0$ and $Q_2 = 0$
  imply $R_1 = 0$.  Since $h(g_2(w), \cdots, g_m(w), w)$ is not identically zero in $U$,  it follows that, by replacing $U$  by an  open subset of $U$ on which $h(g_2(w), \cdots, g_m(w), w)$ does not vanish,  the new system still satisfies the two conditions (1) and (2). Let us still use $\mathcal{S}$ to denote the system of polynomials.

Note that after the above process,   the sum of the degrees of all polynomials in $\mathcal{S}$ with respect to $z_1$, is decreased at least by $1$. So after finitely many steps,  there is only one polynomial in $\mathcal{S}$ which involves $z_1$, and moreover,  the two conditions (1) and (2) are still satisfied with $U$ replaced by an appropriate open subset of $U$. Let us still use $U$ denote this open subset.

Now  we claim that, except $Q_1$,  there is at least one polynomial in the system which involves  $z_2$. Suppose the it were not true.  Take $(z_1^0, \cdots, z_m^0) \in U$ and $(w_1^0, \cdots, w_l^0) \in V$ such that $Q(z_1^0, \cdots, z_m^0, w_1^0, \cdots, w_l^0)=0$ for all $Q$ in $\mathcal{S}$. Let us write $$Q_1(z, w) = h_lz_1^l + \cdots + h_0$$ where $h_i$, $0 \le i \le l$, are  polynomials of $z_2, \cdots, z_m, w_1, \cdots, w_l$. Since $Q_1$ is the only polynomial involves $z_1$, by the condition (2), there is some $h_i$ such that $$h_i(z_2^0, \cdots, z_m^0, w_1^0, \cdots, w_l^0) \ne 0.$$ But then for any $z_2^*$ near $z_2^0$, by Rouche Theorem, there is some $z_1^*$ near $z_1^0$ such that $$Q_1(z_1^*, z_2^*, z_3^0, \cdots, z_m^0, w_1^0, \cdots, w_l^0) = 0.$$ Since the other polynomials in $\mathcal{S}$ does not involve $z_1$ and $z_2$, we have $$Q(z_1^*, z_2^*, z_3^0, \cdots, z_m^0, w_1^0, \cdots, w_l^0) = 0$$ for all $Q$ in $\mathcal{S}$, This contradicts with the condition (2) and the claim has been proved.

Now let $Q_2$ be a polynomial in the system which involves $z_2$ and whose degree with respect to $z_2$ is the lowest. If except $Q_1$ and $Q_2$, there are no other polynomials which involve $z_2$, we go to the next step. Otherwise,   choose a polynomial, say $Q_3$, which involves $z_2$. Then we repeat the process of the polynomial long division as in (\ref{division}) for $Q_2$ and $Q_3$, with respect to $z_2$.  In this way,  after finitely many steps  we get a new system $\mathcal{S}$ such that except $Q_1$, there is only one polynomial $Q_2$  which involves $z_2$, and moreover, the conditions (1) and (2) are satisfied by replacing $U$ by some appropriate open subset of $U$.

Using the same argument as before, one can prove that except $Q_1$ and $Q_2$, there is some other polynomial in $\mathcal{S}$, say $Q_3$, which involves $z_3$.  Repeating the above process,   we finally get an irreducible polynomial of $l+1$ variables, say $Q_m$, such that
$Q_m(g_m(w), w) = 0$.
Let $P_m = Q_m$.  By relabeling each of $z_1, \cdots, z_{m-1}$ as $z_m$ and repeating the above process, we get an irreducible polynomial $P_i$ of $l+1$ variables such that
$P_i(g_i(w), w) = 0, \: 1 \le i \le m-1$.
The proof of Lemma~\ref{fundamental-E-a} is completed.
\end{proof}

Now let  us recall some basic notions of algebraic functions.  For more knowledge in this aspect, the reader may refer to \cite{Ah} and \cite{FK}.  Suppose $P(w, z)$ is an irreducible polynomial.
   Suppose $P$ has degree $m$ with respect to $w$. That is,
$$
P(w, z) = b_0(z) w^m + b_1(z) w^{m-1}+ \cdots +b_m(z)
$$ where $b_i(z)$ are  polynomials of $z$ and $b_0(z)$ is not identically zero.  Let $R(z)$ be the resultant of $P(w, z)$ and $P_w(w, z)$.
Let $\Pi(P) = \{z \in \Bbb C\:|\: R(z) = 0 \hbox{  or  } b_0(z) = 0\}$.   We call $\Pi(P)$ the set of $\emph{algebraic singularities}$ of $P$.  Then for
any $z \in {\Bbb C} \setminus \Pi(P)$, there are exactly $m$ distinct $w$ in $\widehat{\Bbb C}$ such that $P(w, z) = 0$. Thus  the equation $P(w, z) = 0$  determines a multi-valued analytic function $w = w(z)$ in the sense
$P(w(z), z) = 0$.

\begin{lem}\label{algebraic-curve}
Suppose $P_i(w_i,z)$, $1 \le i \le l$, are $l$ irreducible polynomials of two variables. Let $$\Pi = \bigcup_{1 \le i \le l} \Pi(P_i) \cup \{\infty\}.$$   Let $z^0 \in \widehat{\Bbb C} \setminus \Pi$  and  $w_i^0 \in \Bbb C$  with  $P_i(w_i^0, z_0) = 0$ for $1 \le i \le l$. Then there exists a compact Riemann surface $S$ and meromorphic functions $z$   and $w_i$, $1 \le i \le l$, defined in $S$ such that
\begin{itemize}

\item[1.]  there is a $t_0 \in S$ such that $z^0 = z(t_0)$  and $w_i^0 = w_i(t_0)$, $1 \le i \le l$,
\item [2.] $P(w_i(t), z(t)) = 0$ for $t \in S\setminus P$ with $P$ being the set of poles of $z$ and all $w_i$, $1 \le i \le l$.
\end{itemize}
\end{lem}
\begin{proof} It is a classical theorem when $l = 1$ and  the reader may refer to \cite{FK} for a detailed proof. The idea is completely the same for $l > 1$. So let us merely present an outline of the proof as follows.

Let $W(z) = (w_1(z), \cdots, w_l(z))$ be the vector of holomorphic germs at $z^0$ such that $w_i(z^0) = w_i^0$ and $P(w_i(z), z) = 0$ for $1 \le i \le l$.  For any point $\zeta \in \widehat{\Bbb C} \setminus\Pi$ and any path $\gamma \subset \widehat{\Bbb C} \setminus \Pi$ connecting $z$ and $\zeta$, we can analytically continue $W$  along $\gamma$ so that $P_i(z, w_i(z)) = 0$ for $1 \le i \le l$. Here continuing $W$ along a path $\gamma$ means continuing all the components of $W$ simultaneously along $\gamma$.  In this way we get a vector of holomorphic germs  at $\zeta$. Let $S$ denote the set of all such vectors of holomorphic germs. Define $\pi : S \to \widehat{\Bbb C} \setminus\Pi$ by  $(w_1(z), \cdots, w_l(z)) \mapsto z$.  As in the case that $l = 1$, one can first put a topology  and then introduce a complex chart on $S$  so that $S$ becomes into a Riemann surface and the map $\pi: S \to  \widehat{\Bbb C} \setminus\Pi$ is a holomorphic covering map. It is clear that $z$ and all $w_i$ are holomorphic in $S$. Since there can only be finitely many distinct germs over each $z$,  $\pi$ is a finite  covering map. Let  $\deg(\pi)$ denote  the covering degree.

To compactify $S$ we need to fill in  the points lying over $\Pi$. Let $p \in \Pi$ be a point and $U$ be a small disk about $p$ and not containing any other points in $\Pi$. Let $V$ be a component of $\pi^{-1}(U\setminus \{p\})$. Then $\pi|V: V \to U\setminus \{p\}$ is a holomorphic covering map and $(\pi|V)^*(\pi_1(V))$ is a subgroup of $\pi_1(U\setminus \{p\}) = \Bbb Z$. Thus
$(\pi|V)^*(\pi_1(V)) = k \Bbb Z$ for some integer  $k$ with $|k|\le \deg(\pi)$.  But on the other hand, there is a disk $W$ about the origin such that the holomorphic covering map $\tau: W\setminus \{0\} \to U \setminus \{p\}$ given by   $\tau(z) = z^k + p$, also satisfies  $(\tau)^*(\pi_1(W\setminus \{0\})) = k \Bbb Z$. This implies that  there is a holomorphic isomorphism $\phi: W\setminus \{0\} \to V$.  By identifying $z$ with $\phi(z)$ for $z \in W$, we can replace $V$ by $W$ in $S$ and thus fill in the ``hole'' in $V$. Repeat this procedure for all the other components of $\pi^{-1}(U\setminus \{p\}$ and all the other points in $\Pi$. In this way we fill in all the points ``lying'' over $\Pi$ and $S$ is compactified. As in the case that $l=1$, one can introduce  more charts and  sets respectively to the previous  atlas of charts and topology of $S$ so that the ``compactified" becomes into a Riemann surface.

 Since $z$ and all $w_i$ are holomorphic in $S$ except the points lying over $\Pi$, to prove they are meromorphic in $S$, it suffices to show that the points lying over $\Pi$ are either poles or regular points for $z$ and $w_i$.  To see this,  take take an arbitrary   $q \in S$  which lies above some point in $\Pi$. Let $D$ be a small disk about $q$. Since $\pi: S \to \widehat{\Bbb C}$ is a finitely branched covering map, $z$ takes any complex value finitely often in $D$. This implies that $q$ is either a pole or a regular point of $z$. Since $q$ is arbitrary, $z$ is a meromorphic function in $S$ with the poles being exactly those points lying above $\infty$. Now if $q$ is an essentially singularity of some $w_i$, then with at most one exception,
  $w_i$ takes any complex value, say $\alpha$,  infinitely often in $D$. Since $P_i(z, w)$ is irreducible, $P_i(z, \alpha) = 0$ has finitely many roots. This implies that $z$ must take some value infinitely often in $D$. This is a contradiction since $z$ is meromorphic. Thus $w_i$ is also a meromorphic function in $S$.

    The  assertion (1) now follows by taking $t_0$ to be the point represent by the vector $W(z) = (w_1(z), \cdots, w_l(z))$ of holomorphic germs  at $z^0$ such that $w_i(z^0) = w_i^0$, $1 \le i \le l$. The assertion (2) follows by the construction.

\end{proof}

Let us start the proof of Key-Lemma 3. Suppose $f$ has $l$ attracting cycles with multipliers non-zero $t_1^0, \cdots, t_l^0$.
For each $1 \le i \le l$, let $x_i^0$ be one of the points in the $i$-th periodic attracting cycle and
$p_i \ge 1$ be the period.   Thus we have
$$
f_{c_1^0, \cdots, c_{d-2}^0, 1}^{p_i}(x_i^0) - x_i^0 = 0 \hbox{  and } Df_{c_1^0, \cdots, c_{d-2}^0, 1}^{p_i}(x_i^0) - t_i^0 =0.$$
Since the cycle  is attracting, there exist open neighborhoods $U$ of $(c_1^0, \cdots, c_{d-2}^0)$ and  $V$ of $(x_i^0, t_i^0)$,
and two holomorphic functions
\begin{equation}\label{imp-f-1}x_i = \alpha_i(c_1, \cdots, c_{d-2})  \hbox{  and } t_i = \lambda_i(c_1, \cdots, c_{d-2})
\end{equation}
defined in $U$ such that for all $(c_1, \cdots, c_{d-2}) \in U$ we have
\begin{equation}\label{pre-pe}
 f_{c_1, \cdots, c_{d-2}, 1}^{p_i}(x_i) - x_i = 0 \hbox{  and } Df_{c_1, \cdots, c_{d-2}, 1}^{p_i}(x_i) - t_i =0,
\end{equation}
and moreover, for any $(c_1', \cdots, c_{d-2}') \in U$ and $(x_i', t_i') \in V$ satisfying (\ref{pre-pe}),  we have
$x_i' = \alpha_i(c_1', \cdots, c_{d-2}')$ and  $t_i' = \lambda_i(c_1', \cdots, c_{d-2}')$.

\begin{lem}\label{non-de-J}
Let $t_i = \lambda_i(c_1, \cdots, c_{d-2})$, $1 \le i \le l$, be the functions given by (\ref{imp-f-1}).  Then
by relabeling $c_1, \cdots, c_{d-2}$ if necessary,
we have $$\bigg{|}\frac{\partial(t_1, \cdots, t_l)}{\partial(c_1, \cdots, c_l)}
\bigg{|}_{(c_1^0, \cdots, c_{d-2}^0)} \ne 0$$
\end{lem}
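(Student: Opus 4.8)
The plan is to recast Lemma~\ref{non-de-J} as the assertion that the multiplier map is a submersion, and to prove the latter by a quasiconformal deformation of $f$ supported in its attracting basins.

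\emph{Reduction.} By Lemma~\ref{multiplier-analytic} the multipliers $t_1,\dots,t_l$ are holomorphic functions of $(c_1,\dots,c_{d-2})$ near $(c_1^0,\dots,c_{d-2}^0)$, so near the base point we have a holomorphic map
$$
T\colon (c_1,\dots,c_{d-2})\longmapsto (t_1,\dots,t_l)\in\Bbb C^l .
$$
It is enough to show that the $l\times(d-2)$ Jacobian matrix of $T$ at $(c_1^0,\dots,c_{d-2}^0)$ has rank $l$: in that case $l$ of its columns are linearly independent, and relabelling the critical points of $f$ so that these are the columns corresponding to $c_1,\dots,c_l$ makes the minor $\partial(t_1,\dots,t_l)/\partial(c_1,\dots,c_l)$ non-zero, which is the assertion of Lemma~\ref{non-de-J}. (Since $l\le d-3$, at least one of the $d-2$ critical points is still left over to serve as $c_{l+1}$ in Key-Lemma~3.)

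\emph{The deformation argument.} For $1\le i\le l$ let $\Omega_i$ be the attracting basin of the $i$-th cycle $\mathcal O_i$, of period $p_i$. The closure $\overline{\Omega_i}$ is a bounded subset of $\Bbb C$ which meets the Fatou set only in $\Omega_i$; in particular it is disjoint from $\mathcal O_j$ for $j\ne i$, from the Siegel disk (hence from its centre $0$), and from $\infty$. Fix $i$. In a Koenigs linearising coordinate near $\mathcal O_i$ the first return map $f^{p_i}$ is multiplication by $t_i$, so an $f^{p_i}$-invariant Beltrami differential near $\mathcal O_i$ is the same thing as a Beltrami differential on the quotient torus $\Bbb C^*/t_i^{\Bbb Z}$. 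Choose on that torus a constant Beltrami differential $\mu_0$ (which moves the modulus of the torus at non-zero rate --- an explicit affine computation), pull it back near $\mathcal O_i$, spread it over all of $\Omega_i$ using $f$-invariance, and set it to $0$ off $\Omega_i$; this gives an $f$-invariant Beltrami differential $\mu_i$ with $\operatorname{supp}\mu_i\subset\overline{\Omega_i}$ and $\|\mu_i\|_\infty<1$. For $s$ small let $\phi_s$ solve the Beltrami equation for $s\mu_i$, normalised to fix $0$ and $\infty$. Since $s\mu_i$ is $f$-invariant, $g_s:=\phi_s\circ f\circ\phi_s^{-1}$ is holomorphic; since $s\mu_i$ vanishes near $\infty$, $g_s$ is a polynomial of degree $d$; the image $\phi_s(1)$ of the critical point $1$ is a critical point of $g_s$ close to $1$, and $\phi_s$ is conformal near $0$ and near the Siegel disk, so the linear rescaling $z\mapsto g_s(\phi_s(1)z)/\phi_s(1)$ is a map $f_s\in\mathcal P_\alpha^d$ (the multiplier at $0$ stays $e^{2\pi i\alpha}$), with $f_0=f$, depending holomorphically on $s$. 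Because $\phi_s$ is conformal on a neighbourhood of $\mathcal O_j$ for every $j\ne i$, the cycles $\mathcal O_j$ persist with unchanged multipliers, so $t_j(f_s)\equiv t_j$; because $\mu_0$ was chosen to move the torus modulus at non-zero rate, $\tfrac{d}{ds}t_i(f_s)\big|_{s=0}\ne0$. Regarding $s\mapsto f_s$, via the coordinates $(c_1,\dots,c_{d-2})$, as a holomorphic curve $\gamma_i$ through $(c_1^0,\dots,c_{d-2}^0)$, we obtain $(T\circ\gamma_i)'(0)=(0,\dots,0,\xi_i,0,\dots,0)$ with $\xi_i\ne0$ in the $i$-th slot. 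Hence $DT$ at the base point has $l$ linearly independent vectors in its image, so it has rank $l$; with the reduction, Lemma~\ref{non-de-J} follows.

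\emph{Main obstacle.} The delicate point is the localized surgery: one must check that it really yields a holomorphic one-parameter family inside $\mathcal P_\alpha^d$ (normalisations, persistence of the Siegel disk with rotation number $\alpha$ and of $f'(1)=0$) and, above all, that the variation of $t_i$ is infinitesimally non-zero, i.e. that $\mu_i$ is not infinitesimally trivial on the torus $\Bbb C^*/t_i^{\Bbb Z}$. The last point is the elementary Teichm\"uller theory of a torus (a constant Beltrami differential changes the modulus), but the bookkeeping --- immediate versus full basin, the post-critical set, and spreading $\mu_i$ over the grand orbit by $f$-invariance --- needs care. The disjointness of the relevant invariant sets, the linear algebra in the reduction, and the conclusion are routine.
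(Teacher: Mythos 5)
Your overall strategy — reduce the lemma to the statement that the multiplier map $T=(t_1,\dots,t_l)$ has full rank at the base point, and prove this by deforming $f$ quasiconformally with a Beltrami differential supported in a single attracting basin $\Omega_i$ (so that $t_j$ for $j\neq i$ are held fixed, while $t_i$ moves at non-zero rate) — is exactly the route the paper takes: the paper constructs a local right inverse $\Psi$ to $T$ by performing, for each $i$, a compactly supported quasiregular perturbation inside a fundamental domain of $f^{p_i}$, spreading it by invariance, and straightening.

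However, there is a genuine gap. Your deformation is built from the Koenigs linearization and the quotient torus $\Bbb C^*/t_i^{\Bbb Z}$, and so it only makes sense when $0<|t_i|<1$. The lemma must also cover super-attracting cycles, i.e.\ $t_i=0$: Key-Lemma~3 is invoked for $f$ arising from the class $\Sigma_{\rm geom}^{\alpha,d}$, whose definition explicitly allows critical points to land on super-attracting periodic cycles, and the paper's proof of this lemma treats $t_i=0$ as a separate case. In that case there is no Koenigs coordinate, no quotient torus, and a constant Beltrami differential has nothing to deform — what one must show is that an infinitesimal move of the critical point off the cycle produces a non-vanishing derivative of the multiplier, starting from $t_i=0$. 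The paper handles this via a different local model: it modifies the lift $\Lambda_i(z)=z^m$ of the first-return map by a translation $\Lambda_i(z)+s_i$ in the inner half of a fundamental annulus (as opposed to the multiplicative tweak $(1+s_i)\Lambda_i(z)$ used when $0<|t_i|<1$), and checks directly that the resulting multiplier is a univalent function of $s_i$ near $s_i=0$. Your proof would need an analogous second construction to close the gap; as written, it does not apply when $t_i=0$.

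A secondary, smaller point: before deforming you assert that $\overline{\Omega_i}$ is disjoint from the other cycles and from the Siegel disk; in fact $\overline{\Omega_i}$ meets the Julia set, which is shared by all Fatou component boundaries. What you actually need (and what is true) is that the other Fatou components are disjoint from $\overline{\Omega_i}$ because they are disjoint from $\Omega_i$ and from $\partial\Omega_i\subset J(f)$, so that $\phi_s$ is conformal on each of them; and that $\phi_s$ is conformal at the origin so the Siegel multiplier, hence $\alpha$, is preserved. This is a matter of stating the disjointness correctly rather than a missing idea.
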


By Lemma~\ref{non-de-J} and Implicit function Theorem,  there exist a vector of holomorphic functions
$$(g_1(c_{l+1}, \cdots, c_{d-2},t_1 \cdots, t_l), \cdots,g_l(c_{l+1}, \cdots, c_{d-2},t_1 \cdots, t_l))$$ defined in a polydisk neighborhood $U_0$ of $(c_{l+1}^0, \cdots, c_{d-2}^0, t_1^0, \cdots, t_l^0)$ and taking values in a polydisk neighborhood $V_0$ of $(c_1^0, \cdots, c_l^0)$ such that $t_i = \lambda_i(g_1, \cdots, g_l, c_{l+1}, \cdots, c_{d-2})$ holds in $U_0$, and moreover, for any $(c_{l+1}, \cdots, c_{d-2},t_1, \cdots, t_l) \in U_0$ and $(c_1, \cdots, c_l) \in V_0$, if $t_i = \lambda_i(c_1, \cdots, c_{d-2})$, then $c_i=g_i(c_{l+1}, \cdots, c_{d-2},t_1 \cdots, t_l)$. Now take $t_i= t_i^0$ for $1 \le i \le l$ and  $c_i = c_i^0$ for  $l+2\le i \le d-2$.  Then the systems of equations in (\ref{pre-pe}) uniquely determined a group of holomorphic functions $x_i$ and $c_i$, $1 \le i \le l$, in a small neighborhood of $c_{l+1}^0$.
By multiplying an appropriate power of $c_1 \cdot c_2 \cdots \cdot c_{l}$ on both sides of the equations in (\ref{pre-pe}) we get a system of polynomial equations
\begin{equation}\label{pol-ee}R_i(x_i, c_1, \cdots, c_{l+1}) = 0 \hbox{  and  } S_i(x_i, c_1, \cdots, c_{l+1}) = 0, \:\: 1 \le i \le l.
\end{equation}
Now apply Lemma~\ref{fundamental-E-a}, we have
\begin{lem}\label{multiplier-analytic}
There exist an open neighborhood $U$ of $c_{l+1}^0$ such that for each $1 \le i \le l$, there exist irreducible polynomials $P_i$ and $Q_i$ of two variables satisfying $$P_i(c_i, c_{l+1}) = 0  \hbox{  and  } Q_i(x_i, c_{l+1}) =0$$ for all $c_{l+1} \in U$.
\end{lem}

Now let us  prove Lemma~\ref{non-de-J}.  By (\ref{imp-f-1}) it follows that
$$\Phi: (c_1, \cdots, c_l, c_{l+1}, \cdots, c_{d-2}) \to (t_1, \cdots,
t_l)$$ is a  holomorphic map in a neighborhood $U$
  of $(c_1^0, \cdots, c_l^0, c_{l+1}^0, \cdots, c_{d-2}^0)$ and  $$t_i^0 = \lambda_i(c_1^0, \cdots, c_l^0, c_{l+1}^0, \cdots, c_{d-2}^0), \:\:1 \le i \le l.$$  It suffices to  the existence of a small neighborhood $W$ of $(t_1^0, \cdots, t_l^0)$ and a holomorphic map
$\Psi: W \to U$ such that
 $\Phi\circ \Psi = \hbox{id}$.   It is clear that this will imply Lemma~\ref{non-de-J}.   The construction
of $\Psi$ is as follows.

 For $1 \le i \le l$, let $x_i^0$ and $p_i$ be as before. For each $x_i^0$,  Let $U_i$ be a Jordan domain with real analytic boundary such that $f^{p_i}: U_i \to f^{p_i}(U_i)$ is conjugate to  $z \to t_i^0 z$.  Let $U_i^k = f^k(U_i)$ for $0 \le k \le p_i$. For each $1 \le i \le l$,  the map
$f: U_i^{p_i-1} \to U_i^{p_i}$ is  a holomorphic isomorphism. Let $\phi_i:  \Delta \to U_i^{p_i-1}$ and $\psi_i: \Delta \to U_i^{p_i}$ be  the holomorphic isomorphisms with $\phi_i(0) = f^{p_i-1}(x_i), \phi_i'(0) > 0$, and  $\psi_i(0) = x_i$.  Then we can lift the map $f: U_i^{p_i-1} \to U_i^{p_i}$ to a holomorphic isomorphism $\Lambda_i: \Delta \to \Delta$ with $\Lambda_i(0) = 0$.  It is clear that $\Lambda_i (z) = \lambda  \cdot z$ for some $|\lambda| = 1$. By choosing an appropriate argument of $\psi_i'(0)$, we can assume that $\lambda = 1$ and  $\Lambda_i(z) = z$.

For $r > 0$ let
 $\Delta_r = \{z\:|\:|z| < r\}$. Now take $0< r < 1$ such that
 $\phi_i(\Delta_r) \supset f^{p_i-1}(U_i)$ for all $1 \le i \le l$.  For an $\epsilon > 0$ small we  define $\Lambda_{i,s_i}: \Delta \to \Delta$ for all $|s_i| < \epsilon$ by

$$
\Lambda_{i,s_i}(z) =
\begin{cases}

 (1 + s_i)  z & \text{for $|z| < r$}, \\
 (1 + \frac{1-|z|}{1-r}s_i)z & \text{for $r \le |z| < 1$}.
\end{cases}
$$

For $s = (s_1, \cdots, s_l)$, define a family of quasi-regular map $g_s$, $|s| = \sum_i|s_i| < \epsilon$, by
   $$
g_s(z) =
\begin{cases}
\psi_i \circ \Lambda_{i,s_i} \circ \phi_i^{-1} & \text{ if $z \in U_i^{p_i-1}$}  \\
f(z)   & \text{for otherwise}.
\end{cases}
$$
From the above construction, it follows that for all $|s| < \epsilon$, the $i$-th attracting  cycle of $f$, $1 \le i \le l$,  is an attracting cycle of $g_s$ with multiplier $(1 + s_i)t_i^0$.  Now  pulling back the standard complex structure by the iterations of $g_s$,
  we get a $g_s$-invariant complex structure $\mu_s$ on the whole plane  which depends analytically on $s$. Let $\phi_s$ be the qc homeomorphism of the plane to itself which solves the Beltrami equation given by $\mu_s$ and fixes $0$ and $1$. Then $f_s = \phi_s \circ g_s \circ \phi_s^{-1}$ is a polynomial of degree $d$ which depends  analytically on $s$ and has $l$ attracting cycles with multipliers $(1+s_i)t_i^0$, $1 \le i \le l$, respectively. Note that the critical points of $g_s$ is exactly those of $f$, hence the critical points of $f_s$ are $\phi_s$-images of those of $f$ and thus depend holomorphically on $s$.

Let $$W = \{(1+s_1)t_1^0, \cdots, (1+s_l)t_l^0\:|\: \sum_{1\le i \le l}\big{|}s_i| < \epsilon\}.$$ Then $W$ is an open neighborhood of $(t_1^0, \cdots, t_l^0)$. For $(t_1, \cdots, t_l) \in W$, let $s =  ((t_1-t_1^0)/t_1^0, \cdots, (t_l-t_l^0)/t_l^0)$. Let $c_1, \cdots, c_{d-2}$ be the critical points of $f_s$.  Since all $c_i, 1 \le i \le d-2$, depend holomorphically on $s$ and are thus holomorphic functions in $W$.    This defines a holomorphic function $\Psi: W \to U$ satisfying  $\Phi \circ \Psi = \hbox{id}$. The proof of  Key-Lemma 3 is completed.

\subsection{Proof of  Key-Lemma 4} Let $0 \le l \le d-3$ and suppose $f$ has $l+1$ periodic attracting cycles with non-zero multipliers $t_1^0, \cdots, t_{l+1}^0$ respectively.  For a small  $\epsilon > 0$,  using the same qc surgery  as in the above construction of $\Psi$, but just in the immediate basin of the $(l+1)$-th attracting cycle,   we get a holomorphic family of polynomials $f_{t_{l+1}}$  with $|t_{l+1} - t_{l+1}^0| < \epsilon$, such that $f_{t_{l+1}}$  preserves all the orbit relations and the multipliers of the first $l$ attracting cycles.  Let $c_i$, $1 \le i \le d-1$ be the critical points of $f_{t_{l+1}}$ with $c_{d-1}= 1$.  For $1 \le i \le l+1$,  let $p_i$ be the period of the $i$-th attracting cycle and  $x_i$  be one of the points in the $i$-th cycle.    Then all $c_i$, $1 \le i \le d-2$, and $x_i, 1 \le i \le l+1$,  are holomorphic functions of $t_{l+1}$ for $|t_{l+1} - t_{l+1}^0| < \epsilon$.  Moreover, $c_1, \cdots, c_{d-2}, x_1, \cdots, x_{l+1}, t_{l+1}$, satisfy $d-l+1$ polynomial equations
\begin{equation}\label{k4}
\begin{cases}
f^{p_i}(x_i) = x_i, & \: 1 \le i \le l+1, \\
Df^{p_i}(x_i) = t_i^0, & \: 1 \le i \le l, \\
Df^{p_{l+1}}(x_{l+1}) = t_{l+1}, & \:  \\
f^{k_i}(1) = c_i,  & \: l+2 \le i \le d-2.
\end{cases}
\end{equation} As in (\ref{imp-f-1}), we have a holomorphic function $\lambda_{l+1}$ of $d-2$ variables such that
$$
t_{l+1} = \lambda_{l+1}(c_1(t_{l+1}), \cdots, c_{d-2}(t_{l+1})), \:\:|t_{l+1} - t_{l+1}^0| < \epsilon.
$$
It follows that there is some $1 \le i \le d-2$ with  $c_i'(t^0_{l+1}) \ne 0$.  Without loss of generality, let us assume that  $c_{d-2}'(t^0_{l+1}) \ne 0$. Thus  $c_{d-2}$ is a univalent function of $t_{l+1}$  in a small neighborhood of $t_{l+1}^0$.
  Thus  in a small neighborhood of $c_{d-2}^0$,
  all $c_i$, $1 \le i \le d-3$,  $x_i$, $1 \le i \le l+1$, and $t_{l+1}$ are holomorphic functions of $c_{d-2}$, and satisfy the  polynomial equations in (\ref{k4}).  We can directly check the  conditions in  Lemma~\ref{fundamental-E-a}. In particular the second condition is guaranteed by the rigidity assertion of Theorem~\ref{Thurston-Siegel-Ch}. Thus  by Lemma~\ref{fundamental-E-a}
all $c_i$, $1 \le i \le d-3$, are functions of $c_{d-2}$ determined by some irreducible polynomial equation $P(c_i, c_{d-2}) = 0$.  This completes the proof of  Key-Lemma 4.

\section{topological characterization of the maps in $\Sigma_{\alpha}^d$}

The purpose of this section is to prove Theorem~\ref{Thurston-Siegel-Ch}.  Key-Lemma-2 then follows by a little more effort.
Theorem~\ref{Thurston-Siegel-Ch} can be viewed as an extension of Thurston's characterization theorem for post-critically finite rational maps.  It may have independent interest to  consider the topological characerization of  more general rational maps, for instance, rational maps with Jordan Siegel disks for which the critical orbits are either eventually periodic, or attracted to some attracting or even parabolic  cycles, or intersect the closures of the Siegel disks. But for the purpose of this work, we restrict our attention  only to the maps in $\Sigma_{\alpha}^d$ and this will suffice for our purpose.

\subsection{Some Preliminaries} For readers' convenience, let us introduce some background knowledge for Thurston's characterization theorem for  rational maps,  especially,  the extension of this theorem to sub-hyperbolic rational maps.  The readers may refer to  \cite{CJ}, \cite{CT}, \cite{DH}, \cite{Pi} and \cite{ZJ} for more details in this aspect.

Let $F: \widehat{\Bbb C} \to \widehat{\Bbb C}$ be a finitely branched covering map which preserves the orientation. Let
$$
\Omega_F = \{z\in \widehat{\Bbb C}\:|\: \deg_{z}(F) \ge 2\}
$$ and
$$
P_F = \overline{\bigcup_{k\ge 1}F^k(\Omega_F)}
$$ be the critical set and the post-critical set of $F$, respectively.

We say $F$ is ${\emph geometrically finite}$ if $P_f$ is an infinite set but the accumulation set of $P_F$ is a finite set. It is easy to check that each accumulation point of $P_F$ is a period point of $F$.  We say a geometrically finite branched covering map $F$ is ${\emph sub}$-${\emph hyperbolic}$  ${\emph semi}$-${\emph rational}$ if  each periodic cycle $\mathcal{O}$ in the accumulation set of $P_F$ is  $\emph{holomorphically}$ $\emph{attrracting}$, that is, there is an open neighborhood $U$ of $\mathcal{O}$ such that $F|U$ is holomorphic, and moreover, $|DF^p(x)| < 1$ for each $x \in \mathcal{O}$ where $p \ge 1$ is the period of $\mathcal{O}$.
 Let $p$ be the period of $\mathcal{O}$.  According to Lemma 2.1 of \cite{ZJ}, for each $x \in \mathcal{O}$, one can take a Jordan disk $D_x$ containing $x$,   and an annulus $A_x$ surrounding $D_x$,  such that (1) the inner boundary component of $A_x$  is equal to $\partial D_x$, (2) $F$ maps $\overline{D_x}\cup A_x$ holomorphically into $D_{F(x)}$  and (3) all $D_x$ are disjoint and $\bigcup_{x\in\mathcal{O}}A_x \cap P_F = \emptyset$. We call each $D_x$  a holomorphic disk, and $A_x$ the protective annulus of $D_x$. We may further assume that $F^p: D_x \to D_x$ is holomorphically conjugate either to $z \mapsto \lambda z$ for some $0< |\lambda|  <1$ or to $z \mapsto z^m$ for some $m \ge 2$.

Let $F$ be a sub-hyperbolic semi-rational branched covering map. Let $\gamma \subset \widehat{\Bbb C} \setminus P_F$ be a simple closed curve.
We say $\gamma$ is ${\emph non}$-${\emph peripheral}$ if each component of $\widehat{\Bbb C} \setminus \gamma$ contains at least two points in $P_F$. A ${\emph multi}$-${\emph curve}$  $\Gamma$  is
 a finite family of non-peripheral curves which are disjoint and non-homotopic to each other. We say $\Gamma$ is ${\emph stable}$ if for each $\gamma_i \in \Gamma$, each
 non-peripheral component of $F^{-1}(\gamma)$ is homotopic to some $\gamma_j$ in $\Gamma$.

Suppose $\Gamma = \{\gamma_1, \cdots, \gamma_n\}$ is a stable multi-curve. For $1 \le i, j \le n$, let $\gamma_{i, j, \alpha}$ denote all the non-peripheral components of $f^{-1}(\gamma_i)$ which are homotopic to $\gamma_j$. Let $d_{i,j,\alpha}$ be the covering degree of
$$
F: \gamma_{i,j,\alpha} \to \gamma_i.
$$
Let
$$
a_{i,j}  = \sum_{\alpha} \frac{1}{d_{i,j,\alpha}}.
$$
The matrix $A = (a_{i,j})$ is called Thurston linear transformation matrix.  It is non-negative and thus has a maximal positive eigenvalue $\lambda > 0$. If $\lambda \ge  1$ we call $\Gamma$ a ${\emph Thurston\: obstruction}$.

\begin{defi}\label{dfdf}
Two sub-hyperbolic  semi-rational  maps $F$ and $G$ are called CLH-equivalent (combinatorially and locally holomorphically equivalent) if there exist a pair of homeomorphisms of the sphere $\phi, \psi: \widehat{\Bbb C} \to \widehat{\Bbb C}$ such that
\begin{itemize}
\item[1.] $\phi \circ F = G \circ \psi$,
\item[2.] for each holomorphic disk $D_i$, $\phi|D_i = \psi|D_i$ and both of them are holomorphic,
    \item[3.] $\phi$ is isotopic to $\phi$ rel $P_F \cup \cup_{1 \le i \le l} \overline{D_i}$ where $D_i$, $1 \le i \le l$, are all the holomorphic disks of $F$.
\end{itemize}
\end{defi}
The following  is an extension of Thurston's characterization theorem for post-critically finite rational maps to sub-hyperbolic rational maps.
\begin{thm}[\cite{CT} \& \cite{ZJ}]\label{CT-ZJ}
Let $F$ be a  semi-rational branched covering map. Then $F$ is CLH-equivalent to a sub-hyperbolic rational map $G$  if and only if $F$ has no Thurston obstructions.
\end{thm}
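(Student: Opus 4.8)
The plan is to establish Theorem~\ref{CT-ZJ} by running Thurston's iteration on a Teichm\"uller space adapted to the sub-hyperbolic situation, the essential new device being that the (infinite) post-critical set is rendered harmless by \emph{freezing} the complex structure on the holomorphic attracting disks. I will treat the two implications separately, the first being routine and the second being the substance.

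\emph{Necessity.} Suppose $F$ is CLH-equivalent to a rational map $G$ via $(\phi,\psi)$, and let $\Gamma=\{\gamma_1,\dots,\gamma_n\}$ be a stable multicurve for $F$. Pushing forward by $\phi$ produces a stable multicurve $\Gamma'$ for $G$ with the same Thurston matrix $A=(a_{i,j})$. I would then invoke the classical Gr\"otzsch/extremal-length estimate on $G$: choose, for a prescribed modulus vector, pairwise disjoint essential annuli $A'_j\subset\widehat{\Bbb C}\setminus P_G$ with core $\gamma'_j$; the non-peripheral components of $G^{-1}(A'_i)$ homotopic to $\gamma'_j$ have modulus at least $\mathrm{mod}(A'_i)/d_{i,j,\alpha}$ and embed disjointly --- but not exhaustively --- into the $A'_j$, so the Gr\"otzsch inequality forces the leading eigenvalue of $A$ to be $<1$, whence $\Gamma$ is not a Thurston obstruction. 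The only point requiring care is that $P_G$ is infinite and accumulates on the attracting cycles, so the annuli must be kept in a fixed compact part of the sphere disjoint from the holomorphic disks; this is possible because $\Gamma$ is non-peripheral and $P_G$ is discrete away from the attracting cycles.

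\emph{Sufficiency.} Assume $F$ has no Thurston obstructions. (i) Normalize $F$ near each holomorphic attracting cycle, fixing holomorphic coordinates in which the first-return map is $z\mapsto\lambda z$ or $z\mapsto z^m$, and let $W$ be the finite union of the holomorphic disks $D_{i,k}$ together with their protective annuli. (ii) Let $\mathcal T$ be the Teichm\"uller space of quasiconformal deformations of $(\widehat{\Bbb C},P_F)$ that are holomorphic on $W$ and standard there in the chosen coordinates; since $P_F$ is finite off $W$ and rigid inside $W$, this $\mathcal T$ is a finite-dimensional complex manifold carrying a complete Teichm\"uller metric. (iii) Define the Thurston pullback $\sigma_F:\mathcal T\to\mathcal T$; because $F$ is holomorphic near the attracting cycles and maps the immediate-basin disks holomorphically among themselves, the pulled-back structure is again standard on $W$ after an inessential holomorphic normalization, so $\sigma_F$ is well defined, is distance non-increasing, and --- the exceptional Latt\`es case being excluded since $P_F$ is infinite, hence the orbifold hyperbolic --- strictly contracting pointwise. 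A fixed point of $\sigma_F$ is exactly a rational map $G$ CLH-equivalent to $F$: the holomorphy of $\phi,\psi$ on $\bigcup\overline{U_i}$ comes from the $W$-constraint, and the isotopy rel $P_F\cup\bigcup\overline{U_i}$ from Thurston's lifting-of-isotopies lemma \cite{DH}. (iv) Iterate $\sigma_F$ from a basepoint $\tau_0$: either the orbit remains in a compact subset of $\mathcal T$, whence a subsequential limit is a fixed point and we are done, or it leaves every compact set. (v) In the latter case, the canonical-obstruction machinery \cite{Pi}, adapted to this setting in \cite{CT} and \cite{ZJ}, extracts from curves whose hyperbolic length in the successive surfaces tends to $0$ a stable multicurve $\Gamma$; a thick--thin and bounded-geometry analysis --- using once more that the holomorphic disks keep the short curves in the free part of the surface --- shows that the Thurston matrix of $\Gamma$ has leading eigenvalue $\ge 1$, i.e.\ $\Gamma$ is a Thurston obstruction, contradicting the hypothesis. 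Hence escape cannot occur, $\sigma_F$ has a fixed point, and $F$ is CLH-equivalent to a rational map.

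The main obstacle is step (v): manufacturing a genuine Thurston obstruction out of divergence of the iteration. In the post-critically finite case this is the core of Thurston's theorem and of Pilgrim's canonical-obstruction argument; the extra work in the geometrically finite case is to control the degenerating curves in the presence of the infinitely many post-critical points accumulating on the attracting cycles. Concretely, one must show that the short annuli can be chosen to avoid the disks $D_{i,k}$, so that the limiting multicurve is non-peripheral \emph{for $F$} rather than merely peripheral toward some $D_{i,k}$, and that the Thurston-matrix estimate is stable under the limit; the frozen-structure convention on $W$ is exactly what makes both possible. Granting this, the argument closes as in the classical case, and combining the two implications yields Theorem~\ref{CT-ZJ}; the uniqueness of $G$ up to M\"obius conjugacy, although not asserted in the statement, follows from the standard absence of invariant line fields for sub-hyperbolic rational maps.
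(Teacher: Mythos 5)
Your proposal follows essentially the same route as the proof the paper itself merely cites and sketches: Theorem~\ref{CT-ZJ} is imported from \cite{CT} and \cite{ZJ}, and the sketched argument is exactly your plan --- the Thurston pullback $\sigma_F$ acting on the Teichm\"uller space modeled on $(\widehat{\Bbb C}, P_F\cup\cup_i\overline{U_i})$ with the structure frozen on the holomorphic disks, realization being equivalent to a fixed point, and the dichotomy ``no obstruction $\Rightarrow$ bounded geometry $\Rightarrow$ fixed point,'' which you phrase contrapositively through the canonical-obstruction machinery of \cite{CJ} and \cite{Pi} recorded in the paper as Theorem~\ref{cn}. Be aware, though, that the step you defer in (v) (together with the pointwise strict contraction claim, which by itself does not yield a fixed point) is precisely the substantive content of \cite{CT} and \cite{ZJ}, so your text is a correct outline of their argument rather than an independent proof.
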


Two different proofs of Theorem~\ref{CT-ZJ} are provided in \cite{CT} and \cite{ZJ}, respectively.
Let us just sketch the idea of the proof in \cite{ZJ} as follows,  which will be helpful in the sequel discussion.    Let $D_i$ denote all the holomorphic disks of $F$. Let
$$
D = \bigcup_i D_i \hbox{  and  } P_1 = P_F\setminus D.
$$
Let $T_F$ denote the Teichmuller space modeled on $(\widehat{\Bbb C} \setminus (P_1 \cup \overline{D}), P_1 \cup \partial D)$.  Then  $F$ induces an analytic map $\sigma_F: T_F \to T_F$.  It turns out that the existence of the desired  rational map $G$ is equivalent to the existence of a fixed point of $\sigma_F$.  The proof in \cite{ZJ}  is divided into two steps. In the first step it was proved  that the
non-existence of Thurston obstructions implies certain bounded geometry condition.   In the second step it was proved that the bounded geometry condition implies the existence of a fixed point of $\sigma_F$.

Now let us introduce some notations.
Let $\mu_0$ be the standard complex structure. Let $\mu_k$ be the complex structure which is the pull back of $\mu_0$ by $F^k$. Let $\phi_k:\widehat{\Bbb C} \to \widehat{\Bbb C}$ be the quasiconformal homeomorphism which solves the Beltrami equation given by $\mu_k$ and fixes $0$, $1$ and $\infty$. It is important to note that for each holomorphic disk $D_i$ and its protective annulus $A_i$, $\mu_k = 0$ on $\overline{D_i}\cup A_i$, and thus $\phi_k$ is conformal in  $\overline{D_i}\cup A_i$.  Consider the hyperbolic Riemann surface
$$
X_k = \widehat{\Bbb C} \setminus \phi_k(P_1 \cup \overline{D}).
$$
For any non-peripheral curve $\gamma \subset \widehat{\Bbb C} \setminus (P_1 \cup  \overline{D})$,   there  is a unique simple closed geodesic $\eta$ in $X_k$ which is homotopic to $\phi_k(\gamma)$. Let $[\mu_k]$ denote the Teichm\"{u}ller class of $\mu_k$ in $T_F$ and  $l_{[\mu_k]}(\gamma)$ the length of $\eta$ with respect to the hyperbolic metric in $X_k$. We say $\gamma$ is a $[\mu_k]$-geodesic if $\phi_k(\gamma)$ is a geodesic in $X_k$.
   \begin{defi}{\rm
   We say a sub-hyperbolic semi-rational map $F$ has bounded geometry if there exists a $\delta > 0$ such that for every non-peripheral curve $\gamma \subset \widehat{\Bbb C} \setminus (P_1 \cup  \overline{D})$ and all $k \ge 0$, one has $l_{[\mu_k]}(\gamma) > \delta$.}
   \end{defi}

\begin{thm}[cf. \cite{CJ}, \cite{Pi}]\label{cn}
Let $F$ be a sub-hyperbolic semi-rational map. Then \begin{itemize} \item[1.] there is a $\delta > 0$ such that for any non-peripheral curve $\gamma \subset \widehat{\Bbb C} \setminus (P_1 \cup  \overline{D})$, either $l_{[\mu_k]}(\gamma) > \delta$ for all $k \ge 0$ or $l_{[\mu_k]}(\gamma) \to 0$ as $k \to 0$. \item[2.] The multi-curve $\Gamma$ which represents the homotopy classes of all $\gamma$ such that  $l_{[\mu_k]}(\gamma) \to 0$ as $k \to \infty$ is a Thurston obstruction. Such Thurston obstruction is called the canonical Thurston obstruction of $F$.  \item[3.]
 $F$ has a Thurston obstruction if and only if $F$ has a canonical Thurston obstruction. \end{itemize}

\end{thm}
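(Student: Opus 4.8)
The plan is to study Thurston's pullback map $\sigma_F\colon T_F\to T_F$ on the Teichm\"uller space $T_F$ modeled on $(\widehat{\Bbb C},\,P_F\cup\bigcup_i\overline{U_i})$, using that $[\mu_k]=\sigma_F^k([\mu_0])$ and that $\sigma_F$ factors as an isometric pullback into the Teichm\"uller space carrying the enlarged marking $F^{-1}(P_F\cup\bigcup_i\overline{U_i})$ followed by the $1$-Lipschitz forgetful projection, so that $d_{T_F}([\mu_k],[\mu_{k+1}])$ is non-increasing in $k$. The whole argument rests on two one-step length estimates. Writing $A=P_F\cup\bigcup_i\overline{U_i}$ and letting $R_k$ be the rational map with $\phi_k\circ F=R_k\circ\phi_{k+1}$, all critical values of $R_k$ lie in $\phi_k(P_F)$, so $R_k\colon\widehat{\Bbb C}\setminus\phi_{k+1}(F^{-1}(A))\to\widehat{\Bbb C}\setminus\phi_k(A)$ is an unbranched holomorphic covering, hence a local isometry for the hyperbolic metrics; combined with the fact that removing extra punctures only increases the hyperbolic metric, this gives, for every non-peripheral $\gamma$ and every non-peripheral component $\gamma_\alpha$ of $F^{-1}(\gamma)$ with $d_\alpha=\deg(F|_{\gamma_\alpha})$,
\[
l_{[\mu_{k+1}]}(\gamma_\alpha)\ \le\ d_\alpha\,l_{[\mu_k]}(\gamma).
\]
Dually, pushing a wide sub-annulus of the collar of a short geodesic forward by $R_k$ and using that a $d$-fold covering of annuli multiplies the modulus by $d$ yields a constant $C_0=C_0(F)$ such that, for every non-peripheral $\beta$,
\[
\frac1{l_{[\mu_{k+1}]}(\beta)}\ \le\ C_0+\sum_{\alpha}\frac1{d_\alpha}\cdot\frac1{l_{[\mu_k]}(\beta_\alpha)},
\]
the sum over the finitely many non-peripheral classes $\beta_\alpha$ at level $k$ admitting an $F$-preimage component homotopic to $\beta$, weighted by the corresponding local degrees.

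For Part 1, put $\Gamma:=\{[\gamma]\ \text{non-peripheral}\ :\ l_{[\mu_k]}(\gamma)\to0\ \text{as}\ k\to\infty\}$. The first estimate makes $\Gamma$ backward invariant; for large $k$ all curves of $\Gamma$ have hyperbolic length below the Margulis constant, hence are pairwise disjoint and pairwise non-homotopic, so $\Gamma$ is a finite multicurve. It then remains to produce a uniform $\delta>0$ with $l_{[\mu_k]}(\gamma)>\delta$ for all $k$ whenever $[\gamma]\notin\Gamma$; this is the relative bounded geometry of the orbit $([\mu_k])$ along the stratum cut out by $\Gamma$, which I would import from \cite{CJ} and \cite{Pi}. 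Its mechanism is that, if $[\gamma]\notin\Gamma$ but $\liminf_k l_{[\mu_k]}(\gamma)=0$, then the second estimate read at near-minimal times forces a member of the finite set of ``potential parents'' of $\gamma$ to become arbitrarily short infinitely often; iterating this backwards, and using Wolpert's inequality together with the convergence of $d_{T_F}([\mu_k],[\mu_{k+1}])$ to rule out a length oscillating between $0$ and a positive value, one is driven to $l_{[\mu_k]}(\gamma)\to0$, contradicting $[\gamma]\notin\Gamma$. The threshold $\delta$ comes out as the Margulis constant shrunk by a factor depending only on $C_0$, on $d_{T_F}([\mu_0],[\mu_1])$, and on the combinatorial complexity of $F$.

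For Part 2, assume $\Gamma=\{\gamma_1,\dots,\gamma_n\}\ne\emptyset$; it is stable by backward invariance, and let $A=(a_{ij})$ be its Thurston matrix. Taking $\beta=\gamma_j$ in the second estimate and splitting the sum into the contributions of classes $\beta_\alpha\in\Gamma$, which are dominated by $\sum_i a_{ij}/l_{[\mu_k]}(\gamma_i)$, and of classes $\beta_\alpha\notin\Gamma$, for which $1/l_{[\mu_k]}(\beta_\alpha)<1/\delta$ by Part 1, one gets, for $k$ large and componentwise,
\[
v_{k+1}\ \le\ A^{\top}v_k+C_1\mathbf 1,\qquad v_k:=\bigl(1/l_{[\mu_k]}(\gamma_1),\dots,1/l_{[\mu_k]}(\gamma_n)\bigr),
\]
with $C_1=C_1(F,\delta)$. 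Each coordinate of $v_k$ tends to $+\infty$ since $\gamma_i\in\Gamma$; but if the leading eigenvalue of $A$ (equivalently of $A^{\top}$) were $<1$, then $(I-A^{\top})^{-1}$ would have non-negative entries and iterating the affine inequality would keep $v_k$ bounded, a contradiction. Hence the leading eigenvalue of $A$ is $\ge1$ and $\Gamma$ is a Thurston obstruction, the canonical one.

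For Part 3, if $\Gamma\ne\emptyset$ then by Part 2 it is a Thurston obstruction, so $F$ is obstructed. Conversely, if $F$ is obstructed, then by Theorem~\ref{CT-ZJ} and the standard correspondence between fixed points of $\sigma_F$ and rational realizations, $\sigma_F$ has no fixed point; by the ``bounded geometry implies a fixed point'' step underlying Theorem~\ref{CT-ZJ}, the orbit $([\mu_k])$ cannot stay in a compact subset of $T_F$, so Mumford's compactness criterion produces a non-peripheral curve $\gamma$ and times $k_j\to\infty$ with $l_{[\mu_{k_j}]}(\gamma)\to0$; since $\gamma$ is then not among the curves that stay longer than $\delta$, Part 1 upgrades this to $l_{[\mu_k]}(\gamma)\to0$, i.e.\ $[\gamma]\in\Gamma$, so $\Gamma\ne\emptyset$. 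The main obstacle throughout is the second one-step estimate with a genuinely uniform error constant and, above all, the relative bounded geometry behind Part 1; these are precisely the technical cores of \cite{CJ} and \cite{Pi}, which I would invoke as cited rather than reprove, the remainder being the bookkeeping sketched above.
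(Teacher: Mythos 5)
The paper does not prove Theorem~\ref{cn} at all: it is quoted from \cite{CJ} and \cite{Pi}, so there is no in-paper argument to compare with, and your outline is precisely the standard proof from those sources (the two one-step covering estimates for $R_k$, the uniform dichotomy, the affine inequality $v_{k+1}\le A^{\top}v_k+C_1\mathbf{1}$ forcing $\lambda(A)\ge 1$, and the equivalence with obstructedness via the fixed-point criterion for $\sigma_F$), correctly sketched, with the genuinely hard inputs — the uniform $\delta$ in Part 1 and the additive-error upper bound on reciprocal lengths — deferred to exactly the references the paper itself cites. One small point to tidy in Part 3: Mumford compactness only gives short curves $\gamma_j$ that may vary with the times $k_j$, but since any curve that ever drops below the uniform $\delta$ of Part 1 must lie in $\Gamma$, the conclusion $\Gamma\ne\emptyset$ follows at once.
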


 \subsection{Proof of Theorem~\ref{Thurston-Siegel-Ch}}

Let $f \in \mathcal{T}_\alpha^d$.  If $f$ has a Thurston obstruction in the exterior of $\Delta$, by a result of McMullen (cf. Appendix B, \cite{McM1}), $f$ can not be CLH-equivalent to any $g \in \Sigma_\alpha^d$. In the following let us assume that  $f$ has no Thurston obstructions in the exterior of $\Delta$. Let us first prove that  there is a $g \in \Sigma_\alpha^d$ such that $f$ is CLH-equivalent to$g$. After that, we prove  the uniqueness of $g$  up to a linear conjugation.

For $w \in \widehat{\Bbb C}$, let $w^*$ denote the symmetric image of $w$ about $\Bbb T$. Define
$$ F(z) =
\begin{cases}
f(z) & \hbox{  for  } |z| \ge 1 \\
[f(z^*)]^{*} & \hbox{ for } |z| < 1.
\end{cases}
$$
Note that $F|\Bbb T$ is the rigid rotation $z \mapsto e^{2 \pi i \alpha}z$ and
 $F: \widehat{\Bbb C} \to \widehat{\Bbb C}$ is a branched covering map of degree $2d-1$ and is symmetric with respect to $\Bbb T$.
 Let $\alpha_n = p_n/q_n$ be the sequence of convergents of $\alpha$. Then $\alpha_n \to \alpha$ as $n \to \infty$.
By definition,  $1 \in \Omega_f$.  Thus $1 \in \Omega_F$.
Let $\epsilon > 0$ and $$H = \{z\:|\:(1 + \epsilon)^{-1}  < |z| < 1 + \epsilon\}.$$  By taking $\epsilon > 0$ small enough we may assume that  $$(H - \Bbb T) \cap (\Omega_F \cup P_F) =
\emptyset.$$
We can perturb $F$ in $H$ to get a sequence of  sub-hyperbolic semi-rational maps $F_n$   such that
\begin{itemize}
\item[1.] $1 \in \Omega_{F_n}$  and  $F_n(z^*) = [F_n(z)]^*$,
\item[2.] $F_{n}(z) = F(z)$ for all $z \notin H$,
\item[3.] $F_n(z) = e^{2 \pi i \alpha_n}z$ for all $z \in \Bbb T$,

\item[4.] $(H - \Bbb T) \cap (\Omega_{F_n} \cup P_{F_n}) = \emptyset$,

\item[5.] $P_{F_n}\cap \Bbb T =  \mathcal{O}_n$ is a periodic orbit of $F_n$ with period $q_n$,
\item[6.]  $F_n \to F$ uniformly with respect to the spherical metric.
 \item[7.]
 the degree of $F_n$ and the number of the critical points of $F_n$ in $\Bbb T$, are respectively the same as those of $F$, \item[8.]  for each critical point $c$ of $F$, there is corresponding   critical point of $F_n$, say $c_n$, such that $c_n \to c$ as $n \to \infty$, and the local degree of $F_n$ at $c_n$ is the same as that of $F$ at $c$. \end{itemize}

\begin{lem}\label{no-th-ob}
 $F_{n}$ has no Thurston obstructions for all $n$ large enough.
\end{lem}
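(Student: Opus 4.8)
\textbf{Proof proposal for Lemma~\ref{no-th-ob}.}

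The plan is to argue by contradiction, exploiting the convergence $F_n \to F$ and the hypothesis that $F$ (equivalently $f$) has no Thurston obstructions. First I would recall from Theorem~\ref{cn} that a sub-hyperbolic semi-rational map has a Thurston obstruction if and only if it has a \emph{canonical} Thurston obstruction, i.e. one detected by the lengths $l_{[\mu_k]}(\gamma) \to 0$; but for the purposes of this lemma it is cleaner to work directly with the linear-algebra definition. Suppose, for contradiction, that for infinitely many $n$ there is a Thurston obstruction $\Gamma_n$ for $F_n$. Since $P_{F_n}$ consists of the finite orbit $\mathcal{O}_n$ of the critical point $1$ in $\Bbb T$ together with the post-critical sets of the finitely many holomorphic attracting cycles and the (finitely many) eventually-periodic critical orbits, and since these converge to the corresponding data of $F$, the number of punctures is eventually constant and the punctures of $X_k(F_n)$ converge to those of $X_k(F)$. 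Consequently only finitely many homotopy classes of simple closed curves on the $(2d-1)$-punctured-type surface can ever occur in any $\Gamma_n$; passing to a subsequence I may assume $\Gamma_n = \Gamma$ is a fixed multi-curve, and moreover that $\Gamma$ lies in the exterior of $\Delta$ (this uses the symmetry $F_n(z^*) = [F_n(z)]^*$: a $\Gamma$ meeting or surrounding $\Bbb T$ in an essential way would, by reflecting, split into sub-families in the two half-spheres, and the part in the exterior of $\Delta$ already carries an obstruction, because the Thurston matrix is block-diagonal with respect to this reflection — here is the one place I must be careful about curves that cross $\Bbb T$, and I would handle them exactly as Lemma~\ref{ucl}-type cone estimates force such curves to be non-essential or to be pushed off $\Bbb T$).

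Next I would compare the Thurston matrices. Fix $\gamma_i, \gamma_j \in \Gamma$. The preimage $F^{-1}(\gamma_i)$ is a finite union of circles, and for $n$ large $F_n^{-1}(\gamma_i)$ is isotopic, curve by curve and with the same covering degrees, to $F^{-1}(\gamma_i)$: this is because $F_n = F$ off the thin annulus $H$, the curves $\gamma_i$ and their preimages may be taken disjoint from $H$ (as $\Gamma$ lies in the exterior of $\Delta$, hence a definite distance from $\Bbb T$), and the branched-covering structure of $F_n$ over the complement of a neighborhood of $\Bbb T$ stabilizes by properties (7)--(8). Hence the Thurston transition matrices $A_n(\Gamma)$ for $F_n$ equal $A(\Gamma)$ for $F$ once $n$ is large enough. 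Since $\Gamma$ is a Thurston obstruction for $F_n$, the leading eigenvalue of $A_n(\Gamma) = A(\Gamma)$ is $\ge 1$; but then $\Gamma$ (or the sub-multi-curve of it consisting of curves that are actually stable under $F$, obtained by the standard saturation procedure, which only enlarges the leading eigenvalue) is a Thurston obstruction for $F$ in the exterior of $\Delta$. This contradicts the hypothesis.

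I expect the main obstacle to be the bookkeeping around curves near $\Bbb T$: one must be sure that in passing to the subsequence $\Gamma_n \equiv \Gamma$, no essential curve of $\Gamma_n$ is allowed to shrink toward $\Bbb T$ as $n \to \infty$ (which could happen in principle because the punctures coming from $\mathcal{O}_n \subset \Bbb T$ are genuinely on the circle). Here I would invoke the real bounds and cone geometry already established — Theorem~\ref{real bounds} gives that adjacent points of $\mathcal{Q}_n$ are $C(d)$-commensurable, so the hyperbolic lengths of curves surrounding bounded groups of these punctures are bounded below uniformly in $n$ — together with the reflection symmetry, to conclude that any obstructing multi-curve may be taken in a fixed compact part of the exterior of $\Delta$, where $F_n \equiv F$. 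Once that reduction is in place, the matrix comparison in the second paragraph is essentially immediate, and the lemma follows. A cleaner alternative, which I would pursue if the curve-surgery near $\Bbb T$ becomes delicate, is to use Theorem~\ref{cn}(1): there is a uniform $\delta(d) > 0$ (uniform because all the relevant geometry is uniform over $\bigcup_\alpha \mathcal{S}_d^\alpha$ and over the bounded post-critical data) such that for $F$, every non-peripheral $\gamma$ has $l_{[\mu_k]}(\gamma) > \delta$ for all $k$; by continuity of the whole construction in $n$ the same holds for $F_n$ with $\delta/2$, and hence $F_n$ has bounded geometry, so by Theorem~\ref{cn}(2)--(3) it has no Thurston obstruction.
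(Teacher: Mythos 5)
Your proposal contains a genuine gap, and it concerns precisely the phenomenon that makes this lemma delicate. You assert that ``the number of punctures is eventually constant and the punctures of $X_k(F_n)$ converge to those of $X_k(F)$,'' and hence that only finitely many homotopy classes of curves can occur across all $n$, so that one may pass to a constant multi-curve $\Gamma_n\equiv\Gamma$. This is false: by construction $F_n|\Bbb T$ is the rational rotation $z\mapsto e^{2\pi i\alpha_n}z$ with $\alpha_n=p_n/q_n$, and $\mathcal O_n=P_{F_n}\cap\Bbb T$ is a periodic orbit of period $q_n$. Since $q_n\to\infty$, the number of punctures on $\Bbb T$ grows without bound and the punctures accumulate on all of $\Bbb T$. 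Consequently the Teichm\"uller spaces $T_{F_n}$ vary with $n$, there is no stable finite list of homotopy classes, and the reduction to a single $\Gamma$ is not available. The same difficulty defeats your ``cleaner alternative'': the uniform $\delta(d)>0$ of Theorem~\ref{cn}(1) depends on $F_n$ through its post-critical set, so it cannot be inherited from $F$ ``by continuity'' when the combinatorics of punctures on $\Bbb T$ changes with every $n$.

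Your second reduction --- that a canonical obstruction may be assumed to lie in the exterior of $\Delta$ by exploiting the reflection symmetry --- also goes in the wrong direction. A simple closed curve crossing $\Bbb T$ does not decompose into two curves in the two hemispheres (it remains a single curve), and the Thurston matrix does not become block-diagonal with respect to such a decomposition. What the paper actually proves (its Claim 1) is the \emph{opposite}: if a canonical obstruction $\Gamma$ exists for $F_n$, then \emph{every} curve in $\Gamma$ must intersect $\Bbb T$, precisely because any curve disjoint from $\Bbb T$ would, after splitting $\Gamma$ into interior and exterior sub-multicurves and showing each is stable, yield a Thurston obstruction for $f$ in the exterior of $\Delta$, contradicting the hypothesis. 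So the hard case is the one you are trying to discard. The paper's proof then handles curves meeting $\Bbb T$ by pulling back the complex structure by $F_n^k$, using a compactness/normality lemma of Herman for the resulting Blaschke products $G_{n,k}$ near $\Bbb T$ (the equicontinuity ``Fact 1''), and a Teichm\"uller-contraction estimate (``Fact 2'') to produce a uniform lower bound on the $\phi_k$-distance between any two points of $\mathcal O_n$, independent of $k$; this forces any short curve to enclose non-$\Bbb T$ post-critical data and leads to a contradiction involving the holomorphic disks $D_i$. None of that analysis can be bypassed by the matrix comparison you propose, because the curves you would need to compare are exactly those that meet $\Bbb T$, where $F_n$ and $F$ differ and where the puncture combinatorics are not stable.
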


\begin{proof} Our proof is by contradiction.
Suppose $F_n$ has a Thurston obstruction for some large $n$.
By Theorem~\ref{cn}  $F_n$ would have a canonical Thurston obstruction $\Gamma$. That is,  for any $\delta > 0$, there is a $k$ such that $\Gamma$ is the set of all $[\mu_k]$-geodesic with $l_{[\mu_k]}(\gamma) < \delta$.
Since $F_n$ is symmetric about $\Bbb T$ and two short simple closed geodesics are disjoint (cf. Corollary 6.6 of \cite{DH}), we may assume the following:    For any $\gamma \in \Gamma$,  $\gamma \cap \Bbb T = \emptyset$ implies   $\gamma^* \in \Gamma$; and  $\gamma \cap \Bbb T \ne \emptyset$ implies $\gamma = \gamma^*$.  Here $\gamma^*$ denote the symmetric image of $\gamma$ about $\Bbb T$.

We first claim that all curves in $\Gamma$ intersect $\Bbb T$.  The proof is by contradiction.
Let  $\Gamma' =\{\gamma \in  \Gamma\:|\:\gamma \cap \Bbb T =  \emptyset\}$. Let us assume  that  $\Gamma' \ne \emptyset$.  Let $\Gamma'' = \Gamma\setminus \Gamma'$. Then any  $\xi \in \Gamma''$ intersects $\Bbb T$ and is thus symmetric about $\Bbb T$. Thus
both the two components of $\widehat{\Bbb C}\setminus \xi$ contain at least two points in $P_{F_n}$ which  either are   symmetric about $\Bbb T$, or belong to $\Bbb T$. This implies that   $\xi$ can not be homotopic  to any  curve which is disjoint with $\Bbb T$.  Now Let $\gamma \in \Gamma'$.  Since $F_n$ maps $\Bbb T$ to $\Bbb T$, any non-peripheral component of  $F_n^{-1}(\gamma)$ must be disjoint with $\Bbb T$ and is homotopic to some element $\eta \in \Gamma$. Note that we have just proved that $\eta \notin \Gamma''$. It follows that $\eta \in \Gamma'$. This implies that  $\Gamma'$ is  stable. By Theorem~\ref{cn} $l_n(\gamma) \to 0$ for all $\gamma \in \Gamma'$. Thus  $\Gamma'$ is also a Thurston obstruction.  Now let $\Gamma_1 = \{\gamma \in \Gamma'\:|\:\gamma \in \Bbb C \setminus \overline{\Delta}\}$ and $\Gamma_2 = \{\gamma \in \Gamma_1\:|\: \gamma \subset \Delta\}$. Then $\Gamma' = \Gamma_1 \cup \Gamma_2$.   Let us show that
 both $\Gamma_1$ and $\Gamma_2$ are stable. By symmetry it suffices to prove that $\Gamma_2$ is stable. Suppose $\Gamma_2$ is not stable. Then $\Gamma_2$ would contains a  $\gamma$ such that one of the non-peripheral components of $F_n^{-1}(\gamma)$, say $\eta$, is homotopic to some $\xi$ in $\Gamma_1$. Thus $\eta$ encloses at least two points in $P_{F_n} \setminus \overline{\Delta}$ which are mapped to the inside of $\gamma$. In particular, the two points are mapped to the interior of $\Delta$. By the construction of $F$ and $F_n$, we have $P_{F_n} \setminus \overline{\Delta} = P_{F} \setminus \overline{\Delta} = P_{f} \setminus \overline{\Delta}$  and $F_n(P_{F_n} \setminus \overline{\Delta}) = f(P_{f} \setminus \overline{\Delta}) \subset P_f \setminus \overline{\Delta}$.   This is a contradiction.  It follows that  $\Gamma_2$ is stable. By symmetry  $\Gamma_1$ is stable also. Since $\Gamma' = \Gamma_1 \cup \Gamma_2$ is a Thurston obstruction, by symmetry and the fact that both $\Gamma_1$ and $\Gamma_2$ are stable, it follows that both $\Gamma_1$ and $\Gamma_2$ are Thurston obstructions of $F_n$. Since $F_n$ behaves like $f$ in the exterior of $\Delta$, it  follows that $\Gamma_1$ is a Thurston obstruction of $f$ in the exterior of $\Delta$. This contradicts the assumption.  So  $\Gamma' = \emptyset$ and all the curves in $\Gamma$ intersect $\Bbb T$.

  Now  for any $k \ge 1$, $G_{n,k} = \phi_{k-1} \circ F_{n} \circ \phi_{k}^{-1}$  is a rational map of degree $2d-1$.  By symmetry $G_{n,k}$ is a Blaschke product.
Note that the restriction of each $G_{n,k}$  to $\Bbb T$ is a circle homeomorphism and all the zeros of $G_{n,k}$, except the origin, are all contained in the exterior of the unit disk. Thus $G_{n,k} \in \mathcal{H}_d$ where $\mathcal{H}_d$ is the Herman family defined in (\ref{e-H}).  By a lemma of Herman (cf. $\S15$ of \cite{He}),
 the sequence $\{G_{n,k}\}$ is equicontinuous in an annular neighborhood of $\Bbb T$.

We now claim that every $\gamma$ in $\Gamma$  encloses at least two points in $P_1$, say $z$ and $z^*$,  or two  holomorphic disks, say $D_i$ and $D_i^*$, which are symmetric about $\Bbb T$.  Assume that the claim were not true.  Then by symmetry,  $\gamma$  would  enclose two adjacent  points in $\mathcal{O}_n$. To get a contradiction,  it suffices to show  the existence of a $d_0 > 0$  independent of $k$ such that for any two adjacent  points $x, y  \in \mathcal{O}_n$ and any $k \ge 1$, one has
 \begin{equation}\label{dis-p}
 {\rm dist}(\phi_k(x), \phi_k(y)) > d_0,
   \end{equation}
   where ${\rm dist}(\cdot, \cdot)$ denotes the distance with respect to the Euclidean metric. This is because if $\gamma$ encloses two points $x$ and $y$ in $\mathcal{O}_n$, since $\phi_k$ is a plane homeomorphism, $\phi_k(\gamma)$ will enclose $\phi_k(x)$ and $\phi_k(y)$.  But by (\ref{dis-p}) it follows that $l_{[\mu_k]}(\gamma)$  have a positive lower bound for all $k \ge 1$.  This contradicts the assumption that $l_{[\mu_k]}(\gamma) \to 0$ as $k \to \infty$.

Note that $d_{T_{F_n}}([\phi_k], [\phi_{k-1}]) \le d_{T_{F_n}}([\phi_1], [\phi_0])$ for all $k \ge 1$ (cf. Corollary 4.1 of \cite{ZJ}).    Let $\delta_0  = d_{T_{F_n}}([\phi_1], [\phi_0])$. Then  by Proposition 7.2 of \cite{DH},  we have
  \begin{equation}\label{geo-in}e^{-2\delta_0} \cdot l_{[\mu_{k-1}]}(\gamma) \le  l_{[\mu_k]}(\gamma) \le e^{2\delta_0} \cdot l_{[\mu_{k-1}]}(\gamma).
  \end{equation}  Because all $\phi_k$ fix $0$, $1$ and $\infty$,  the above inequality implies the existence of  an $\epsilon > 0$  such that
  for any two integers $1 \le k' \le k'' \le k'+q_n$,  and any two  adjacent points $a$ and $b$ in  $\mathcal{O}_n$,  if ${\rm dist}(\phi_{k'}(a), \phi_{k'}(b)) < \epsilon$, then ${\rm dist}(\phi_{k''}(a), \phi_{k''}(b)) < 2\pi/q_n$. In fact,  if ${\rm dist}(\phi_{k'}(a), \phi_{k'}(b))$ is small, then there exists a non-peripheral curve $\gamma$ in $\widehat{\Bbb C}\setminus (P_1 \cup \overline{D})$ which encloses $a$ and $b$ such that $l_{[\mu_{k'}]}(\gamma)$ is small. In the post-critically finite case, the existence of such $\gamma$ is obvious. In the sub-hyperbolic semi-rational case,  note that  each holomorphic disk $D_i$ has  a protective annulus $A_i$ surrounding  it such that for all $k \ge 1$,
  $\phi_k$ is conformal  in  $\overline{D_i} \cup A_i$. Thus by Koebe's distortion theorem, we have \begin{equation}\label{ess-in}{\rm diam}(\phi_k(D_i)) \preceq
  {\rm dist}(\phi_k(D_i), \Bbb T).\end{equation} The existence of such $\gamma$ also follows.    By (\ref{geo-in}) and $k' \le k'' < k'+q_n$ we get $l_{[\mu_{k''}]}(\gamma) < e^{2q_n \delta_0} \cdot l_{[\mu_{k'}]}(\gamma)$ and thus $l_{[\mu_{k''}]}(\gamma)$ is small also.  So $d(\phi_{k''}(a), \phi_{k''}(b))$  must be small.

  Now let us go back to the proof of the existence of $d_0$ so that (\ref{dis-p}) holds.  For the above $\epsilon > 0$, since  $\{G_{n,k}\}$ is equicontinuous,   we  have a $d_0 > 0$   such that for any $x$ and $y$ in $\mathcal{O}_n$,  any $N > q_n$ and any $1\le l \le q_n$, if ${\rm dist}(\phi_N(x), \phi_N(y)) \le d_0$, then
   $$
   {\rm dist}(\phi_{N-l}(F_n^l(x), \phi_{N-l}(F_n^l(y))) = {\rm dist}(B(\phi_N(x)), B(\phi_N(y)) < \epsilon
   $$
 where $B = G_{n,N-l+1}\circ \cdots \circ G_{n,N}$. Let us show that the $d_0$ is the desired constant.
     Let $N > q_n$ be an arbitrary integer and $x, y$ be any two adjacent points in $\mathcal{O}_n$. Suppose  ${\rm dist}(\phi_N(x), \phi_N(y)) \le d_0$.   Since  $\mathcal{O}_n$ is a periodic orbit of  period $q_n$,
   there is some $1 \le l \le q_n$ such that ${\rm dist}(\phi_N(F_n^l(x)), \phi_N(F_n^l(y))) \ge 2 \pi/q_n$. But from the choice of $d_0$, we have $ {\rm dist}(\phi_{N-l}(F_n^l(x), \phi_{N-l}(F_n^l(y)))< \epsilon$. From the choice of $\epsilon$, we have   ${\rm dist}(\phi_N(F_n^l(x)), \phi_N(F_n^l(y))) < 2 \pi/q_n$. This is a contradiction.
 This proves (\ref{dis-p}) and thus completes the proof of the claim.

Let $N_1$ be the number of the points in $P_1$ and $N_2$ be the number of the holomorphic disks $D_i$. From the claim we just proved, each $\gamma \in \Gamma$ either encloses two points in $P_1$ or two holomorphic disks, it follows that the number of the curves in $\Gamma$ is less than $N_0 = N_1 + N_2$. Note that $N_0$  is independent of $n$. In the following we will show $\Gamma$ contains an arbitrarily large number of curves provided that $n$ is large enough. This is a contradiction and completes the proof of Lemma~\ref{no-th-ob}.

To see this, for each $x \in \mathcal{O}_n$, let $R_x$ and $L_x$ denote the two interval components of $\Bbb T \setminus \mathcal{O}_n$ such that $\partial R_x \cap \partial L_x = \{x\}$. Let $S_x = R_x \cup L_x$. Since $\theta$ is irrational, by taking $n$ large enough, we may assume that for any $x \in \mathcal{O}_n$, the intervals $F_n^i(S_x)$, $0 \le i \le N_0$, are disjoint with each other.  Now take $\gamma \in \Gamma$. Then $\gamma$ encloses two points $z$ and $z^*$  in $P_{F_n}\setminus \mathcal{O}_n$ which are symmetric about $\Bbb T$.  Since $l_{[\mu_k]}(\gamma) \to 0$ we have ${\rm dist}(\phi_k(z), \Bbb T) \to 0$ and ${\rm dist}(\phi_k(z), \Bbb T) \to 0$. Take $N$ large. Then there exists an $a \in \mathcal{O}_n$  such that
\begin{equation}\label{bi-1}
{\rm dist}( \phi_N(z), \phi_N(S_a)) \asymp {\rm dist}(\phi_N(z^*), \phi_N(S_a)) \ll {\rm diam}(\phi_N(S_a)) \asymp 1.
\end{equation}
Note that ${\rm diam}(\phi_N(S_a)) \asymp 1$ is implied by (\ref{dis-p}). For $1 \le l \le N_0$,
we have
$$
{\rm dist}(\phi_{N-l}(F_n^l(z)), \phi_{N-l}(F_n^l(S_a)))  =  {\rm dist}(B(\phi_N(z)), B(\phi_N(S_a)))
$$
and
$$
{\rm dist}(\phi_{N-l}(F_n^l(z^*)), \phi_{N-l}(F_n^l(S_a)))  =  {\rm dist}(B(\phi_N(z^*)), B(\phi_N(S_a)))
$$ where $B = G_{N-l+1}\circ \cdots \circ G_{n,N}$.
Since $\{G_{n,k}\}$ is equicontinuous, by taking $N$ large, from (\ref{bi-1}) we have
$$
 {\rm dist}(\phi_{N-l}(F_n^l(z)), \phi_{N-l}(F_n^l(S_a)))  \asymp {\rm dist}(\phi_{N-l}(F_n^l(z^*)), \phi_{N-l}(F_n^l(S_a))) $$$$ \asymp {\rm dist}(\phi_{N-l}(F_n^l(z)), \phi_{N-l}(F_n^l(z^*)))   \ll {\rm diam}(\phi_{N-l}(F_n^l(S_a))) \asymp 1.
$$

Again ${\rm diam}(\phi_{N-l}(F_n^l(S_a))) \asymp 1$ is implied by (\ref{dis-p}). This implies that for  $0 \le l \le N_0$, there is a short simple closed geodesic $\eta_l$ in $X_{N-l} = \widehat{\Bbb C} \setminus \phi_{N-l}(P_1 \cup \overline{D})$ such that $\eta_l$ encloses $\phi_{N-l}(F_n^l(z))$ and $\phi_{N-l}(F_n^l(z^*))$ and intersects $\phi_{N-l}(F_n^l(S_a))$ (Note that the existence of such $\eta_l$ relies on (\ref{ess-in})).  The hyperbolic length of $\eta_l$ in $X_{N-l}$ can be arbitrarily small provided that $N$ is large enough. Thus there is a $\gamma_l \in \Gamma$ which is homotopic to $\phi_{N-l}^{-1}(\eta_l)$ in $\widehat{\Bbb C}\setminus (P_1 \cup \overline{D})$. Then $\gamma_l$ must intersect $F_n^l(S_a)$. Since all $F_n^l(S_a)$, $0\le l \le N_0$,  are disjoint, it follows that all $\gamma_l$, $0 \le l \le N_0$, are distinct elements of $\Gamma$.  This implies that the number of the curves in $\Gamma$ is greater than $N_0$. This is a contradiction.
 The proof of Lemma~\ref{no-th-ob} is completed.
\end{proof}

By Lemma~\ref{no-th-ob} and Theorem~\ref{CT-ZJ}, it follows that for all $n$ large enough $F_n$ is CLH-equivalent to a rational map.  Let $\phi_{n}$ and $\psi_{n}$ be a pair of homeomorphisms of the sphere which fix $0$, $1$ and $\infty$ such that \begin{itemize}
\item[1.] $\phi_{n} \circ F_{n} \circ \psi_n^{-1}$  is a rational map, and
\item[2.] $\phi_{n}$ is isotopic to $\phi_{n}$ rel $P_{F_{n}}$, moreover, for each holomorphic disk $D_i$, $\phi_{n}|D_i = \psi_{n}|D_i$ and both are holomorphic.\end{itemize}
By symmetry of $F_n$, $\phi_n$ and $\psi_n$ can be taken such that both of them  are symmetric about $\Bbb T$. Let
$$
G_n = \phi_{n} \circ F_{n} \circ \psi_n^{-1}.
$$
Then $G_n$ is a Blaschke product. Since all the zeros of $G_n$, except the origin,  belong to the exterior of the unit disk, we have
\begin{equation}\label{form-p}
G_n(z) = \lambda_n z^d \prod_{i=1}^{d-1} \frac{z - p_{n,i}}{1 - \overline{p_{n,i}}z}
\end{equation} where $|\lambda_n|= 1$  and $|p_{n,i}| > 1$, $1 \le i \le d-1$.

\begin{lem}\label{compact-s}
There exist $0 < r(d) < \sigma(d) < 1 < \kappa(d) <  R(d) < \infty$ depending only on $d$ such that for any $z$ with $|z| > R(d)$,  $G_n^k(z) \to \infty$ as $k \to \infty$,  and for any $z$ with $|z| < r(d)$,  $G_n^k(z) \to 0$ as $k \to \infty$.  Moreover,  we have  $$\kappa(d) \le |p_{n,i}| \le R(d)$$ for all $n$ large enough.
\end{lem}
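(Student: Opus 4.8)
\emph{Overview.} The plan is to put each $G_n$ into the family $\mathcal H_d$, reduce the whole statement to the uniform two‑sided estimate on the $|p_{n,i}|$, deduce the two basin assertions from that estimate by a crude computation, and finally prove the estimate by a compactness argument that combines Herman's normality near $\Bbb T$ with the preservation of the critical data of the fixed branched cover $F$.

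\emph{Step 1: $G_n\in\mathcal H_d$, and reduction.} Set $a_{n,i}=1/\overline{p_{n,i}}$, so $0<|a_{n,i}|<1$. Using the elementary identity $\frac{z-p}{1-\bar p z}=\frac{a}{\bar a}\cdot\frac{1-\bar a z}{z-a}$, valid when $p=1/\bar a$, formula (\ref{form-p}) rewrites as $G_n(z)=\lambda_n' z^{d}\prod_{i=1}^{d-1}\frac{1-\overline{a_{n,i}}z}{z-a_{n,i}}$ with $|\lambda_n'|=1$; since $G_n$ is a Blaschke product of degree $2d-1$ whose restriction to $\Bbb T$ is an orientation‑preserving circle homeomorphism, $G_n\in\mathcal H_d$. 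Granting the bound $\kappa(d)\le|p_{n,i}|\le R(d)$, the two dynamical assertions are routine: for $|z|=\rho$ one has $|G_n(z)|=\rho^{d}\prod_i\frac{|z-p_{n,i}|}{|1-\overline{p_{n,i}}z|}$, so if $\rho>2R(d)$ every factor is at least $\frac{\rho-R(d)}{1+R(d)\rho}\ge c(d)/\rho$, whence $|G_n(z)|\ge c(d)^{\,d-1}\rho$, which exceeds $2\rho$ once $R(d)$ is enlarged depending only on $d$; symmetrically, for $|z|$ below a threshold $r(d)$ depending only on $d$ one gets $|G_n(z)|\le|z|/2$. This yields $G_n^{k}(z)\to\infty$ on $\{|z|>R(d)\}$ and $G_n^{k}(z)\to0$ on $\{|z|<r(d)\}$, and one may take $\sigma(d)=1/\kappa(d)\in(r(d),1)$, which moreover records that every interior pole $1/\overline{p_{n,i}}$ lies in $\{|z|\le\sigma(d)\}$.

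\emph{Step 2: the bound $\kappa(d)\le|p_{n,i}|\le R(d)$.} Suppose not; then along a subsequence some $|p_{n,i}|\to1$ or some $|p_{n,i}|\to\infty$, i.e.\ $a_{n,i}\to\zeta\in\Bbb T$ or $a_{n,i}\to0$. To rule out the first, use Herman's Lemma~15 of \cite{He}: $\{G_n\}$ is a normal family on a fixed annular neighbourhood $A$ of $\Bbb T$ depending only on $d$, so after a further subsequence $G_n\to G$ locally uniformly (spherically) on $A$; since $|G_n|\equiv1$ on $\Bbb T$, the limit is non‑constant with $G(\Bbb T)\subset\Bbb T$, so $G|\Bbb T$ has a well‑defined winding number, necessarily $\lim(\text{winding of }G_n|\Bbb T)=1$. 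But the degenerating factor $\frac{1-\overline{a_{n,i}}z}{z-a_{n,i}}$, whose winding contribution on $\Bbb T$ is $-1$ as long as $|a_{n,i}|<1$, collapses away from $\zeta$ to the unimodular constant $-\zeta$ (winding $0$), while all other factors and $z^{d}$ converge; hence $G|\Bbb T$ would have winding $2$, a contradiction. To rule out the second, one uses that $G_n$ inherits the full critical data of $F$: by properties (5) and (8) of the construction of $\{F_n\}$, the point $1$ is a critical point of $F_n$ with $\deg_1F_n=\deg_1F=:\ell_1\ge3$ (odd, since $F$ is the symmetric double of $f$ and $1\in\Omega_f$), and every critical point of $F$ persists for $F_n$ with unchanged local degree; as $\psi_n$ fixes $1$, the rational map $G_n$ carries a critical point of local degree $\ell_1$ at $1$, and its total critical multiplicity off the two super‑attracting fixed points $0,\infty$ equals that of $F$, namely $4d-4-2(d-1)=2d-2$. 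If $a_{n,i}\to0$, the $i$‑th Blaschke factor degenerates to $1/z$ and, after passing to a subsequence along which the remaining parameters, the finitely many other critical points, and the holomorphically attracting cycles all converge, $G_n\to\tilde G$, a Blaschke product of degree $2d-3$; passing to the limit in the defining relations, $\tilde G$ still carries $F$'s critical data, so it has critical multiplicity at least $2d-2$ off $\{0,\infty\}$, whereas a degree‑$(2d-3)$ rational map has total critical multiplicity $4d-8$, of which $2(d-2)$ already sit at $0,\infty$, leaving only $2d-4<2d-2$, a contradiction. (Equivalently: any subsequential limit of $G_n$ in the space of rational maps of degree $\le2d-1$ is CLH‑equivalent to the single branched cover $F$, hence has full degree $2d-1$, so no parameter can escape.) Since the only non‑quantitative input, Herman's normality, is uniform in the degree, the resulting $\kappa(d),R(d)$ depend on $d$ alone.

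\emph{Main obstacle.} The delicate point is excluding $|p_{n,i}|\to\infty$. Unlike the pinching $|p_{n,i}|\to1$, this degeneration leaves the winding number on $\Bbb T$ unchanged and is therefore invisible to the soft normality argument; one must instead control exactly how much of $F$'s critical multiplicity survives into a degenerate algebraic limit — i.e.\ carry out the Riemann–Hurwitz bookkeeping attached to property (8), or, in the conceptual version, push the CLH‑equivalence through the limit. A secondary technical chore is that when several parameters degenerate simultaneously (some $a_{n,i}\to0$, others $a_{n,i}\to\partial\Delta$, possibly with collisions) one must first extract a single subsequence along which all the relevant objects converge; this is standard but should be made explicit.
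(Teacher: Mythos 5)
Your Step~1 reduction and the lower bound are sound and in the same spirit as the paper's: once $\kappa(d)\le|p_{n,i}|\le R(d)$ is known the two basin statements follow by elementary estimates, and the lower bound $|p_{n,i}|\ge\kappa(d)$ is exactly what Herman's compactness of $\mathcal H_d$ in a fixed annulus around $\Bbb T$ gives (your winding-number derivation is an unnecessary detour, and is itself delicate because spherical normality fails at the accumulation point of a pole, but the conclusion you need is already part of Herman's statement as the paper invokes it).

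The gap is in the upper bound. Your Riemann--Hurwitz count assumes that when one $a_{n,i}\to 0$ and $G_n$ degenerates to a Blaschke product $\tilde G$ of degree $2d-3$, the critical multiplicity of $G_n$ off $\{0,\infty\}$, namely $2d-2$, survives into $\tilde G$. That is false in general: when the local degree at $0$ and $\infty$ drops from $d$ to $d-1$ in the limit, exactly two units of off--$\{0,\infty\}$ multiplicity can (and arithmetically must) be absorbed into $\{0,\infty\}$, and that is consistent with $\tilde G$ carrying the multiplicity $2(d-2)=2d-4$ that a degree-$(2d-3)$ Blaschke product allows. Your accounting would give a contradiction only if all of $G_n$'s critical points off $\{0,\infty\}$ were known a priori to lie on $\Bbb T$ (or otherwise be bounded away from $0,\infty$ uniformly in $n$), but in the setting of Theorem~\ref{Thurston-Siegel-Ch} the map $F_n$ has critical points off $\Bbb T$ corresponding to critical points of $f$ falling into attracting or super-attracting cycles, and the statement that $\psi_n$ keeps their images in a fixed compact annulus $\{1/K<|z|<K\}$ is precisely item (1) of Lemma~\ref{bounded-geometry} — which the paper proves \emph{after} and \emph{using} Lemma~\ref{compact-s}. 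So as stated your argument is either incomplete or circular. The "conceptual version" you float (pass the CLH-equivalence to the limit) runs into the same circularity: that is the content of Lemma~\ref{premodel}, which also sits downstream of Lemma~\ref{compact-s}.

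The paper's route for the upper bound avoids any limiting argument. Since all critical orbits of $G_n$ are bounded, the B\"ottcher coordinate at $\infty$ for the monic normalization $\tilde G_n(z)=\eta G_n(z/\eta)$ (with $\eta^{d-1}=\epsilon$ the leading coefficient) extends to a Riemann map of the whole basin, and Koebe's $1/4$-theorem forces that basin to contain $\{|z|>4\}$. Since each zero $p_{n,i}$ is \emph{not} in the basin of $\infty$ (it maps to $0$ in one step), one gets $|p_{n,i}|\le 4/|\eta|$, which together with $|\eta|^{d-1}=\prod|p_{n,i}|^{-1}$ forces all the $|p_{n,i}|$ to be mutually comparable; if they were all large, $G_n$ would be uniformly close to a rigid rotation on a fixed annulus around $\Bbb T$ and could not have a critical point at $1\in\Bbb T$. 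You would need to replace your degeneration argument with something of this quantitative, non-limiting flavor.
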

\begin{proof}
Note that all the finite poles of $G_n$ belong to the interior of $\Delta$ and except the origin,
all the zeros of $G_n$ belong to the exterior of $\Delta$.  By a lemma of Herman (cf. $\S15$ of \cite{He}),
all the poles, and thus all the zeros by symmetry,  are uniformly bounded away from $\Bbb T$.  This implies the existence of $\kappa(d) > 1$ such that $\kappa(d) \le |p_{n,i}|$ holds for all $1 \le i \le d-1$ and all $n$ large enough. So to prove the lemma, it suffices to show  the existence of $R(d) > 1$ such that   $G_n^k(z) \to \infty$ for all $|z| > R(d)$. Then  the existence of the desired $r(d)$ follows by symmetry. It is also clear that for such $R(d)$ we must have $|p_{n,i}| \le R(d)$  for all $1 \le i \le d-1$ and all $n$ large enough.

Now let  $$\epsilon = (-1)^{d-1}\lambda_n \prod_{i=1}^{d-1}\frac{1}{\overline{p_{n,i}}}.$$ Then near infinity $G_n(z) = \epsilon z^d + o(z^d)$. Take $\eta$ such that $\eta^{d-1} = \epsilon$. Consider the map
 $$
 \tilde{G}_n(z) = \eta \cdot G_{n}(\frac{z}{\eta}).
 $$ Then $\tilde{G}_n(z) = z^d + o(z^d)$ near infinity.  Let
$\Phi(z) = z(1 + o(1))$ be a holomorphic map in a neighborhood of infinity  which conjugates $\tilde{G}_n$  to $z\mapsto z^d$. Since all critical orbits of $G_n$, and thus of $\tilde{G}_n$, are bounded, it follows that
$\Phi$ can be extended to a Riemann isomorphism   from the immediate attracting basin of infinity for $\tilde{G}_n$  to the exterior of the unit disk.   By Koebe's $1/4$-theorem, the immediate attracting basin of $\tilde{G}_n$ at infinity contains the exterior of $\{z\:|\: |z| < 4\}$.  Go back to $G_n$,   it follows that the immediate attracting basin of ${G}_n$ at infinity contains the exterior of $\{z\:|\: |z| < 4/|\eta|\}$, and in particular,
\begin{equation}\label{zero-in}
|p_{n,i}| \le \frac{4}{|\eta|},  \:\: 1 \le i \le d-1.
\end{equation} It follows that $|p_{n,i}|^{d-1} \le 4^{d-1} \prod_{j=1}^{d-1}|p_{n,j}|$ for any $1 \le i \le d-1$. Thus
$$
\min_{1 \le i \le d-1} |p_{n,i}| \le \max_{1 \le i \le d-1} |p_{n,i}| \le 4 \min_{1 \le i \le d-1} |p_{n,i}|.
$$
This implies that   that  $|p_{n,i}|, 1 \le i \le d-1, $ are all large provided that one of them  is large. From
 (\ref{form-p}), we see if all $|p_{n,i}|$, $1 \le i \le d-1$, are large enough, then  $G_n$ is holomorphic in $H = \{z\:|\: 1/2 < |z|< 2\}$,  and moreover, $G_n$ can be arbitrarily close to the linear map $z\mapsto a  z$ in $H$  with $|a| = 1$  provided that $|p_{n,i}|$ are all large enough. But this would imply $G_n$ has no critical point in $\Bbb T$. This is a contradiction. This implies the existence of some constant depending only on $d$, say $\beta(d) > 1$ such that  $|p_{n,i}| < \beta(d)$ for all $1 \le i \le d-1$. Since the immediate attracting basin of ${G}_n$ at infinity contains the exterior of $\{z\:|\: |z| < 4/|\eta|\}$, from the definition of $\eta$, we can take $R(d) = 4\beta(d)$.  The proof of Lemma~\ref{compact-s} is completed.

\end{proof}

By taking a subsequence, we may assume that $G_{n}$ converges to $G$ uniformly with respect to the spherical metric. Then $G|\Bbb T$ is a critical circle homeomorphism with rotation number $\alpha$. Since $\alpha$ is of bounded type, by Herman-Swiatek's theorem\cite{P2}, there is a qs circle homeomorphism $h$ which conjugates $G|\Bbb T$ to the rigid rotation $R_{\alpha}: z \to e^{2 \pi i \alpha} z$.
Let $\phi_{n}, \psi_{n}: \widehat{\Bbb C} \to \widehat{\Bbb C}$ be the pair of homeomorphisms  defined before (\ref{form-p}).

\begin{lem}\label{circle}
$\phi_{n}|\Bbb T \to h$ and $\psi_{n}|\Bbb T \to h$ uniformly.
\end{lem}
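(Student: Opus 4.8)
The plan is to exploit the rigidity coming from the fact that, on the unit circle, all the maps involved are circle homeomorphisms with the same irrational rotation number $\alpha$, together with the $C^0$-convergence $G_n \to G$ already established. First I would observe that $\phi_n$ and $\psi_n$ both fix $0$, $1$, $\infty$ and are symmetric about $\Bbb T$, hence each restricts to a circle homeomorphism of $\Bbb T$ fixing $1$; call these $u_n = \phi_n|\Bbb T$ and $v_n = \psi_n|\Bbb T$. From the relation $G_n = \phi_n \circ F_n \circ \psi_n^{-1}$ and the fact that $F_n|\Bbb T = R_{\alpha_n}$ while $G_n|\Bbb T$ is a circle homeomorphism, we get the conjugacy-type identity $u_{?}\circ R_{\alpha_n}\circ v_n^{-1} = G_n|\Bbb T$ on $\Bbb T$; more precisely, because $\phi_n$ is isotopic to $\psi_n$ rel $P_{F_n}\supset\mathcal{O}_n$ and $\mathcal{O}_n\subset\Bbb T$ is the orbit of $1$ under $R_{\alpha_n}$, the two restrictions agree on the dense (as $n\to\infty$) orbit $\mathcal{O}_n$, and in the limit $u_n$ and $v_n$ have the same limit. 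So the real content is: $u_n|\Bbb T$ converges uniformly, and its limit conjugates $G|\Bbb T$ to $R_\alpha$, hence equals $h$ by the uniqueness of the conjugacy (Herman/Yoccoz, $\alpha$ irrational, normalized by fixing $1$).

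Next I would establish precompactness of $\{u_n\}$ and $\{v_n\}$ in the uniform topology. Both are circle homeomorphisms fixing $1$; to get equicontinuity and no-degeneracy I would use the uniform quasisymmetry available here: $G_n|\Bbb T$ are critical circle homeomorphisms arising from Blaschke products $G_n$ in the compact family $\mathcal{H}_d$ (Lemma~\ref{compact-s} gives the uniform bounds on the poles, so after passing to a subsequence $G_n \in \mathcal{H}_d$ with $G_n \to G \in \mathcal{H}_d$), and $\alpha$ is of bounded type. By Herman's theorem and the uniform real bounds (Theorem~\ref{real bounds}), the conjugacies $h_n$ between $G_n|\Bbb T$ and $R_{\alpha_n}$ are uniformly quasisymmetric, hence $\{h_n\}$ is precompact and any limit is a quasisymmetric conjugacy of $G|\Bbb T$ to $R_\alpha$; by uniqueness (normalizing at $1$) $h_n \to h$ uniformly. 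It then remains to identify $u_n$ (and $v_n$) with $h_n$, or at least show $|u_n - h_n|\to 0$ and $|v_n - h_n| \to 0$ uniformly. This is where the combinatorial rigidity enters: $u_n$ and $h_n$ both conjugate $G_n|\Bbb T$ to a rigid rotation ($R_{?}$ and $R_{\alpha_n}$ respectively) and both fix $1$; since $G_n|\Bbb T$ is minimal and the rotation number determines the conjugacy up to the choice of normalization point, $u_n$ coincides with $h_n$ precomposed by the rigid rotation matching the two rotation numbers — but $F_n|\Bbb T = R_{\alpha_n}$ forces the rotation number on the $G_n$ side to be exactly $\alpha_n$, so $u_n = h_n$ on $\Bbb T$. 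The same argument with the roles of $\phi_n,\psi_n$ gives $v_n = h_n$ on $\Bbb T$ (using that $\phi_n$ and $\psi_n$ agree on $\mathcal{O}_n$, which is dense).

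I expect the main obstacle to be the bookkeeping that pins down $u_n$ and $v_n$ \emph{exactly} as the conjugacy $h_n$, rather than merely up to an a priori unknown rotation. The subtlety is that $\phi_n \circ F_n \circ \psi_n^{-1} = G_n$ is an identity of branched covers of $\widehat{\Bbb C}$, and restricting to $\Bbb T$ one must be careful that $\phi_n|\Bbb T$ and $\psi_n|\Bbb T$ really are the correct normalizations; this is handled by the isotopy condition rel $P_{F_n}$ together with $\mathcal{O}_n \subset \Bbb T$ and $1 \in \mathcal{O}_n$. A second, more technical point is upgrading pointwise/uniform-on-$\mathcal{O}_n$ agreement to genuine uniform convergence on all of $\Bbb T$; here I would invoke the uniform quasisymmetry bound to control the modulus of continuity of $u_n$ and $v_n$ uniformly in $n$, so that agreement on the $(2\pi/q_n)$-dense set $\mathcal{O}_n$ plus equicontinuity forces $\sup_{\Bbb T}|u_n - v_n| \to 0$ and, combined with $h_n \to h$, gives $u_n \to h$ and $v_n \to h$ uniformly. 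Finally I would note that the limit is automatically $h$ and not some other conjugacy because $\alpha$ is irrational: a conjugacy of $G|\Bbb T$ to $R_\alpha$ fixing $1$ is unique, which is exactly the classical rigidity used throughout Section~4.
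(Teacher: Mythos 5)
There is a genuine gap. Your mechanism for precompactness hinges on ``the conjugacies $h_n$ between $G_n|\Bbb T$ and $R_{\alpha_n}$'' being uniformly quasisymmetric, but these objects do not exist: $F_n|\Bbb T = R_{\alpha_n}$ with $\alpha_n = p_n/q_n$ \emph{rational}, so $G_n|\Bbb T$ is a critical circle map with rational rotation number. A critical circle map with rational rotation number is never topologically conjugate to a rigid rotation (its dynamics has a distinguished finite periodic orbit $\phi_n(\mathcal{O}_n)$ with other orbits attracted to it), and Herman's quasisymmetry theorem requires a bounded-type \emph{irrational} rotation number, so it gives nothing here. Relatedly, your claim that ``$u_n$ and $h_n$ both conjugate $G_n|\Bbb T$ to a rigid rotation'' is false on two counts: $h_n$ does not exist, and $u_n = \phi_n|\Bbb T$ is not a conjugacy of anything --- the identity on $\Bbb T$ is $\phi_n \circ R_{\alpha_n} \circ \psi_n^{-1} = G_n|\Bbb T$ with $\phi_n \neq \psi_n$ in general (they agree only on the finite set $\mathcal{O}_n \subset P_{F_n}$). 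So the step ``it remains to identify $u_n$ with $h_n$'' has no content, and with it the proposed source of equicontinuity collapses. The related assertion that $G_n|\Bbb T$ is minimal is also false, again because its rotation number is rational.

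The paper avoids all of this by a purely combinatorial Cauchy estimate that never invokes quasisymmetry or any per-$n$ conjugacy. Concretely: $\phi_n|\Bbb T$ is an order-preserving circle homeomorphism sending the orbit segment $\mathcal{O}_N(F_n) = \{F_n^k(1)\}_{k=0}^N$ to $\mathcal{O}_N(G_n) = \{G_n^k(1)\}_{k=0}^N$; since $F_n \to F$, $G_n \to G$ uniformly, for $n$ large these segments are close to $\mathcal{O}_N(F)$ and $\mathcal{O}_N(G)$ and all four share the same cyclic order. Because $F|\Bbb T = R_\alpha$ and $G|\Bbb T$ are minimal circle homeomorphisms with irrational rotation number $\alpha$ (minimality of $G|\Bbb T$ coming from the existence of the limiting conjugacy $h$), the gaps of $\mathcal{O}_N(F)$ and $\mathcal{O}_N(G)$ can be made $< \epsilon/4$ by taking $N$ large, hence for all large $m,n$ one gets $\sup_{\Bbb T}|\phi_n - \phi_m| < \epsilon$ directly from order-preservation and the nesting of partition intervals. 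If you want to salvage your outline, you would need to replace the $h_n$-based equicontinuity with exactly this kind of modulus-of-continuity control coming from the order structure and the smallness of the $\mathcal{O}_N(G)$-gaps; the rest of your scheme (agreement of $\phi_n$ and $\psi_n$ on $\mathcal{O}_n$, identification of the common limit with $h$ by density and uniqueness of the normalized conjugacy for irrational $\alpha$) is sound and matches the first half of the paper's proof.
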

\begin{proof}
To simplify the notations, let us write $\phi_n|\Bbb T$ and $\psi_n|\Bbb T$ as $\phi_n$ and $\psi_n$ respectively.  We claim that $\phi_n$ and $\psi_n$ converge uniformly. Let us first prove Lemma~\ref{circle} by assuming the claim.  Let $\phi$ and $\psi$ be the limit maps respectively.
 Since the convergence is uniform, it follows that both $\phi$ and $\psi$ are circle homeomorphisms.  Since $\phi_n|P_{F_n} =  \psi_n|P_{F_n}$, for any $k \ge 0$ we have
  $\phi_n (F_n^k(1)) = \psi_n (F_n^k(1)) = G_n^k(1)$.  Since $F_n \to F$ and $G_n \to G$ uniformly on $\Bbb T$, let $n \to \infty$ we get $\phi(F^k(1)) = \psi(F^k(1)) = G^k(1)$ for all $k \ge 0$. This implies that $\phi$ and $\psi$ coincide on $\{F^k(1)= e^{2 \pi i k \alpha}\}_{k=0}^\infty$, which is a dense subset of $\Bbb T$. It follows that $\phi = \psi$. Since $\phi(1) = \psi(1) = h(1) = 1$,  it follows that $\phi = \psi = h$.

It remains to show that $\phi_n$ and $\psi_n$ converge uniformly.  Let us do this only for $\phi_n$ since the same argument works for $\psi_n$.
By construction, the point $1$ is a critical point for $F$, $G$, $F_{n}$ and $G_{n}$.
 For any $N \ge 1$,
 since  $F_{n} \to F$  and  $G_{n} \to G$ uniformly,   by taking $n$  large enough, we can assume that the orbit segments
 $$\mathcal{O}_N(G) = \{1, G^1(1), \cdots, G^N(1)\}\hbox{  and  } \mathcal{O}_N(G_{n}) =\{1, G_{n}^1(1), \cdots, G_{n}^N(1)\}$$ have the same order,
 and   $$\mathcal{O}_N(F) =\{1, F^1(1), \cdots, F^N(1)\}\hbox{  and  }\mathcal{O}_N(F_{n}) = \{1, F_{n}^1(1), \cdots, F_{n}^N(1)\}$$ have the same order. Since $\phi_{n}$ ($\psi_{n}$) is a circle homeomorphism and maps $\mathcal{O}_N(F_n)$ to  $\mathcal{O}_N(G_n)$ with the order being preserved, thus $\mathcal{O}_N(F_n)$ and  $\mathcal{O}_N(G_n)$ have the same order. All of these implies that for any fixed $N$, by taking $n$ large enough, all the four orbit segments,  $\mathcal{O}_N(F)$,  $\mathcal{O}_N(F_n)$,  $\mathcal{O}_N(G)$  and $\mathcal{O}_N(G_n)$, have the same order.

Let $\epsilon > 0$ be an arbitrary small number. Since $G|\Bbb T$ and $F|\Bbb T$ are circle homeomorphisms with irrational rotation number, by taking  $N = N(\epsilon)$ large enough we may assume that the length of
each component of  $\Bbb T \setminus \mathcal{O}_N(F)$ and $\Bbb T \setminus \mathcal{O}_N(G)$ is less than $\epsilon/4$.   For such $N$,  since $F_{n} \to F$ and $G_{n} \to G$ uniformly, there exists an $M_1 > 1$ such that for all $n \ge M_1$,  the length of
each component of  $\Bbb T \setminus \mathcal{O}_N(F_n)$ and $\Bbb T \setminus \mathcal{O}_N(G_n)$ is less than $\epsilon/3$.   For a component $I$ of $\Bbb T \setminus \mathcal{O}_N(F_n)$, we use $I^l$ and $I^r$ to denote the components of $\Bbb T \setminus \mathcal{O}_N(F_n)$, which are adjacent to $I$ from the left and right respectively. Similarly, for a component $J$ of $\Bbb T \setminus \mathcal{O}_N(G_n)$, we use $J^l$ and $J^r$ to denote the components of $\Bbb T \setminus \mathcal{O}_N(G_n)$, which are adjacent to $J$ from the left and right respectively.
For such $N$,  since $F_n \to F$ and $G_n \to G$ uniformly on $\Bbb T$,  there exists an $M > M_1$ such that for all $m, n \ge M$,
 the following holds: For any component $I$ of $\Bbb T \setminus \mathcal{O}_N(F_n)$ and any component $J$ of $\Bbb T \setminus \mathcal{O}_N(G_n)$, let $\tilde{I}$ and $\tilde{J}$ be the corresponding components of $\Bbb T \setminus \mathcal{O}_N(F_m)$ and $\Bbb T \setminus \mathcal{O}_N(G_m)$ respectively,  then \begin{equation}\label{ref-in} \overline{I} \subset \overline{\tilde{I}^l \cup \tilde{I} \cup \tilde{I}^r} \hbox{  and  } \overline{J} \subset \overline{\tilde{J}^l \cup \tilde{J} \cup\tilde{J}^r}.\end{equation}

For the above $M$, let $n, m > M$ be any two integers.  Let  $z \in \Bbb T$ be an arbitrary point and  $I$ be a component of $\Bbb T \setminus \mathcal{O}_N(F_n)$ such that $z \in \overline{I}$.  Since $\overline{I} \subset \overline{\tilde{I}^l \cup \tilde{I} \cup \tilde{I}^r}$ by (\ref{ref-in}), we  have $z \in \overline{\tilde{I}^l \cup \tilde{I} \cup \tilde{I}^r}$.   Let $J = \phi_n(I)$  and  $\tilde{J} = \phi_m(\tilde{I})$. It follows from (\ref{ref-in}) that
$$
\phi_n(z) \in \overline{J} \subset \overline{\tilde{J}^l \cup \tilde{J} \cup\tilde{J}^r}.
$$ and
$$
\phi_m(z) \in \phi_m(\overline{\tilde{I}^l \cup \tilde{I} \cup \tilde{I}^r}) = \overline{\tilde{J}^l \cup \tilde{J} \cup\tilde{J}^r}.
$$ Then $|\phi_n(z) - \phi_m(z)| \le |\tilde{J}^l|+| \tilde{J}|+ |\tilde{J}^r| < \epsilon$.  This proves that $\phi_n$ converges uniformly.  The proof of  Lemma~\ref{circle}  is completed.
\end{proof}

Recall that $D_i$  denote all the holomorphic disks of $F$ and $F_n$. Let
$$
P = P_{F_{n}} \setminus (\Bbb T \cup  \cup_i \overline{D_i}) = P_{F}\setminus (\Bbb T \cup  \cup_i \overline{D_i}) .
$$  Then $P$ is a finite set.
\begin{lem}\label{bounded-geometry}
There exists a $K > 1$ such that for all $n \ge 1$,
\begin{itemize}
\item[(1)] $\phi_{n}(P \cup \cup_i D_i) \subset \{z\:|\: 1/K < |z| < K\}$,
\item[(2)] for every  $D_i$, ${\rm diam}(\phi_{n}(D_i)) > 1/K$,

\item[(3)] for every point $z \in P$ and every  $D_i$, ${\rm dist}(\phi_{n}(z), \phi_{n}(D_i)) > 1/K$,

\item[(4)] for every  $D_i$, ${\rm dist}(\phi_{n}(D_i), \Bbb T) > 1/K$,

\item[(5)] for every two distinct  $D_i$ and $D_j$, ${\rm dist}(\phi_{n}(D_i), \phi_{n}(D_j)) > 1/K$,
\item[(6)] for every point $z \in P$, ${\rm dist}(\phi_{n}(z),\Bbb T) > 1/K$,
\item[(7)] for every two distinct points $z$ and $w$ in $P$, ${\rm dist}(\phi_{n}(z), \phi_{n}(w)) > 1/K$.
\end{itemize}
\end{lem}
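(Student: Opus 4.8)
The seven estimates together assert that the family $\{G_n\}$ has \emph{uniform} bounded geometry: writing $X_n:=\widehat{\Bbb C}\setminus\phi_n\bigl(P_{F_n}\cup\bigcup_i\overline{D_i}\bigr)$, they say that with a constant independent of $n$ the hyperbolic sphere $X_n$ has no short non-peripheral geodesic and no puncture or holomorphic disk close to $\Bbb T$. The plan has two steps. First, for each fixed $n$ these bounds hold with \emph{some} constant $K_n$: by Lemma~\ref{no-th-ob} the map $F_n$ has no Thurston obstruction, hence by Theorem~\ref{cn} no canonical Thurston obstruction, which is the bounded-geometry condition for the Thurston iteration of $F_n$; since $\phi_n$ realises the limit of that iteration (the fixed point produced by \cite{ZJ}), the bound transfers to $X_n$. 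Second --- and this is the real content --- one shows $\sup_n K_n<\infty$ by a contradiction argument that replays, now with constants depending only on $d$ and the fixed map $F$, the reasoning in the proof of Lemma~\ref{no-th-ob}.

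The key preparatory observation is that the perturbations $F_n$ may be, and I would take them to be, equal to $F$ outside the fixed thin annulus $H$, and inside $H$ obtained from $F$ by a twist carrying the rotation number from $\alpha$ to $\alpha_n$; such $F_n$ are quasiconformal with dilatation $\le K_0$ independent of $n$. This has two uniform consequences. The off-$\Bbb T$ post-critical data --- the finite set $P=P_F\setminus(\Bbb T\cup\bigcup_i\overline{D_i})$, the holomorphic disks $D_i$, and the protective annuli around them --- is literally $n$-independent; and the Thurston pullback moves the base point by at most $\frac{1}{2}\log K_0$ per step, so that by Proposition~7.2 of \cite{DH} the $[\mu_k^{(n)}]$-length of any non-peripheral curve changes by a bounded factor per step, uniformly in $n$ and $k$ (here $\mu_0^{(n)}$ is the standard structure, $\mu_k^{(n)}=(F_n^k)^*\mu_0^{(n)}$, and $\phi_k^{(n)}$ the solution fixing $0,1,\infty$).

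Now suppose one of (1)--(7) failed with constant tending to $\infty$ along a subsequence $n=n_j$. Each failure mode supplies a non-peripheral $\gamma_j\subset X_{n_j}$ --- surrounding two colliding off-circle punctures, or a shrinking disk $\phi_{n_j}(D_i)$, or separating an off-circle puncture or disk from $\Bbb T$ --- with hyperbolic length tending to $0$; crucially, since the lemma only concerns the off-$\Bbb T$ part of $P_{F_n}$, one may always take $\gamma_j$ to enclose only off-circle punctures and disks, so the proliferating punctures $\phi_n(\mathcal{O}_n)\subset\Bbb T$ never enter. Transporting $\gamma_j$ by $\phi_{n_j}^{-1}$ into $\widehat{\Bbb C}\setminus(P_{F_{n_j}}\cup\bigcup_i\overline{D_i})$ and pulling back along $F_{n_j}$, the symmetry of $F_{n_j}$ about $\Bbb T$ forces, as in Claim~1 of the proof of Lemma~\ref{no-th-ob}, that the relevant short curve crosses $\Bbb T$; and then, as in Claim~2, the forward orbit of an enclosed off-circle puncture either enters some $D_i$ or lands on a periodic cycle of $F_{n_j}$ away from $\Bbb T$. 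In that situation, Koebe's theorem on the (uniform-modulus) protective annulus gives ${\rm diam}\,\phi_k^{(n_j)}(D_i)\asymp{\rm dist}(\phi_k^{(n_j)}(D_i),\Bbb T)$ with an $n$-independent constant, and combined with the normality of $\{G_{n,k}\}$ in a uniform neighbourhood of $\Bbb T$ (Herman's Lemma~15 of \cite{He}, recorded as Fact~1 in the proof of Lemma~\ref{no-th-ob}) this produces two geodesics in $X_{n_j}$ --- one enclosing $D_i\cup D_i^*$ at level $k$, one at the corresponding earlier level --- that must cross. This contradiction yields the uniform $K$, the finitely many remaining $n$ being covered by the first step.

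I expect the main obstacle to be precisely this uniformity, and within it the fact that the critical orbit $\mathcal{O}_n$ on $\Bbb T$ has period $q_n\to\infty$: estimates of the type $d(\phi_k(x),\phi_k(y))>d_0$ for $x,y\in\mathcal{O}_n$ that appear in the proof of Lemma~\ref{no-th-ob} have $d_0$ deteriorating like $L^{-q_n}$ and cannot be used uniformly. The resolution I propose is that Lemma~\ref{bounded-geometry} never needs to control $\mathcal{O}_n$; by restricting to curves enclosing the $n$-independent off-circle data and the finitely many $F$-orbits into the disks $D_i$ and the periodic cycles, every quantitative input --- the modulus of the protective annuli, the Koebe constant, Herman's constant, the escape radii $r(d),R(d)$ of Lemma~\ref{compact-s}, and the per-step factor $K_0$ --- depends only on $d$ and $F$.
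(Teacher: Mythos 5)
Your proposal takes a genuinely different route from the paper, and I believe it has a gap that the paper's actual argument is designed to avoid.

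The paper proves this lemma by a direct compactness argument on the normalized rational maps $G_n$, not through the Thurston iteration. Concretely: after extracting a subsequence, $G_n \to G$ uniformly; Lemma~\ref{compact-s} gives escape radii $r(d), R(d)$ independent of $n$, which, together with the symmetry of $G_n$, yields~(1). Item~(2) is proved by analysing the restriction of $G_n^p$ to $\phi_n(D_i)$ (where $F_n^p$ and $G_n^p$ are holomorphically conjugate), with two cases according to whether the multiplier of the attracting cycle vanishes, and a Koebe argument showing the diameter cannot collapse. Item~(3) is a delicate local-degree counting argument: assuming two off-circle post-critical points collide under $\phi_n$, one tracks the largest index $l$ at which this occurs, extracts a limit critical point $c$ of $G$, and counts preimages near $c$ to reach a contradiction with $\deg_c G - 1 = \sum_i(\deg_{c_i}G_n - 1)$. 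Items (4)--(6) follow from the protective annuli plus Koebe, and (7) from the convergence of critical orbits of $G_n$ to those of $G$ away from $\Bbb T$. None of this passes through the Thurston pullback $\sigma_{F_n}$ once $G_n$ is in hand.

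Your proposal instead aims to recognise (1)--(7) as a uniform bounded-geometry statement for the fixed-point surfaces $X_n$ and to derive the uniformity by \textquotedblleft replaying\textquotedblright\ the contradiction argument of Lemma~\ref{no-th-ob}. The gap is that Lemma~\ref{no-th-ob} operates on the \emph{intermediate iterates} $[\mu_k]$ and their canonical Thurston obstruction, not on the limit $[\phi_n]$, and the two are not interchangeable. In particular, Claim~1 of that proof shows that curves disjoint from $\Bbb T$ cannot belong to a \emph{canonical Thurston obstruction} of $F_n$ (because that would force a Thurston obstruction of $f$); it does not say that $X_n$ contains no short curve disjoint from $\Bbb T$. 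A short geodesic at the fixed point is not, by itself, a Thurston obstruction --- the obstruction criterion concerns lengths degenerating along the iteration, not shortness of the limit surface. So your statement that \textquotedblleft the symmetry of $F_{n_j}$ ... forces, as in Claim~1, that the relevant short curve crosses $\Bbb T$\textquotedblright\ does not follow, and the subsequent pullback / crossing-geodesics step has nothing to stand on. You correctly flag that the $q_n$-dependent Fact~2 estimate would ruin uniformity and propose to avoid it by restricting to off-circle curves, but that restriction is exactly what disconnects the argument from the machinery of Claim~1 and Claim~2, leaving the contradiction unproduced. In short, the real content of the lemma --- your step~2 --- is the part that is asserted rather than argued, and the paper supplies it by entirely different, more concrete means: a convergent subsequence $G_n\to G$ and separate hands-on estimates for each of (1)--(7), the hardest being the local-degree count in~(3), which your geodesic framing does not capture.
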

\begin{proof}
By Lemma~\ref{compact-s} it follows that there is a $K > 1$ such that  $$\phi_{n}(P \cup \cup_i D_i) \subset \{z\:|\: |z| < K\}.$$ By symmetry we further
 have  $\phi_{n}(P \cup \cup_i D_i) \subset \{z\:|\: 1/K< |z| < K\}$. This proves (1).

 Now let us prove (2).  Let us fix a $D_i$.  By symmetry we may assume that $D_i$ belongs to the exterior of $\Delta$.  Since $F_n$ is obtained by perturbing $F$ in a thin annular neighborhood of $\Bbb T$, we may assume that  $F_n|D_i = F|D_i$.   Then $D_i$ contains a periodic point of $F_n$, say $x$. Suppose the period of $x$ is $p \ge 1$.  There are two cases.

 In the first case, $0< |DF_n^p(x)| = |DF^p(x)|< 1$.   Since $\phi_n$ and $\psi_n$ are holomorphic and identified with each other
 on all holomorphic disks, $F_n^p$ and $G_n^p$ are holomorphic conjugate on $D_i$.  In particular, $0< |DG_n^p(\phi_n(x))| = |DF^p(x)| < 1$. By Lemma~\ref{compact-s} and a compactness argument, there is an $r > 0$ independent of $n$ such that $DG_n^p \ne 0$ in the disk $B_r(\phi_n(x))$.  Now let $U_0 = \phi_n(D_i)$.   For $k \ge 0$, define $U_{k+1}$ to be the component of $G_n^{-p}(U_k)$ which contains $\phi_n(x)$. Then we have a sequence of increasing domains $U_0 \subset U_1 \subset \cdots$.  Let $l \ge 1$ be the least integer such that $U_l$ contains a critical point of $G_n^p$. Then ${\rm diam}(U_l) \ge r$.  If (2) were not true, then ${\rm diam}(U_0)$ could be arbitrarily small for some $n$. But by Lemma~\ref{compact-s} and a compactness argument, this would imply that $l$ could be arbitrarily large provided that ${\rm diam}(U_0)$  is small enough. Go back to $F_n$, this means that some critical point of $F_n$ goes through an arbitrarily long orbit  segment  before it enters $D_i$. But this is impossible. This is because $F_n$ is obtained by modifying $F$ in a thin annular neighborhood of $\Bbb T$ which does not intersect $P_{F}\cup \Omega_F$, so there is an $L \ge 1$ independent of $n$ such that for any critical point $c$ of $F_n$, either $F_n^k(c) \notin D_i$ for all $k \ge 0$, or there is some $0 \le k \le L$ such that $F_n^k(c) \in D_i$. This proves (2) in the first case.

In the second case, $|DF_n^p(x)| = |DF^p(x)| = 0$.  By taking a subsequence, we may assume that $\phi_n(x) \to z$. It follows that $DG^p(z) = 0$.  Now suppose $\phi_n(D_i) \subset B_r(z)$ for some $r > 0$. Then
 $$\frac{{\rm diam}(G_n^p(\phi_n(D_i)))}{{\rm diam}(\phi_n(D_i))} \le  \max_{w \in B_r(z)} |DG_n^p(w)|.$$
But on the other hand, since $G_n^p(\phi_n(D_i)) = \phi_n(F_n^p(D_i))= \phi_n(F^p(D_i))$ and since $\phi_n$ is holomorphic in $\overline{D_i} \cup A_i$, by Koebe's distortion theorem,
$$\frac{{\rm diam}(G_n^p(\phi_n(D_i)))}{{\rm diam}(\phi_n(D_i))} = \frac{{\rm diam}(\phi_n(F^p(D_i)))}{{\rm diam}(\phi_n(D_i))} \succeq \frac{{\rm diam}(F^p(D_i))}{{\rm diam}(D_i)}.$$   This implies that $$ \max_{w \in B_r(z)} |DG_n^p(w)| \succeq \frac{{\rm diam}(F^p(D_i))}{{\rm diam}(D_i)}.$$  Since  $G_n$ converges to $G$ uniformly in a small neighborhood of $z$ and $DG^p(z) = 0$, it follows that $\max_{w \in B_r(z)} |DG_n^p(w)| \to 0$ as $r\to 0$ and $n \to \infty$. This implies that $r > 0$ can not be too small and ${\rm diam}(\phi_n(D_i))$ has a positive lower bound as $n \to \infty$.  This completes the proof of (2).

 Since each  $\overline{D_i}$ has a protective annulus  $A_i$  which does not contain points in $P_{F_n}$ and
 on which $\phi_n$ is holomorphic, by Koebe's distortion theorem, the thickness of $\phi_n(A_i)$ is $\succeq {\rm diam}(\phi_n(D_i))$.  Since ${\rm diam}(\phi_n(D_i))>  1/K$ by (2), it follows that  $\phi_n(A_i)$ has definite thickness. This implies (3), (4) and (5).

By symmetry it suffices to prove (6) for $z \in P_{F_n}$  and $|z| > 1$.  Note that the forward orbits of
all critical points of $G$ in the exterior of the unit disk converge to some attracting or super-attracting cycles of $G$ and are thus  bounded away from the unit circle.
 Since for any $R > 1$, $G_n \to G$ uniformly in  the compact set $\{z\:|\: 1 \le |z| \le R\}$,
 it follows that  an attracting or super-attracting periodic cycle of $G_n$ in the exterior of the unit disk converges to the corresponding one for $G$, and the forward orbits of all critical points of $G_n$
 in the exterior of the unit disk converge uniformly, with respect to $n$,
  to these attracting or super-attracting cycles of $G_n$.   This implies that all the critical orbits of $G_n$ which belong to the exterior of the unit disk, are uniformly bounded away from $\Bbb T$.  This proves (6).

Suppose (7) were not true. By taking a subsequence we may assume that there exist $x, y \in P$ such that ${\rm dist}(\phi_n(x), \phi_n(y)) \to 0$ as $n \to \infty$. By definition of $\mathcal{T}_\alpha^d$,
there is an integer $k \ge 0$ such that $w = F_n^k(x)=F^k(x)$ either belongs to a holomorphic disk $D_i$ or belongs to a periodic cycle containing a critical point, which is  not a holomorphic attracting cycle.  In the first case, since  $\phi_n$ is holomorphic in  $\overline{D_i}\cup A_i$,
by (2) and Koebe's distortion theorem, there is a $\delta > 0$ such that for any $z \in P_{F_n}$ with $z \ne w$,  one has ${\rm dist}(\phi_n(w), \phi_n(z)) > \delta$  for all $n \ge 1$. In the second case,  $\phi_n(w)$ belongs to a super-attracting cycle of $G_n$ which does not attract any other critical orbit. By Lemma~\ref{compact-s} and a compactness argument, it follows that there is an $r > 0$ such that the immediate attracting basin of this cycle contains the disk $B_r(\phi_n(w))$ for all $n \ge 1$. This implies that for all $z \in P_{F_n} \setminus \Bbb T = P_F \setminus \Bbb T$ with $z \ne w$,
 ${\rm dist}( \phi_n(w),\phi_n(z)) > r$  for all $n \ge 1$.   Now let  $0\le l < k$  be the largest integer  such that there exists a $\xi \in P_{F_n}$ with $\xi \ne \zeta  = F_n^l(x) = F^l(x)$ and  $ {\rm dist}(\phi_n(\xi), \phi_n(\zeta)) \to 0$ as $n \to \infty$.  Note that $F(\xi) = F_n(\xi)$ and $F(\zeta) = F_n(\zeta)$. Since ${\rm dist}(\phi_n(F(\zeta)), \phi_n(F(\xi))) = {\rm dist}(G_n(\phi_n(\zeta)), G_n(\phi_n(\xi))) \to 0$,  By the maximal property of $l$ we must have $F(\xi) = F(\zeta)$.  By taking a subsequence we may assume that $\phi_n(\xi)$ and $\phi_n(\zeta)$ converge to a  point, say $c$, and $\phi_n(F(\zeta)) =\phi_n(F(\xi))$ converge to a point, say $v$.  Then $c$ must be a critical point of $G$ and $G(c) = v$.  Since $G_n \to G$ uniformly, by taking a subsequence if necessary,  there are critical points of $G_n$ near $c$, say $c_1^n, \cdots, c_m^n$,   all of which converge to $c$ as $n \to \infty$ such that
\begin{equation}\label{degree-in}
\deg_c G -1 = \sum_{i=1}^m(\deg_{c_i^n} G_n -1),
\end{equation} where $\deg_x $ denotes the local degree of the map at $x$.  Again by taking a subsequence if necessary, we may assume that  $c^n_i = \phi_n(c_i)$  with $c_i, 1 \le i \le m$, being critical points of $F_n$ (also of $F$). Then
 ${\rm dist}(\phi_n(F(c_i)), \phi_n(F(\zeta))) = {\rm dist}(G_n(c_i^n), G_n(\phi_n(\zeta)) \to 0$. By the definition of $\zeta$, we have $F(c_i) = F(\zeta)$ for all $1 \le i \le m$.  This implies that $G_n(c_i^n) = \phi_n(F(\zeta))$ for all $1 \le i \le m$. Note that $\phi_n(\zeta) \ne \phi_n(\xi)$ and  $G_n(\phi_n(\zeta) = G_n(\phi_n(\xi)) = \phi_n(F(\zeta))$. Since $G_n \to G$ uniformly in a neighborhood of $c$ and since $\phi_n(F(\zeta))$ converges to $v$,  the number of the pre-images of $ \phi_n(F(\zeta))$ under $G_n$ in a small neighborhood of $c$, counting by multiplicities,  must be equal to $\deg_c G$. On the other hand, since $G_n(c_i^n) = \phi_n(F(\zeta))$ for all $1 \le i \le m$,  this number is at least $\sum_{i=1}^m \deg_{c_i^n}G_n$.
From (\ref{degree-in}) it follows  that $m = 1$ and $\deg_c G = \deg_{c_1^n} G_n$. But $c_1^n, \phi_n(\zeta)$ and $\phi_n(\xi)$ are all mapped to $\phi_n(F(\zeta))$ by $G_n$ and $\phi_n(\xi) \ne \phi_n(\zeta)$.  So the number of the pre-images of $ \phi_n(F(\zeta))$ under $G_n$ in a small neighborhood of $c$ is at least $\deg_{c_1^n}G_n + 1 = \deg_c G  +1$. This is a contradiction. This proves (7) and completes the proof of Lemma~\ref{bounded-geometry}.

\end{proof}

 \begin{lem}\label{premodel}
 There exist a pair of homeomorphisms $\phi, \psi:\widehat{\Bbb C} \to \widehat{\Bbb C}$ which fix $0$, $1$ and $\infty$ such that
 \begin{itemize}
 \item[1.] $\phi \circ F = G \circ \psi$,
 \item[2.] $\phi$ is isotopic to $\psi$ rel $P_F \cup \cup_i \overline{D_i}$
 and $\phi|D_i = \psi|D_i$ is holomorphic for each  $D_i$.
 \end{itemize}
 \end{lem}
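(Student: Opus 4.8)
The plan is to obtain $(\phi,\psi)$ as a subsequential limit of the normalized pairs $(\phi_n,\psi_n)$ constructed above, the two essential inputs being the circle convergence $\phi_n|\Bbb T,\psi_n|\Bbb T\to h$ of Lemma~\ref{circle} and the bounded geometry of Lemma~\ref{bounded-geometry}. Some of the needed convergence is already available: on each holomorphic disk $D_i$ the maps $\phi_n|_{D_i}=\psi_n|_{D_i}$ are univalent with images lying in a fixed compact part of $\widehat{\Bbb C}$ and of diameter bounded below (Lemma~\ref{bounded-geometry}\,(1)--(2)), so after passing to a subsequence they converge uniformly on $\overline{D_i}$ to a univalent map; on the finite set $P=P_F\setminus(\Bbb T\cup\bigcup_i\overline{D_i})$ the images $\phi_n(P)=\psi_n(P)$ converge, after a further subsequence, to distinct points that stay off $\Bbb T$ and off the images of the disks, by Lemma~\ref{bounded-geometry}\,(3)--(7); and on $\Bbb T$ both $\phi_n|\Bbb T$ and $\psi_n|\Bbb T$ converge uniformly to the same homeomorphism $h$, which is quasisymmetric because $G|\Bbb T$ is a critical circle homeomorphism with bounded type rotation number $\alpha$ and $\mathcal H_d$ is compact (cf. $\S15$ of \cite{He}).

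The obstacle is that the $\phi_n$ are a priori only homeomorphisms, so there is no compactness on the complement of $\Bbb T\cup P\cup\bigcup_i\overline{D_i}$. To remedy this I would, for each small $\epsilon>0$, replace $\phi_n$ inside its isotopy class rel $P_{F_n}\cup\bigcup_i\overline{D_i}$ by a homeomorphism $\widetilde\phi_n$ that agrees with $\phi_n$ on $\Bbb T\cup P\cup\bigcup_i\overline{D_i}$ and is $K(\epsilon)$-quasiconformal off the $\epsilon$-neighbourhoods of $P\cup\bigcup_i\overline{D_i}$, with $K(\epsilon)$ independent of $n$. This is possible because on the domain side the marked data $\Bbb T\cup P\cup\bigcup_i\overline{D_i}$ is fixed and the restrictions $\phi_n|\Bbb T$ are uniformly quasisymmetric (they converge to the quasisymmetric $h$), while on the image side Lemma~\ref{bounded-geometry} provides uniform separations and the curves $\partial D_i$ uniform geometry; one cuts $\widehat{\Bbb C}$ along $\Bbb T$ and the $\partial D_i$ and interpolates piece by piece, for instance by the barycentric or Beurling--Ahlfors extension. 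Keeping everything symmetric about $\Bbb T$, one then takes $\widetilde\psi_n$ to be the lift of $\widetilde\phi_n\circ F_n$ under $G_n$ in the homotopy class of $\psi_n$; this lift exists because $\widetilde\phi_n\simeq\phi_n$ rel $P_{F_n}$ and $\phi_n\circ F_n=G_n\circ\psi_n$, it again agrees with $\widetilde\phi_n$ on $P_{F_n}\cup\bigcup_i\overline{D_i}$, it satisfies $\widetilde\phi_n\circ F_n=G_n\circ\widetilde\psi_n$ exactly, and since $F_n,G_n$ are holomorphic off $\Bbb T$ with degree $2d-1$ it is again $K(\epsilon)$-quasiconformal off a controlled neighbourhood of $P\cup\bigcup_i\overline{D_i}\cup\Bbb T$; from $h\circ R_\alpha=(G|\Bbb T)\circ h$ one checks $\widetilde\psi_n|\Bbb T\to h$ as well.

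With the pairs $(\widetilde\phi_n,\widetilde\psi_n)$ in hand, a diagonal argument over $\epsilon\downarrow0$ extracts a subsequence along which $\widetilde\phi_n\to\phi$ and $\widetilde\psi_n\to\psi$ uniformly on $\widehat{\Bbb C}$, where $\phi,\psi$ are homeomorphisms of $\widehat{\Bbb C}$ fixing $0,1,\infty$, holomorphic on each $D_i$, equal to $h$ on $\Bbb T$, and with $\phi=\psi$ on $P_F\cup\bigcup_i\overline{D_i}$. Letting $n\to\infty$ in $\widetilde\phi_n\circ F_n=G_n\circ\widetilde\psi_n$, using $F_n\to F$ and $G_n\to G$ uniformly in the spherical metric together with the uniform convergence and uniform continuity of $\phi,\psi$, yields $\phi\circ F=G\circ\psi$. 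For the isotopy, put $E=P_F\cup\bigcup_i\overline{D_i}$. Since $\widetilde\phi_n\simeq\widetilde\psi_n$ rel $P_{F_n}\cup\bigcup_i\overline{D_i}\supseteq P\cup\bigcup_i\overline{D_i}$, the homeomorphism $\widetilde\phi_n^{-1}\circ\widetilde\psi_n$ lies in the identity component of $\mathrm{Homeo}(\widehat{\Bbb C},P\cup\bigcup_i\overline{D_i})$, which is open in the $C^0$ topology (a homeomorphism of the sphere fixing this set pointwise and sufficiently $C^0$-close to the identity is isotopic to the identity rel that set); since $\widetilde\phi_n^{-1}\circ\widetilde\psi_n\to\phi^{-1}\circ\psi$ uniformly, $\phi^{-1}\circ\psi$ lies in that same component, and because $\phi=\psi$ on $\Bbb T$ and the whole construction is symmetric about $\Bbb T$, the resulting isotopy can be taken to fix $\Bbb T$ as well, giving $\phi\simeq\psi$ rel $E$. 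This produces the desired premodel.

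I expect the construction of the uniformly quasiconformal representatives $\widetilde\phi_n$ — equivalently, the statement that the bounded geometry of Lemma~\ref{bounded-geometry} confines the sequence $[\phi_n]$ to a compact subset of the Teichm\"uller space modelled on $(\widehat{\Bbb C},P_F\cup\bigcup_i\overline{D_i})$ — to be the main point of the argument; once it is in place, extracting the limit, passing to the limit in the conjugacy equation, and the isotopy bookkeeping are routine. A secondary point requiring care is to carry out the replacement and all the normalizations symmetrically about $\Bbb T$, so that $\phi$ and $\psi$ are symmetric and the limiting isotopy rel the finite marked data automatically respects $\Bbb T$.
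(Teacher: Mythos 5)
Your approach is genuinely different from the paper's.  The paper never attempts a global limit of the pairs $(\phi_n,\psi_n)$.  Instead it fixes a single large $n$, deforms $\phi_n$ rel $P_{F_n}$ so that, on $\delta$-neighbourhoods of $P$, of each $\overline{D_i}$ and of $\Bbb T$, the deformed $\phi_n$ converge to a homeomorphism $\chi$, and then defines $\phi$ as a small perturbation of $\phi_n$ that equals $\chi$ on those neighbourhoods; $\psi$ is then built directly as the unique lift of $\phi\circ F$ under $G$ chosen by proximity to $\psi_n$, with a separate local argument near the critical points.  The isotopy is obtained by tracking the closeness to $\phi_n$, $\psi_n$.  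No uniform quasiconformal bound on $\phi_n$ over $\widehat{\Bbb C}$ is ever required.  Your strategy, by contrast, needs exactly such a bound: you want to replace $\phi_n$ by a representative $\widetilde\phi_n$ that is $K(\epsilon)$-qc off $\epsilon$-neighbourhoods, uniformly in $n$, so that Tukia-type compactness applies.

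There are two concrete problems with the way you set this up.  First, you require $\widetilde\phi_n$ to agree with $\phi_n$ \emph{on all of} $\Bbb T$.  But the isotopy class of $\phi_n$ rel $P_{F_n}\cup\bigcup_i\overline{D_i}$ only constrains $\phi_n$ at the finite set $\mathcal{O}_n=P_{F_n}\cap\Bbb T$; on $\Bbb T\setminus\mathcal{O}_n$, $\phi_n$ was never pinned down and could be arbitrarily bad, so there is no hope of it being uniformly quasisymmetric, and forcing $\widetilde\phi_n$ to match it there defeats the purpose.  What you actually want is agreement on $\mathcal{O}_n$ together with a normalized interpolation between orbit points.  Second, the justification you give — ``the restrictions $\phi_n|\Bbb T$ are uniformly quasisymmetric because they converge to the quasisymmetric $h$'' — is a non sequitur; uniform convergence of a sequence of circle homeomorphisms to a quasisymmetric limit does not make the sequence uniformly quasisymmetric.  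The correct input here is the Herman--\'Swi\c atek real a priori bounds (Theorem~\ref{real bounds} applied to $G_n|\Bbb T$, whose rotation numbers $p_n/q_n$ have uniformly bounded partial quotients): these control the geometry of the orbit $\phi_n(\mathcal{O}_n)$ directly, independently of what $h$ is.  On the domain side $\mathcal{O}_n$ is equally spaced because $F_n|\Bbb T$ is a rigid rational rotation, so with this substitution the interpolation can indeed be made uniformly quasisymmetric.  Relatedly, the phrase ``compact subset of the Teichm\"uller space modelled on $(\widehat{\Bbb C},P_F\cup\bigcup_i\overline{D_i})$'' does not match the actual setup: $P_F\supset\Bbb T$ is infinite, whereas each $\phi_n$ is marked by the finite, $n$-dependent set $P_{F_n}\cup\bigcup_i\overline{D_i}$ with $|\mathcal{O}_n|=q_n\to\infty$, so there is no single Teichm\"uller space in which the $[\phi_n]$ all live.

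Once these points are repaired — replace agreement on $\Bbb T$ by agreement on $\mathcal{O}_n$, invoke the Herman--\'Swi\c atek bounds in place of the convergence-to-$h$ argument, and treat the ``Teichm\"uller compactness'' claim as shorthand for the concrete uniform-qc interpolation you are constructing — I believe your plan can be carried out and gives an arguably cleaner proof, trading the paper's delicate lifting and branch-selection construction of $\psi$ for a single normal-family extraction.  As written, though, the uniform quasisymmetry step is both mis-stated and unjustified, and it is the load-bearing point of the whole argument.
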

\begin{proof}
By (1) and (2) of Lemma~\ref{bounded-geometry}  and the fact that $\phi_n = \psi_n$ is holomorphic in all  $\overline{D_i} \cup A_i$,  by taking a subsequence if necessary,     we may assume that for each $D_i$,  $\phi_n= \psi_n$   converges uniformly to a univalent map on some domain  containing $\overline{D_i}$. Let $U_i = \lim_{n\to \infty} \phi_n(D_i)$.
Then by Lemmas~\ref{circle} and ~\ref{bounded-geometry} it follows that when restricted to the set $P_F \cup \cup_i \overline{D_i}$ (Note that $P_{F_n} \setminus \Bbb T = P_F \setminus \Bbb T$),
 $\phi_n$ and $\psi_n$  converge uniformly to some homeomorphism
\begin{equation}\label{lm}
 \sigma: P_F \cup \cup_i \overline{D_i} \to P_G \cup \cup_i \overline{U_i}.
\end{equation}
Note that $\sigma|\Bbb T = h$ where $h: \Bbb T \to \Bbb T$ is the circle homeomorphism in Lemma~\ref{circle} and $\sigma$ is holomorphic in each $D_i$, and moreover,  \begin{equation}\label{lm'} \sigma \circ F|P_F \cup \cup_i \overline{D_i} = G \circ \sigma|P_F \cup \cup_i \overline{D_i}.\end{equation}

The maps $\phi$ and $\psi$ are constructed by perturbing $\phi_n$ and $\psi_n$ for a large $n$. Before that, by Lemma~\ref{bounded-geometry} we may assume that there exists a $\delta> 0$ such that for every $n \ge 1$,
\begin{itemize}
\item[1.] the closures of
 the  $\delta$-neighborhoods of all points in $P$,   the closures of  the $\delta$-neighborhoods of all holomorphic disks $D_i$, and the closure of the $\delta$-neighborhood of $\Bbb T$, are all disjoint with each other.
 \end{itemize}
 Since when restricted to $P_F \cup \cup_i \overline{D_i}$, $\phi_n = \psi_n$ converges uniformly to a homeomorphism $\sigma:P_F \cup \cup_i \overline{D_i} \to P_G \cup \cup_i \overline{U_i}$,    by deforming $\phi_n$ rel $P_{F_n}$ we may further assume
\begin{itemize}
    \item[2.] when restricted to the closure of each of the above $\delta$-neighborhoods, $\phi_n$ converges uniformly  to a homeomorphism $\chi$, \end{itemize} Since $F_n \to F$ and $G_n \to G$ uniformly with respect to the spherical metric, from $\phi_n \circ F_n = G_n \circ \psi_n$,
\begin{itemize} \item[3.]
  there is an $\eta > 0$ such that  when restricted to the $\eta$-neighborhood of $P_F \cup \cup_i\overline{D_i}$, $\psi_n$ converge uniformly to some homeomorphism $\tau$.
 \end{itemize}
 It is clear that $\chi|P_F \cup \cup_i \overline{D_i} = \tau|P_F \cup \cup_i \overline{D_i} = \sigma$.

  Now let $\epsilon > 0$ be an arbitrarily small number and fixed.  By taking $n$ large enough, form the above assumption on $\phi_n$,   there exists a homeomorphism $\phi: \widehat{\Bbb C} \to \widehat{\Bbb C}$ which fixes $0$, $1$ and $\infty$,    such that ${\rm dist}(\phi, \phi_n) < \epsilon$, and moreover,
when restricted to  the closure of each of the above $\delta$-neighborhoods, $\phi = \chi$.

Let us now construct the homeomorphism $\psi: \widehat{\Bbb C} \to \widehat{\Bbb C}$.  Let  $\Pi$ denote the set of the critical values of $F$.  Take $0< \rho < \delta$  such that all the disks, $B_\rho(v), v \in \Pi$, are disjoint.  Let
 $$
 X = \bigcup_{v \in \Pi} B_{\rho}(v) \hbox{  and  } Y = F^{-1}(X).
 $$
Note that $Y$  is the union of finitely many Jordan domains  whose closures are disjonit from each other.   Since $v$ is the only critical value of $F$ contained in $B_\rho(v)$, each of these Jordan domains  either contains exactly one critical point of $F$, which is mapped to $v$ by $F$,  or contains no critical point of $F$.    In the following we will first define $\psi$ on $\widehat{\Bbb C} \setminus Y$
and then extend it to  $Y$.

Now take an arbitrary   $x \in \widehat{\Bbb C} \setminus Y$. Since the degree of $G$ is $2d-1$,
$G^{-1} (\phi \circ F(x))$ contains $2d -1$ points, counting by multiplicities. Define
 $\psi(x)$ to be the one which is closest, with respect to the spherical metric,   to $\psi_n(x)$ among these $2d-1$ points.  We need to explain that such definition does not cause any ambiguity.  This comes from the following two observations.  The first one is  that  when $\epsilon > 0$ is  small and $n$ is  large, the set
$G^{-1}(\phi\circ F(x))$ is close to the set $G_n^{-1}(\phi_n \circ F_n(x))$.   The  second one is that any two points in $G^{-1}(\phi \circ F(x))$ are uniformly bounded away from each other, because
 $\phi \circ F(x)$  is bounded away from the set of the critical values of  $G$.       Thus  $\psi$  can be well defined in $\widehat{\Bbb C} \setminus Y$.  From the definition it is clear that on $\widehat{\Bbb C} \setminus Y$, $\psi$ is locally homeomorphic  and satisfies  $\phi \circ F = G \circ \psi$.

  Let $U$ be one of the Jordan components of $Y$.  Note that  $\psi$ has been defined on $\partial U$  and satisfies $\phi \circ F = G \circ \psi$. Then $\psi (\partial U)$ is a component of $G^{-1}(\phi \circ F (\partial U))$. Since $\phi \circ F (\partial U)$ contains no critical values of $G$,   $\psi (\partial U)$ must be a Jordan curve. Let $V$ be the Jordan domain bounded by this curve.   Note that  $U$ and $V$ do not depend on the chocie of $\epsilon$ and $n$,  and that $\partial U$ contains no critical points of $F$, and $\partial V$ contains no critical point of $G$.  Let $V_n = \psi_n(U)$. By taking $\epsilon > 0$ small and $n$ large enough, $\psi$ and $\psi_n$ can be arbitrarily close to each other on $\partial U$. Thus $\partial V_n$ and $\partial V$ can be arbitrarily close to each other. We have two cases.

  In the first case, $U$ contains no critical points of $F$, and thus contains no critical points of $F_n$ for all $n$ large enough. This implies that  $V_n$ contains no critical points of $G_n$ for all $n$ large enough and thus $V$ contains no critical points of $G$. Then for any $z \in U$, there is a unique point $w \in V$ such that $\phi (F(z)) = G(w)$. Define $\psi(z) = w$. It is easy to see that $\psi: U \to V$ is a homeomorphism and $\phi \circ F = G \circ \psi$.

  In the second case,
   $U$ contains exactly one critical point of $F$, say $c$.  Then $U$ contains exactly one critical point of $F_n$, say $c_n$, which has the same local degree as $c$ and $c_n \to c$ as $n \to \infty$. Thus $V_n$ contains exactly one of the critical points of $G_n$, $\psi_n(c_n)$, which has the same local degree as that of $c_n$.  Since $G_n \to G$ uniformly with respect to the spherical metric, it follows that $V$ has exactly one   critical point of $G$, which has the same degree as that of $\psi_n(c_n)$, and thus has the the same local degree as that of $c$.  Then there is an obvious way to extend $\psi$ to $U$ such that $\psi: U \to V$ is a homeomorphism and $\phi \circ F = G \circ \psi$. In particular, $\psi$ maps the critical point of $F$ to a critical point of $G$.

From the construction we have $\phi \circ F = G \circ \psi$. Note that by taking $\epsilon > 0$ small and $n$ large, $\phi$ and $\phi_n$, $F$ and $F_n$, and $G$ and $G_n$ can be arbitrarily close to each other.   From $\phi_n \circ F_n = G_n \circ \psi_n$ and  $\phi \circ F = G \circ \psi$, it follows that $\psi$ and $\psi_n$ can be arbitrarily close to each other provided that $\epsilon > 0$ is  small enough and $n$ is large enough. Since $\psi_n$ fixes $0$, $1$ and $\infty$, and  since $\psi$ can be arbitrarily  close to $\psi_n$ and maps critical points of $F$ to critical points of $G$, it follows that $\psi$ fixes $0$, $1$ and $\infty$ also.
From the construction,   it follows  that  $\psi: \widehat{\Bbb C} \to \widehat{\Bbb C}$ is locally homeomorphic and thus a covering map.   By Riemann-Hurwitz formula, the covering degree must be equal to one. So  $\psi: \widehat{\Bbb C} \to \widehat{\Bbb C}$ is a homeomorphism.

 By the definition of $\phi$, it follows that $\phi|P_F\cup \cup_i \overline{D_i} = \sigma$  where $\sigma$ is the map  in (\ref{lm}).  This, together with   (\ref{lm'}) and $\phi \circ F = G \circ \psi$, implies  that  $ G \circ \sigma|P_F\cup \cup_i \overline{D_i} = G \circ \psi|P_F\cup \cup_i \overline{D_i}$.   Since $\psi_n|P_F\cup \cup_i \overline{D_i} \to \sigma$ and $\psi_n$ can be arbitrarily close to $\psi$  provided that $\epsilon > 0$ is small and $n$ is large,  it follows that $\psi|P_F \cup \cup_i \overline{D_i}$ can be arbitrarily close to  $\sigma$ provided that $\epsilon > 0$ is small enough and $n$ is large enough. From  $G \circ \sigma|P_F\cup \cup_i \overline{D_i} = G \circ \psi|P_F\cup \cup_i \overline{D_i}$ we have $\psi|P_F\cup \cup_i \overline{D_i} = \sigma$.  In particular, $\phi|D_i = \psi|D_i$ are holomorphic.

  It remains to show that there is an isotopy between $\phi$ and $\psi$ rel $P_F \cup \cup_i\overline{D_i}$. Since for orientation preserving surface homeomorphisms, homotopy implies isotopy (cf. \cite{Ba},\cite{Ep}), it suffices to show the existence of a homotopy between $\phi$ and $\psi$ rel $P_F \cup \cup_i\overline{D_i}$.

   Recall that $P_{F_n} \cap \Bbb T = \mathcal{O}_n$ and $\phi_n|\mathcal{O}_n = \psi_n|\mathcal{O}_n$.   Since the arc components of $\Bbb T \setminus \mathcal{O}_n$ can be arbitrarily small provided that $n$ is large,   we can construct  a homeomorphism $\omega_n: \widehat{\Bbb C} \to \widehat{\Bbb C}$ by deforming $\psi_n$ in a small neighborhood of $\Bbb T$ such that \begin{itemize} \item[1.]  $\omega_n|\Bbb T = \phi_n|\Bbb T$, \item[2.]  $\omega_n$ is isotopic to $\psi_n$ rel $P_{F_n} \cup \cup_i\overline{D_i}$, \item[3.]  $\omega_n$ can be arbitrarily close to $\psi_n$ provided that $n$ is large enough. \end{itemize}  Since $\phi_n$ is isotopic to $\psi_n$  rel $P_{F_n} \cup \cup_i \overline{D_i}$, it follows that  $\phi_n$ is isotopic to $\omega_n$ rel $P_{F_n} \cup \cup_i \overline{D_i}$. Let $H(t, \cdot)$, $0 \le t \le 1$, be the isotopy between $\phi_n$  and $\omega_n$. Since $\phi_n|\Bbb T = \omega_n|\Bbb T $, $H$ can be constructed such that $H(t, \cdot)|\Bbb T =\omega_n|\Bbb T = \phi_n|\Bbb T$ for  all $0 \le t \le 1$.

Now let $\xi: \widehat{\Bbb C} \to \widehat{\Bbb C}$ be a homeomorphism  such that
 $$
 \phi |P_F  \cup \cup_i\overline{D_i}= \xi \circ  \phi_n |P_F  \cup \cup_i\overline{D_i} = \xi \circ \omega_n|P_F  \cup \cup_i\overline{D_i}.
 $$ Note that
  when restricted to $P_F  \cup \cup_i\overline{D_i}$,  $\omega_n = \phi_n$ converges to $\phi$. So  the $\xi$ can be chosen to be arbitrarily close to  $\rm id$  provided that $n$ is large enough.   This implies that by taking $\epsilon > 0$ small enough and  $n$ large enough,  $\phi$ and $\xi \circ  \phi_n$ can be arbitrarily close to each other.  Thus $(\xi \circ \phi_n) \circ \phi^{-1}$ can be arbitrarily close to $\rm id$, and  by Lemma~\ref{homotopy},  they are homotopic to each other rel $P_G \cup \overline{U_i}$.  So $\phi$ and $\xi \circ  \phi_n$ are homotopic to each other  rel $P_F \cup \cup_i \overline{D_i}$.

     Since by taking $n$ large enough and $\epsilon > 0$ small enough, $\xi$ can be arbitrarily close to $\rm id$,
  $\omega_n$ can be arbitrarily close to $\psi_n$, and $\psi_n$ can be arbitrarily close to $\psi$,  it follows that
  $\xi \circ \omega_n$ can be arbitrarily close to $\psi$.  Again by  Lemma~\ref{homotopy}, $(\xi \circ \omega_n)\circ \psi^{-1}$ is homotopic to $\rm id$ rel $P_G \cup_i \overline{U_i}$.  So $\psi$ and $\xi \circ  \omega_n$ are homotopic to each other  rel $P_F \cup \cup_i \overline{D_i}$.

  From the above and the fact  that $\xi \circ H(t, \cdot)$,   $0\le t \le 1$,  is an isotopy between $\xi \circ \phi_n$ and $\xi \circ \omega_n$ rel $P_F \cup \cup_i \overline{D_i}$, it follows that $\phi$ and $\psi$ are homotopic to each other  rel $P_F \cup \cup_i \overline{D_i}$. The proof of Lemma~\ref{premodel} is completed.

\end{proof}

Now we can prove the existence part of  Theorem~\ref{Thurston-Siegel-Ch} by performing qc surgery on $G$. The process is routine. Let $h: \Bbb T \to \Bbb T$ be the qs circle homeomorphism in Lemma~\ref{circle}. Let $H: \Delta \to \Delta$ be the Douady-Earle extension of $h$. Define
\begin{equation}\label{model}
\widehat{G}(z) =
\begin{cases}
G(z) & \hbox{  for  } |z| \ge 1 \\
H\circ R_{\alpha} \circ H^{-1}(z) & \hbox{ for } |z| < 1
\end{cases}
\end{equation}
Let $\mu_0$ be the complex structure in $\Delta$ obtained by pulling back  the standard complex structure in $\Delta$ by $H^{-1}$. We then pull back $\mu_0$ to the whole plane by the iteration of $\widehat{G}$ and get a $\widehat{G}$-invariant complex structure $\mu$. Let $\xi$ be the qc homeomorphism of the plane to itself which solves the Beltrmai equation given by $\mu$ and fixes  $1$ and maps $H(0)$ to $0$.  Then
$g = \xi \circ \widehat{G} \circ \xi^{-1} \in \Sigma_\alpha^d$.
   Let us show that $f$  is CLH-equivalent to $g$.  Recall that $\phi \circ F = G \circ \psi$. Define
$\widehat{\phi}: \widehat{\Bbb C} \to \widehat{\Bbb C}$ by setting $\widehat{\phi}(z) = \phi(z)$  for $|z| \ge 1$
and $\widehat{\phi}(z) = H(z)$ for $|z| < 1$. Similarly, define $\widehat{\psi}: \widehat{\Bbb C} \to \widehat{\Bbb C}$ by setting $\widehat{\psi}(z) = \psi(z)$  for $|z| \ge 1$
and $\widehat{\psi}(z) = H(z)$ for $|z| < 1$. The isotopy between $\phi$ and $\psi$ rel $P_F \cup \cup_i \overline{D}_i$ induces an isotopy between $\widehat{\phi}$ and $\widehat{\psi}$ rel $P_f \cup \cup_i\overline{D_i}$, where the later union contains only those $D_i$ in the outside of the unit disk.  Let $\Omega_i, 1 \le i \le m$, denote all the components of $f^{-1}(\Delta)$ other than $\Delta$.   By only changing $\widehat{\psi}$ in the interior of  each $\Omega_i$ we can get a homeomorphism, say $\widetilde{\psi}$, such that
$\widehat{\phi} \circ f = \widehat{G} \circ \widetilde{\psi}$.  Since   each
$\Omega_i$ is a Jordan domain which does not intersect $P_f \cup \cup_i \overline{D_i}$, and $\widetilde{\psi}|\partial \Omega_i = \widehat{\psi}|\partial \Omega_i$, it follows that  $\widetilde{\psi}$ is isotopic to $\widehat{\psi}$, and is thus isotopic to $\widehat{\phi}$ rel $P_f \cup \cup_i \overline{D_i}$.  From $g = \xi \circ \widehat{G} \circ \xi^{-1}$ and
$\widehat{\phi} \circ f = \widehat{G} \circ \widetilde{\psi}$ we get
$$\xi \circ \widehat{\phi}\circ f = g \circ \xi \circ \widetilde{\psi}.$$

Note that  on each holomorphic disk $D_i$ of $f$, $\widehat{\phi} = \widehat{\psi} = \widetilde{\psi}$ is holomorphic, and that
in the attracting basin of  each attracting cycle of $G$ which lies in the exterior of $\Delta$, $\mu = 0$ and thus $\xi$ is holomorphic. This implies that $\xi \circ \widehat{\phi} = \xi \circ \widetilde{\psi}$ is holomorphic on each holomorphic disk $D_i$ of $f$. Since $\widehat{\phi}$ is isotopic to $\widetilde{\psi}$ rel $P_f \cup \cup_i \overline{D_i}$, $\xi \circ \widehat{\phi}$ is isotopic to $\xi \circ \widetilde{\psi}$ rel $P_f \cup \cup_i \overline{D_i}$.   Since $\widehat{\phi}|\Delta = \widetilde{\psi}|\Delta = H$, by the definition of $\xi$, it follows that $\xi \circ \widehat{\phi}|\Delta = \xi \circ \widetilde{\psi}|\Delta$ is holomorphic.  All of these implies that $f$ is CLH-equivalent to $g$. This proves the existence part of Theorem~\ref{Thurston-Siegel-Ch}.

Now it remains to prove that the $g$ is unique up to a linear conjuation.
Let  $\widehat{G}$ be the modified Blaschke product  defined in (\ref{model}).  Let $K_{\widehat{G}}$ be the set of all points whose forward orbits under the iteration of $\widehat{G}$ is bounded. Let $J_{\widehat{G}} = \partial K_{\widehat{G}}$. We call $J_{\widehat{G}}$  the Julia set of $\widehat{G}$. Let us first  prove the uniqueness part of Theorem~\ref{Thurston-Siegel-Ch} by assuming the following lemma.
\begin{lem}\label{zm}
The set $J_{\widehat{G}}$ has zero Lebesgue measure.
\end{lem}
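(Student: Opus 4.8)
The goal is to show that the Julia set $J_{\widehat G}$ of the modified Blaschke product $\widehat G$ in \eqref{model} has zero Lebesgue measure. The overall strategy is the standard one for proving zero area of Julia sets of non-renormalizable, sub-hyperbolic-type maps: show that Lebesgue-almost every point of the plane is either attracted to one of the finitely many attracting or super-attracting cycles of $\widehat G$ (coming from the finite critical orbits outside $\Delta$ that land on, or are attracted to, attracting cycles), or eventually enters the Siegel disk $\Delta$ (on which $\widehat G$ is a rigid rotation), or escapes to infinity. Since the basin of infinity and the Siegel disk together with the finitely many attracting basins form an open set whose complement is exactly $K_{\widehat G}\setminus(\text{Siegel disk}\cup\text{attracting basins})$, it suffices to prove that the set of points whose forward orbit never enters any of these open sets has zero area. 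Equivalently, writing $\Lambda$ for the non-escaping, non-attracted set, one must show $\mathrm{area}(\Lambda)=0$.

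\textbf{Key steps.} First I would record the structure of $\widehat G$: by Lemma~\ref{compact-s} (applied to $G$, which is the outer part of $\widehat G$) and the construction, every finite critical point of $\widehat G$ lying in the exterior of $\Delta$ has bounded orbit that is attracted to an attracting or super-attracting cycle, while the critical points on $\Bbb T$ lie on the boundary of the Siegel disk. Thus $\widehat G$ is, in the terminology of the paper, geometrically finite relative to the Siegel disk: all critical orbits outside $\overline\Delta$ are attracted to finitely many (super)attracting cycles. Second, by Herman's theorem (used repeatedly above, since the relevant rotation numbers are of bounded type, or by the density argument it suffices to work with the Siegel case here), the boundary $\partial\Delta=\Bbb T$ is a quasicircle and $\widehat G$ is quasiconformally conjugate near $\Bbb T$ to a model; this gives a definite collar of bounded geometry around the Siegel disk. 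Third, and this is the technical heart, one applies a thermodynamic/telescoping argument: cover $\Lambda\cap\{|z|>1\}$ by small round disks, pull back by branches of $\widehat G^{-n}$, and use Koebe distortion together with the uniform real bounds (Theorem~\ref{real bounds}) and the saddle-node geometry (Lemma~\ref{U-S-G}) near $\Bbb T$ to show that a definite proportion of the area of each disk is, after finitely many iterates, mapped into the union of the attracting basins, the basin of $\infty$, or the Siegel disk. Iterating this ``shrinking'' gives a geometric decay of $\mathrm{area}(\Lambda\cap\{|z|\le R\})$ and hence $\mathrm{area}(\Lambda)=0$. By symmetry of $\widehat G$ about $\Bbb T$, the same conclusion holds inside $\Delta$ off the Siegel disk itself, which has full measure behaviour already (it is a rotation domain). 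Finally, $J_{\widehat G}=\partial K_{\widehat G}\subset\Lambda\cup\partial(\text{attracting basins})$; the boundaries of the attracting basins of a polynomial-like/rational-like map are contained in $J$, so one must in fact show $\mathrm{area}(J_{\widehat G})=0$ directly, which the telescoping covering argument does since $J_{\widehat G}\subset\Lambda$ up to the finitely many critical orbits.

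\textbf{Main obstacle.} The delicate point is the density/shrinking estimate near the Siegel boundary $\Bbb T$: away from $\Bbb T$ the map $\widehat G$ is uniformly expanding on $J$ relative to the orbifold metric (standard sub-hyperbolic estimate), so a conventional Koebe argument applies, but points of $\Lambda$ can recur arbitrarily close to $\Bbb T$, where there is no expansion. One must show that such a recurrence still produces a definite loss of area into $\Delta$: this is exactly where the uniform real bounds of Herman--Swiatek (Theorem~\ref{real bounds}) and the saddle-node geometry (Lemma~\ref{U-S-G}, Lemma~\ref{DGS}) are used — a point that comes $\varepsilon$-close to $\Bbb T$ lies in a definite ``hyperbolic neighborhood'' $H_\sigma(I)$ of a dynamical interval $I$, and a definite fraction of the surrounding disk has already fallen into $\Delta$ in bounded time. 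I would combine this with the disjointness of the pulled-back hyperbolic neighborhoods (as in the sets $Z_n$ of \eqref{ZN} and the area estimate \eqref{artes}) to get the uniform density bound. Assembling these local estimates into a clean global geometric-decay inequality for $\mathrm{area}(\Lambda\cap\{|z|\le R\})$, exactly as in the Lebesgue-density / telescoping arguments for Siegel quadratics (cf. the McMullen-type argument and \cite{PZ}), is the step requiring the most care, but no new idea beyond those already deployed in Sections~4.6--4.8 of the paper.
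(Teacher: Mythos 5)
Your proposal takes a genuinely different route from the paper's proof, and in doing so it misses the essential mechanism. The paper proves Lemma~\ref{zm} by a Lebesgue-density-point contradiction \`a la McMullen: one supposes a Lebesgue density point $z_0\in J_{\widehat G}$ exists, invokes a result of Lyubich that the forward orbit $z_n=G^n(z_0)$ accumulates on $\Bbb T$, and then — this is the crux — extracts a carefully chosen subsequence $n_j$ of return times along which $z_{n_j}$ lies in a \emph{definite cone spanned at a critical point of $G$ in $\Bbb T$}. The subsequence is constructed by a minimality argument in the $h$-coordinate (where $h$ conjugates $G|\Bbb T$ to the rigid rotation), which is what guarantees proximity to a critical point rather than merely to $\Bbb T$. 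Near a critical point, one then finds a Euclidean disk $B_j$ commensurable with ${\rm dist}(z_{n_j},\Bbb T)$ that is mapped into $\Delta$ by one application of $G$, and — crucially — the inverse branches of $G^{n_j}$ that carry $z_{n_j}$ back to $z_0$ contract the hyperbolic metric of $\widehat{\Bbb C}\setminus P_G$ by a definite factor at every pass through a critical cone. This contraction gives uniformly bounded distortion of the pullbacks $G^{-n_j}$ on a fixed-size hyperbolic neighborhood, so $B_j$ pulls back to a set of positive density at $z_0$ that is disjoint from $J_{\widehat G}$, contradicting the density-point assumption.

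Your sketch replaces this by a global telescoping/geometric-decay estimate for $\mathrm{area}(\Lambda\cap\{|z|\le R\})$, driven by the assertion that ``a point that comes $\varepsilon$-close to $\Bbb T$ lies in a definite hyperbolic neighborhood $H_\sigma(I)$\dots and a definite fraction of the surrounding disk has already fallen into $\Delta$ in bounded time.'' That assertion is not correct as stated: the only points in the exterior of $\Delta$ that land in $\Delta$ after one step are those inside the pre-circles attached to the critical points of $G|\Bbb T$, so a point near $\Bbb T$ but far from all critical points has no nearby disk with definite relative area falling into $\Delta$ in bounded time. The real-bounds and saddle-node lemmas control the circle dynamics and the Yoccoz cells, but they do not supply the Fatou density you need at arbitrary near-$\Bbb T$ points. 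Your proposal also never addresses how to transport whatever Fatou density is found at a recurrence time back to the original point with bounded distortion; since the return times to scales where you can argue are unbounded, a naive Koebe argument does not apply, and the paper handles exactly this by the hyperbolic-contraction lemma for inverse branches through cones (Lemma 1.11 of Petersen / Lemma 3.2 of \cite{Zh3}). In short: the two missing ideas are (i) localizing the recurrence to cones at critical points via the $h$-minimal subsequence, and (ii) the hyperbolic contraction giving bounded distortion of the long pullbacks. Without these, the density estimate you posit does not follow from the cited lemmas, and the global geometric decay you aim for is likely stronger than what is true.
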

Suppose $f$ is also CLH-equivalent to a Siegel polynomial $h \in \Sigma_\alpha^d$.
 By Shishikura's theorem, the boundary of the Siegel disk of $h$ is also a quasi-circle. Since $f$ is CLH-equivalent to both $g$ and $h$,  we have a pair of qc homeomorphisms $\phi_1, \phi_2: \widehat{\Bbb C} \to \widehat{\Bbb C}$ such that
 \begin{itemize}
 \item[1.] $\phi_1 \circ g  = h\circ \phi_2$,
 \item[2.] $\phi_1$ is isotopic to $\phi_2$ rel $P_g$ where $P_g$ is the post-critical set of $g$,
 \item[3.] when restricted to the interior of the Siegel disk and an open neighborhood of each attracting periodic cycle of $g$, $\phi_1 = \phi_2$ is holomorphic.
 \end{itemize}
Note that both $\phi_1$ and $\phi_2$ must map the center of the Siegel disk for $g$ to the center of the Siegel disk for $h$, and map the critical points of $g$ to those of $h$. So by a linear conjugation if necessary, we may assume that both $\phi_1$ and $\phi_2$ fix $0$, $1$, and $\infty$.  For $k \ge 2$ suppose  $\phi_k: \widehat{\Bbb C} \to \widehat{\Bbb C}$ is a qc homeomorphism  which is isotopic to $\phi_1$ rel $P_g$. Since  $\phi_1 \circ g  = h\circ \phi_2$,  we can define  $\phi_{k+1}$ by lifting $\phi_k$ through
\begin{equation}\label{lim-c}
\phi_k \circ g = h \circ \phi_{k+1}.
\end{equation} It is clear that $\phi_{k+1}$ is isotopic to $\phi_2$ and is thus isotopic to $\phi_1$ rel $P_g$. By induction we get a sequence of qc homeomorphisms $\phi_k: \widehat{\Bbb C} \to \widehat{\Bbb C}$ which fix $0$, $1$ and $\infty$ and satisfy the above equation.
Note that the qc constants of each $\phi_k$ is bounded  by that of $\phi_1$. Let $\mu_k$ be the Beltrami coefficient of $\phi_k$. Since all the points in the Fatou set of $g$ is either attracted to some attracting cycle of $g$, or is eventually mapped to the interior of the Siegel disk of $g$,  $\mu_k \to 0$ on the Fatou set of $g$. Since $J_g$ is the qc image of $J_{\widehat{G}}$, by Lemma~\ref{zm} $J_g$ has zero Lebesgure measure. So $\mu_k \to 0$ a.e. This implies that $\phi_k$ converges to $\rm id$ uniformly in any compact set of the plane.  From  (\ref{lim-c}) it follows that  $g = h$.  This implies the uniqueness part of Theorem~\ref{Thurston-Siegel-Ch}.

 Now let us prove Lemma~\ref{zm}.  The proof is by contradiction. Suppose $J_{\widehat{G}}$ has positive Lebesgue measure.  Let $z_0$ be a Lebesgue point of $J_{\widehat{G}}$.  For $n \ge 1$ let $z_n = G^n(z_0)$ (Note that $\widehat{G} = G$ in the exterior of $\Delta$).
By  Proposition 1.14 of \cite{Ly}, $z_n$ accumulates to $\Bbb T$.   The idea of the proof is adapted from \cite{McM2} and \cite{Zh3}.  We
   first show that there exist cones spanned at the critical points in $\Bbb T$ and   a subsequence $z_{n_j}$ such that  each $z_{n_j}$ belongs to  one of these cones. Then for each $n_j$, we can take a small disk, say  $B_j$, in the cone such that
   \begin{itemize}
   \item[(1)] $G(B_j) \subset \Delta$,
   \item[(2)] ${\rm dist}(B_j, z_{n_j}) \preceq {\rm dist}(B_j, \Bbb T) \asymp {\rm diam}(B_j) \asymp {\rm dist}(z_{n_j}, \Bbb T)$.
   \end{itemize}
  Let $P_G$ denote the post-critical set of $G$.  From (2) we can take a Jordan domain $A_j$ which is disjoint with $P_G$  and contains both $B_j$ and $z_{n_j}$ such that
  $${\rm diam}(A_j)\asymp {\rm dist}(A_j, \Bbb T)
  $$  where ${\rm diam}(\cdot)$ denotes the diameter with respect to the Euclidean metric.
  Let $X$ be the unbounded component of $\widehat{\Bbb C} \setminus P_G$.
  The above relation implies that $${\rm diam}_{X}(A_j) < K$$ for some uniform constant $K > 0$, where ${\rm diam}_X(\cdot)$ denotes the diameter with respect to the hyperbolic metric in $X$.  Now we pull back $A_j$ along the orbit $z_0, \cdots, z_{n_j}$, and denote the component of $G^{-k}(A_j)$ which contains $z_{n_j -k}$ by $A_j^{n_j -k}$. Let $X_k$ be the unbounded component of $G^{-k}(X)$. Then $G^k: X_k \to X$ is a holomorphic covering map and $X_k \subset X$. Note that $A_j^{n_j -k} \subset X_k$. It follows that $${\rm diam}_{X}(A_j^{n_j -k})< {\rm diam}_{X_k}(A_j^{n_j -k}) = {\rm diam}_{X}(A_j) < K.$$  For each $1 \le i \le j$,
  since $A_j^{n_i}$ contains $z_{n_i}$ and $z_{n_i}$ is contained in some cone spanned at some critical point, from the above inequality it follows that $A_j^{n_i}$ is contained in a definite cone spanned at this critical point. Thus the inverse branch of $G^{-1}$ which maps $A_j^{n_i +1}$ to $A_j^{n_i}$ contracts the hyperbolic diameter by a definite factor $0< \delta < 1$ (cf. Lemma 1.11 of \cite{P1} or Lemma 3.2 of \cite{Zh3}), that is,
  $$
  {\rm diam}_X(A_j^{n_i}) < \delta \cdot {\rm diam}_X(A_j^{n_i+1}).
  $$
This implies that ${\rm diam}_X(A_j^0) < \delta^j \cdot {\rm diam}_X(A_j) < \delta^j \cdot K \to 0$  as $j \to \infty$.  It follows  that $${\rm diam}(A_j^0) \to 0  \hbox{  as  }j \to \infty.$$   In addition, since ${\rm dist}(A_j, P_G) \asymp {\rm diam}(A_j)$, the distortion of $G^{-n_j}$ on $A_j$ is uniformly bounded.  Let $B_{j}^0$ denote the component of $G^{-n_j}(B)$ which is contained in $A_j^0$. We have $$area(B_j^0) \succeq {\rm diam}(A_j^0)^2.$$ Since  $B$ is disjoint from $J_{\widehat{G}}$, it follows that $B_j^0$ is disjoint from $J_{\widehat{G}}$.  This is a contradiction with the assumption that $z_0$ is a Lebesgue point of $J_{\widehat{G}}$.

 It remains to show  the existence of the cones  and the subsequence $n_j$ satisfying the conditions in the last paragraph.   For each open arc $I \subset \Bbb T$,   consider the space
$$
\Omega_I = \widehat{\Bbb C} \setminus (P_G \setminus I).
$$  Let $d_{\Omega_I}(\cdot,\cdot)$ denote the hyperbolic distance in $\Omega_I$ and for $d_0 > 0$, let
$$
\Omega_{d_0}(I) = \{z \in \Omega_I \:|\:d_{\Omega_I}(z, I) < d_0\}.
$$
As in Lemma~\ref{hn-ps},  when $I$ is small, $\Omega_{d_0}(I)$ is like the hyperbolic
neighborhood in the slit plane, that is, it is almost like the domain bounded two arcs of Euclidean circles which are symmetric about each other and such that the four exterior angles formed by the two arcs and $\Bbb T$ are all equal to $\sigma$ with $d_0 = \ln \cot (\sigma/4)$.  Define
$$
H_{d_0}(I) = \{z\:|\: |z| > 1 \hbox{  and  } z \in \Omega_{d_0}(I)\}.
$$
Then $H_{d_0}(I)$ is bounded by $I$ and  $S =  \partial \Omega_{d_0} \setminus \Delta$. Take $d_0 > 0$ such that the two exterior angles formed by $\Bbb T$ and $S$  are equal to $\big{(}1- \frac{1}{4(2d-1)}\big{)}\pi$. It follows that for any $z \in \Bbb T \setminus I$, if $V$ is a cone spanned at $z$ such that the angles formed by the two rays and $\Bbb T$ are equal to $\frac{\pi}{3(2d-1)}$, then $V$ does not intersect $H_{d_0}(I)$.

Now  let $h: \Bbb T \to \Bbb T$ be the circle homeomorphism such that $G|\Bbb T = h^{-1} \circ R_{\alpha} \circ h$ and $h(1)  = 1$.
For each $z_n$, let $I_n \subset I$ be the arc such that $z_n \in \overline{H_{d_0}(I_n)}$ and moreover, $I_n$ is the smallest one in the following sense
$$
|h(I_n)| = \min\{|h(I)|\:|\: I \subset \Bbb T \hbox{  and  } z_n \in \overline{H_{d_0}(I)}\}.
$$
Since $z_n \to \Bbb T$, we have $|I_n| \to 0$  and thus $$|h(I_n)| \to 0  \hbox{  as  }n \to \infty.$$
So there  is an increasing subsequence of integers, say  $m_j$,   such
that
$$
|h(I_{m_j})| <  |h(I_n)| \:\:\:\hbox{  for all } 1 \le n < m_j.
$$

Let $n_j = m_j -1$. We claim $\{n_j\}$ is the desired subsequence. Let us prove the claim. Since $|I_{m_j}| \to 0$,  by disregarding finitely many $m_j$ we may assume  that each $\overline{I_{m_j}}$ contains at most one critical value of $G$. So
these are two cases. In the first case, $\overline{I_{m_j}}$ contains no critical value. In the second case, $\overline{I_{m_j}}$ contains
exactly one critical value. Let $J \subset \Bbb T$ be the arc such that $G(J) = I_{m_j}$.

In the first case,   Let $K$ be the component of $G^{-1}(\overline{H_{d_0}(I_{m_j})})$ which is attached to $\overline{J}$. By Schwarz lemma it follows that $K \subset \overline{H_{d_0}(J)}$.  By the minimal property of $I_{m_j}$, it follows that
$z_{m_j -1} = z_{n_j} \notin \overline{H_{d_0}(J)}$. This implies that $z_{m_j}$ is near a critical value in $\Bbb T$, and $\overline{H_{d_0}(J)}$ is near some critical point $c$ in $\Bbb T$. By the choice of $d_0$, $\overline{H_{d_0}(J)}$ belongs to the angle domain bounded by $\Bbb T$ and a ray starting from $c$ such that the angle formed by the ray and $\Bbb T$ at $c$ is equal to $\frac{\pi}{3(2d-1)}$.  Let $m \ge 3$ be  the local degree of $G$ at $c$. Then $m \le 2d-1$. Now in a small neighborhood of $c$, we may regard $G$ approximately as the map $z \mapsto \lambda \cdot (z - c)^m + v$ where $\lambda \ne 0$ is some constant and $v = G(c)$.  Let  $w \in \overline{H_{d_0}(J)}$ be such that $G(w) = G(z_{n_j}) = z_{m_j}$. Then the smaller angle between $\Bbb T$ and $[c, w]$ is   $z_{m_j}$ must belongs to an angle domain at $v = G(c)$ bounded by $\Bbb T$ and a ray starting from $v$ with angle being equal to $\frac{\pi}{3(2d-1)}$. This implies that for any other pre-image of $z_{m_j}$ near $c$, say $w'$, the smaller angle between $\Bbb T$ and $[c, w']$ is not less than $$\frac{\pi}{m} - \frac{\pi}{2(2d-1)} > \frac{\pi}{2(2d-1)} \bigg{(}> \frac{\pi}{m} - \frac{\pi}{3(2d-1)}\bigg{)}.$$   Let $V$ be the cone spanned at $c$ such that the exterior angles formed by the
two rays of $V$ and $\Bbb T$ are equal to  $\frac{\pi}{2(2d-1)}$. Since $z_{n_j}$ is one of the pre-images of $z_{m_j}$ near $c$,  it follows that $z_{n_j}$ is contained in $V$.

 In the second case, $\overline{I_{m_j}}$ contains
exactly one critical value $v$. Let $c$ be the critical point in $\Bbb T$ such that $G(c) = v$. Then $z_{n_j} = z_{m_j-1}$ is near $c$.    If $\angle([v, z_{m_j}], \Bbb T)  \ll \pi$ where $\angle([v, z_{m_j}], \Bbb T)$ denotes the smaller angle formed by $[v, z_{m_j}]$ and $\Bbb T$ at $v$,   we would have an arc $I \subset \Bbb T$ such that $z_{m_j} \subset H_{d_0}(I)$ with $|I|  \ll |I_{m_j}|$ and $I \cap I_{m_j} \ne \emptyset$.  Since $h$ is qusi-symmetric,  this  would imply $|h(I)| < |h(I_{m_j})|$ (cf. Lemma 4.8 of \cite{Zh3}).  This is a contradiction with the minimal property of $I_{m_j}$. So $\angle([v, z_{m_j}], \Bbb T)  \succeq \pi$.  This implies that   $\angle([c, z_{n_j}], \Bbb T)   \succeq \pi$ where $\angle([c, z_{n_j}], \Bbb T)$ denotes the smaller angle formed by $[c, z_{n_j}]$ and $\Bbb T$ at $c$.     Thus  $z_{n_j}$ must belong to a cone spanned at $c$ such that the two rays of the cone form a definite angle with $\Bbb T$.  This proves Lemma~\ref{zm}.  The proof of Theorem~\ref{Thurston-Siegel-Ch} is thus completed.

\subsection{Proof of  Key Lemma 2}

Let $g \in \Sigma_\alpha^d$ be the Siegel polynomial in Key Lemma 2. Let $f \in \mathcal{T}_\alpha^d$ such that $f$ is CLH-equivalent to $g$. Then $f$ has exactly $m$ distinct critical points in $\Bbb T$, say $c_i^0$  with $c_1^0 = 1$, $1 \le 1 \le m$.  Let $H$ be a thin neighborhood of $f$ such that $H \setminus \Bbb T \cap (\Omega_f \cup P_f) = \emptyset$. By perturbing $f$ in $H$  we  get a sequence $f_n \in \mathcal{T}_\alpha^d$ such that for each $f_n$, there are exactly $m$ distinct critical points $c_i^n$ in $\Bbb T$ with $c_1^n = 1$, $1 \le i \le m$,  each of which has the same local degree as the corresponding one of $f$, and moreover, there are $m$ integers $k^n_i \ge 0$, $1 \le i \le m$, such that $f_n^{k_i^n}(1) = c_i^n$. Since $f_n$ is different from $f$
 only in a thin neighborhood of $\Bbb T$ which does not intersect $(\Omega_f \cup P_f) \setminus \Bbb T$, $f_n$ still has no Thurston obstruction in the exterior of $\Delta$. By Theorem~\ref{Thurston-Siegel-Ch}, we get a sequence $g_n \in \Sigma_\alpha^d$ such that $f_n$ is CLH-equivalent to $g_n$.
 Let $K_n$ be the filled Julia set of $g_n$. Since all the critical orbits of $g_n$ are bounded, $K_n$ is connected.
 Then the B\:{o}ttcher map $\Phi: \widehat{\Bbb C} \setminus K_n \to \widehat{\Bbb C} \setminus \overline{\Delta}$ conjugates $g_n$ to the map $z \mapsto z^d$. Near infinity, $\Phi(z) = \alpha z + O(1/z)$ with $\alpha^{d-1}$ being the leading coefficient of $g_n$. By Koebe's $1/4$-theorem, $K_n$, and thus all the critical points of $g_n$,  are contained in the disk $\{z\:|\: |z| < 4/|\alpha|\}$. Let $c_i^n$, $1 \le i \le d-1$, be all the critical points of $g_n$. By (\ref{form-1}) and (\ref{form-2}), we have
 $$
|c_i^n| \le 4 \bigg{(}\prod_{1 \le j \le d-1}|c_j^n|\bigg{)}^{1/d-1} \hbox{  for all } 1 \le i \le d-1.
 $$This implies that $|c_i^n| < 4^{d-1}$ for all $1 \le i \le d-1$. By Lemma~\ref{plmd}, there is a $\delta > 0$ independent of $n$ such that $|c_i^n| > \delta$ for all $1 \le i \le d-1$.  So by taking a subsequence we may assume that $c_i^n \to c_i^0$ with $c_i^0 \in \Bbb C \setminus \{0\}$, $1 \le i \le d-1$. Let $g_0$ be the polynomial given by (\ref{form-1}) and (\ref{form-2}).  Since $g_n$ converges to $g_0$ uniformly in any compact set of the plane, all the attracting cycles converges to attracting cycles of $g_0$ with the same multipliers, and each critical point of $g_n$, which is attracted to some attracting cycle of $g_n$, will converge to a critical point of $g_0$, which is attracted to the corresponding attracting cycle of $g_0$. Besides this, the boundary of the Siegel disk of $g_0$, say $D$,  must contain all the critical points $c_i^0$, $1 \le i \le m$. Since otherwise, if some $c_i^0 \notin \partial D$, then by Lemma~\ref{continuous-moving}, $c_i^n \notin \partial D_n$ where $D_n$ is the Siegel disk of $g_n$ centered at the origin. This is a contradiction.

 Now it suffices to prove that $g_0 = g$ up to a linear conjugation, that is, $g(z) = \frac{1}{a}g_0(az)$ with $a = c_i^0$ for some $1 \le i \le m$. This is because Key Lemma 2
 will follow from this by taking $\tilde{g}(z) = \frac{1}{c_i^n}g_n(c_i^n z)$  for some $n$ large enough.  To see this, by the rigidity part of Theorem~\ref{Thurston-Siegel-Ch}, it suffices to prove that $f$ is CLH-equivalent to $g_0$ also. From our proof, $f_n$ is CLH-equivalent to $g_n$.  So for each $n$, there exist  two plane homeomorphisms $\phi_n, \psi_n:  {\Bbb C} \to  {\Bbb C}$ such that
\begin{itemize} \item[1.] $\phi_n|\Delta = \psi_n|\Delta$ is holomorphic,
\item[2.] for each holomorphic attracting cycle $\mathcal{O}$ of $f_n$ if there is any, there is an open neighborhood $U$ of $\mathcal{O}$ such that $\phi_n|U = \psi_n |U$ is holomorphic,
\item[3.] $\phi_n$ is isotopic to $\psi_n$ rel $P_{f_n} \cup \cup_i \overline{D_i}$ where $D_i$ are  open neighborhoods of all holomorphic attracting cycles of $f_n$,
    \item[4.] $\phi_n \circ f_n = g_n \circ  \psi_n$.
\end{itemize}
By taking a subsequence, we may assume that there is a $1 \le j \le m$ such that
$\phi_n(1) = \psi_n(1)= c_j^n$. Define $\tilde{g}_n$  by $\tilde{g}_n(z) = \frac{1}{c_j^n}g_n(c_j^n z)$. By considering $\tilde{g}_n$ instead of $g_n$,  we may assume that $\phi_n(1) = \psi_n(1) = 1$. Define $\tilde{g}_0$ by $\tilde{g}_0(z) =  \frac{1}{c_j^0}g_0(c_j^0 z)$. Then $\tilde{g}_n$ converges to $\tilde{g}_0$ uniformly in any compact set of the plane.
Now it suffices to prove that there exist  two plane homeomorphisms $\phi, \psi: {\Bbb C} \to {\Bbb C}$ which fixes $0$  and  $1$, such that
\begin{itemize} 
\item[(i)] $\phi|\Delta = \psi|\Delta$ is holomorphic,
\item[(ii)] for each holomorphic attracting cycle $\mathcal{O}$ of $f$ if there is any, there is an open neighborhood $U$ of $\mathcal{O}$ such that $\phi|U = \psi |U$ is holomorphic,
\item[(iii)] $\phi$ is isotopic to $\psi$ rel $P_{f} \cup \cup_i \overline{D_i}$ where $D_i$ are  open neighborhoods of all holomorphic attracting cycles of $f$,
\item[(iv)] $\phi \circ f = \tilde{g}_0 \circ  \psi$.
\end{itemize}
Recall that $f_n$ is obtained by perturbing $f$ in a thin neighborhood of $f$, we may assume that $f_n|D_i = f|D_i$. So we may assume that 
$\phi_n|\cup_i D_i = \psi_n|\cup_i D_i$ uniformly converges to a univalent map $\sigma: \cup_i D_i \to \cup_i U_i$ with $\cup U_i $ containing all the attracting cycles of $\tilde{g}_0$. Let $\tilde{D}_n$ and $\tilde{D}$ denote the Siegel disks of $\tilde{g}_n$ and $\tilde{g}_0$, respectively.
It is clear that $\phi_n| \Delta = \psi_n|\Delta: \Delta \to \tilde{D}_n$ uniformly converges to a holomorphic isomorphism from $\Delta \to \tilde{D}$. Let us also use $\sigma: \Delta \to \tilde{D}$ denote this limit map.

Now we can use the same argument as in the proof of Lemma~\ref{premodel} to construct the desired homeomorphisms $\phi$ and $\psi$ by perturbing $\phi_n$ and $\psi_n$ for some large $n$. Let us just outline the proof.  Let  $\epsilon > 0$ be a small number. By taking  $n$ large enough, we can perturb $\phi_n$ to get a plane homeomorphism  $\phi: \Bbb C \to \Bbb C$ such that (1) $\phi|\Delta \cup \cup_i D_i = \sigma$  and (2) ${\rm dist}(\phi, \phi_n) < \epsilon$.
As in the proof of Lemma~\ref{premodel}, provided that $\epsilon > 0$ is small enough (and  thus $n$ must be large enough), we can construct a plane homeomorphism $\psi: \Bbb C \to \Bbb C$ such that the properties (i), (ii) and (iv) hold. Moreover, $\psi$ can be arbitrarily close to $\psi_n$  provided that $\epsilon > 0$ is small enough.  Then by the same argument as in the proof of Lemma~\ref{premodel},  it follows that   $\phi$ and $\psi$ are isotopic to each other rel $P_f \cup \overline{\cup_i D_i}$,  provided that $\epsilon > 0$ is small enough. This is the property (iii).  The proof of Key Lemma 2 is completed.

\section{Appendix}

\subsection{Appendix A}
\begin{lem}\label{plmd}
Let $f$ be a degree-$d$ polynomial map with a Siegel disk centered at
the origin. Then there exist $M > 1$ depending only on $d$
 such that if $f$ has two critical points $c$ and $c'$  with
$|c|/|c'| > M$, then there exist a pair of Jordan domains $U \Subset V$ containing the Siegel disk of $f$ centered at the origin
such that the tuple $(U, V, f)$ is a polynomial-like map which is qc conjugate to a polynomial map $g$ of degree less
than $d$. Moreover,  there exists a $L > 1$ which depends only on $d$ such that
$$
{\rm diam}(D) \le L \cdot \min_{c \in \Omega_f} |c|
$$ where $D$ is the Siegel disk of $f$ centered at the origin and $\Omega_f$ is the set of all critical points of $f$.
\end{lem}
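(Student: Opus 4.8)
The plan is to deduce Lemma~\ref{plmd} from the theory of polynomial-like maps together with Shishikura's uniform quasi-disk bounds for bounded type Siegel disks (stated in the Appendix). First I would set up the polynomial-like restriction. Write $f(z) = a_d z^d + \cdots$ and suppose $f$ has critical points $c$ and $c'$ with $|c|/|c'| > M$ for a large constant $M = M(d)$ to be chosen. Label the critical points so that $c'$ is among the ``small'' ones and $c$ among the ``large'' ones; more precisely, let $r = \min_{w \in \Omega_f}|w|$ and let $R$ be the smallest modulus of the critical points whose modulus is much larger than $r$ (the hypothesis $|c|/|c'| > M$ guarantees there is a genuine gap in the moduli of the critical points, so we may split $\Omega_f$ into a ``small cluster'' of radius $\asymp r$ and the rest at radius $\geq M' r$ with $M'$ large). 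In the round annulus $\{ r' < |z| < R'\}$ with $r \ll r' \ll R' \ll M'r$ the map $f$ has no critical points, so for a suitable round disk $V = \{|z| < \rho\}$ with $\rho$ chosen in this critical-point-free annular range, the preimage component $U$ of $V$ containing the small cluster is compactly contained in $V$ and $f: U \to V$ is proper of some degree $d' < d$ (the degree counts only the critical points in the small cluster). This gives a polynomial-like triple $(U, V, f)$ of degree $d' < d$, and by the straightening theorem (Douady--Hubbard) it is quasiconformally conjugate to a genuine polynomial $g$ of degree $d'$.

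Next I would check that this polynomial-like map contains the Siegel disk $D$ centered at the origin. The point is that $\partial D$ is forward invariant and consists of non-escaping points; more concretely, the orbit of $0$ stays in $D$, and since $D$ is contained in the filled Julia set of $f$, and the filled Julia set of the polynomial-like map $(U,V,f)$ is exactly $\{z \in U : f^n(z) \in U \ \forall n \geq 0\}$, one must verify that $D \subset U$. For this I would use that $\partial D$, being a boundary of a Siegel disk, cannot cross $\partial U$ (a piece of a preimage of a round circle on which $f$ is expanding in the relevant sense), combined with the fact that $0 \in U$; a connectedness argument then forces $D \subset U$. Actually the cleanest route: the Siegel disk is contained in the non-escaping set, hence in the filled Julia set $K(U,V,f)$ once we know $0$ and hence $D$ lies in $U$, which follows because $V$ was chosen as a round disk of radius $\rho$ large compared with everything in the small cluster but $D$ itself has bounded diameter relative to $\rho$ — this is where the quantitative choice of constants feeds back in.

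Then I would derive the diameter bound. Since $(U,V,f)$ is qc conjugate to a polynomial $g$ of degree $d' < d$ and $D$ corresponds under the straightening to the Siegel disk of $g$ centered at the corresponding fixed point, an inductive hypothesis on the degree (or a direct appeal to the structure of the argument in \S2, where Proposition~\ref{main-pro} is proved by induction on degree) controls $D$ in terms of the conjugacy. But for the stated conclusion I only need the weaker \emph{a priori} bound ${\rm diam}(D) \le L \cdot \min_{c \in \Omega_f}|c|$ with $L = L(d)$. I would prove this by contradiction: if no such $L$ existed, there would be a sequence $f_n$ of degree-$d$ polynomials with Siegel disks $D_n$ at the origin such that ${\rm diam}(D_n)/\min_{c \in \Omega_{f_n}}|c| \to \infty$. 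Rescaling by the minimal critical modulus, we may normalize $\min_{c}|c| = 1$; then the critical points lie in $\{|z|\geq 1\}$ with at least one on $\{|z|=1\}$, while ${\rm diam}(D_n) \to \infty$. Now apply the polynomial-like construction just described: for $n$ large, the huge ratio between ${\rm diam}(D_n)$ and $1$ forces a gap in the critical moduli, yielding a polynomial-like map of lower degree capturing $D_n$ in a controlled way, and iterating this reduction at most $d-2$ times one reaches degree $2$, where the quadratic Siegel disk has diameter comparable to the single critical modulus — contradicting ${\rm diam}(D_n) \to \infty$.

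The main obstacle I expect is the quantitative bookkeeping in the first two steps: ensuring that the round disk $V$ and its component $U$ can simultaneously be chosen so that (a) $U \Subset V$, (b) $f: U \to V$ is proper of degree exactly the number of critical points in the small cluster, and (c) $D \subset U$ — all with constants depending only on $d$ and not on $f$. This requires a careful choice of the radius $\rho$ of $V$ inside the critical-point-free annulus, and a Grötzsch/modulus estimate to guarantee the compact containment $U \Subset V$ is uniform. The rest (straightening, the contradiction argument, the reduction of degree) is then routine given the results already available in the paper.
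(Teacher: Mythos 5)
Your proposal follows the same structural idea as the paper's proof: a gap in the moduli of the critical points yields a polynomial-like restriction of lower degree containing the Siegel disk, from which the diameter bound falls out. The differences are in how the quantitative estimates and the containment $D\subset U$ are handled. The paper makes everything explicit via the formula for $f$ in terms of elementary symmetric polynomials of the critical points, cf.\ (\ref{form-1})--(\ref{form-2}): under the gap hypothesis (\ref{con-ass}) it derives the coefficient bounds (i)--(iii), and from these the inequalities (\ref{First-in})--(\ref{Second-in}), which show directly that the escape radius of $f$ is at most $R < M_{l+1}$, hence $K_f\subset V=\{|z|<R\}$ without any reference to $D$. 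That is what guarantees $D\subset V$ (and then $D\subset U$ by connectedness of $D$ and the fact that $D\subset f^{-1}(V)$), and it yields $\mathrm{diam}(D)\le 2R\le 2M$ immediately, with no need for the compactness/contradiction detour you propose or the iterated reduction to degree $2$. The paper also constructs an explicit quasi-regular interpolation $F$ whose dilatation is small outside a single annulus that each orbit crosses at most twice, rather than invoking Douady--Hubbard straightening abstractly; for the statement either suffices.

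Where your argument has a genuine soft spot is in the ``cleanest route'' you give for $D\subset U$: you justify it by saying ``$D$ itself has bounded diameter relative to $\rho$'' --- but that bound is precisely the second assertion of the lemma, so as written the step is circular. The fix is the one the paper implements: deduce $K_f\subset V$ from the coefficient estimates (independently of any a priori knowledge of $D$), and only then conclude $D\subset V$ and $D\subset U$. Relatedly, your step ``the huge ratio between $\mathrm{diam}(D_n)$ and $1$ forces a gap in the critical moduli'' needs the same ingredient: the contrapositive fact that if all $|c_i|$ are bounded by $M$ after normalization, then the leading coefficient is bounded below by $M^{-(d-1)}$ and the escape radius is bounded by some $R(d,M)$, hence $\mathrm{diam}(D_n)\le R(d,M)$. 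With those two points supplied, your approach works, but the direct escape-radius argument makes the contradiction scaffolding unnecessary.
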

\begin{proof}  Let $c_1, \cdots, c_{d-1}$ be the critical points of
$f$.  Through a linear conjugation, we may assume that
$1 = |c_{d-1}| \le |c_{d-2}| \le \cdots \le |c_1|$.

 Claim. For each $M_l \ge  1$,   there exists an $M_{l+1} > M_l$ such that if
 \begin{equation}\label{con-ass}
 1 = |c_{d-1}| \le |c_{d-2}| \le \cdots \le |c_{d-l}| \le  M_l  < M_{l+1} <  |c_{d-l-1}|\le \cdots \le |c_1|,
\end{equation}
 then there exist domains $U \Subset V$ containing the origin such that $(f, U, V)$ is a polynomial-like map which is qc conjugate
 to some polynomial of degree $l+1$.   Let us prove the Claim first.  Recall that
$$
f(z) =  f_{c_1, \cdots, c_{d-1}}(z) =  \sum_{i=1}^{d} a_i z^i
$$
with $$a_i = e^{2 \pi i
\alpha} \cdot \bigg{(}\frac{(-1)^{i-1}}{i} \bigg{)} \cdot
\frac{Q_{d-i}(c_1, \cdots, c_{d-1})}{c_1 \cdots c_{d-1}}$$ where
$Q_{d-i}$ is the degree-$(d-i)$ elementary polynomials of $c_1,
\cdots, c_{d-1}$.  If (\ref{con-ass}) holds,  a simple calculation shows that  there is some  constant $$C = C(d, M_l) > 1$$
 depending only on $d $ and $M_l$  such that by taking $M_{l+1}$ large enough, the following inequalities hold.
\begin{itemize}
 \item[(i)] $C^{-1} < |a_{l+1}| < C$, \item[(ii)]  $|a_k| \le C$  for $1 \le k \le l$, \item[(iii)]
$|a_k|  < d! \cdot M_{l+1}^{-1}$ for $ l+2 \le k \le d$.  \end{itemize}

  From  (i) and (ii) it follows that  by taking $R>0$ large enough,  we can make sure that
\begin{equation}\label{First-in}
\sum_{i=1}^{l} |a_i| |z|^i \ll |a_{l+1} z^{l+1}| \:\:\hbox{    for    } |z| \ge \bigg{(}\frac{  R}{2C}\bigg{)}^{1/(l+1)}.
\end{equation}
 Fix such an $R$. Then from (iii)  it follows that if we take $$M_{l+1} > R$$  large enough, then we have
\begin{equation}\label{Second-in}
\sum_{i=l+2}^{d} |a_i| |z|^i \ll |a_{l+1} z^{l+1}| \:\:\hbox{    for    } |z| \le |C\cdot R^{l+1}|.
\end{equation}
Now define a quasi-regular map $F$ as follows.
$$ F(z) =
\begin{cases}
f(z) \hbox{  for  } |z| \le R \\
f(z) - \frac{|z|-R}{|a_{l+1} R^{l+1}| - R} (f(z) - a_{l+1}z^{l+1}),  \hbox{ for } R <|z|< |a_{l+1} R^{l+1}| \\
a_{l+1}z^{l+1}$ if $|z| \ge |a_{l+1} R^{l+1}|.
\end{cases}
$$
By definition, $F$ is holomorphic for $|z| < R$ and $|z| > |a_{l+1} R^{l+1}|$.  From (i-iii) and a simple calculation we get
$$
|F_{\bar{z}}| \ll |F_z|  \hbox{ for } R < |z| < |a_{l+1} R^{l+1}|.
$$  provided that $R$  and $M_{l+1}$ are
 large enough (the choice of $M_{l+1}$ depends on $R$).
 This implies that the real dilatation of $F$ in $\{z\:|R < |z| < |a_{l+1} R^{l+1}|\}$ can be arbitrarily close to $1$. Note that the forward orbit of any point $z$ passes through the annulus $\{z\:|R \le |z| \le |a_{l+1} R^{l+1}|\}$ at most two times.  By a routine argument it follows  that $F$ is conjugate to a polynomial of degree $l+1$ through some $K$-qc homeomorphism where $K > 1$ can be arbitrarily close to $1$ provided that $R$ and $M_{l+1}$ are chosen appropriately large.

 Let $V = \{z\:|\:|z| < R\}$ and $U$ be the component of $f^{-1}(V)$
 which contains the origin.  From (\ref{First-in}), it follows that $U \Subset V$ and $f: U \to V$ is of degree $l+1$
 and thus $(U, V, f)$ is a polynomial-like map (whose Julia set may not be connected).
  From the last paragraph it follows that $(U, V, f)$ is $K$-qc conjugate to some polynomial of degree $l+1$. This proves the Claim.

 Now let $M_{1} = 1$. By successively applying the Claim we get $$1 = M_{1} < M_2 < \cdots< M_{d-1}.$$  Let  $M = M_{d-1}$.   Suppose $1 = |c_{d-2}| \le \cdots \le |c_1|$.

 In the first case,  $|c_i| \le M$ for all $1 \le i \le d-2$.   The first assertion obviously holds in this case.  Note that the absolute value of the
 leading coefficient of $f$ is $1/|c_1| \cdot \cdots \cdot |c_{d-1}| \ge 1/M^{d-1}$, and that all the other coefficients of $f$ are bounded above by some constant $K(d, M)$  depending on $d$ and $M$. This implies the existence of an $R(d, M)$ such that $f$ is dominated by the leading term for all $|z| > R(d, M)$. In particular, this implies that the diameter of the Siegel disk is not greater than $R(d, M)$. Since $M$ depends only on $d$, the second assertion of the lemma follows.

 In the second case, there is some $c_i$ such that $|c_i| > M$. Let $1 \le l \le d-2$ be the least integer such that $|c_{d-l-1}| > M_{l+1}$. Then we have  (\ref{con-ass}).  The first assertion follows from the Claim.  For the second assertion,  let us go back to the proof of the Claim. We see the Siegel disk centered at the origin is contained in $V = \{z\:|\: |z| < R\}$. Since $R < M_{l+1} \le M$, it follows that the diameter of the Siegel disk is not greater than $2 M$. Since $M$ depends only on $d$, the second assertion follows.

 The proof of Lemma~\ref{plmd} is completed.
\end{proof}

Let
$0< \alpha < 1$ be a bounded type irrational number and $d \ge 2$ be an integer. Let
$\mathcal{P}_{\alpha}^d$ denote the class of all the polynomial maps
$f$ such that
$$f(z) = e^{2 \pi i \alpha} z + \alpha_2 z + \cdots + \alpha_d z^d$$
with $\alpha_d \ne 0$ and $f'(1) = 0$. Let $\Delta$ denote the unit disk and $D$ denote the Siegel disk of $f$ centered at the origin.

\begin{lem}[\cite{Sh}, Shishikura]\label{shi} There exists a $K = K(\alpha, d) > 1$ depending only on $d$ and $\alpha$
such that for any polynomial map $f \in \mathcal{P}_{\alpha}^d$,  if $\phi: \Delta \to D$ is the holomorphic isomorphism such that $\phi^{-1} \circ f \circ \phi (z) = e^{2 \pi i \alpha} z$ for $z \in \Delta$, then $\phi$ can be extended to a $K$-qc
homeomorphism of the plane. In particular, the boundary of
the Siegel disk of $f$ centered at the origin is a $K$-quasicircle.
\end{lem}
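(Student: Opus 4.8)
The plan is to prove Lemma~\ref{shi} by Shishikura's quasiconformal surgery \cite{Sh} (see also \cite{Zh1}), carried out so that every constant is visibly controlled by $\alpha$ and $d$. Fix $f\in\mathcal P_\alpha^d$ with Siegel disk $D$ and linearizer $\phi\colon\Delta\to D$, $\phi^{-1}f\phi=R_\alpha$. First I would record the combinatorial type of $f$ along $\partial D$ — the cyclic order, the angular positions in the $\phi$-coordinate, and the local degrees of the critical points of $f$ lying on $\partial D$ (there is at least one, namely $1$) — and build a Blaschke product model $B\in\mathcal S_d^\alpha$ of degree $2d-1$, symmetric under $z\mapsto 1/\bar z$, with $0$ and $\infty$ superattracting, realizing that same data on $\mathbb T$; thus $h:=B|_{\mathbb T}$ is a critical circle homeomorphism of rotation number $\alpha$ whose critical points on $\mathbb T$ match, in order, position and order, those of $f$ on $\partial D$.

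The bounded-type hypothesis is used exactly once, but decisively. By the uniform real a priori bounds (Theorem~\ref{real bounds}) together with the Herman--Swiatek theorem, $h$ is quasisymmetrically conjugate to the rigid rotation, $\psi\,h\,\psi^{-1}=R_\alpha$, with quasisymmetry constant controlled by $\alpha$ and the analytic type of $h$; and since the models $B$ that can occur lie in the compact family $\bigcup_{\alpha\in\Theta_C^b}\mathcal S_d^\alpha$, with combinatorics constrained only by $d$, that constant — hence the dilatation of the Douady-Earle extension $\Psi\colon\Delta\to\Delta$ of $\psi$ — is at most some $k=k(\alpha,d)<1$. One then performs the surgery interchanging the Siegel disk of $f$ with that of $B$: gluing the rigid rotation into $B$ across $\mathbb T$ by means of $\Psi$ produces a symmetric quasiregular map $\widehat B$ of degree $2d-1$; pulling the standard structure back through $\Psi$ on $\Delta$ and spreading it by the iterates of $\widehat B$ gives a $\widehat B$-invariant Beltrami coefficient $\mu$ with $\|\mu\|_\infty\le k(\alpha,d)$ (invariance uses the holomorphy of $R_\alpha$); and solving the Beltrami equation yields a normalized symmetric quasiconformal $\Theta$ with $g:=\Theta\widehat B\Theta^{-1}$ a Blaschke product and $\Theta$ conformal off $\mathrm{supp}\,\mu$. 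Matching this surgery with the (inverse) surgery that recovers $f$ from the model, one obtains a quasiconformal homeomorphism of the sphere that is conformal off $\overline D$, of dilatation $\le k(\alpha,d)$ on $D$, and carries the round circle $\mathbb T$ onto $\partial D$; hence $\partial D$ is a $K$-quasicircle with $K=K(\alpha,d)$ and $\phi$ extends $K$-quasiconformally.

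The main obstacle is precisely this uniformity of $K$ over all of $\mathcal P_\alpha^d$, and it is not a formal matter: if $\alpha$ is allowed out of the bounded-type set the quasisymmetry constant of $\psi$ blows up, so the argument genuinely needs both the uniform real bounds of Theorem~\ref{real bounds} — which ultimately rest on Herman's cross-ratio distortion estimate for the compact family $\mathcal H_d$ (Lemmas~\ref{HC1}--\ref{HC2}) — and the compactness of $\mathcal S_d^\alpha$ for fixed $\alpha$, so that the combinatorial type of $f$ along $\partial D$ cannot degrade the bound. The second assertion needed in the paper — that, with $\alpha$ fixed, $\partial D$ depends continuously on $f$ — then comes for free from the same construction: the model $B$, the conjugacy $\psi$, the coefficient $\mu$ and the solution $\Theta$ all vary continuously with $f$, so a normal-families argument gives Hausdorff continuity of $\partial D=\Theta(\mathbb T)$.
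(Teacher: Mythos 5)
The paper does not give a proof of Lemma~\ref{shi} at all: after the statement it simply says ``For a proof, see \cite{Sh} or \cite{Zh1}.'' So there is no in-paper argument to compare against, and the right calibration is whether your reconstruction of Shishikura's surgery is actually watertight. In broad strokes you have the standard skeleton — Blaschke model, Herman--\'Swi\c{a}tek quasisymmetric linearization, Douady--Earle extension, invariant Beltrami with $\|\mu\|_\infty\le k(\alpha,d)$, straightening — and the way you locate the source of uniformity (compactness of the model family plus the cross-ratio bounds Lemmas~\ref{HC1}--\ref{HC2} and Theorem~\ref{real bounds}) is correct. That much is faithful to the references.

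The genuine gap is the step you compress into one clause: ``Matching this surgery with the (inverse) surgery that recovers $f$ from the model.'' Your surgery on $B$ produces \emph{some} polynomial $g = \Theta\widehat B\Theta^{-1}$; nothing you have written forces $g$ to be $f$. Worse, with your choice $B\in\mathcal S_d^\alpha$ (degree $2d-1$, \emph{all} nontrivial critical points on $\mathbb T$), the polynomial $g$ that comes out necessarily lies in $\Pi_{\rm geom}^{\alpha,d}$ — every finite critical point on the Siegel boundary — whereas a general $f\in\mathcal P_\alpha^d$ has critical points off $\partial D$ with possibly wild dynamics. So $g\ne f$ generically and the ``inverse surgery'' you invoke does not exist as stated. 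What is actually needed is either (i) a \emph{local} polynomial-like model over an annular neighborhood of $\overline D$, of degree $2m-1$ with $m-1$ the number of critical points on $\partial D$, together with a local rigidity for that polynomial-like restriction; or (ii) a perturbation of $f$ into $\Sigma_{\rm geom}^{\alpha,d}$ followed by a limiting argument, using that $K$-quasicircles are closed under Hausdorff limits with fixed $K$. The paper's Theorem~\ref{Thurston-Siegel-Ch} gives a rigidity only for $\Sigma_{\rm geom}^{\alpha,d}$, not for all of $\mathcal P_\alpha^d$, so you cannot just cite it.

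Two smaller points. First, the parenthetical ``there is at least one, namely $1$'' assumes $1\in\partial D$; the definition of $\mathcal P_\alpha^d$ only says $f'(1)=0$, not that $1$ lies on the Siegel boundary, and the existence of \emph{some} critical point on $\partial D$ for bounded type is itself a nontrivial theorem (and one consequence of the lemma). Second, to ``record the angular positions in the $\phi$-coordinate'' of the critical points on $\partial D$ you are implicitly using that $\phi$ extends to a boundary homeomorphism of $\overline\Delta\to\overline D$, which is essentially equivalent to $\partial D$ being a Jordan curve — again part of what must be proved. Finally, the closing remark that continuity of $\partial D$ in $f$ ``comes for free'' from the construction is too quick: the model $B$ can jump when critical points of $f$ collide or enter/leave $\partial D$, and the paper treats continuity separately in Lemma~\ref{continuous-moving} with a genuine argument based on the uniform quasicircle constant.
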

For a proof, see \cite{Sh} or \cite{Zh1}.

\begin{lem}\label{continuous-moving}The boundaries  of the Siegel disks of $$f \in
\bigcup_{2 \le l \le d} \mathcal{P}_{\alpha}^l$$ at the origin moves  continuously  with respect to the Hausdorff metric on the spaces of non-empty compact sets of the plane and   the topology of $\bigcup_{2 \le l \le d} \mathcal{P}_{\alpha}^l$ is given by open-compact topology, that is, $f_n \to f$  with respect to this topology means that
$f_n$ uniformly converges to $f$ in any compact set of the plane.
\end{lem}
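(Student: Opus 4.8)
The plan is to combine Shishikura's uniform quasiconformal control (Lemma~\ref{shi}) with the uniform size bound of Lemma~\ref{plmd} and a standard normal-family argument for quasiconformal maps. Let $f_n\to f$ in $\bigcup_{2\le l\le d}\mathcal{P}_\alpha^l$, with $f_n\to f$ uniformly on compact subsets of $\Bbb C$. Since $\alpha$ is of bounded type, hence Brjuno, each of $f$ and the $f_n$ has a Siegel disk at the origin; call them $D$ and $D_n$, and let $\phi\colon\Delta\to D$, $\phi_n\colon\Delta\to D_n$ be the linearizing isomorphisms normalized by $\phi(0)=\phi_n(0)=0$ and $\phi'(0)=\phi_n'(0)=1$, so that $\phi_n^{-1}\circ f_n\circ\phi_n=R_\alpha$ on $\Delta$ and likewise for $f$; these are uniquely determined. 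By Lemma~\ref{shi}, applied in each of the finitely many degrees $\le d$ and taking the largest of the resulting constants, there is a single $K=K(\alpha,d)>1$ such that $\phi$ and every $\phi_n$ extend to $K$-quasiconformal homeomorphisms of $\widehat{\Bbb C}$ fixing $0$ and $\infty$; in particular $\partial D$ and every $\partial D_n$ is a $K$-quasicircle, and $\phi_n$ extends to a homeomorphism $\overline\Delta\to\overline{D_n}$.

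Next I would pin down the geometry of the $D_n$. Because $f'(0)=e^{2\pi i\alpha}\neq0$ the origin is not a critical point of $f$, and since $\deg f\ge2$ the map $f$ has a finite critical point $c^{*}$; as $f_n'\to f'$ uniformly near $c^{*}$, Hurwitz's theorem gives a critical point of $f_n$ within distance $1$ of $c^{*}$ for all large $n$, so $\min_{c\in\Omega_{f_n}}|c|\le|c^{*}|+1$. By Lemma~\ref{plmd} there is then a fixed $M'>0$ (depending only on $d$ and $c^{*}$) with $D_n\subset B_{M'}(0)$ for all large $n$; on the other hand the Koebe one-quarter theorem applied to the normalized univalent map $\phi_n$ gives $D_n\supset B_{1/4}(0)$. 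Hence each $\partial D_n$ lies in the fixed compact annulus $A^{*}=\{\,w:1/4\le|w|\le M'\,\}$, which avoids $0$ and $\infty$.

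Now for the convergence. First, the normalized univalent maps $\phi_n|_\Delta$ are uniformly bounded (by $M'$), hence form a normal family; any locally uniform limit is univalent with derivative $1$ at the origin and conjugates $R_\alpha$ to $\lim f_n=f$ near $0$, so by uniqueness of linearization it equals $\phi$. Thus $\phi_n|_\Delta\to\phi$ locally uniformly on $\Delta$ for the full sequence. Second, dividing by $\phi_n(1)\in A^{*}$ turns $\phi_n$ into a $K$-quasiconformal self-homeomorphism of $\widehat{\Bbb C}$ fixing $0,1,\infty$, and such maps form a compact family; combined with $\phi_n(1)\to w_0\in A^{*}$ along a subsequence, this produces a subsequence with $\phi_n\to\Phi$ uniformly on $\widehat{\Bbb C}$, where $\Phi$ is a homeomorphism. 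Comparing the two limits on $\Delta$ shows $\Phi|_\Delta=\phi$, so $\Phi(\Delta)=\phi(\Delta)=D$ and hence $\Phi(\Bbb T)=\partial D$. Uniform convergence of $\phi_n$ on the compact set $\Bbb T$ then gives
\[
d_H(\partial D_n,\partial D)=d_H\big(\phi_n(\Bbb T),\Phi(\Bbb T)\big)\le\sup_{z\in\Bbb T}|\phi_n(z)-\Phi(z)|\longrightarrow0
\]
along that subsequence, where $d_H$ is the Hausdorff distance; since the limit $\partial D$ does not depend on the chosen subsequence, the full sequence $\partial D_n$ converges to $\partial D$. This is the asserted continuity, and the bounds above were uniform over the degree $2\le l\le d$.

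The step I expect to be the main obstacle is the transition from interior convergence (locally uniform convergence of the linearizers on $\Delta$, which is soft) to convergence of the boundary curves: locally uniform control on $\Delta$ says nothing about the behaviour of $\partial D_n$ near $\Bbb T$, and one genuinely needs a source of equicontinuity up to and including the unit circle, uniform in $n$. This is exactly what Shishikura's uniform $K$-quasiconformal extension supplies — through the compactness of normalized $K$-quasiconformal homeomorphisms of the sphere — and it is the only non-elementary ingredient; the normal-family argument, Lemma~\ref{plmd}, the Koebe estimate, and uniqueness of linearization are all routine once it is available.
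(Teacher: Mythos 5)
Your approach — extract a locally uniform limit of the Riemann linearizers $\phi_n$ on $\Delta$ and use Shishikura's uniform $K$-qc extension together with compactness of normalized $K$-qc homeomorphisms to upgrade this to uniform convergence of the boundary curves — is genuinely different from the paper's. The paper instead picks a point $w\in D\cap\partial D_n$ (or $D_n\cap\partial D$), compares the finite $f$- and $f_n$-orbit segments of $w$, and transfers their closeness to Hausdorff closeness of the $f$-invariant curve $\Gamma_w$ and $\partial D_n$ by means of the uniform H\"older modulus coming from the $K$-qc conjugacy, before finally showing $\Gamma_w$ is close to $\partial D$.

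However, your proposal has a real gap at its crux. First, the normalization $\phi_n\colon\Delta\to D_n$, $\phi_n(0)=0$, $\phi_n'(0)=1$ is not available: $|\phi_n'(0)|$ is forced to equal the conformal radius of $D_n$ about $0$, which is not $1$ in general. Koebe's one-quarter theorem therefore gives $D_n\supset B_{|\phi_n'(0)|/4}(0)$, not $B_{1/4}(0)$, so the inner bound — and hence the containment of all $\partial D_n$ in a fixed annulus away from $0$ — is not established until you independently bound $|\phi_n'(0)|$ from below. Second, and more seriously, even with the corrected normalization $\phi_n'(0)>0$, the sentence ``by uniqueness of linearization it equals $\phi$'' is not justified. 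A locally uniform limit $\hat\phi$ of the $\phi_n$ on $\Delta$ is a linearizer of $f$ onto its image $\hat\phi(\Delta)\subset D$, and uniqueness only gives $\hat\phi=\phi\circ\rho_r$ with $\rho_r(z)=rz$, where $r=\hat\phi'(0)/\phi'(0)\in(0,1]$; nothing in your argument forces $r=1$, i.e.\ that the Siegel disks do not Carath\'eodory-shrink. If $r<1$ then $\hat\phi(\Delta)\Subset D$ and $\Phi(\Bbb T)\ne\partial D$, so the conclusion fails. The missing step is exactly the paper's final contradiction: each $\partial D_n$ carries a critical point $c_n$ of $f_n$; these accumulate on a critical point $c$ of $f$; and if $r<1$ then $c\in\hat\phi(\Delta)\Subset D$, which is impossible because $f$ is univalent on its Siegel disk. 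Without this (or some equivalent argument ruling out a drop in the conformal radius), $K$-qc compactness alone does not identify the limit curve with $\partial D$.
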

\begin{proof}
Let $f \in \bigcup_{2 \le l \le d} \mathcal{P}_{\alpha}^l$. Assume that $f_n \to f$.
Let $D$ and $D_n$ be
respectively the Siegel disks of $f$ and $f_n$ which are centered at
the origin.  It suffices to prove that
$\partial D_g$ is close to $\partial D_f$ with respect to the
Hausdorff metric for all $n$ large enough. It is known that both of them contains critical
points.

We may assume that $D \ne D_n$ since otherwise there is nothing to
prove. Since both $D$ and $D_n$ contains the origin as an interior
point there is a point $w \in (D \cap
\partial D_n) \cup (D_n \cap \partial D)$.  Without loss of generality, let us assume that $w \in D\cap \partial Dn$. Let $\Gamma_{w} \subset D$ be the $f$-invariant curve containing $w$.
Let $\mathcal{O}_f(w) = \{f^k(w)\}_{k\ge 0}$ and $\mathcal{O}_g(w) =
\{g^k(w)\}_{k\ge 0}$. Then $\mathcal{O}_f(w)$ and $\mathcal{O}_g(w)$
are dense in $\Gamma_w$ and $\partial D_n$ respectively. For any integer
$m \ge 1$, the two finite orbit segments
$$
\{f^k(w), 0 \le k \le m\}; \:\: \hbox{ and } \:\: \{f^k(w), 0 \le k \le m\}
$$
can be arbitrarily close to each other provided that $n$ is
 large enough.  By Lemma~\ref{shi} there exist  two $K(\alpha, d)$-qc homeomorphisms of the plane which fix $0$ and $\infty$, say $\phi$ and $\psi$ such that $\phi^{-1} \circ f \circ \phi(z) = e^{2 \pi i \alpha} z$ and  $\psi^{-1} \circ g \circ \psi(z) = e^{2 \pi i \alpha} z$ for all $z \in \overline{\Delta}$.
$\phi(\Bbb T_r) = \Gamma_w$ and $\psi(\Bbb T) = \partial D_g$.   Suppose $w = \phi(z_0)$ for some $z_0$ with $0 < |z_0| < 1$ and $w = \psi(\zeta_0)$ for some $\zeta_0 \in \Bbb T$. Then
Each component of $\Gamma_w \setminus \{f^k(w), 0 \le k \le m\}$ is the $\phi$-image of a component of
$$
\{z|\: |z| = |z_0|\} \setminus \{ e^{2 k \pi i \alpha} \cdot z_0\:|\: 0 \le k \le m\}
$$ and each component of $\partial D_n \setminus \{f_n^k(w), 0 \le k \le m\}$ is the $\psi$-image of a component of
$$
\Bbb T \setminus \{ e^{2 k \pi i \alpha}\cdot \zeta_0 \:|\: 0 \le k \le m\}.
$$
By the second assertion of Lemma~\ref{plmd} the Siegel disks of $f$ and $f_n$ are contained in some compact set of the plane. Since $\phi$ and $\psi$ fix $0$ and $\infty$ and are $K$-qc homeomorphisms of the plane for some $K$ depending only on $d$ and $\alpha$, there exist $C > 1$ and $0< \eta < 1$ which are independent of $n$ such that for any $z_1, z_2$ with $|z_1|, |z_2| \le 1$ we have
$$
|\phi(z_1) - \phi(z_2)| < C \cdot |z_1 - z_2|^\eta \hbox{  and  } |\psi(z_1) - \psi(z_2)| < C \cdot |z_1 - z_2|^\eta.
$$
Since the components of $$
\{z|\: |z| = |z_0|\} \setminus \{ e^{2 k \pi i \alpha} \cdot z_0\:|\: 0 \le k \le m\}
$$ and $$
\Bbb T \setminus \{ e^{2 k \pi i \alpha}\cdot \zeta_0 \:|\: 0 \le k \le m\}
$$ can be arbitrarily small provided that $m$ is large enough, it follows that $\Gamma_w$ and $\partial D_n$ can be arbitrarily to each other provided that $n$ is large enough.

Now we claim that $\Gamma_w$ is close to $\partial D$ also provided that $n$ is large enough. This
is because if not, then by Lemma~\ref{shi} it follows that the modulus of the
annulus bounded by $\partial D$ and $\Gamma_w$ has a  positive lower bound.  But since $\partial D_n$ is close to $\Gamma_w$ and
contains a critical point of $f_n$,  it follows that there is a
critical point of $f_n$ contained in $D$ and is bounded away from
$\partial D$. But since $f_n \to f$ uniformly in $D$,  there would be a
critical point of $f$ contained in $D$. This is impossible. This
completes the proof of Lemma~\ref{continuous-moving}.
\end{proof}
\begin{lem}\label{compact-class}
Suppose $f_n \in \bigcup_{2 \le l \le d} \mathcal{P}_{\alpha}^l$ is a sequence such that for each $f_n$,
the boundary of the Siegel disk centered at the origin passes through the critical point $1$.
Then $f_n$ has a subsequence $f_{n_k}$ which converges to some $f \in \bigcup_{2 \le l \le d} \mathcal{P}_{\alpha}^l$.
 Moreover, the Siegel disk of the limit polynomial map $g$ centered at the origin also passes through the critical point $1$,
 and  for any $k > m \ge 0$, we have
$$
\lim_{n \to \infty} \sigma_{k,m}(f_n) = \sigma_{k,m}(g).
$$
\end{lem}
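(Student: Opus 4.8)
The plan is to first extract enough compactness to pass to a convergent subsequence, then to recognize the limit as a member of $\bigcup_{2\le m\le d}\mathcal{P}_{\alpha}^m$, and finally to transfer the orbit data $\sigma_{k,m}$ to the limit. Passing to a subsequence (which we continue to denote by $f_n$) we may assume all $f_n$ have the same degree $l$ with $2\le l\le d$; let $D_n$ be the Siegel disk of $f_n$ at the origin and $c_1^n,\dots,c_{l-1}^n$ its critical points, one of which is $1$. Since $0\in D_n$ and $1\in\partial D_n$, we have ${\rm diam}(D_n)\ge 1$, while Lemma~\ref{plmd} gives ${\rm diam}(D_n)\le L\cdot\min_{c\in\Omega_{f_n}}|c|$ with $L=L(d)$; combining the two, every critical point of $f_n$ satisfies $|c_i^n|\ge 1/L$. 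As $1$ is a critical point, $\min_{c\in\Omega_{f_n}}|c|\le 1$, so ${\rm diam}(D_n)\le L$ too, whence $\overline{D_n}\subset\overline{B(0,L)}$. By formula (\ref{form-2}) each coefficient $a_i$ of $f_n$ is, up to a unimodular constant, the degree-$(i-1)$ elementary symmetric function of $1/c_1^n,\dots,1/c_{l-1}^n$; since each $|1/c_i^n|\le L$, all coefficients of $f_n$ are bounded by a constant depending only on $d$. Hence, after a further subsequence, the coefficients converge and $f_n\to g$ uniformly on compact subsets of $\Bbb C$, where $g$ is a polynomial of degree $\le l$.

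Next I would identify $g$. Convergence of coefficients gives $g(0)=0$, $g'(0)=e^{2\pi i\alpha}$ and $g'(1)=\lim_n f_n'(1)=0$, so $1$ is a critical point of $g$. If $g$ had degree $\le 1$ then $g'\equiv e^{2\pi i\alpha}$, contradicting $g'(1)=0$; therefore $2\le\deg g\le d$, the leading coefficient of $g$ is nonzero, and $g\in\mathcal{P}_{\alpha}^{\deg g}\subset\bigcup_{2\le m\le d}\mathcal{P}_{\alpha}^m$ (since $\alpha$ is of bounded type, $g$ automatically has a Siegel disk $D_g$ at the origin). As $f_n\to g$ in the open-compact topology of $\bigcup_{2\le m\le d}\mathcal{P}_{\alpha}^m$, Lemma~\ref{continuous-moving} applies and $\partial D_n\to\partial D_g$ in the Hausdorff metric. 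Since $1\in\partial D_n$ for every $n$ and $\partial D_g$ is closed, it follows that ${\rm dist}(1,\partial D_g)=0$, i.e. $1\in\partial D_g$. Thus the Siegel disk of $g$ passes through the critical point $1$.

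For the last assertion, fix $k>m\ge 0$. The boundary of a Siegel disk is forward invariant — concretely, by Lemma~\ref{shi} the linearizing map $\Delta\to D_n$ extends, via Carath\'eodory, to a homeomorphism $\overline{\Delta}\to\overline{D_n}$, so $f_n$ restricts to a homeomorphism of $\partial D_n$ — hence $f_n^j(1)\in\partial D_n\subset\overline{B(0,L)}$ for all $j\ge 0$. Since $f_n\to g$ uniformly on $\overline{B(0,L)}$ and $g$ is continuous, an induction on $j$ shows $f_n^j(1)\to g^j(1)$ for every $j$: one writes $f_n^{j+1}(1)-g^{j+1}(1)=\bigl(f_n(f_n^j(1))-g(f_n^j(1))\bigr)+\bigl(g(f_n^j(1))-g(g^j(1))\bigr)$, where the first bracket is bounded by $\sup_{\overline{B(0,L)}}|f_n-g|\to 0$ and the second tends to $0$ by continuity of $g$ and the inductive hypothesis. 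Taking $j=k$ and $j=m$ gives $\sigma_{k,m}(f_n)=f_n^k(1)-f_n^m(1)\to g^k(1)-g^m(1)=\sigma_{k,m}(g)$ along this subsequence.

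The one point that genuinely needs care is the first paragraph: a priori some critical points of $f_n$ could escape to infinity, so the degree of $g$ may drop. This turns out to be harmless, but only because the uniform bound $|1/c_i^n|\le L$ — itself a consequence of ${\rm diam}(D_n)\ge 1$ together with Lemma~\ref{plmd} — keeps \emph{all} coefficients of $f_n$ bounded, and because the identity $g'(1)=0\ne g'(0)$ forces $\deg g\ge 2$ without having to track which critical points survive in the limit. Everything else is a routine consequence of Lemma~\ref{plmd}, Lemma~\ref{shi}, and the Hausdorff continuity supplied by Lemma~\ref{continuous-moving}.
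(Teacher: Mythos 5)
Your proof is correct and follows essentially the same route as the paper: use Lemma~\ref{plmd} to bound critical points away from $0$ and get normality, invoke Lemma~\ref{continuous-moving} for Hausdorff convergence of Siegel boundaries (hence $1\in\partial D_g$), and deduce $\sigma_{k,m}(f_n)\to\sigma_{k,m}(g)$ from locally uniform convergence of iterates. The only cosmetic difference is that the paper tracks explicitly which critical points escape to infinity and builds $g$ from the surviving ones via the formula $f_X$, whereas you bound all coefficients via the elementary-symmetric-function identity in $1/c_i^n$ and determine $\deg g\ge 2$ from $g'(1)=0\ne g'(0)$; both devices yield the same conclusion.
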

\begin{proof}
By the second assertion of Lemma~\ref{plmd}, the critical points of all $f_n$ are uniformly bounded away from the origin; that us,
there is a uniform $L >  0$ such that for each $f_n$, the critical points of $f_n$ are contained in the outside of the disk $\{z\: |\: |z| > L\}$.
For each $f_n$, let us label the critical points of $f_n$ by
$$
c_1^n, \cdots, c_{d-2}^n, c_{d-1}^n = 1.
$$
By taking a subsequence, we get $1 \le i_1 < \cdots < i_l \le d-2$ and $d-1-l$
such that for all $1 \le j \le l$,
$$
c_{i_j}^n \to \infty \hbox{  and  } n \to \infty,
$$ and for all $k \ne i_j, 1 \le j \le l$ and $1 \le k \le d-1$, we have
$$
c_{k}^n \to c_k^*
$$ where $c_k^*$ is some non-zero complex number. Let $g$ denote the polynomial of degree $d-l$
 which has critical points at these $c_k^*, 1 \le k \le d-1$ and $k \ne i_j, 1 \le j \le l$.
 It is clear that $f_{n_k}$ converges to $g$ uniformly in any compact set of the plane. This proves the first assertion of Lemma~\ref{compact-class}.  Let us prove that  the boundary of the Siegel disk of $g$ centered at the origin must also contain the critical point $1$. Suppose this were not true. Then
 the critical point $1$ is bounded away from the boundary of the Siegel disk $g$.  Since $f_n \to g$, by Lemma~\ref{continuous-moving}, the boundary of the Siegel disk of $f_n$ centered at the origin can be arbitrarily close to the boundary of the Siegel disk of $g$ centered at the origin. This would imply for all $n$ large enough, the boundary of the Siegel disk of $f_n$ centered at the origin does not pass through the critical point $1$. This is a contradiction. For $k > m\ge 0$ given, $f_n^k \to g^k$ and $f_n^m \to g^m$ uniformly in any compact set of the plane. Thus $\sigma_{k,m}(f_n) \to \sigma_{k,m}(g)$ as $n \to \infty$. This implies the second assertion. The proof of Lemma~\ref{compact-class} is completed.
\end{proof}

\subsection{Appendix B}

\begin{lem}\label{homotopy}
Let $\Delta$ be the unit disk and
 $U_i$, $1 \le i \le l$,  be  Jordan domains  with $\overline{U_i} \subset \Delta$ and $\overline{U_i} \cap \overline{U_{i'}} = \emptyset$ for $1 \le i \ne i' \le l$.   Let $Q = \{q_1, \cdots, q_m\} \subset \Delta$ and $X = {\Delta} \setminus (Q \cup \cup_{1 \le i \le l} \overline{U_i})$. Suppose
\begin{itemize}
\item[1.] ${\rm diam}(U_i) > d_0$  for $1 \le i \le l$,
\item[2.] ${\rm dist}(U_i, q_j) > d_0$   for $1 \le i \le l$ and $1 \le j \le m$,
\item[3.] ${\rm dist}(q_j, q_{j'}) > d_0$   for $1 \le j \ne j' \le m$,
\item[4.] ${\rm dist}(q_j, \Bbb T) > d_0$  for $1 \le j \le m$.
\end{itemize}
Then there exists a $\tau > 0$ depending only on $d_0$ such that for any homeomorphism $h: \overline{X} \to \overline{X}$,
if ${\rm dist}(h, {\rm id}) < \tau$ and $h|\partial X = {\rm id}$, then $h$ is homotopic to {\rm id} rel $\partial X$.
\end{lem}
\begin{proof}
For $1 \le j \le m$ and $r > 0$, let $B_j(r) = \{z\:|\: |z - q_j| < r\}$.
Then  there is a $\eta > 0$ depending only on $d_0$
such that for any two points $a$ and $b$ in $\Delta \setminus  \overline{\cup_i U_i \cup  \cup_j B_j(d_0/4)}$, if the Euclidean distance between $a$ and $b$ is less than $\eta$,  there is a unique shortest geodesic segment in $X$  which connecting $a$ and $b$.

We first show that $h$ can be homotopic to a continuous map $h_0: \overline{X} \to \overline{X}$ such that $h_0|B_j(d_0/3) = {\rm id}$ for all $1 \le j \le m$.   Let us construct the homototy $H$
 as follows.

For $0< r \le d_0/3$, let $\Delta_{j,r} = \{z\:|\: z + q_j \in h(B_j(r))\}$. Let
$\Phi_{j,r}: \Delta \to \Delta_{j,r}$ be the holomorphic isomorphism with $\Phi_{j,r}(q_j) = 0$ and $\Phi_{j,r}'(0) > 0$. Then there is a continuous function $\theta: (0, d_0/3]\times [0, 2\pi] \to \Bbb R$ such that $\Phi_{j,r}^{-1}(h(q_j+r e^{i\alpha})-q_j) = e^{i \theta(r, \alpha)}$.  Note that $\theta(r, 0) + 2 \pi = \theta(r, 2\pi)$ for  $0< r < d_0/3$.

(1) If $z \in B_j(d_0/3)$ for some $j$, let $z =  q_j + r e^{i \alpha}$. If $z = q_j$, define $H(t, z) = q_j$ for all $0 \le t \le 1$. Otherwise, define
$$
H(t, z) =
\begin{cases}
q_j + \Phi_{j,r}((1 - 2t) e^{i \theta(r, \alpha)}) \cdot  \frac{\Phi_{j,r}'(0) + 2t (r - \Phi_{j,r}'(0))}{\Phi_{j,r}'(0)(1-2t)}, 0 \le t  <  1/2 \\

q_j+ r e^{i(\theta(r, \alpha) + 2(t-1/2)(\alpha - \theta(r, \alpha)))},  1/2 \le t \le 1.
\end{cases}
$$

(2) If $z \in B_j(d_0/2)\setminus B_j(d_0/3)$, let $z =  q_j + r e^{i \alpha}$. Define
$$
H(t, z) = h(z) + (H(t, q_j+d_0e^{i\alpha}/3)-h(q_j+d_0e^{i\alpha}/3))\frac{d_0/2-r}{d_0/6}.
$$

(3) If $z \notin \overline{X} \setminus \cup_{1 \le i \le m} B_j(d_0/2)$, define $H(t, z) = h(z)$ for all $0 \le t \le 1$.

It is clear that $H(\cdot, \cdot): [0, 1] \times \overline{X} \to \overline{X}$ is continuous and $h = H(0, \cdot)$. Let $h_0 = H(1, \cdot)$. From the construction it follows that $h_0|B_j(d_0/3) = {\rm id}$ for all $1 \le j \le m$, and moreover, $h_0$ can be arbitrarily close to $\rm id$ provided that $h$ is close enough to $\rm id$.  For $\eta > 0$ be the number guaranteed in the beginning of the proof,  let $\tau > 0$ be the constant such that ${\rm dist}(h_0, {\rm id}) < \min\{\eta, d_0/12\}$ provided that ${\rm dist}(h, {\rm id}) < \tau$.

Now suppose ${\rm dist}(h, {\rm id}) < \tau$. Then  ${\rm dist}(h_0, {\rm id}) < \min\{\eta, d_0/12\}$. It suffices to construct a homotopy $H_0$ between $h_0$ and $\rm id$. For $z \in \partial X \cup \cup_j B_j(d_0/3)$, define $H_0(t, z) = z$ for all $0 \le t \le 1$. For $z \in X \setminus  \cup_j B_j(d_0/3)$,  both $z$ and $h_0(z)$ belongs to $\Delta \setminus  \overline{\cup_i U_i \cup  \cup_j B_j(d_0/4)}$. By the definition of $\eta$,  there is a unique shortest geodesic segment $\gamma$  in $X$ connecting $z$ and $h_0(z)$.  Now define $H_0(t, z)$ to  be the point in $\gamma$  which divides $\gamma$ into two geodesic segments with the ratio of their length being equal to $t: (1 - t)$. This defines the homotopy $H_0$ between $h_0$ and $\rm id$.  The proof of lemma~\ref{homotopy} is completed.

\end{proof}

\end{document}